\numberwithin{equation}{section} 
\newtheorem{Theorem}{Theorem}[section]
\newtheorem{Definition}[Theorem]{Definition}
\newtheorem{Proposition}[Theorem]{Proposition}
\newtheorem{Lemma}[Theorem]{Lemma}
\newtheorem{Remark}[Theorem]{Remark}
\DeclareMathOperator*{\dv}{div}
\newcommand{\dvm}{{\rm div}_m} 
\DeclareMathOperator*{\tr}{Tr}
\newcommand{\s}{s}
\newcommand{\eps}{\varepsilon}
\DeclareMathOperator{\Proj}{{\rm P}}
\newcommand{\XN}{\mathbf{X}_N} 
\title{Global weak solutions to the Quantum Navier-Stokes system in the whole space with a confining external potential}
\author{J\'er\'emy Faupin$^1$, Ingrid Lacroix-Violet$^2$, Julien Lequeurre$^3$}
\begin{document}
	
	\maketitle

\begin{abstract}

We consider a dissipative quantum fluid on the whole space $\mathbb{R}^d$ ($d\geq 1$) confined by an external harmonic potential. The dynamics of the quantum fluid is described by the Quantum Navier-Stokes (QNS) system which is a particular case of the Navier-Stokes-Korteweg systems. The goal of this paper is to prove the existence of global weak solutions to the QNS system. To this end, we write the evolution equations with respect to a Gaussian reference measure and follow the general strategy of previous works \cite{Bresch_Gisclon_Lacroix-Violet_ARMA19, Carles_Carrapatoso_Hillairet_AIF2022, Lacroix-Violet_Vasseur_JMPA18, Vasseur_Yu_SIMA2016}. Nevertheless, several substantial modifications have to be done due to our choice of the reference measure. 

\end{abstract}

	\tableofcontents
	
\section{Introduction}

Quantum fluid models have attracted lots of attention in the physics literature in the last few decades, due to the variety of applications. Among others, quantum fluid models can indeed be used to describe superfluids \cite{LoMo93}, quantum semiconductors \cite{FeZhou93}, weakly interacting Bose gases \cite{Grant73} and quantum trajectories of Bohmian mechanics \cite{Wyatt_book}.

In this paper, we will be interested in a dissipative quantum fluid model, the so-called Quantum Navier-Stokes (QNS) model, in the whole space $\mathbb{R}^d$ (in any dimension $d\ge1$) in the presence of a confining external potential. This model has been derived in \cite{BruMe09}, starting with a quantum system satisfying the Liouville-von Neumann equation, considering the associated Wigner equation and using a Chapman-Enskog expansion around a quantum local equilibrium. Roughly spea\-king, the QNS system corresponds to the classical Navier-Stokes system with a quantum correction term.  The main difficulties encountered in the mathematical analysis of this model lie in the highly nonlinear structure of the third order quantum term, and the proof of the positivity (or non-negativity) of the particle density. 

The system that we study in this paper reads, on the whole space $\mathbb{R}^d$ (for $d\geq 1$ an integer):
	\begin{subequations}\label{eq:CNSK_system}
		\begin{eqnarray}
			\partial_t\rho + \dv(\rho u) & = & 0, \label{eq:CNSK_system_density} \\
			\partial_t(\rho u) + \dv(\rho u \otimes u) & = & \dv(2\nu \rho D(u))  + 2\kappa^2 \rho\nabla\left(\frac{\Delta(\sqrt{\rho})}{\sqrt{\rho}}\right) - a \nabla \rho - \lambda \rho x, \label{eq:CNSK_system_velocity}
		\end{eqnarray}
	\end{subequations}
where, as usual, $\rho$ stands for the density of the fluid, $u$ its velocity, $\nu$,  $a$, $\kappa$ and $\lambda$ are positive constants, the matrix $D(u)$ is the symmetric part of $\nabla u = (\partial_{j}u_i)_{i,j}$, defined by $D(u) = \frac{\nabla u + \nabla u^\top}{2}$ (where, for every matrix $M$, the matrix $M^\top$ is the transpose of $M$). This explicit form of the system that we consider follows from particular choices for the pressure, viscosity, capillarity tensor and external potential. 

The  QNS model \eqref{eq:CNSK_system} indeed belongs to a more general class of models, the Navier-Stokes-Korteweg models. Let us mention that the latter are widely used in theoretical physics to describe capillarity phenomena in fluids with diffusing interfaces. The Navier-Stokes-Korteweg model are associated to the system of equations
\begin{subequations}\label{eq:CNSK_system_general}
\begin{eqnarray}
\partial_t\rho + \dv(\rho u) & = & 0, \label{eq:CNSK_system_general_density} \\
\partial_t(\rho u) + \dv(\rho u \otimes u) & = & \dv(2\mu_F(\rho)D(u) + \lambda_F(\rho)\tr(D(u)){\rm I}_d)   - \nabla p(\rho)  \label{eq:CNSK_system_general_velocity} \\
 & & \quad + \rho\nabla\left(K(\rho)\Delta\rho + \frac{1}{2}K'(\rho) |\nabla\rho|^2\right) - \rho\nabla V,  \notag
\end{eqnarray}
\end{subequations}
where $p(\rho) = a\rho^\gamma$ is the pressure ($a>0$ and $\gamma \geq 1$ are two constants), $K(\rho)$ is the function in the capillarity tensor (Korteweg tensor), ${\rm I}_d$ is the identity matrix, and $\mu_F(\rho)$ and $\lambda_F(\rho)$ are the fluid viscosities that satisfy the Bresch-Desjardins condition, see \cite{Bresch-Desjardins_CRAS04},
\begin{equation}\label{BD_condition}
\lambda_F(\rho) = 2(\rho\mu_F'(\rho)-\mu_F(\rho)).
\end{equation}
Finally the potential $V$ is a function of the space variable $x$ only, which represents the external forces applied on the fluid. 
The QNS model is then given by the equations \eqref{eq:CNSK_system_general} in the particular case where $\mu_F$, $\lambda_F$ and $p$ in \eqref{eq:CNSK_system_general} are given respectively by
\begin{equation}\label{def:mu_lambda_p}
\mu_F(\rho) = \nu\rho, \quad \lambda_F(\rho) = 0, \quad p(\rho) = a\rho,
\end{equation}
with $\nu>0$, $a>0$ (note that $\mu_F$, $\lambda_F$ then satisfy the Bresch-Desjardins condition \eqref{BD_condition}) and the external potential  is supposed to be harmonic,
\begin{equation}
V(x)=\frac{\lambda}{2}|x|^2, \quad x\in\mathbb{R}^d,
\end{equation}
with $\lambda>0$. The assumption that $V$ is harmonic leads to a simplification of several formulas below. Finally, we also assume that 
\begin{equation}\label{def:K(rho)quantum}
K(\rho) = \frac{\kappa^2}{\rho},
\end{equation}
with $\kappa >0$ a constant. Note that in this case the Korteweg tensor in \eqref{eq:CNSK_system_general} reduces indeed to $\rho\nabla\left(2\kappa^2\frac{\Delta \sqrt{\rho}}{\sqrt{\rho}}\right)$ (see Appendix \ref{appendixA}).

As mentioned before, the system of equations \eqref{eq:CNSK_system_general} with $K(\rho)$ given by \eqref{def:K(rho)quantum} was derived in \cite{BruMe09} as a `hydrodynamic limit' (more precisely, a Chapman-Enskog expansion up to order $1$ around a local quantum equilibrium) of the Wigner equation describing the evolution of the density of a quantum model. Note that the potential force associated to $V$ in  \eqref{eq:CNSK_system_general} originates from the external potential applied to the quantum system considered in \cite{BruMe09}. We refer the reader to \cite{BruMe09} and references therein for more details concerning the physics associated to the Quantum-Navier-Stokes model.

\vspace{1em}

General classes of Navier-Stokes-Korteweg systems have been studied since the last two decades, especially in the case of barotropic pressures (that is $p(\rho)$ proportional to $\rho^\gamma$ with $\gamma >1$), and in finite volume domains (generally tori in two or three dimensions), without an external force, for different expressions of the Korteweg function $K(\rho)$. Let us mention, among others, \cite{Jungel_SIMA10,Lacroix-Violet_Vasseur_JMPA18,Lu_Zhang_Zhong_JMP19} when $K(\rho) \propto \frac{1}{\rho}$, \cite{Antonelli_Spirito_AHP22} when $K(\rho) = 1$,  \cite{Bresch_Gisclon_Lacroix-Violet_ARMA19} when $K(\rho)$ and $\mu_F(\rho)$ satisfy an algebraic equation (namely $\sqrt{\rho K(\rho)} = \mu_F'(\rho)$) with $K(\rho) \propto \rho^s$, or \cite{Bresch_Gisclon_Lacroix-Violet_Vasseur_JMFM22} for more general expressions of $K(\rho)$.

In the whole space $\mathbb{R}^d$, fewer references exist. In \cite{Antonelli_Hientzsch_Spirito_JDE21}, in dimension $d=2$ or $3$, the Quantum Navier-Stokes system ($K(\rho) \propto \frac{1}{\rho}$, $\mu_F(\rho) \propto \rho$, $\lambda_F(\rho) = 0$) with non trivial far-field behavior $\rho(x) \underset{|x|\to \infty}{\longrightarrow} 1$ is considered.
In \cite{Carles_Carrapatoso_Hillairet_AIF2022}, the quantum `isothermal fluid' case ($p(\rho) \propto \rho$) without an external force is considered in $\mathbb{R}^d$ with $d\leq 3$, that is the system \eqref{eq:CNSK_system} with $\lambda = 0$,  by approaching $\mathbb{R}^d$ by finite volume tori after introducing auxiliary variables obtained thanks to a time-dependent rescaling (a similar rescaling was already used in \cite{Carles_Carrapatoso_Hillairet_AHL2018} hinted by the link between the Euler-Korteweg equations (when $\mu_F(\rho) = \lambda_F(\rho) = 0$ and $K(\rho) \neq 0$) and the corresponding nonlinear Schrödinger equation through the Madelung transform). Anticipating Section \ref{sect:CCH} below, we mention that the method developed in the present paper provides an alternative proof of the result established in \cite{Carles_Carrapatoso_Hillairet_AIF2022}, without relying on an approximation of $\mathbb{R}^d$ by finite volume tori.

In the previous references, either in tori or in the whole space, global existence of weak solutions is proved for initial data of bounded initial energy and bounded initial Bresch-Dejardins Entropy (BD Entropy). See Section \ref{subsect:Introduction_QNS_drag} below for the definitions of the energy and BD entropy in our context. With an added drag term $-r_2\rho u$ (for $r_2 >0$), the authors in \cite{Bresch_Gisclon_Lacroix-Violet_Vasseur_JMFM22} prove the convergence of the global weak solution to the equilibrium of the system with an exponential rate of convergence.

Note that most of, if not all, the previous references consider fluid viscosities $\mu_F$ and $\lambda_F$ satisfying the BD relation \eqref{BD_condition}.

\vspace{1em}

Our main goal in this paper is to prove the existence of global weak  solutions to \eqref{eq:CNSK_system}. Before stating our result in precise terms (see Theorem \ref{thm:main} below) we begin with stating some useful basic properties of the system \eqref{eq:CNSK_system} and reformulating it in a more appropriate setting.

It would be interesting to extend the analysis presented in this paper to a more general setting with $\mu_F$, $\lambda_F$ functions satisfying the Bresch-Desjardins condition, $p(\rho) = a\rho^\gamma$ with $\gamma\ge1$ and $V$ a suitable confining potential.

\subsection{Basic properties of the QNS system and change of reference measure}

\paragraph{Mass conservation.}

We will assume that the initial mass of the fluid, $m_F = \int_{\mathbb{R}^d} \rho^0$, is finite. Therefore, by the conservation of mass along the evolution, the following equality
\begin{equation}\label{eq:mass}
\int_{\mathbb{R}^d} \rho(t,x)\,{\rm d}x = m_F
\end{equation}
is satisfied for all time $t\geq0$. Indeed, under suitable assumptions on solution $(\rho,u)$, integrating equation \eqref{eq:CNSK_system_density} on $\mathbb{R}^d$ leads to $\frac{{\rm d}}{{\rm d}t}\int_{\mathbb{R}^d} \rho(t,x) \,{\rm d}x = 0$ for all time $t\geq 0$.

Moreover, because of the particular expression of $K(\rho)$, $\mu_F(\rho)$, $\lambda_F(\rho)$ and $p(\rho)$, see \eqref{def:K(rho)quantum} and \eqref{def:mu_lambda_p}, the system \eqref{eq:CNSK_system} is homogeneous in the density. In other words, changing $\rho$ to $\frac{\rho}{m_F}$, one sees that \eqref{eq:mass} simplifies to 
\begin{equation}\label{eq:mass_1}
\int_{\mathbb{R}^d} \rho(\cdot,x) \,{\rm d}x = 1
\end{equation}
and that $\left(\displaystyle \frac{\rho}{m_F}, u\right)$ satisfies  \eqref{eq:CNSK_system} whenever $(\rho, u)$ does.

We will then assume in the following that the initial density $\rho^0$ satisfies \eqref{eq:mass_1}.

\paragraph{Relative energy.}

It is convenient to introduce the relative energy  (with respect to the minimal energy) of the QNS system \eqref{eq:CNSK_system} as we describe now. First, the energy associated to the system \eqref{eq:CNSK_system} is given by
\begin{equation}\label{eq:def_energy_intro}
E(\rho,u) = \frac12 \int_{\mathbb{R}^d} \rho|u|^2 + a\int_{{\mathbb{R}^d}}\rho \ln(\rho) + \frac{\kappa^2}{2}\int_{{\mathbb{R}^d}} \rho|\nabla \ln(\rho)|^2 + \frac{\lambda}{2}\int_{{\mathbb{R}^d}} \rho|x|^2.
\end{equation}
A direct computation (see Appendix \ref{appendixA}) then shows that $E(\rho,u)$ has a unique global minimizer $(\rho_m,u_m)$ in the set $\displaystyle \left\{(\rho,u) \text{ measurable such that } \rho > 0 , \; \int_{\mathbb{R}^d}\rho=1, \; E(\rho,u)<\infty \right\}$, given by
\begin{equation}\label{def:rho_m_u_m}
\rho_m(x)=\frac{1}{(2\pi\sigma^2)^{\frac{d}2}} \exp\left(-\frac{1}{2\sigma^2}|x|^2\right) , \quad u_m(x)=0,\quad  x\in\mathbb{R}^d, 
\end{equation}
with $\sigma>0$ solution of the equation 
	\begin{equation}\label{def:sigma}
	\frac{a}{\sigma^2}+\frac{\kappa^2}{\sigma^4} = \lambda.
	\end{equation}
The minimal energy is 
\begin{equation}\label{eq:def_energy_minimal}
E(\rho_m,0) = d\left(\frac{\kappa^2}{\sigma^2}-\frac{a}{2}\ln(2\pi\sigma^2)\right)
\end{equation}
and one can then introduce the (relative) energy $\mathcal{E}(\rho,u)$ as
\begin{equation}\label{eq:CNSK_energy}
\mathcal{E}(\rho,u) = E(\rho,u) - E(\rho_m,0) = \frac12 \int_{\mathbb{R}^d} \rho|u|^2 + a\int_{{\mathbb{R}^d}}\rho \ln\left(\frac{\rho}{\rho_m}\right) + \frac{\kappa^2}{2}\int_{{\mathbb{R}^d}} \rho\left|\nabla \ln\left(\frac{\rho}{\rho_m}\right)\right|^2.
\end{equation}
For a justification of the second equality, see again Appendix \ref{appendixA}.

It should be observed that the system satisfies the energy estimate
\begin{equation*}\label{eq:CNSK_energy_estimate}
\frac{{\rm d}}{{\rm d}t}\mathcal{E}(\rho,u) + \mathcal{D}(\rho,u) = 0,
\end{equation*}
where, as in the classical Navier-Stokes system, the dissipation is given by
\begin{equation}\label{eq:CNSK_dissipation}
	\mathcal{D}(\rho,u)  =  2\nu \int_{\mathbb{R}^d} \rho|D(u)|^2.
\end{equation}

\paragraph{Mean position and velocity.}

By a simple change of unknown variables, we can assume without loss of generality that the mean position and mean velocity of the fluid vanish at time $t=0$, and therefore for all time. This is a consequence of the following formal property:
	Let $(\rho,u)$ be a solution to \eqref{eq:CNSK_system_density}--\eqref{eq:CNSK_system_velocity} such that $\int_{\mathbb{R}^d} \rho = 1$. Denoting, for all time $t\geq 0$, $M_x$ the mean position and $M_u$ the mean velocity of the fluid,
	\begin{equation*}
	M_x(t) = \int_{\mathbb{R}^d}\rho(t,x)x \,{\rm d}x \qquad \mbox{and} \qquad M_u(t) = \int_{\mathbb{R}^d}\rho(t,x)u(t,x) \,{\rm d}x,
	\end{equation*}
	we have
	\begin{equation*}
	\dot {M_x}(t) = M_u(t) \quad \mbox{and} \quad \dot M_u(t) = -\lambda M_x(t).
	\end{equation*}
	This directly follows from \eqref{eq:CNSK_system}. In particular, the mean position $M_x$ has a periodic trajectory if either $M_x(0)$ or $M_u(0)$ is non zero.
By setting $\overline{\rho}(t,x) = \rho(t,x+M_x(t))$ and $\overline{u}(t,x) = u(t,x+M_x(t)) - M_u(t)$, it is then easy to see that the new unknowns $(\overline{\rho},\overline{u})$ satisfy the same equations \eqref{eq:CNSK_system_density}--\eqref{eq:CNSK_system_velocity} as $(\rho,u)$, with vanishing mean position and mean velocity.

\paragraph{Change of reference measure.}

Our next concern is to rewrite the system \eqref{eq:CNSK_system} with a different reference measure. This turns out to be more natural in our context. More precisely, instead of considering the Lebesgue measure on $\mathbb{R}^d$, we aim at writing the system with respect to the measure $\mu_m$ such that 
\begin{equation*}
{\rm d}\mu_m = \rho_m \,{\rm d}x, \quad \rho \, {\rm d}x = q \, {\rm d}\mu_m,  \quad q = \frac{\rho}{\rho_m}.
\end{equation*}
For $q:\mathbb{R}^d\to\mathbb{R}$ and $v:\mathbb{R}^d\to\mathbb{R}^d$, we introduce the notations
\begin{align*}
\dvm(v) = \frac{1}{\rho_m}\dv(\rho_m v), \quad \dvm(D(v)) = \frac{1}{\rho_m}\dv(\rho_m D(v)), \quad \Delta_m q = \dvm(\nabla q).
\end{align*}
These twisted derivatives encode the change of measure of reference, and simplify some integration by parts formulae, see \cite{Gianazza_Savare_Toscani_ARMA09}.  For instance, for $v$, $w : \mathbb{R}^d \to \mathbb{R}^d$, $q : \mathbb{R}^d \to \mathbb{R}$ such that all quantities below are well-defined, we have
\begin{equation*}
\int_{\mathbb{R}^d} \nabla q \cdot v \,{\rm d}\mu_m = - \int_{\mathbb{R}^d} q \dvm(v) \,{\rm d}\mu_m, \qquad \int_{\mathbb{R}^d} \dvm(D(v)) \cdot w \,{\rm d}\mu_m = - \int_{\mathbb{R}^d} \tr\left(D(v)D(w)\right)\, {\rm d}\mu_m.
\end{equation*}
Straightforward computations (see Appendix \ref{appendixA}) show that the QNS system \eqref{eq:CNSK_system} rewrites in the new reference measure as
\begin{subequations}
	\label{eq:CNSK_system_m}
	\begin{align}
	\partial_t q + \dvm(qu) &= 0,      \label{eq:CNSK_system_m-density} \\
	\partial_t (qu) + \dvm(u \otimes qu) &= \dvm\left(2\nu q D(u)\right) +  \dvm\left(\kappa^2qD^2(\ln(q))\right) - \lambda \sigma^2 q \nabla \left(\ln(q) \right). \label{eq:CNSK_system_m-velocity}
	\end{align}
\end{subequations}

We emphasize that, firstly, in this new formulation the potential $V$ is absorbed into the different twisted differential operators, see \eqref{eq:potential_force_m}, and secondly that we will mainly consider this system in the following.

\subsection{The QNS system with drag forces}
\label{subsect:Introduction_QNS_drag}

Following \cite{Bresch-Desjardins_CRAS04, Carles_Carrapatoso_Hillairet_AIF2022, Lacroix-Violet_Vasseur_JMPA18, Vasseur_Yu_SIMA2016}, we now introduce smoothing terms corresponding to `drag' forces into the system. As in these references, the smoothing terms will be crucial ingredients in our proof of the existence of global solutions to \eqref{eq:CNSK_system_m}. More precisely, we will first obtain the existence of solutions for a family of regularized systems with drag terms depending on some positive parameters, and then show that these solutions converge, in the limit where the parameters, and therefore the drag terms, vanish. 

\paragraph{The drag forces.}

Our regularization consists in adding the three terms $-r_0u$,  $-r_1q |u|^2u$ and  $- \frac{r_4}{\sigma^4} q |x|^2x$ to the right-hand side of \eqref{eq:CNSK_system_m-velocity}, with $r_0 >0$, $r_1 >0$ and $r_4 >0$. The system then becomes 
\begin{subequations}
	\label{eq:CNSK_system_m_regularized_intro}
	\begin{eqnarray}
	\partial_t q + \dvm(qu) &=& 0,      \label{eq:CNSK_system_m_regularized-density_intro} \\
	\partial_t (qu) + \dvm(qu\otimes u) &=&  \displaystyle \dvm\left(2\nu q D(u)\right) + \dvm\left(\kappa^2qD^2(\ln(q))\right) -\lambda \sigma^2 \nabla q \label{eq:CNSK_system_m_regularized-velocity_intro} \\
	& & \qquad  - r_0 u -r_1q |u|^2u- \frac{r_4}{\sigma^4} q |x|^2x.  \notag
	\end{eqnarray}
\end{subequations}
The terms $-r_0u$ and $-r_1q|u|^2u$ are similar to those previously used in \cite{Lacroix-Violet_Vasseur_JMPA18, Vasseur_Yu_SIMA2016}. We emphasize, however, that the term $- \frac{r_4}{\sigma^4} q |x|^2x$ dit not appear previously in the literature, to our knowledge. As explained below, this term is especially fitted to the model we consider.

Note that other `drag' terms exist in the literature. First, in \cite{Bresch_Desjardins_JMPA06}, the authors introduced the term $- r_1 \rho|u|u$ (with $r_1 > 0$) in lieu of $-r_1 \rho |u|^2u$. Second, the drag term $-r_2 \rho u$ (with $r_2 > 0$) is used in \cite{Bresch_Gisclon_Lacroix-Violet_Vasseur_JMFM22} to prove the exponential convergence to equilibrium of the global weak solution to the compressible Navier-Stokes-Korteweg equations in $\mathbb{T}^3$.

\paragraph{Energy identity.}

As in \eqref{eq:CNSK_energy}--\eqref{eq:CNSK_dissipation}, the QNS system with drag forces \eqref{eq:CNSK_system_m_regularized_intro} enjoys (formally at this stage) the following energy identity
\begin{equation}\label{eq:energy_identity_reg_intro}
\frac{{\rm d}}{{\rm d}t} \mathcal{E}(t) + \mathcal{D}(t) = 0,
\end{equation}
where the `regularized' energy $\mathcal{E}$ is now given by
\begin{equation}\label{def:energy_reg_intro}
\mathcal{E} = \mathcal{E}(q,u)= \int_{\mathbb{R}^d} q\left[\frac12\left(|u|^2 + \kappa^2|\nabla\ln(q)|^2\right)+ a\ln(q)\right]\;{\rm d}\mu_m + \frac{r_4}{4}I_4(q)
\end{equation}
where $I_4(q)$, the fourth order moment of $q$, is given by
\begin{equation}\label{def:I4}
I_4(q) = \int_{\mathbb{R}^d} q \left|\frac{x}{\sigma}\right|^4 {\rm d}\mu_m
\end{equation}
and the `regularized' dissipation $\mathcal{D}$ is given by
\begin{equation}\label{def:dissipation_reg_intro}
		\mathcal{D}=\mathcal{D}(q,u)  =  \displaystyle 2\nu \int_{\mathbb{R}^d} q|D(u)|^2 \;{\rm d}\mu_m +r_0 \int_{\mathbb{R}^d} |u|^2 \;{\rm d}\mu_m+ r_1\int_{\mathbb{R}^d} q|u|^4 \;{\rm d}\mu_m .
\end{equation}

\paragraph{BD entropy identity.}

The Bresch-Desjardins entropy (BD entropy) is a fundamental tool to prove the existence and study the solutions of a large class of compressible fluid systems, see \cite{Bresch-Desjardins_CRAS04, Bresch-Desjardins-Lin_CPDE03} for instance. In our context, the BD entropy for the QNS system with drag forces \eqref{eq:CNSK_system_m_regularized_intro} is given by
	\begin{eqnarray}\label{def:BD_reg-entropie_intro}
\mathcal{E}_{\rm BD}=\mathcal{E}_{\rm BD}(q,u) & = & \int_{\mathbb{R}^d} q\left[\frac12\left(|u+2\nu \nabla\ln(q)|^2 + \kappa^2|\nabla\ln(q)|^2\right)+ a\ln(q)\right]\;{\rm d}\mu_m \notag \\
& & + 2\nu r_0 \int_{\mathbb{R}^d} (q-\ln(q)) \;{\rm d}\mu_m + \frac{r_4}{4}I_4(q).
\end{eqnarray}
Note that, by concavity of the function $\ln$, $q-\ln(q)$ is nonnegative and hence $\mathcal{E}_{\rm BD}$ is nonnegative.

The corresponding dissipation term is
	\begin{eqnarray}\label{def:BD_reg-dissipation_intro} 
	\mathcal{D}_{\rm BD}=\mathcal{D}_{\rm BD}(q,u) & = & 2\nu \int_{\mathbb{R}^d} q|A(u)|^2 \;{\rm d}\mu_m + 2\nu\lambda \sigma^2 \int_{\mathbb{R}^d} q|\nabla \ln(q)|^2  \;{\rm d}\mu_m  + 2 \kappa^2 \nu \int_{\mathbb{R}^d} q |D^2(\ln(q))|^2  \;{\rm d}\mu_m  \notag \\
	& & +r_0 \int_{\mathbb{R}^d} |u|^2 \;{\rm d}\mu_m + r_1\int_{\mathbb{R}^d} q|u|^4 {\rm d}\mu_m + \frac{2r_4\nu}{\sigma^2}I_4(q), 
\end{eqnarray}
and we have the BD entropy equality
	\begin{equation}\label{eq:BD-entropy_equality_intro}
	\frac{{\rm d}}{{\rm d}t} \mathcal{E}_{\rm BD}(t) +\mathcal{D}_{\rm BD} (t) = \mathcal{R}_{\rm BD}(t),
	\end{equation}
	where, denoting $I_2(q)$, the second order moment of $q$, by
	\begin{equation}\label{def:I2}
	I_2(q) = \int_{\mathbb{R}^d} q\left|\frac{x}{\sigma}\right|^2 \, {\rm d}\mu_m,
	\end{equation}
	the right-hand side $\mathcal{R}_{\rm BD}$ is given by
\begin{eqnarray}
	\mathcal{R}_{\rm BD}=\mathcal{R}_{\rm BD}(q,u) & = & \frac{2r_4\nu(d+2)}{\sigma^2}I_2(q) - 2\nu r_1\int_{\mathbb{R}^d} q|u|^2 u \cdot \nabla \ln(q)  \;{\rm d}\mu_m \notag \\
	& & + \frac{2\nu}{\sigma^2}\int_{\mathbb{R}^d} q\left(u+2\nu\nabla\ln(q)\right) \cdot u  \;{\rm d}\mu_m.
	\label{def:BD_reg-RHS_intro}
	\end{eqnarray}

\begin{Remark}
	Note that, we may also define a `modified' BD Entropy, and the corresponding dissipation and reaction terms satisfying \eqref{eq:BD-entropy_equality_intro} too, as follows:
	\begin{eqnarray*}
	\mathcal{E}_{\rm BD} & = & \int_{\mathbb{R}^d} q\left[\frac12\left(|u+2\nu \nabla\ln(\rho)|^2 + \kappa^2|\nabla\ln(q)|^2\right)+ a\ln(q)\right]\;{\rm d}\mu_m + \frac{r_4}{4}I_4(q) \\
	\mathcal{D}_{\rm BD} & = & 2\nu \int_{\mathbb{R}^d} q|A(u)|^2 \;{\rm d}\mu_m + 2\nu\left(\lambda \sigma^2 + \frac{\kappa^2}{\sigma^2}\right) \int_{\mathbb{R}^d} q|\nabla \ln(q)|^2  \;{\rm d}\mu_m  + 2\nu\kappa^2 \int_{\mathbb{R}^d} q |D^2(\ln(q))|^2  \;{\rm d}\mu_m \notag \\
	&  & + \frac{2\nu r_4}{\sigma^2}\left[I_4(q)- (d+2) I_2(q)\right] + 2\nu \lambda I_2(q), \\ 
	\mathcal{R}_{\rm BD} & = & 2\nu \lambda d. \notag 
	\end{eqnarray*}
	In this case, the dissipation term $\mathcal{D}_{\rm BD}$ 
	above is non negative (for all $q$) whenever $r_4 \geq 0$ satisfies $\lambda - \frac{2\nu r_4(d+2)}{\sigma^2} \geq 0$, which is the case in particular for $r_4 = 0$.
\end{Remark}

\paragraph{(Renormalized) Second moment of the density.} We conclude this section with the following observation that may be useful for some purpose, even if we do not use it in the present paper. Let us define (recall that $m_F = 1$) 
\begin{equation}\label{def:tildeI2}
\widetilde{I}_2(q) = I_2(q) - I_2(1) = \int_{\mathbb{R}^d} q \left|\frac{x}{\sigma}\right|^2  \;{\rm d}\mu_m - d.
\end{equation}
Then
\begin{equation*}
\frac{{\rm d} \widetilde{I}_2(q)}{{\rm d}t} = 2 \int_{\mathbb{R}^d} qu\cdot \frac{x}{\sigma^2} \;{\rm d}\mu_m
\end{equation*}
and $\widetilde{I}_2(q)$ is solution to the following ODE (obtained by taking the scalar product of \eqref{eq:CNSK_system_m_regularized-velocity_intro} by $\frac{x}{\sigma^2}$ and integrating on $\mathbb{R}^d$):
\begin{eqnarray*}
	& &\frac{{\rm d}^2}{{\rm d}t^2}{\widetilde{I_2}(q)} + \frac{2\nu}{\sigma^2} \frac{{\rm d}}{{\rm d}t}{\widetilde{I_2}(q)} + 2\left[\lambda + \frac{\kappa^2}{\sigma^4}\right]\widetilde{I_2}(q)\\
	&  &= \frac{2}{\sigma^2}\left[\int_{\mathbb{R}^d} q|u|^2 \;{\rm d}\mu_m + \kappa^2 \int_{\mathbb{R}^d}q|\nabla \ln(q)|^2  \;{\rm d}\mu_m\right] + \frac{4\nu}{\sigma^2} \int_{\mathbb{R}^d} \nabla q \cdot u \;{\rm d}\mu_m \\ 
	&  & \quad - \frac{2r_0}{\sigma^2} \int_{\mathbb{R}^d} u \cdot x    \;{\rm d}\mu_m - \frac{2r_1}{\sigma^2} \int_{\mathbb{R}^d} q|u|^2u \cdot x - \frac{2r_4}{\sigma^2} I_4(q).
\end{eqnarray*}
This ODE is particularly interesting when the drag term coefficients $r_0$, $r_1$ and $r_4$ vanish. In that case, the right-hand side is bounded (thanks to the energy or BD-entropy estimates). Nevertheless, it seems that we lack information to prove convergence (at exponential rate) of $\widetilde{I}_2(q(t))$ to $0$ when $t\to \infty$. 

\subsection{Main result}

We denote by $L^p(\mathbb{R}^d)$ (or simply $L^p$) the classical Lebesgue space (of exponent $p$) with respect to the Lebesgue measure on $\mathbb{R}^d$. In the same way, we denote by $L^p_{\mu_m}(\mathbb{R}^d)$ (or simply $L^p_{\mu_m})$ for $p \in [1,+\infty]$ the Lebesgue space of exponent $p$ with respect to the measure ${\rm d}\mu_m$.
For instance, $r \in L^2_{\mu_m}(\mathbb{R}^d)$ if $r : \mathbb{R}^d \mapsto \mathbb{R}$ is measurable and $\int_{\mathbb{R}^d} r^2 \;{\rm d}\mu_m < \infty$. We may also use the norm $\|\cdot\|_{L^2_{\mu_m}(\mathbb{R}^d)}$ (or simply $\|\cdot\|_{L^2_{\mu_m}}$). Moreover, we extend these notations for Sobolev spaces (of integer exponant) with obvious definitions.

We denote by $\mathcal{D}(\mathbb{R}^d)$ the set of smooth and compactly supported functions from $\mathbb{R}^d$ to $\mathbb{R}$, and likewise for $\mathcal{D}((0,\infty)\times \mathbb{R}^d)$. The dual spaces are denoted by $\mathcal{D}'(\mathbb{R}^d)$ and $\mathcal{D}'((0,\infty)\times \mathbb{R}^d)$, respectively.

\vspace{1em}
Let us introduce the definition of weak solutions to the system \eqref{eq:CNSK_system_m_regularized_intro}. It corresponds to Definition 1.1 in \cite[page 2246]{Carles_Carrapatoso_Hillairet_AIF2022}, adapted to our context. The functions $\Lambda^0$ and $\Lambda$ appearing in the next definition are related to the initial data $(q^0,u^0)$ and a solution $(q,u)$ to \eqref{eq:CNSK_system_m_regularized_intro}, respectively, by the expressions $\Lambda =\sqrt{q}u$ and $\Lambda^0 = \sqrt{q^0}u^0$; to simplify the presentation, and following the convention used in previous works, we do not introduce a notation for $\sqrt{q}$ and $\sqrt{q^0}$, but in principle we should replace $\sqrt{q^0}$ and $\sqrt{q}$ by functions $r$ and $r^0$, respectively, everywhere in Definition \ref{def:WeakSolution_drag_term_intro}, and then define $q^0:=(r^0)^2$ and $q:=r^2$.

\begin{Definition}\label{def:WeakSolution_drag_term_intro}
	Let $(\sqrt{q^0},\Lambda^0) \in L^2_{\mu_m} \times [L^2_{\mu_m}]^d$. A pair $(q,u)$ is called a weak solution to \eqref{eq:CNSK_system_m_regularized_intro} associated to the initial data $(\sqrt{q^0},\Lambda^0)$ if there exists a quadruplet $(\sqrt{q},\Lambda,\mathbf{S}_{\rm K},\mathbf{T}_{\rm NS})$ such that 
	\begin{enumerate}
		\item The following regularities are satisfied
		\begin{equation*}
		\begin{array}{rclcrcl}
		\sqrt{q}   &\in&  L_{\rm loc}^\infty(0,\infty,L^2_{\mu_m}), &  & \nabla\sqrt{q}, \Lambda &\in&  L_{\rm loc}^\infty(0,\infty,[L^2_{\mu_m}]^d), \\
		\mathbf{T}_{\rm NS}, \;\mathbf{S}_{\rm K} &\in & L_{\rm loc}^2(0,\infty,[L^2_{\mu_m}]^{d\times d}), & & \sqrt{r_0}u & \in & L_{\rm loc}^2(0,\infty,[L^2_{\mu_m}]^d),\\
		r_1^{\frac14}q^{\frac14}u & \in& L^4_{\rm loc}(0,\infty;[L^4_{\mu_m}]^d), &  & 	r_0 (q-\ln(q)) &\in& L^\infty_{\rm loc}(0,\infty;L^1_{\mu_m}), \\
		r_4^{\frac14}q^{\frac14}x & \in& L^\infty_{\rm loc}(0,\infty;[L^4_{\mu_m}]^d), & & q\ln(q)&\in& L^\infty_{\rm loc}(0,\infty;L^1_{\mu_m}),\\
		\sqrt{r_4}D^2(\sqrt{q}) &\in& L_{\rm loc}^2(0,\infty,[L^2_{\mu_m}]^{d\times d}), & & r_4^{\frac14}\nabla(q^\frac14) &\in& L^4_{\rm loc}(0,\infty;[L^4_{\mu_m}]^d),
		\end{array}
		\end{equation*}
		and, for $\rho = q \rho_m$,
		\begin{equation*}
		\begin{array}{rlcrl}
		{D^2(\sqrt{\rho})} 
		&\in  L_{\rm loc}^2(0,\infty,[L^2]^{d\times d}), 
		& \quad &  \nabla \left(\rho^{\frac14}\right) &\in  L_{\rm loc}^4(0,\infty,[L^4]^d), 
		\end{array}
		\end{equation*}
		with the compatibility conditions
		\begin{equation*}
		\sqrt{q} \geq 0 \quad\mbox{a.e. on } (0,\infty)\times \mathbb{R}^d, \qquad \Lambda = 0 \quad\mbox{a.e. on } \left\{ \sqrt{q} = 0\right\}.
		\end{equation*}
		The velocity $u$ is then defined by 
		\begin{equation*}
			u = \frac{\Lambda}{\sqrt{q}}{\bf 1}_{\{\sqrt{q} >0\}}.
		\end{equation*}
		
		\item The following equations are satisfied in $\mathcal{D}'((0,\infty)\times \mathbb{R}^d)$, (see \eqref{eq:CNSK_system_m_regularized_intro}),
		\begin{subequations}\label{eq:CNSK_system_m_drag-term_WeakSolution}
			\begin{eqnarray}
				\partial_t \sqrt{q} + {\dv}_m (\sqrt{q}u) & = &\frac12 \tr(\mathbf{T}_{\rm NS}) -\frac1{2\sigma^2} \sqrt{q}u \cdot x, \label{eq:CNSK_system_m_drag-term_WeakSolution-density} \\
				\partial_t(\sqrt{q}\sqrt{q}u) + \dvm(\sqrt{q}u \otimes \sqrt{q}u)  & = & \dvm(2\nu \sqrt{q}\mathbf{S}_{\rm NS} + 2\kappa^2\sqrt{q}\mathbf{S}_{\rm K}) - \lambda\sigma^2\nabla\left(|\sqrt{q}|^2\right) \label{eq:CNSK_system_m_drag-term_WeakSolution-velocity} \\
				& & -r_0u - r_1q|u|^2u - \frac{r_4}{\sigma^4}q|x|^2x \notag
			\end{eqnarray}
		\end{subequations}
		with
		\begin{equation*}
		\mathbf{S}_{\rm NS} = \frac{\mathbf{T}_{\rm NS} + \mathbf{T}_{\rm NS}^\top}{2}
		\end{equation*}
		and the compatibility conditions
		\begin{eqnarray*}
			\sqrt{q}\mathbf{T}_{\rm NS} & = & \nabla(\sqrt{q}\sqrt{q}u) - 2\sqrt{q}u \otimes \nabla(\sqrt{q}), \\
			\sqrt{q}\mathbf{S}_{\rm K} & = & \sqrt{q}D^2(\sqrt{q})-  \nabla(\sqrt{q}) \otimes \nabla(\sqrt{q}) = \frac{1}{\rho_m}\left[\sqrt{\rho}D^2(\sqrt{\rho}) - \nabla(\sqrt{\rho}) \otimes \nabla (\sqrt{\rho}) + \frac\rho{2\sigma^2}{\rm I}_d\right].
		\end{eqnarray*}

		\item For any $\psi \in \mathcal{D}(\mathbb{R}^d)$,
		\begin{eqnarray*}
			\underset{t\to 0^+}{\lim} \int_{\mathbb{R}^d} \sqrt{q}(t,\cdot) \psi\,{\rm d}\mu_m & = & \int_{\mathbb{R}^d} \sqrt{q^0}\psi \,{\rm d}\mu_m, \\
			\underset{t\to 0^+}{\lim} \int_{\mathbb{R}^d} \sqrt{q}(t,\cdot) (\sqrt{q}u)(t,\cdot)\psi(x)\,{\rm d}\mu_m & = & \int_{\mathbb{R}^d} \sqrt{q^0}(\sqrt{q^0}u^0)\psi \,{\rm d}\mu_m.
		\end{eqnarray*}
	\end{enumerate}
\end{Definition}

\begin{Remark}{\label{Rmq:differente_ecriture_equation_masse}}
Note that, following the lines of the proof of Lemma 2.2 in \cite{Carles_Carrapatoso_Hillairet_AHL2018}, if $(\sqrt{q},\sqrt{q}u)$ is a weak solution in the sense of Definition \ref{def:WeakSolution_drag_term_intro}, then $q$ satisfies also \eqref{eq:CNSK_system_m_regularized-density_intro}.

Indeed, from the regularities of $\sqrt{q}u$ (which belongs to $L^\infty(0,\infty;[L^2_{\mu_m}]^d)$) and $\sqrt{q}$ (which belongs to $L^\infty(0,\infty;H^1_{\mu_m})$), multiplying equation \eqref{eq:CNSK_system_m_drag-term_WeakSolution-density} by $\sqrt{q}$, we get
\begin{equation*}
\partial_t q +2\sqrt{q}\dvm(\sqrt{q}u) = \sqrt{q}\tr(\mathbf{T_{\rm NS}}) - \frac{1}{\sigma^2}\sqrt{q}\sqrt{q}u \cdot x.
\end{equation*}
We then obtain from the definition of $\mathbf{T_{\rm NS}}$ on one hand and from the identity $\dvm(qu) = \sqrt{q}\dvm(\sqrt{q}u) + \sqrt{q}u\cdot \nabla \sqrt{q}$ on the other hand, that 
\begin{equation*}
\sqrt{q}\tr(\mathbf{T_{\rm NS}}) = - \dvm(qu) + 2\sqrt{q} \dvm(\sqrt{q}u)+\sqrt{q}\sqrt{q}u \cdot \frac{x}{\sigma^2}
\end{equation*}
and thus, after simplification, it is not difficult to see that $q$ satisfies \eqref{eq:CNSK_system_m_regularized-density_intro}.
\end{Remark}

\begin{Remark}
	In the previous definition, the regularities in 1. are dependent on the coefficients of the drag forces in the sense that, if one coefficient vanishes then the corresponding regularity is dropped.
\end{Remark}

\begin{Remark}
	Thanks to the Logarithmic Sobolev inequality \eqref{eq:LogarithmicSobolevInequality}, the fact that $q\ln(q) \in L^\infty_{\rm loc}(0,\infty;L^1_{\mu_m})$ is actually a consequence of  $\nabla{\sqrt{q}} \in L^\infty_{\rm loc}(0,\infty;[L^2_{\mu_m}]^d)$. 
\end{Remark}

We are now ready to state the main result of this paper.
\begin{Theorem}\label{thm:main}
	Let $\nu >0$, $\kappa >0$, and let us consider non-negative drag force coefficients $r_0 \geq 0$, $r_1 \geq 0$ and $r_4 \geq 0$.	
	Let $q^0\ge0$, $u^0$ be such that $(\sqrt{q^0},\Lambda^0) \in L^2_{\mu_m} \times [L^2_{\mu_m}]^d$ (with $\Lambda^0 = \sqrt{q^0}u^0$) 
	satisfying furthermore $\mathcal{E}(q^0,u^0) < \infty$ and $\mathcal{E}_{\rm BD}(q^0,u^0) < \infty$ and the compatibility condition 
	\begin{equation*}
		\sqrt{q^0} \geq 0 \quad \mbox{a.e. on } \mathbb{R}^d, \qquad \Lambda^0 = 0 \quad \mbox{a.e. on } \{\sqrt{q^0} = 0\}
	\end{equation*}
	and 
	\begin{equation*}
	\int_{\mathbb{R}^d} q^0 \,{\rm d}\mu_m = 1.
	\end{equation*}	
	
	Then, there exists a global weak solution $(q,u)$ to \eqref{eq:CNSK_system_m_regularized_intro} in the sense of Definition \ref{def:WeakSolution_drag_term_intro} associated to the initial data $(\sqrt{q^0},\sqrt{q^0}u^0)$ 
	which satisfies 
	\begin{enumerate}
		\item[-] the conservation of mass, that is, for all $t \geq 0$,
		\begin{equation*}
		\int_{\mathbb{R}^d}    q(t) \;{\rm d}\mu_m = 1.
		\end{equation*}
		\item[-] the energy estimate, for a.e. $t \geq 0$,
		\begin{equation*}
		\mathcal{E}(t) + \int_0^t \mathcal{D}(t') \,{\rm d}t' \leq C_0\left(\mathcal{E}(0)\right),
		\end{equation*}
		where the constant $C_0$ depends on the initial regularized energy \eqref{def:energy_reg_intro} and the time $t$.
		\item[-] the BD entropy estimate, for a.e. $t \geq 0$,
		\begin{equation*}
		\mathcal{E}_{\rm BD}(t) + \int_0^t \mathcal{D}_{\rm BD}(t') \,{\rm d}t' \leq C_{\rm BD, 0}\left(\mathcal{E}(0), \mathcal{E}_{\rm BD}(0)\right),
		\end{equation*}
		where the constant $C_{\rm BD, 0}$ depends on the initial regularized energy \eqref{def:energy_reg_intro}, the initial regularized BD entropy \eqref{def:BD_reg-entropie_intro} and the time $t$.
	\end{enumerate}
\end{Theorem}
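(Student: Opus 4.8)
The plan is to obtain the weak solution as a limit of smooth solutions to a suitably regularized version of \eqref{eq:CNSK_system_m_regularized_intro}, following the scheme of \cite{Vasseur_Yu_SIMA2016, Lacroix-Violet_Vasseur_JMPA18, Carles_Carrapatoso_Hillairet_AIF2022} but carried out entirely with respect to the Gaussian reference measure $\mu_m$; throughout, the drag coefficients $r_0,r_1,r_4\geq 0$ are kept fixed. \textbf{Step 1 (Approximate system).} First I would introduce a non-degenerate approximation of \eqref{eq:CNSK_system_m_regularized_intro} depending on small parameters, say a parameter $\eps>0$ adding an artificial viscosity $\eps\Delta_m u$ together with a regularizing higher-order term, and a cold-pressure-type term $\delta q^{-k}$ ($\delta>0$, $k$ large) designed to force the strict positivity of the density. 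On this non-degenerate system the viscosity no longer vanishes where $q=0$, so existence of a smooth approximate solution $(q_n,u_n)$ on $[0,T]$ can be obtained by a Faedo--Galerkin discretization of the velocity equation coupled to a resolution of the (now uniformly parabolic) continuity equation, closed by a Schauder fixed-point argument, as in the cited references. The positivity provided by the cold pressure makes $\ln q_n$, $\nabla\ln q_n$ and the twisted operators well-defined along the approximation.

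\textbf{Step 2 (Uniform a priori estimates).} The core of the proof is to make the formal identities \eqref{eq:energy_identity_reg_intro} and \eqref{eq:BD-entropy_equality_intro} rigorous at the level of the approximate solutions and to close them by a Gronwall argument with constants independent of the regularization parameters. The energy identity directly controls $\Lambda_n=\sqrt{q_n}u_n$ in $L^\infty_t[L^2_{\mu_m}]^d$, $\sqrt{q_n}\,\nabla\ln q_n$ in $L^\infty_t[L^2_{\mu_m}]^d$ (hence $\nabla\sqrt{q_n}$), $\sqrt{q_n}D(u_n)$ in $L^2_t[L^2_{\mu_m}]^{d\times d}$, the drag quantities, and the moment $I_4(q_n)$. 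For the BD identity I would bound the reaction term $\mathcal{R}_{\rm BD}$ of \eqref{def:BD_reg-RHS_intro} by the dissipation \eqref{def:BD_reg-dissipation_intro} up to lower-order pieces: the $r_1$-term is absorbed by Young's inequality into $\int q|u|^4$ and $\int q|\nabla\ln q|^2$, the last term likewise into $\int|u|^2$ and $\int q|\nabla\ln q|^2$, and the $I_2(q_n)$ contribution is controlled by $I_4(q_n)$ (Cauchy--Schwarz, since $\int q\,{\rm d}\mu_m=1$) and hence by the entropy. This yields, uniformly in $n$, the additional bounds on $\sqrt{q_n}\,D^2(\ln q_n)$ in $L^2_t[L^2_{\mu_m}]^{d\times d}$ and on $\sqrt{q_n}\,\nabla\ln q_n$ in $L^2_t[L^2_{\mu_m}]^d$ recorded in Definition \ref{def:WeakSolution_drag_term_intro}. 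Here the Gaussian setting is essential: the logarithmic Sobolev inequality \eqref{eq:LogarithmicSobolevInequality} converts the $L^2_{\mu_m}$-control of $\nabla\sqrt{q_n}$ into control of $q_n\ln q_n$ in $L^1_{\mu_m}$, so the relative energy is coercive, while the Poincar\'e inequality for $\mu_m$ and the moment bounds $I_2,I_4$ replace the compact-domain estimates used on the torus and ensure tightness at spatial infinity.

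\textbf{Step 3 (Compactness and passage to the limit).} From the uniform bounds I would extract strong convergence of $\sqrt{q_n}$ in $L^2_{\rm loc}$: the spatial regularity $\nabla\sqrt{q_n}\in L^\infty_t L^2_{\mu_m}$, $D^2\sqrt{q_n}\in L^2_t L^2_{\mu_m}$ together with a bound on $\partial_t\sqrt{q_n}$ read off \eqref{eq:CNSK_system_m_drag-term_WeakSolution-density} give compactness via an Aubin--Lions argument in the weighted spaces. For the momentum I would establish strong $L^2_{\mu_m}$-convergence of $\Lambda_n$: when $r_1>0$ the dissipative term $\int q|u|^4$ supplies the extra integrability directly, and when $r_1=0$ one replaces it by a Mellet--Vasseur-type estimate adapted to $\mu_m$. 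Strong convergence of $\sqrt{q_n}$ and $\Lambda_n$ lets me pass to the limit in the convective flux $\sqrt{q_n}u_n\otimes\sqrt{q_n}u_n$ and in the drag terms, while the linear tensors converge weakly; their limits are the fields $\mathbf{T}_{\rm NS}$ and $\mathbf{S}_{\rm K}$, and the compatibility relations of Definition \ref{def:WeakSolution_drag_term_intro} are obtained by identifying weak limits of products such as $\nabla(q_n u_n)$ and $\sqrt{q_n}D^2(\sqrt{q_n})$ against the strong limits. The regularizing terms ($\eps$, $\delta$) vanish in the limit thanks to the uniform estimates, and mass conservation, the energy inequality and the BD-entropy inequality follow by weak lower semicontinuity.

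\textbf{Main obstacle.} The delicate point, as usual for degenerate-viscosity Korteweg systems, is the compactness of the momentum and the identification of the nonlinear limits in the third-order (capillarity) term; here it is compounded by the whole-space, Gaussian-weighted framework. The degeneracy of the viscosity prevents any direct bound on $\nabla u_n$, so the strong convergence of $\Lambda_n$ must be squeezed out of the energy and BD estimates alone, and one must simultaneously control the behavior at $|x|\to\infty$ --- this is precisely where the moment functionals $I_2(q)$, $I_4(q)$ of \eqref{def:I2}--\eqref{def:I4} and the drag term $-\frac{r_4}{\sigma^4}q|x|^2x$ (when $r_4>0$), or the confining Gaussian weight itself, are indispensable to guarantee tightness and to pass to the limit in the products weighted by $\rho_m$.
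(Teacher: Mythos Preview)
Your outline follows the Vasseur--Yu / Carles--Carrapatoso--Hillairet template quite closely, but the paper departs from that template in two places, and in at least one of them your proposed route is genuinely problematic in the Gaussian-weighted setting.

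\textbf{Regularization.} You propose the standard package: artificial viscosity $\eps\Delta_m u$, a higher-order dispersive term, and a cold pressure $\delta q^{-k}$. The paper deliberately \emph{does not} do this. Its only regularization is the parabolic term $\delta_1\Delta_m q$ in the continuity equation together with the companion term $\delta_1\nabla u\,\nabla q$ in the momentum equation (system \eqref{eq:CNSK_system_m_regularized}). The reason is stated explicitly: the heavier regularization of the cited works is closed using Sobolev embeddings (to control, e.g., the cold-pressure term or the bilaplacian in terms of energy-level quantities), and such embeddings are \emph{not available} in $L^p_{\mu_m}$. So your Step~1 as written is not known to go through here; the paper replaces it by a more delicate analysis with the single $\delta_1$-term, Faedo--Galerkin on Hermite functions, and the positivity bounds \eqref{eq:positivity_qN} coming from the Fokker--Planck structure rather than from a cold pressure.

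\textbf{Passing to the limit.} The paper does not attempt a direct limit for fixed (possibly zero) drag coefficients. It proceeds in two stages: first Theorem~\ref{prop:GWS_CNSK_system_m_regularized} gives solutions for strictly positive $r_0,r_1,r_4$ and regular initial data ($c^0\le q^0\le 1/c^0$); then Section~\ref{sec:3} introduces \emph{renormalized weak solutions} (Definition~\ref{definition:RenormalizedWeakSolution}) and proves (i) weak $\Rightarrow$ renormalized when the drags are positive, (ii) renormalized $\Rightarrow$ weak, and (iii) stability of renormalized solutions as $r_{0,n},r_{1,n},r_{4,n}\to 0$ and the initial data are de-regularized. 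The key compactness ingredient is Lemma~\ref{lemma:LV-V}, which gives strong convergence of $\sqrt{q_n}H(u_n)$ for bounded $H$ and is what allows identifying the limits in the compatibility conditions for $\mathbf{T}_{\rm NS}$. Your Step~3 replaces this machinery by a ``Mellet--Vasseur-type estimate adapted to $\mu_m$'' when $r_1=0$; no such estimate is developed here, and it is not clear how to obtain it in the weighted space. This is the point where your outline is thinnest, and it is exactly the difficulty the renormalized framework is designed to bypass.

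In short: the overall architecture (approximate, estimate, pass to the limit) is right, but both your choice of regularization and your mechanism for compactness in the absence of drag are different from the paper's, and the paper argues that the more standard choices you propose run into obstructions specific to the Gaussian-measure setting.
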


	\begin{Remark}
		We can also consider system \eqref{eq:CNSK_system_m_regularized_intro} with $\kappa = 0$, that is the compressible (isothermal) Navier-Stokes equations by taking the limit $\kappa \to 0$ in the previous definition and theorem, see Remark \ref{rem:limit_kappa_0} for more details.
	\end{Remark}

The first (and main) step in the proof of Theorem \ref{thm:main} is the existence of solutions to the QNS system \eqref{eq:CNSK_system_m_regularized_intro} in the case where the coefficients $r_0$, $r_1$, $r_4$ in front of the drag forces   do not vanish and the initial data are regular enough, in a sense to be made precise below. As in previous works \cite{Carles_Carrapatoso_Hillairet_AIF2022, Feireisl_Book04,Vasseur_Yu_SIMA2016}, our strategy to establish this first step consists in introducing artificial smoothing terms into the system, depending on some parameter $\delta_1>0$, prove the existence of solutions to the new, regularized system, using in particular the Faedo-Galerkin method, and then show that the solutions survive in the limit $\delta_1\to0$. Here we emphasize that the regularization we choose is substantially simpler than that used in the previously cited references, namely we only add a smoothing diffusion term into the continuity equation (and its counterpart, for energy estimates, in the momentum equation). In \cite{Carles_Carrapatoso_Hillairet_AIF2022, Vasseur_Yu_SIMA2016}, several further terms are added, that are subsequently dealt with using, in particular, various Sobolev embeddings. In our context, where the reference measure is the Gaussian measure $\mu_m$, Sobolev embeddings are not available, so that we must rely on a simpler regularization procedure. We will show that the latter is indeed sufficient by refining some of the arguments of \cite{Carles_Carrapatoso_Hillairet_AIF2022, Feireisl_Book04,Jungel_SIMA10, Vasseur_Yu_SIMA2016}. The first step of the proof will be the content of Section \ref{sec:2}.

In a second step, in Section \ref{sec:3}, we will conclude the proof of Theorem \ref{thm:main} by showing that the solutions constructed in Section \ref{sec:2} survive for less regular initial data, and in the limit where some of the drag coefficients $r_0$, $r_1$, $r_4$ vanish. As in \cite[Section 4]{Carles_Carrapatoso_Hillairet_AIF2022}, the main ingredient here will be the notion of renormalized weak solutions introduced in \cite{Lacroix-Violet_Vasseur_JMPA18}.

As a direct consequence of Theorem \ref{thm:main} (and the strategy of its proof), we are able to recover Theorem 1.3 in~\cite{Carles_Carrapatoso_Hillairet_AIF2022} where, as mentioned above, the authors consider initially the system \eqref{eq:CNSK_system} without potential ($\lambda =0$) and then use a rescaling to consider a new system in new unknowns. In order to keep this introduction relatively short, we postpone to Section \ref{sect:CCH} a more detailed discussion, as well as a sketch of the proof allowing us to recover the result of \cite{Carles_Carrapatoso_Hillairet_AIF2022}, as we need to introduce many notations (the rescaling $\tau$, the new unknowns $(R,U)$ and the new system) to be able to write a precise statement.

We conclude with two appendices. Appendix \ref{appendixA} contains the details of some
calculations and estimates in the setting of the Gaussian reference measure ${\rm d}\mu_m$, that we use in several places in our proof. Finally, we gather in Appendix \ref{AppendixB} several functional inequalities in the setting of Gaussian measure spaces, including the logarithmic Sobolev inequality and the Poincar\'e inequalities. These inequalities play a crucial role in our proof, in that they allow us to overcome the fact that we cannot rely on the Sobolev embeddings usually employed in the euclidian setting.

\vspace{1em}
In the remainder of this section, we introduce some notations that will be used in the sequel.

\subsection{Notations}

We use the notation $\mathbb{N}$ for the set of all nonnegative integers and denote by $\mathbb{N}^* = \mathbb{N} \setminus\{0\}$ the set of integers greater than one. 

We define $Q_T = (0,T) \times \mathbb{R}^d$, for any $0 < T \leq \infty$ ($Q_\infty = (0,\infty) \times \mathbb{R}^d$).

Recall that $\mu_m$ is the measure on $\mathbb{R}^d$ with density $\rho_m$ given in \eqref{def:rho_m_u_m}.

Recall also that the space of infinitely differentiable functions with compact support in $\mathbb{R}^d$ is denoted $\mathcal{D}(\mathbb{R}^d)$, and likewise for $\mathcal{D}((0,\infty)\times \mathbb{R}^d)$. The dual spaces are denoted by $\mathcal{D}'(\mathbb{R}^d)$ and $\mathcal{D}'((0,\infty)\times \mathbb{R}^d)$, respectively.

For any normed vector spaces $X$, $Y$ (denoting $\|\cdot\|_X$, respectively $\|\cdot\|_Y$ their norm), we define $\mathcal{L}(X,Y)$ as the normed vector space of continuous linear mapping from $X$ to $Y$, endowed with the operator $\|\cdot\|_{\mathcal{L}(X,Y)}$ defined, for any $T \in \mathcal{L}(X,Y)$ by (and by the other equivalent definitions)
\begin{equation*}
\|T\|_{\mathcal{L}(X,Y)} = \sup_{x \in X ; \|x\|_X \leq 1} \|T(x)\|_Y.
\end{equation*}
In particular, we denote by $X' = \mathcal{L}(X,\mathbb{R})$ the dual space of $X$.

For $\Omega$ a measurable subset of $\mathbb{R}^m$ ($m \geq 1$), we define $\mathcal{M}(\Omega;\mathbb{R})$ the space of finite signed measures on $\Omega$. We consider the total variation as the norm on the space $\mathcal{M}(\Omega;\mathbb{R})$: for $\mathfrak{m} \in \mathcal{M}(\Omega;\mathbb{R})$, we define $\|\mathfrak{m}\|_{\mathcal{M}(\Omega;\mathbb{R})} = \displaystyle \sup\left\{ \int_{\Omega} f \,{\rm d}\mathfrak{m} \; ; \; f \in L^\infty(\Omega;\mathbb{R}) \mbox{ and } \|f\|_{L^\infty(\Omega;\mathbb{R})} \leq 1 \right\}$. In the same way, we define $\mathcal{M}(\Omega;\mathbb{R}^d)$ the space of vector-valued (in $\mathbb{R}^d$) measures on $\Omega$ endowed with $l^\infty$-product norm.

\paragraph{Acknowledgements.} We are grateful to K. Carrapatoso and A. Vasseur for useful discussions.

\section{Global weak solutions to the QNS system with drag forces and regular initial data}\label{sec:2}

In this section, we prove Theorem \ref{thm:main} in the case where the coefficients $r_0$, $r_1$, $r_4$ in front of the drag forces in the QNS system \eqref{eq:CNSK_system_m_regularized_intro} do not vanish and the initial data are regular in a sense to be made precise below.

First, in Subsection \ref{subsec:2.1}, we introduce a regularized version of the QNS system \eqref{eq:CNSK_system_m_regularized_intro} depending on a parameter $\delta_1>0$ by adding a diffusion term $\delta_1\Delta_mq$ in the right-hand side of \eqref{eq:CNSK_system_m_regularized-density_intro} and a corresponding term $\delta_1\nabla u \nabla q$ in the right-hand side of \eqref{eq:CNSK_system_m_regularized-velocity_intro}. The precise formulation of the regularized QNS system with drag forces is given in \eqref{eq:CNSK_system_m_regularized}.

Technically, the main result of this section is stated in Theorem \ref{prop:GWS_CNSK_system_m_regularized}, which provides the existence of global solutions for the regularized QNS system \eqref{eq:CNSK_system_m_regularized} for $\delta_1\ge0$ and regular enough initial data (see \eqref{eq:_regularized_initial_data}--\eqref{eq:initial_(q0,u0)} for the precise regularity conditions that we require). Subsections \ref{subsec:Faedo-Galerkin} and \ref{subsection:A priori Estimate} are devoted to the proof of Theorem \ref{prop:GWS_CNSK_system_m_regularized} in the case where $\delta_1>0$. We will closely follow the strategy presented in \cite{Feireisl_Book04}, emphasizing the novelties in the argument due to the setting considered in this paper.

In Subsection \ref{subsec:Faedo-Galerkin}, we introduce a suitable Faedo-Galerkin approximation  and construct global solutions $(q_N,u_N)$ of the regularized QNS system with drag forces \eqref{eq:CNSK_system_m_regularized} within this approximation. Here it should be noticed that, to simplify the notations, we only underline the dependence of the solutions $(q_N,u_N)$ on the Faedo-Galerkin parameter $N$, even if they of course also depend on the drag parameters $r_0$, $r_1$, $r_4$ and the regularizing parameter $\delta_1$. Next, in Subsection \ref{subsection:A priori Estimate}, we show that the Faedo-Galerkin solutions $(q_N,u_N)$ converge in the limit $N\to\infty$ to weak solutions $(q_{\delta_1},u_{\delta_1})$ to \eqref{eq:CNSK_system_m_regularized} with $0<\delta_1\le1$. Here again, to simplify the notations, we only underline the dependence in $\delta_1$ on the solutions $(q_{\delta_1},u_{\delta_1})$ obtained in the limit $N\to\infty$, even if they still depend on $r_0$, $r_1$, $r_4$. Finally, in Subsection \ref{subsect:GWS_drag_term}, we derive a priori estimates for $(q_{\delta_1},u_{\delta_1})$ from the energy and BD entropy estimates, and use them to show that the existence of global solutions persists in the limit $\delta_1\to0$ which completes the proof of Theorem \ref{prop:GWS_CNSK_system_m_regularized}. This leads to a proof of our main result, Theorem \ref{thm:main}, in the case where the drag forces coefficients $r_0$, $r_1$ and $r_4$ do not vanish and the initial data satisfy the regularity conditions \eqref{eq:_regularized_initial_data}--\eqref{eq:initial_(q0,u0)}.

\subsection{The regularized QNS system with drag forces}\label{subsec:2.1}

According to the first step of the strategy outlined in the introduction to this section, we add a diffusion term $\delta_1\Delta_mq$ in the right-hand side of the continuity equation, and a corresponding term (for energy estimates) $\delta_1\nabla u \nabla q$ in the left-hand side of the momentum equation. This leads to considering the following regularized QNS system (with $r_0 >0$, $r_1 >0$, $r_4 >0$ and $\delta_1 \in [0,1]$):
\begin{subequations}
	\label{eq:CNSK_system_m_regularized}
	\begin{eqnarray}
	\partial_t q + \dvm(qu) &=& \delta_1\Delta_mq,      \label{eq:CNSK_system_m_regularized-density} \\
	\partial_t (qu) + \dvm(qu\otimes u) +\delta_1 \nabla u \nabla q &=&  \displaystyle \dvm\left(2\nu q D(u)\right) + \dvm\left(\kappa^2qD^2(\ln(q))\right) -\lambda \sigma^2 \nabla q \label{eq:CNSK_system_m_regularized-velocity} \\
	& & \qquad  - r_0 u -r_1q |u|^2u- \frac{r_4}{\sigma^4} q |x|^2x.  \notag
	\end{eqnarray}
\end{subequations}
As mentioned above, this regularized QNS system should be compared to the corresponding ones in \cite{Carles_Carrapatoso_Hillairet_AIF2022, Feireisl_Book04,Vasseur_Yu_SIMA2016}, where various further terms are added in the right-hand side of the momentum equation \eqref{eq:CNSK_system_velocity}, such as $\nabla(\rho^{-s})$, $\Delta^2 u$, $\rho \nabla \Delta^{2m+1}\rho$, where $s \in (0,\infty)$, $m \in \mathbb{N}$ are large enough parameters.

The formal energy identity \eqref{eq:energy_identity_reg_intro}--\eqref{def:dissipation_reg_intro} now becomes 
\begin{equation}\label{eq:energy_identity_reg}
\frac{{\rm d}}{{\rm d}t} \mathcal{E}_{\rm reg}(t) + \mathcal{D}_{\rm reg}(t) = \mathcal{R}_{\rm reg}(t),
\end{equation}
where the regularized energy $\mathcal{E}_{\rm reg}$ is defined by
\begin{equation}\label{def:energy_reg}
\mathcal{E}_{\rm reg} = \mathcal{E}, 
\end{equation}
(see \eqref{def:energy_reg_intro}), the regularized dissipation $\mathcal{D}_{\rm reg}$ is
\begin{equation}\label{def:dissipation_reg}
\begin{array}{rcl} 
\mathcal{D}_{\rm reg} & = & \displaystyle 2\nu \int_{\mathbb{R}^d} q|D(u)|^2 \;{\rm d}\mu_m + \delta_1\lambda \sigma^2 \int_{\mathbb{R}^d} q |\nabla \ln(q)|^2  \;{\rm d}\mu_m  + \kappa^2\delta_1 \int_{\mathbb{R}^d} q |D^2(\ln(q))|^2  \;{\rm d}\mu_m \\
& & \displaystyle +r_0 \int_{\mathbb{R}^d} |u|^2 \;{\rm d}\mu_m+ r_1\int_{\mathbb{R}^d} q|u|^4 \;{\rm d}\mu_m + \frac{r_4\delta_1}{4\sigma^2}I_4(q),
\end{array} 
\end{equation}
and the right-hand side $\mathcal{R}_{\rm reg}$ in \eqref{eq:energy_identity_reg} is given by
\begin{equation*}
\mathcal{R}_{\rm reg} = \frac{r_4\delta_1(d+2)}{\sigma^2}I_2(q).
\end{equation*}
We recall that the second and fourth order momentum $I_2(q)$, $I_4(q)$ have been defined in \eqref{def:I2} and \eqref{def:I4}, respectively.
Here it should be noticed that the regularizing terms $\delta_1\Delta_mq$, $\delta_1\nabla u \nabla q$ induce the non-negative term $\mathcal{R}_{\rm reg}$. The latter can however be estimated, thanks to the Young inequality, as
\begin{equation}\label{eq:absorb_Rreg}
\mathcal{R}_{\rm reg} \leq \frac{r_4\delta_1}{8\sigma^2}I_4(q) +\frac{2r_4\delta_1(d+2)^2}{\sigma^2},
\end{equation}
(recall that $m_F=\int_{\mathbb{R}^d}q\mathrm{d}\mu_m=1$ throughout the paper). The first term on the right-hand side can be `absorbed' by the dissipation $\mathcal{D}_{\rm reg}$ and we therefore see that the regularized energy should grow at most linearly in time.

We similarly introduce the regularized BD entropy and dissipation by setting 
\begin{equation}\label{def:BD_reg-energy} 
\mathcal{E}_{\rm BD, reg}=\mathcal{E}_{\rm BD}
\end{equation}
(see \eqref{def:BD_reg-entropie_intro}) and (compare with \eqref{def:BD_reg-dissipation_intro}),
\begin{align}
\mathcal{D}_{\rm BD, reg} & =  2\nu \int_{\mathbb{R}^d} q|A(u)|^2 \;{\rm d}\mu_m + (\delta_1+2\nu)\lambda \sigma^2 \int_{\mathbb{R}^d} q|\nabla \ln(q)|^2  \;{\rm d}\mu_m  \notag \\
&\quad + [\kappa^2(\delta_1+2\nu)+(2\nu)^2\delta_1] \int_{\mathbb{R}^d} q |D^2(\ln(q))|^2  \;{\rm d}\mu_m  +r_0 \int_{\mathbb{R}^d} |u|^2 \;{\rm d}\mu_m  \notag \\
& \quad  + 2\nu r_0 \delta_1 \int_{\mathbb{R}^d} |\nabla \ln(q)|^2 \;{\rm d}\mu_m + r_1\int_{\mathbb{R}^d} q|u|^4 {\rm d}\mu_m + \frac{r_4(\delta_1+2\nu)}{\sigma^2}I_4(q). \label{def:BD_reg-dissipation} 
\end{align}
We will see that the following formal identity is satisfied in a suitable sense:
\begin{equation}\label{eq:BD-entropy_equality-regularized}
\frac{{\rm d}}{{\rm d}t} \mathcal{E}_{\rm BD, reg}(t) +\mathcal{D}_{\rm BD, reg} (t) = \mathcal{R}_{\rm BD, reg}(t),
\end{equation}
where the remainder term, $\mathcal{R}_{\rm BD, reg}$, that will be computed below (see \eqref{def:BD_reg-RHS}), satisfies an inequality similar to \eqref{eq:absorb_Rreg} for $\mathcal{R}_{\rm reg}$.	

Recall the notation $Q_\infty = (0,\infty) \times \mathbb{R}^d$. Our definition of global weak solutions to the regularized system \eqref{eq:CNSK_system_m_regularized} is as follows.

\begin{Definition}[Global weak solutions to the regularized QNS system with drag forces] \label{def:WeakSolutionRegularizedEquations}
	Let $r_0 >0$, $r_1 >0$, $r_4 >0$ and $\delta_1 \ge0$. Let $(q^0,u^0) \in L^1_{\mu_m} \times [L^2]^d$. A couple $(q,u)$ is a global weak solution to \eqref{eq:CNSK_system_m_regularized} associated to the initial data $(q^0,u^0)$ if
	\begin{enumerate}
		\item The couple $(q,u)$ satisfies 
		\begin{align*}
		q, \, q \ln(q), \, r_0(q-\ln(q)) \ & \  \in L_{\rm loc}^\infty(0,\infty;L^1_{\mu_m}), \\
		\sqrt{q}u, \,  \nabla\sqrt{q} \ & \ \in  L_{\rm loc}^\infty(0,\infty;[L^2_{\mu_m}]^d), \\
		\sqrt{r_0} u, \,  \sqrt{r_0\delta_1}\nabla\ln(q) \ & \ \in  L_{\rm loc}^2(0,\infty;[L^2_{\mu_m}]^d), \\
		\sqrt{q} D(u), \, \sqrt{q} A(u) , \,    \sqrt{q} D^2(\ln(q)) , \, \sqrt{r_4} D^2(\sqrt{q}) \ & \ \in L_{\rm loc}^2(0,\infty;[L^2_{\mu_m}]^{d\times d}) , \\
		r_4^{\frac14}  q^{\frac14}x \ & \ \in L_{\rm loc}^\infty(0,\infty;[L^4_{\mu_m}]^d) , \\
		r_1^{\frac14}q^{\frac14}u, \, r_4^{\frac14} \nabla{q^{\frac14}} \ & \ \in L_{\rm loc}^4(0,\infty;[L^4_{\mu_m}]^d).
		\end{align*}
		\item For every $\varphi\in \mathcal{D}([0,\infty)\times \mathbb{R}^d)$,
		\begin{align}
		\int_{\mathbb{R}^d}q^0\varphi(0)\,{\rm d}\mu_m+\int_{Q_\infty}q\partial_t\varphi\,{\rm d}\mu_m{\rm d}t+\int_{Q_\infty}qu\cdot\nabla\varphi\,{\rm d}\mu_m{\rm d}t=\delta_1\int_{Q_\infty}(\Delta_mq)\varphi\,{\rm d}\mu_m{\rm d}t.
		\end{align}
		\item For every $\Phi \in [\mathcal{D}([0,\infty)\times \mathbb{R}^d)]^d$, 
		\begin{align}
		&  \int_{\mathbb{R}^d} q^0u^0 \cdot \Phi(0) {\rm d}\mu_m +  \int_{Q_\infty} q u \cdot \partial_t\Phi {\rm d}\mu_m{\rm d}t  + \int_{Q_\infty} q\nabla \Phi u \cdot u {\rm d}\mu_m{\rm d}t -\delta_1 \int_{Q_\infty} \nabla u \nabla q \cdot \Phi {\rm d}\mu_m{\rm d}t \notag \\
		&= 2\nu \int_{Q_\infty} q D(u) : D(\Phi) {\rm d}\mu_m{\rm d}t + 2\kappa^2 \int_{Q_\infty} \left[\sqrt{q} D^2(\sqrt{q})-\nabla(\sqrt{q}) \otimes \nabla(\sqrt{q})\right] : D(\Phi) {\rm d}\mu_m {\rm d}t \notag\\
		&\quad  +\lambda \sigma^2 \int_{Q_\infty} \nabla q  \cdot \Phi {\rm d}\mu_m{\rm d}t + r_0 \int_{Q_\infty} u \cdot \Phi  {\rm d}\mu_m{\rm d}t+ r_1\int_{Q_\infty} q|u|^2u \cdot \Phi  {\rm d}\mu_m{\rm d}t \notag\\
		& \quad +\frac{r_4}{\sigma^4}\int_{Q_\infty} q|x|^2x \cdot \Phi  {\rm d}\mu_m{\rm d}t. \label{eq:CNSK_system_m_regularized-velocity-WS-Definition}
		\end{align}
	\end{enumerate}
\end{Definition}

\begin{Remark}\label{rem:definitions2.1and1.2} 	
	Note that, when $\delta_1 = 0$, the terms $\sqrt{q} D(u), \, \sqrt{q} A(u)$ correspond to the symetric and skew-adjoint parts respectively of $\mathbf{T}_{\rm NS} \in L^2_{\rm loc}(0,\infty;[L^2_{\mu_m}]^{d\times d})$ while $\sqrt{q} D^2(\ln(q))$ has to be understood as $\mathbf{S}_{\rm K} \in L^2_{\rm loc}(0,\infty;[L^2_{\mu_m}]^{d\times d})$, both satisfying the compatibility conditions 
		\begin{eqnarray*}
			\sqrt{q}\mathbf{T}_{\rm NS} & = & \nabla(\sqrt{q}\sqrt{q}u) - 2\sqrt{q}u \otimes \nabla(\sqrt{q}), \\
			\sqrt{q}\mathbf{S}_{\rm K} & = & \sqrt{q}D^2(\sqrt{q})-  \nabla(\sqrt{q}) \otimes \nabla(\sqrt{q}) = \frac{1}{\rho_m}\left[\sqrt{\rho}D^2(\sqrt{\rho}) - \nabla(\sqrt{\rho}) \otimes \nabla (\sqrt{\rho}) + \frac\rho{2\sigma^2}{\rm I}_d\right].
		\end{eqnarray*}
	In that case, Definition \ref{def:WeakSolutionRegularizedEquations} extends Definition \ref{def:WeakSolution_drag_term_intro}. Indeed, when $\delta_1 = 0$, solutions in the sense of Defintion \ref{def:WeakSolutionRegularizedEquations} satisfy the same regularities as in the first point of Definition \ref{def:WeakSolution_drag_term_intro}. Moreover, points 2 and 3 of Definition \ref{def:WeakSolution_drag_term_intro} follow from points 2 and 3 of Definition \ref{def:WeakSolutionRegularizedEquations} (thanks to Remark \ref{Rmq:differente_ecriture_equation_masse} for equation \eqref{eq:CNSK_system_m_drag-term_WeakSolution-density}).

\end{Remark}

The main result of this section, Theorem \ref{prop:GWS_CNSK_system_m_regularized}, establishes the existence of global weak solutions to the regularized QNS system \eqref{eq:CNSK_system_m_regularized}. We consider here an initial data $(q^0,u^0)$ satisfying, for some fixed constant $c^0 >0$ 
\begin{equation}\label{eq:_regularized_initial_data}
q^0 \in H^1_{\mu_m}, \quad c^0 \le q^0 \le \frac{1}{c^0} \mbox{ on } \mathbb{R}^d, \quad \int_{\mathbb{R}^d} q^0 {\rm d}\mu_m = 1, \quad  I_4(q^0 ) = \int_{\mathbb{R}^d} \left|\frac{x}{\sigma}\right|^4 q^0 {\rm d}\mu_m < \infty,
\end{equation}
and
\begin{equation}\label{eq:initial_(q0,u0)}
u^0 \in [L^2]^d, \qquad \rho_m q^0 |u^0|^2 \in L^1, \qquad \rho_mq^0u^0 \in [L^2]^d .
\end{equation}
Moreover, to simplify some estimates in the proof, we suppose that all the regularizing parameters $r_0$, $r_1$, $r_4$ and $\delta_1$ are less than $1$.

\begin{Theorem}[Global weak solutions to the regularized QNS system with drag forces]\label{prop:GWS_CNSK_system_m_regularized}
	Let $0<r_0, r_1, r_4 \le1$ and $0\le\delta_1\le1$. Let $(q^0,u^0) \in L^1_{\mu_m} \times [L^2]^d$ satisfying \eqref{eq:_regularized_initial_data}--\eqref{eq:initial_(q0,u0)}. There exists a global weak solution $(q,u)$ to \eqref{eq:CNSK_system_m_regularized} (in the sense of Definition \ref{def:WeakSolutionRegularizedEquations}), associated to the initial data $(q^0,u^0)$, which satisfies the conservation of mass, for all $t \geq 0$,
	\begin{equation}\label{eq:conservation_mass_thm_regularized}
	\int_{\mathbb{R}^d}    q(t) \;{\rm d}\mu_m = 1
	\end{equation}
	the energy estimate, for a.e. $t \geq 0$,
	\begin{equation}\label{eq:energy_estimates_GWS_CNSK_system_m_regularized}
	\mathcal{E}_{\rm reg}(t) + \frac12 \int_0^t \mathcal{D}_{\rm reg}(t') \,{\rm d}t' \leq C_0\left(\mathcal{E}_{\rm reg}(0),t\right),
	\end{equation}
	where the `constant' $C_0\left(\mathcal{E}_{\rm reg}(0),t\right)$ depends on the initial regularized energy  and the time $t$, and the BD entropy estimate, for a.e. $t \geq 0$,
	\begin{equation}\label{eq:BD_estimates_GWS_CNSK_system_m_regularized}
	\mathcal{E}_{\rm BD,reg}(t) + \frac12 \int_0^t \mathcal{D}_{\rm BD,reg}(t') \,{\rm d}t' \leq C_{\rm BD,0}\left(\mathcal{E}_{\rm reg}(0),\mathcal{E}_{\rm BD,reg}(0),t\right),
	\end{equation}
	where the `constant' $C_{\rm BD,0}\left(\mathcal{E}_{\rm reg}(0),\mathcal{E}_{\rm BD,reg}(0),t\right)$ depends on the initial regularized energy, the initial regularized BD entropy, and the time $t$.
\end{Theorem}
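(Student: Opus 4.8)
The plan is to prove the theorem first for the genuinely parabolic regularization $\delta_1>0$, where the added diffusion $\delta_1\Delta_m q$ in \eqref{eq:CNSK_system_m_regularized-density} makes the continuity equation nondegenerate, and then to recover the degenerate case $\delta_1=0$ by a second limiting procedure. For fixed $\delta_1>0$ I would construct approximate solutions by a Faedo-Galerkin scheme adapted to the Gaussian setting. Since the ambient space is $L^2_{\mu_m}$, the natural choice is the eigenbasis of the Ornstein-Uhlenbeck (twisted Laplacian) operator $\Delta_m$ associated with $\mu_m$, i.e.\ the Hermite polynomials, which are orthonormal in $L^2_{\mu_m}$; their finite spans $\XN$ serve as the Galerkin spaces. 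The velocity $u_N\in\XN$ solves the projection of the momentum equation \eqref{eq:CNSK_system_m_regularized-velocity}, while $q_N$ is obtained by solving the linear parabolic continuity equation \eqref{eq:CNSK_system_m_regularized-density} for the given (smooth, polynomial) $u_N$. Because $\delta_1>0$, the operator $\partial_t-\delta_1\Delta_m+\dvm(\,\cdot\,u_N)$ obeys a maximum/minimum principle, so the bounds $c^0\le q_N\le 1/c^0$ propagate up to $\delta_1$- and time-dependent constants and $q_N$ stays strictly positive; this is what keeps $u_N=(q_Nu_N)/q_N$ and $\ln(q_N)$ well defined. Short-time solvability then follows from a Schauder (or Banach) fixed-point argument for the map $u_N\mapsto q_N[u_N]\mapsto u_N$, and the energy identity \eqref{eq:energy_identity_reg}, which holds exactly at the Galerkin level since $u_N$ is an admissible test function, furnishes a priori bounds precluding blow-up and hence yielding global-in-time solutions $(q_N,u_N)$.

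The second step is the limit $N\to\infty$ at fixed $\delta_1>0$. Here I would make the energy identity \eqref{eq:energy_identity_reg} and the BD entropy identity \eqref{eq:BD-entropy_equality-regularized} rigorous at the Galerkin level, using \eqref{eq:absorb_Rreg} (and its BD analogue) to absorb the remainders into the dissipation and Gr\"onwall to obtain bounds uniform in $N$. These control $\sqrt{q_N}u_N$ and $\nabla\sqrt{q_N}$ in $L^\infty_tL^2_{\mu_m}$, the dissipation quantities $\sqrt{q_N}D(u_N)$, $\sqrt{q_N}D^2(\ln q_N)$, $\sqrt{r_0}\,u_N$, $r_1^{1/4}q_N^{1/4}u_N$ in their respective spaces, the moments $I_2(q_N)$, $I_4(q_N)$, and---crucially using $\delta_1>0$---the parabolic gain $\sqrt{\delta_1}\,\nabla\ln q_N\in L^2_tL^2_{\mu_m}$ together with $\sqrt{q_N}\in L^2_tH^1_{\mu_m}$. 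The passage to the limit rests on compactness: from \eqref{eq:CNSK_system_m_regularized-density} the time derivative $\partial_t q_N$ is controlled in a negative-order space, so an Aubin-Lions-Simon argument, based on the \emph{compact} embedding $H^1_{\mu_m}\hookrightarrow L^2_{\mu_m}$ afforded by the confining Gaussian weight (the operator $\Delta_m$ having compact resolvent), gives strong convergence of $q_N$, hence of $\sqrt{q_N}$, in $L^2_{\rm loc}(Q_\infty)$. Strong convergence of $\sqrt{q_N}$ together with weak convergence of $\sqrt{q_N}u_N$ suffices to identify all nonlinear products---the convective term, the Korteweg stress $\sqrt{q_N}D^2(\sqrt{q_N})-\nabla\sqrt{q_N}\otimes\nabla\sqrt{q_N}$, and the drag terms---and to verify the compatibility conditions, yielding a weak solution $(q_{\delta_1},u_{\delta_1})$ in the sense of Definition \ref{def:WeakSolutionRegularizedEquations}.

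The final step is the limit $\delta_1\to0$. The point is that, once the remainders in \eqref{eq:energy_identity_reg} and \eqref{eq:BD-entropy_equality-regularized} are bounded \emph{uniformly} in $\delta_1\in[0,1]$ via \eqref{eq:absorb_Rreg}, all the estimates of step two hold with constants independent of $\delta_1$, giving uniform bounds on $(q_{\delta_1},u_{\delta_1})$. The regularizing contributions $\delta_1\Delta_m q_{\delta_1}$ and $\delta_1\nabla u_{\delta_1}\nabla q_{\delta_1}$ in the weak formulation each carry a prefactor $\delta_1$ multiplying a uniformly bounded quantity, hence vanish in the limit. Reusing the same compactness scheme (Aubin-Lions with the compact Gaussian embedding, strong convergence of $\sqrt{q_{\delta_1}}$) lets one pass to the limit in every nonlinear term and recover a weak solution for $\delta_1=0$, together with the mass conservation \eqref{eq:conservation_mass_thm_regularized} and the estimates \eqref{eq:energy_estimates_GWS_CNSK_system_m_regularized}--\eqref{eq:BD_estimates_GWS_CNSK_system_m_regularized}, which survive both limits by weak lower semicontinuity of the convex energy and dissipation functionals.

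The main obstacle throughout the two limit passages is compactness in the unbounded-domain Gaussian setting: the polynomial Sobolev embeddings that drive the torus and bounded-domain arguments of \cite{Carles_Carrapatoso_Hillairet_AIF2022, Feireisl_Book04, Vasseur_Yu_SIMA2016} are unavailable here, so one cannot control, say, $L^3$ or $L^6$ norms of $u$ directly from $H^1$ bounds. The resolution---and the reason a comparatively light regularization suffices---is to lean on the functional inequalities of Appendix \ref{AppendixB}: the Poincar\'e inequality for $\mu_m$ yields the compact embedding $H^1_{\mu_m}\hookrightarrow L^2_{\mu_m}$ needed for Aubin-Lions, while the logarithmic Sobolev inequality controls the entropy $q\ln q$ by $\nabla\sqrt{q}$ and thereby closes the energy estimates. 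A secondary difficulty is the degeneracy of the density on $\{q=0\}$, which forces one to work with $\sqrt{q}$, $\sqrt{q}u$ and the tensors $\mathbf{T}_{\rm NS}$, $\mathbf{S}_{\rm K}$ rather than with $u$ directly; this is handled as in the quantum-hydrodynamics literature through the identity relating $\sqrt{q}D^2(\ln q)$ to $\sqrt{q}D^2(\sqrt{q})-\nabla\sqrt{q}\otimes\nabla\sqrt{q}$.
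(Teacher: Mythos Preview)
Your proposal is correct and matches the paper's approach closely: Faedo--Galerkin with the Hermite basis for $\XN$, a Banach fixed point coupling the parabolic Fokker--Planck equation (with its maximum principle for positivity) to the projected momentum ODE, global existence from the energy identity, Aubin--Lions--Simon compactness via the compact Gaussian embeddings for the limit $N\to\infty$, and then the BD entropy estimate---derived for $(q_N,u_N)$ by testing the momentum equation against a mollification of $\nabla\ln(q_N)$, since this is not itself a Galerkin test function---to obtain bounds uniform in $\delta_1$ for the final limit. One small refinement worth noting: the paper secures \emph{strong} convergence of $q_Nu_N$ in $L^2(0,T;[L^1_{\mu_m}]^d)$ through a separate Aubin--Lions argument on the momentum (using the compact embedding $W^{1,1}_{\mu_m}\hookrightarrow L^1_{\mu_m}$), and it is this, rather than mere weak convergence of $\sqrt{q_N}u_N$, that handles the quadratic convective term $(\sqrt{q_N}u_N)\otimes(\sqrt{q_N}u_N)$.
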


\begin{Remark}\label{rk:1/2}
	The numerical factor $\frac12$ in front of the dissipation term in \eqref{eq:energy_estimates_GWS_CNSK_system_m_regularized} comes from the estimate of $\mathcal{R}_{\rm reg}$, see \eqref{eq:absorb_Rreg}, and likewise for \eqref{eq:BD_estimates_GWS_CNSK_system_m_regularized}.
\end{Remark}

\begin{Remark}
	Using the regularity properties stated in the previous remark, one can verify that Equation \eqref{eq:CNSK_system_m_regularized-density} is in fact satisfied in $L^1_{\rm loc}(0,\infty;L^1_{\mu_m})$ (in the sense that each term appearing in \eqref{eq:CNSK_system_m_regularized-density} belongs to $L^1_{\rm loc}(0,\infty;L^1_{\mu_m})$). See Proposition \ref{prop:FG-convergence} below for more details. Likewise, Equation \eqref{eq:CNSK_system_m_regularized-velocity} is satisfied in $L^2_{\mathrm{loc}}(0,\infty;[(W^{1,\infty}_{\mu_m})']^d)$. 
\end{Remark}

\subsection{Global solutions in the Faedo-Galerkin approximation}\label{subsec:Faedo-Galerkin}

\subsubsection{The Faedo-Galerkin approximation.}

We first introduce a Faedo-Galerkin approximation for the velocity. As usual, we consider a finite-dimensional subspace $\XN$ of the velocity Hilbert space $[L^2]^d$. In our context, where the Hilbert space for the relative density is $L^2_{\mu_m}=L^2(\mathbb{R}^d,{\rm d\mu_m})$, with ${\rm d}\mu_m=\rho_m{\rm d}x$ a Gaussian measure, it is natural to choose $\XN$ as the cartesian product of the vector space spanned by Hermite functions (associated to the Gaussian function $\rho_m$). More precisely, we consider $X_N$ the subspace of $L^2$ spanned by all the Hermite functions of degree less or equal to $N$, 
\begin{equation*}
X_N = \left\{ f : \mathbb{R}^d \to \mathbb{R} \; ; \; \exists P \in \mathcal{H}_N \mbox{ such that } f = P \rho_m^{\frac12}  \right\}, 
\end{equation*}
equipped with the $L^2$-norm, where $\mathcal{H}$ is the vector space spanned by the Hermite polynomials on $\mathbb{R}^d$ and $\mathcal{H}_N = \left\{ P \in \mathcal{H} \; \mbox{s.t.} \; \deg(P) \leq N \right\}$. We then set
\begin{equation*}
\XN=[X_N]^d.
\end{equation*}

\begin{Remark}\label{rk:elementary}$ $ For later purpose, we record here the following elementary properties:
	\begin{enumerate}[label=(\roman*)]
		\item $X_N \subset L^\infty$ continuously. 
		\item For all $\alpha \in \mathbb{N}^d$, the operator $\partial^\alpha : X_N \mapsto X_{N+|\alpha|}$ is bounded.
		\item For all $\beta \in \mathbb{N}^d$, the operator $x^\beta : X_N \mapsto X_{N+|\beta|}$ is bounded.
	\end{enumerate}
\end{Remark}
Remark \ref{rk:elementary} in turn implies that $\dvm : \XN \mapsto X_{N+1}$ is bounded: There exists a constant $C_m(N) >0$ such that, for all $u_N \in \XN$,
\begin{equation}\label{eq:bound_div_UN}
\|\dvm(u_N)\|_{L^\infty} \leq C_m(N) \|u_N\|_{[L^2]^d}.
\end{equation}

For any $N\geq 1$, we introduce the initial data for the approximate velocity, $u_N^0$, as follows. Let $\Pi_N$ be the orthogonal projection onto the vector space spanned by $\{\rho_mq^0w_N \, \text{ s.t. } w_N \in X_N\}$ in $[L^2]^d$.  We then set $u^0_N:=\Pi_N(u^0)$. Equivalently, $u^0_N$ is the unique element of $\XN$ such that
\begin{equation}\label{def:XN_initial_data}
\int_{\mathbb{R}^d} q^0 u_N^0 \cdot w_N \;{\rm d}\mu_m = \int_{\mathbb{R}^d} q^0 u^0 \cdot w_N    \;{\rm d}\mu_m \qquad \mbox{for all } w_N \in \XN.
\end{equation}

Our goal in this subsection is to show that, for any initial data $(q^0,u^0_N)$ with $q^0$ satisfying \eqref{eq:_regularized_initial_data} and $u^0_N\in\XN$, for any $T>0$, the regularized system \eqref{eq:CNSK_system_m_regularized} has a global solution $(q_N,u_N)$ with $q_N$ in a suitable space (see Definition \ref{def:function_space_qN} below) and $u_N \in \mathcal{C}^1([0,T];\XN)$.

Recall that the regularized energy $\mathcal{E}_{\rm reg}$ and dissipation $\mathcal{D}_{\rm reg}$ have been introduced in \eqref{def:energy_reg} and \eqref{def:dissipation_reg}, respectively. Recall also that the positive parameters $r_0,r_1,r_4,\delta_1$ are supposed to be less than $1$ to simplify. In this subsection we will prove the following proposition.
\begin{Proposition}[Solutions to the regularized QNS system with drag forces in the Faedo-Galerkin approximation]\label{prop:existence_FD}
	Let $0<r_0,r_1,r_4,\delta_1 \le1$. Let $N\in\mathbb{N}^*$, $T>0$ and $(q^0,u^0_N)\in L^2_{\mu_m}\times\XN$ with $q^0$ satisfying \eqref{eq:_regularized_initial_data}. There exists a unique global weak solution 
	\begin{equation}
	(q_N,u_N)\in [L^2(0,T;H^2_{\mu_m}) \cap  \mathcal{C}^0([0,T];H^1_{\mu_m}) \cap H^1(0,T;L^2_{\mu_m})]\times \mathcal{C}^1([0,T];\XN)
	\end{equation}
	to \eqref{eq:CNSK_system_m_regularized} such that $(q_N(0),u_N(0))=(q^0,u^0_N)$, in the sense that
	\begin{equation}\label{eq:Fokker-Planck-thm}
	\partial_t q_N + \dvm(q_Nu_N) = \delta_1 \Delta_m q_N  \quad \mbox{ in } L^2(0,T;L^2_{\mu_m}),
	\end{equation}
	and for all $\Phi_N \in \mathcal{C}^1([0,T];\XN)$ such that $\Phi_N(T) = 0$, (recall that $Q_T=(0,T)\times\mathbb{R}^d$),
	\begin{align}
	& \int_{\mathbb{R}^d} q^0u_N^0 \cdot \Phi_N(0) {\rm d}\mu_m +  \int_{Q_T} q_N u_N \cdot \partial_t\Phi_N {\rm d}\mu_m{\rm d}t  + \int_{Q_T} q_N\nabla \Phi_N u_N \cdot u_N {\rm d}\mu_m{\rm d}t \notag\\
	& -\delta_1 \int_{Q_T} \nabla u_N \nabla q_N \cdot \Phi_N {\rm d}\mu_m{\rm d}t \notag\\
	=\, & 2\nu \int_{Q_T} q_N D(u_N) : D(\Phi_N) {\rm d}\mu_m{\rm d}t + 2\kappa^2 \int_{Q_T} \left[\sqrt{q_N} D^2(\sqrt{q_N})-\nabla(\sqrt{q_N}) \otimes \nabla(\sqrt{q_N})\right] : D(\Phi_N) {\rm d}\mu_m {\rm d}t \notag\\
	&  +\lambda \sigma^2 \int_{Q_T} \nabla q_N  \cdot \Phi_N {\rm d}\mu_m{\rm d}t + r_0 \int_{Q_T} u_N \cdot \Phi_N  {\rm d}\mu_m{\rm d}t+ r_1\int_{Q_T} q_N|u_N|^2u_N \cdot \Phi_N  {\rm d}\mu_m{\rm d}t \notag\\
	&+ \frac{r_4}{\sigma^4}\int_{Q_T} q_N|x|^2x \cdot \Phi_N  {\rm d}\mu_m{\rm d}t. \label{eq:CNSK_system_m_regularized-velocity-WS-Definition_XN-thm}
	\end{align}
	The solution $(q_N,u_N)$ satisfies, for all $0\le t\le T$, the inequalities
	\begin{equation}\label{eq:positivity_qN-prop}
	c^0 \exp\left(-\int_0^t \|\dvm(u_N(\tau))\|_{L^\infty}\,{\rm d}\tau\right)\le q_N(t)\le \frac{1}{c^0} \exp\left(\int_0^t \|\dvm(u_N(\tau))\|_{L^\infty}\,{\rm d}\tau\right),
	\end{equation}
	for some constant $c^0>0$, the conservation of mass 
	\begin{equation}\label{eq:conservation-mass-qN}
	\int_{\mathbb{R}^d} q_N(t) \;{\rm d}\mu_m = 1
	\end{equation}
	and the energy estimate, 
	\begin{equation*}
	\mathcal{E}_{\rm reg}(q_N(t),u_N(t)) + \frac12 \int_0^t \mathcal{D}_{\rm reg}(t') \,{\rm d}t' \leq C_0\left(\mathcal{E}_{\rm reg}(q_N(0),u_N(0)),t\right),
	\end{equation*}
	where the `constant' $C_0$ depends on the initial regularized energy  and on the time $t$.
\end{Proposition}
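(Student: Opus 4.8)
The plan is to follow the classical Faedo--Galerkin scheme of \cite{Feireisl_Book04}, adapted to the Gaussian-weighted setting: decouple the (now parabolic) continuity equation from the finite-dimensional momentum equation, solve each for a frozen argument, close the coupling by a fixed-point argument, and finally upgrade local to global existence via the energy estimate. First I would treat the continuity equation as a linear parabolic problem: given a velocity curve $v \in \mathcal{C}([0,T];\XN)$, solve
\[
\partial_t q + v\cdot\nabla q + q\,\dvm(v) = \delta_1 \Delta_m q, \qquad q(0)=q^0 .
\]
Since $v$ is a finite combination of Hermite functions, its coefficients are smooth and, by \eqref{eq:bound_div_UN}, $\dvm(v)$ is controlled in $L^\infty$ by $\|v\|_{[L^2]^d}$. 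As $\delta_1>0$, the operator $\delta_1 \Delta_m$ is the (weighted) Ornstein--Uhlenbeck generator, and energy estimates in the weighted Sobolev scale (testing with $q$ and with $\Delta_m q$) yield a unique solution $q=q[v]$ with the claimed regularity $L^2(0,T;H^2_{\mu_m})\cap \mathcal{C}^0([0,T];H^1_{\mu_m})\cap H^1(0,T;L^2_{\mu_m})$, with bounds depending only on $\|v\|_{\mathcal{C}([0,T];\XN)}$ and $\|q^0\|_{H^1_{\mu_m}}$.

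Next I would establish positivity. Written in non-divergence form, the equation for $q$ obeys the parabolic maximum principle (equivalently, a Gr\"onwall estimate along the flow of $v$), so that the initial two-sided bound $c^0\le q^0\le 1/c^0$ propagates to \eqref{eq:positivity_qN-prop}, the diffusion term only improving the comparison. This positivity is the linchpin: fixing a basis $\{e_k\}$ of $\XN$, the lower bound guarantees that the mass matrix $M_{jk}(t)=\int_{\mathbb{R}^d} q\,e_j\cdot e_k\,\mathrm{d}\mu_m$ is uniformly positive definite, hence invertible throughout $[0,T]$.

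I would then substitute $q=q[v]$ into the projected momentum equation \eqref{eq:CNSK_system_m_regularized-velocity-WS-Definition_XN-thm} tested against the basis of $\XN$. Since everything lives in a fixed finite-dimensional space, this reduces to a system of ordinary differential equations $\tfrac{\mathrm{d}}{\mathrm{d}t}\big(M(t)c(t)\big) = F\big(t,c(t)\big)$ for the coordinates $c$ of the velocity, where $F$ assembles the convection, viscous and Korteweg contributions (the latter using $D^2(\sqrt{q})\in L^2_{\mu_m}$ from the $H^2_{\mu_m}$ regularity of $q$), the cross term $\delta_1\nabla u\,\nabla q$, and the drag nonlinearities. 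Thanks to the regularity and positivity of $q[v]$ and the equivalence of all norms on $\XN$, $F$ is locally Lipschitz in $c$, so solving this ODE produces a new velocity $\tilde v=\mathcal{S}(v)$. A contraction (or Schauder) argument on $\mathcal{C}([0,T'];\XN)$ for small $T'$, using the Lipschitz dependence $v\mapsto q[v]$ in the parabolic estimates, gives a local fixed point $u_N$ and hence a local solution $(q_N,u_N)$; uniqueness follows from the same Lipschitz bounds.

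Finally, global existence comes from the energy estimate. Testing the exact finite-dimensional momentum equation with $u_N$---legitimate since $u_N(t)\in\XN$---and combining with the continuity equation reproduces the identity \eqref{eq:energy_identity_reg} at the Galerkin level; absorbing $\mathcal{R}_{\rm reg}$ via \eqref{eq:absorb_Rreg} yields \eqref{eq:energy_estimates_GWS_CNSK_system_m_regularized}, and in particular a bound on $\int_{\mathbb{R}^d} q_N|u_N|^2\,\mathrm{d}\mu_m$, whence, by the lower bound in \eqref{eq:positivity_qN-prop}, on $\|u_N(t)\|_{[L^2]^d}$ on any bounded interval. This a priori bound rules out blow-up of the ODE coordinates, so the local solution extends to all of $[0,T]$. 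The main obstacle, in both the parabolic solve and the fixed point, is the Gaussian measure: the continuity equation must be handled in the weighted spaces $H^k_{\mu_m}$ rather than by the usual Euclidean parabolic regularity, with all constants tracked in $\|v\|$ so the iteration closes, and---more delicately---the positive lower bound \eqref{eq:positivity_qN-prop} must hold uniformly along the whole fixed-point iteration, since the momentum ODE degenerates the instant the mass matrix loses invertibility. The finite-dimensionality of $\XN$ together with the $L^\infty$-control \eqref{eq:bound_div_UN} of $\dvm$ on Hermite functions is exactly what keeps these estimates tractable.
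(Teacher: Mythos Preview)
Your proposal is correct and follows essentially the same route as the paper: solve the Fokker--Planck equation for a frozen velocity, use the resulting positivity to invert the mass operator $\mathfrak{M}[q]$, set up and solve the finite-dimensional momentum ODE by a Banach fixed point, and extend to a global solution via the energy identity \eqref{eq:energy_identity_reg} combined with \eqref{eq:absorb_Rreg}. The only notable technical difference is that the paper handles the Fokker--Planck step through the Ornstein--Uhlenbeck semigroup $e^{t\delta_1\Delta_m}$ and the associated Duhamel formula (appealing to \cite{Lunardi_TAMS97} for the $H^{2,1}_{\mu_m}$ regularity), whereas you propose direct energy estimates in the weighted scale; both are standard and yield the same bounds.
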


\begin{Remark}
	As in Remark \ref{rk:1/2}, the numerical factor $\frac12$ in front of the dissipation comes from the estimate of $\mathcal{R}_{\rm reg}$, see \eqref{eq:absorb_Rreg}.
\end{Remark}

In order to establish Proposition \ref{prop:existence_FD}, we follow the Faedo-Galerkin approach. First we assume that $u_N \in \mathcal{C}^1([0,T];\XN)$ is fixed and show that the Fokker-Planck equation \eqref{eq:CNSK_system_m_regularized-density} has a unique solution $q_N=\mathcal{S}_T^{(q)}(u_N)$ (see Remark \ref{rk:def_STq} for the precise definition of $\mathcal{S}_T^{(q)}$). Next we prove that a solution $u_N\in \mathcal{C}^1([0,T];\XN)$ to  \eqref{eq:CNSK_system_m_regularized-velocity} (with $q$ given by $\mathcal{S}_T^{(q)}(u_N)$) necessarily satisfies an ODE which can be solved by a Banach fixed point argument. This allows us to construct a (local in time) solution to the system \eqref{eq:CNSK_system_m_regularized} in the Faedo-Galerkin approximation. The global existence (on any time interval $[0,T]$, with $T>0$) finally follows from a priori, energy estimates.

Throughout the remainder of this subsection, we assume that $0<r_0,r_1 ,r_4 ,\delta_1 \le1$ are fixed.

\subsubsection{The Fokker-Planck equation.} 

Assuming that an integer $N\ge1$, a positive time $T>0$ and a vector field $u_N \in \mathcal{C}^1([0,T];\XN)$ such that $u_N(0) = u_N^0$ (for a given initial velocity $u_N^0 \in \XN$) are fixed, we wish  to construct a density $q_N$ associated to $u_N$ as a solution to the Fokker-Planck equation
\begin{equation}\label{eq:Fokker-Planck}
\partial_t q + \dvm(qu_N) = \delta_1 \Delta_m q  \quad \mbox{ in } (0,T) \times \mathbb{R}^d ,
\end{equation}
associated to an initial data $q^0$ satisfying \eqref{eq:_regularized_initial_data}. We introduce the following function space.
\begin{Definition}\label{def:function_space_qN}
	Let $T>0$.  We define the space $H^{2,1}_{\mu_m}(Q_T)$ as
	\begin{equation*}
	H^{2,1}_{\mu_m}(Q_T) = L^2(0,T;H^2_{\mu_m}) \cap  \mathcal{C}^0([0,T];H^1_{\mu_m}) \cap H^1(0,T;L^2_{\mu_m}),
	\end{equation*}
	equipped with the usual norm
	\begin{equation*}
	\|q\|_{H^{2,1}_{\mu_m}(Q_T)} = \|q\|_{L^2(0,T;H^2_{\mu_m})} + \|q\|_{\mathcal{C}^0([0,T];H^1_{\mu_m})} + \|\partial_tq\|_{L^2(0,T;L^2_{\mu_m})}.
	\end{equation*}
\end{Definition}

In the next proposition, we only focus on the existence of \emph{local} solutions, for simplicity of presentation and since it is sufficient for our purpose. It would however not be difficult to extend the result to the existence of global solutions. 

\begin{Proposition}[Local solutions to the Fokker-Planck equation]
	\label{prop:Fokker-PlanckEquation_existence}
	Let $N\in\mathbb{N}^*$, and $(q^0,u^0_N)\in L^2_{\mu_m}\times\XN$ with $q^0$ satisfying \eqref{eq:_regularized_initial_data}. There exists $T>0$ such that the following holds.
	\begin{enumerate}[label=(\roman*)]
		\item Let $u_N \in \mathcal{C}^1([0,T];\XN)$ be such that $u_N(0) = u_N^0$. The Fokker-Planck Equation \eqref{eq:Fokker-Planck} admits a unique solution $q_N$ in $H^{2,1}_{\mu_m}(Q_{T})$ such that $q(0)=q^0$, and there is a constant $C(\|u_N\|_{L^\infty(0,T;\XN)}) >0$ such that 
		\begin{equation}\label{eq:regularity_qN}
		\|q_N\|_{H^{2,1}_{\mu_m}(Q_T)} \leq C(\|u_N\|_{L^\infty(0,T;\XN)}) \|q^0\|_{H^1_{\mu_m}}.	
		\end{equation}
		Moreover, with $c^0>0$ as in \eqref{eq:_regularized_initial_data}, we have
		\begin{equation}\label{eq:positivity_qN}
		c^0 \exp\left(-\int_0^t \|\dvm(u_N(\tau))\|_{L^\infty}\,{\rm d}\tau\right) \le q_N(t)\le \frac{1}{c^0} \exp\left(\int_0^t \|\dvm(u_N(\tau))\|_{L^\infty}\,{\rm d}\tau\right),
		\end{equation}
		for all $t\in[0,T]$.

		\item Let $u_N^k \in \mathcal{C}^1([0,T];\XN)$, $k=1, 2$, be such that $u_N^k(0) = u_N^0$. 
		Let $q_N^k\in H^{2,1}_{\mu_m}(Q_T)$ be the solution to \eqref{eq:Fokker-Planck} associated to $u_N^k$ and the initial data $q^0$. There exists a positive constant $$C(\underset{k=1,2}{\max}\|u_N^k\|_{L^\infty(0,T;\XN)},\|q^0\|_{H^1_{\mu_m}})$$ such that 
		\begin{equation}\label{eq:Lipschitz_qN}
		\| q_N^1 - q_N^2 \|_{H^{2,1}_{\mu_m}(Q_T)} \leq C(\underset{k=1,2}{\max}\|u_N^k\|_{L^\infty(0,T;\XN)},\|q^0\|_{H^1_{\mu_m}})\|u^1_N-u^2_N\|_{L^\infty(0,T;\XN)}. 
		\end{equation}
	\end{enumerate}
\end{Proposition}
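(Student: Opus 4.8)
The plan is to exploit that, for a fixed drift $u_N \in \mathcal{C}^1([0,T];\XN)$, equation \eqref{eq:Fokker-Planck} is a \emph{linear} parabolic equation whose principal part $\delta_1 \Delta_m$ is the self-adjoint, nonnegative Ornstein--Uhlenbeck operator on $L^2_{\mu_m}$, while the transport term $\dvm(q u_N) = u_N \cdot \nabla q + q\, \dvm(u_N)$ is of lower order with coefficients that are bounded, together with all their derivatives, since $u_N(t) \in \XN$ and $X_N \subset L^\infty$ (Remark \ref{rk:elementary}). First I would write the weak formulation
\begin{equation*}
\frac{{\rm d}}{{\rm d}t}\int_{\mathbb{R}^d} q\,\phi \,{\rm d}\mu_m + \delta_1 \int_{\mathbb{R}^d}\nabla q \cdot \nabla \phi \,{\rm d}\mu_m = \int_{\mathbb{R}^d} q\, u_N \cdot \nabla \phi\,{\rm d}\mu_m,
\end{equation*}
obtained from the integration by parts formulae for $\dvm$ and $\Delta_m$, and construct a solution by a Galerkin scheme in the Hermite basis, which is natural here since the Hermite functions are eigenfunctions of $\Delta_m$. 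As $\delta_1>0$ and $\|u_N\|_{L^\infty}$ is finite, the associated bilinear form satisfies a G\aa rding inequality, so testing with $q$ yields the basic energy estimate and a solution in $L^2(0,T;H^1_{\mu_m}) \cap \mathcal{C}^0([0,T];L^2_{\mu_m})$; uniqueness then follows at once from linearity and this estimate.

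The core of part (i) is to upgrade this to $H^{2,1}_{\mu_m}(Q_T)$. To this end I would test the (Galerkin) equation with $-\Delta_m q$: the parabolic term produces $\tfrac12\frac{{\rm d}}{{\rm d}t}\|\nabla q\|_{L^2_{\mu_m}}^2$, the diffusion produces $\delta_1\|\Delta_m q\|_{L^2_{\mu_m}}^2$, and the transport term is absorbed by means of $\|\dvm(q u_N)\|_{L^2_{\mu_m}} \le \|u_N\|_{L^\infty}\|\nabla q\|_{L^2_{\mu_m}} + \|\dvm(u_N)\|_{L^\infty}\|q\|_{L^2_{\mu_m}}$ and Young's inequality. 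A Gr\"onwall argument then gives $q \in L^\infty(0,T;H^1_{\mu_m})$ together with $\Delta_m q \in L^2(0,T;L^2_{\mu_m})$. Here the crucial, genuinely Gaussian, ingredient is the integrated Bochner (Bakry--\'Emery) identity for the Ornstein--Uhlenbeck operator,
\begin{equation*}
\int_{\mathbb{R}^d} |D^2 q|^2 \,{\rm d}\mu_m + \frac{1}{\sigma^2}\int_{\mathbb{R}^d}|\nabla q|^2\,{\rm d}\mu_m = \int_{\mathbb{R}^d}(\Delta_m q)^2\,{\rm d}\mu_m,
\end{equation*}
which replaces the Calder\'on--Zygmund/Sobolev estimates unavailable in this setting and immediately turns the bound on $\Delta_m q$ into a bound on $q$ in $L^2(0,T;H^2_{\mu_m})$. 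The equation then gives $\partial_t q = \delta_1\Delta_m q - \dvm(q u_N) \in L^2(0,T;L^2_{\mu_m})$, and the embedding $L^2(0,T;H^2_{\mu_m})\cap H^1(0,T;L^2_{\mu_m}) \hookrightarrow \mathcal{C}^0([0,T];H^1_{\mu_m})$ yields the continuity in time; tracking the constants through these estimates produces \eqref{eq:regularity_qN}.

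For the positivity bounds \eqref{eq:positivity_qN} I would argue by a weak (energy) maximum principle rather than a pointwise one, since the domain is unbounded. The spatially constant functions $\overline q(t) = (c^0)^{-1}\exp(\int_0^t\|\dvm(u_N(\tau))\|_{L^\infty}\,{\rm d}\tau)$ and $\underline q(t) = c^0\exp(-\int_0^t\|\dvm(u_N(\tau))\|_{L^\infty}\,{\rm d}\tau)$ are, respectively, a super- and a sub-solution of the equation written in the form $\partial_t q - \delta_1\Delta_m q + u_N\cdot\nabla q + \dvm(u_N)q = 0$, and they bracket $q^0$ at $t=0$ by \eqref{eq:_regularized_initial_data}. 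Testing the equation satisfied by $w=q-\overline q$ with its positive part $w_+$, and using that the diffusion contributes $\delta_1\|\nabla w_+\|^2_{L^2_{\mu_m}}\ge0$, that the two first order terms combine into $\tfrac12\int_{\mathbb{R}^d}\dvm(u_N)\,w_+^2\,{\rm d}\mu_m$, and that the supersolution defect enters with a favorable sign, a Gr\"onwall argument (the finiteness of $\mu_m$ guaranteeing no boundary contribution at infinity) forces $w_+\equiv0$; the lower bound is symmetric.

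Finally, part (ii) follows by applying the linear estimate of part (i) to the difference. Writing $\delta q = q_N^1 - q_N^2$ and $\delta u = u_N^1 - u_N^2$, and using $q_N^1 u_N^1 - q_N^2 u_N^2 = \delta q\, u_N^1 + q_N^2\,\delta u$, the function $\delta q$ solves the same linear problem driven by $u_N^1$, with zero initial datum and source $-\dvm(q_N^2\,\delta u)$. Running the $H^{2,1}_{\mu_m}$ estimate with this source, and bounding $\|\dvm(q_N^2\,\delta u)\|_{L^2_{\mu_m}} \le \|\delta u\|_{L^\infty}\|\nabla q_N^2\|_{L^2_{\mu_m}} + \|\dvm(\delta u)\|_{L^\infty}\|q_N^2\|_{L^2_{\mu_m}}$ with $\|\delta u\|_{L^\infty}+\|\dvm(\delta u)\|_{L^\infty}\le C(N)\|\delta u\|_{\XN}$ (Remark \ref{rk:elementary} and \eqref{eq:bound_div_UN}) and $\|q_N^2\|_{L^\infty(0,T;H^1_{\mu_m})}$ controlled by part (i), yields \eqref{eq:Lipschitz_qN}. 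The main obstacle throughout is the absence of the usual Euclidean Sobolev machinery: the second-order estimate and the maximum principle must both be carried out intrinsically in the Gaussian space, which is exactly what the Bochner identity and the energy formulation of the comparison principle are designed to circumvent.
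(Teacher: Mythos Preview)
Your proposal is correct but follows a different route from the paper for the existence part of (i). The paper does not use a Galerkin scheme with energy estimates; instead it recasts \eqref{eq:Fokker-Planck} in Duhamel form
\[
q_N(t)=e^{t\delta_1\Delta_m}q^0-\int_0^t e^{(t-s)\delta_1\Delta_m}\dvm(q_N(s)u_N(s))\,{\rm d}s,
\]
invokes the parabolic smoothing bound $\|e^{t\Delta_m}f\|_{H^1_{\mu_m}}\le Ct^{-1/2}\|f\|_{L^2_{\mu_m}}$, and runs a Banach fixed point argument in $\mathcal{C}^0([0,\bar T];H^1_{\mu_m})$ for $\bar T$ small. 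The upgrade to $H^{2,1}_{\mu_m}$ is then obtained from the domain identification $\|f\|_{H^2_{\mu_m}}\le C\|(1-\Delta_m)f\|_{L^2_{\mu_m}}$ (citing Lunardi), which is exactly the content of your Bochner/Bakry--\'Emery identity stated in integrated form. For part (ii) the paper again subtracts the two integral equations and absorbs the $\delta q$-term via the contraction estimate, rather than rerunning the differential energy estimate with a source as you do. Your energy-method approach has the advantage of not requiring $T$ to be small (the Gr\"onwall argument works on any interval once the coefficients are bounded), whereas the paper's contraction mapping explicitly produces the smallness of $T$ appearing in the statement; conversely, the semigroup route is shorter and avoids setting up a second Galerkin approximation inside what is already a Faedo--Galerkin scheme. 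The comparison-principle argument for \eqref{eq:positivity_qN} is essentially identical in both proofs: the paper defines $f=q_N-\overline q$, checks it is a subsolution, tests with $f^+$, and concludes by Gr\"onwall exactly as you describe.
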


\begin{proof}
	\begin{enumerate}[label=(\roman*)]
		\item First, it follows from \cite[Theorem 4.1]{Lunardi_TAMS97} that the operator $\delta_1\Delta_m$ with domain $H^2_{\mu_m}$ is self-adjoint in $L^2_{\mu_m}$ and that the usual parabolic regularity holds for the associated evolution equation, in the sense that the unique solution to 
		\begin{equation*}
		\partial_t q = \delta_1 \Delta_m q  \quad \mbox{ in } (0,T) \times \mathbb{R}^d, \qquad q(0) = q^0 ,
		\end{equation*}
		is given by $(t,x)\mapsto(e^{t\delta_1\Delta_m}q^0)(x)$ and satisfies
		\begin{equation}\label{eq:estim_Lunardi}
		\|e^{t\delta_1\Delta_m}q^0\|_{H^{2,1}_{\mu_m}(Q_T)} \leq C \|q^0\|_{H^1_{\mu_m}},
		\end{equation}
		for some positive constant $C$ depending only on $\sigma$, $\delta_1$ and the dimension $d$.
		
		Next we consider the integral equation
		\begin{equation}\label{eq:integral_qN}
		q_N(t,x)=e^{t\delta_1\Delta_m}q^0(x)-\int_0^te^{(t-s)\delta_1\Delta_m}\dvm(q_N(s,x)u_N(s,x)){\rm d}s=:\mathcal{T}(q_N)(t,x).
		\end{equation}
		Using $\|e^{t\Delta_m}f\|_{H^1_{\mu_m}}\le Ct^{-\frac12}\|f\|_{L^2_{\mu_m}}$ (which in turn follows from $\|f\|_{H^1_{\mu_m}}\le C\|(1-\Delta_m)^{\frac12}f\|_{L^2_{\mu_m}}$) together with \eqref{eq:bound_div_UN}, one easily deduces, in particular, that 
		\begin{equation}\label{eq:TqN1-2}
		\|\mathcal{T}(q_N^1)-\mathcal{T}(q_N^2)\|_{L^\infty(0,T;H^1_{\mu_m})}\le CT^{\frac12}\|u_N\|_{L^\infty(0,T;\XN)}\|q_N^1-q_N^2\|_{L^\infty(0,T;H^1_{\mu_m})}.
		\end{equation}
		Using in addition \eqref{eq:estim_Lunardi}, a standard fixed point argument shows that \eqref{eq:integral_qN} has a unique solution in $\mathcal{C}^0([0,\bar T];H^1_{\mu_m})$, for $\bar T>0$ small enough. 
		Using then that $\|f\|_{H^2_{\mu_m}}\le C\|(1-\Delta_m)f\|_{L^2_{\mu_m}}$ (since the domain of $\Delta_m$ coincides with $H^2_{\mu_m}$), it is not difficult to deduce that $q_N$ also belongs to $L^2(0,T;H^2_{\mu_m})\cap H^1(0,T;L^2_{\mu_m})$. This establishes \eqref{eq:regularity_qN}.
		
		In order to prove the second inequality in \eqref{eq:positivity_qN}, it suffices to proceed as follows (see also \cite[Chapter 7.3.1]{Feireisl_Book04}). Let
		\begin{equation*}
		f(t,x)=q_N(t,x)-\frac{1}{c^0}\exp\left(\int_0^t \|\dvm(u_N(\tau))\|_{L^\infty}\,{\rm d}\tau\right).
		\end{equation*}
		Then one easily verifies that
		\begin{align*}
		\partial_tf+ \dvm(fu_N) - \delta_1 \Delta_m f \le 0
		\end{align*}
		on $(0,T)\times\mathbb{R}^d$. Taking the scalar product with $f^+=\max(f,0)$, we obtain
		\begin{equation*}
		\frac12\frac{{\rm d}}{{\rm d}t}\|f^+\|_{L^2_{\mu_m}}^2+\delta_1\|\nabla f^+\|_{L^2_{\mu_m}}^2\le-\frac12\int_{\mathbb{R}^d}|f^+|^2\dvm(u_N){\rm d} \mu_m\le\frac12\|\dvm(u_N)\|_{L^\infty}\|f^+\|_{L^2_{\mu_m}}^2,
		\end{equation*}
		from which we can conclude, by Gronwall's lemma, that the second inequality in \eqref{eq:positivity_qN} holds. The proof of the first inequality in \eqref{eq:positivity_qN} is similar.
		
		\item By \eqref{eq:integral_qN}, we have
		\begin{align*}
		q_N^1(t,x)-q_N^2(t,x)&=\int_0^te^{(t-s)\delta_1\Delta_m}\dvm((q_N^1-q_N^2)(s,x)u_N^1(s,x)){\rm d}s\\
		&\quad+\int_0^te^{(t-s)\delta_1\Delta_m}\dvm(q_N^2(s,x)(u_N^1-u_N^2)(s,x)){\rm d}s.
		\end{align*}
		The $H^{2,1}_{\mu_m}(Q_T)$-norm of the first term is bounded by $CT^{\frac12}\|u_N^1\|_{L^\infty(0,T;\XN)}\|q_N^1-q_N^2\|_{L^\infty(0,T;H^1_{\mu_m})}$ according to \eqref{eq:TqN1-2}. Using similar arguments together with \eqref{eq:regularity_qN}, the $H^{2,1}_{\mu_m}(Q_T)$-norm of the second term can be estimated by $C(\|u_N^2\|_{L^\infty(0,T;\XN)}) \|q^0\|_{H^1_{\mu_m}} \|u_N^1-u_N^2\|_{L^\infty(0,T;\XN)}$.	For $T$ small enough, this proves \eqref{eq:Lipschitz_qN}.
	\end{enumerate}
\end{proof}

\begin{Remark}\label{rk:def_STq}
	Proposition \ref{prop:Fokker-PlanckEquation_existence} allows us to introduce, given $T>0$ small enough and an initial data $(q^0,u^0_N)\in L^2_{\mu_m}\times\XN$ with $q^0$ satisfying \eqref{eq:_regularized_initial_data}, a map  
	\begin{equation}
	\mathcal{S}_T^{(q)} : \mathcal{C}^1([0,T];\XN) \to H^{2,1}_{\mu_m}(Q_T), \qquad  u_N \mapsto q_N,
	\end{equation}
	which associates to any $u_N \in \mathcal{C}^1([0,T];\XN)$ the solution $q_N$ to \eqref{eq:Fokker-Planck}. By \eqref{eq:Lipschitz_qN}, the map $\mathcal{S}_T^{(q)}$ is locally Lipschitz continuous. 
\end{Remark}

\subsubsection{Ordinary differential equation for the velocity.} \label{subsection:FG}
\label{subsection:Discrete_equation_of_momentum}

Now we wish to construct a solution $u_N$ to \eqref{eq:CNSK_system_m_regularized-velocity-WS-Definition_XN-thm} with $q_N=\mathcal{S}_T^{(q)}(u_N)$ (see Remark \ref{rk:def_STq} for the notation).

Following \cite[Chapter 7.3.3]{Feireisl_Book04}, \cite[section 3.1]{Jungel_SIMA10} or \cite[page 1493]{Vasseur_Yu_SIMA2016} let us introduce, for $q \in L^2_{\mu_m}$, the `mass operator' $\mathfrak{M}[q] : \XN \to \XN'$, defined, for all $v_N, w_N \in \XN$, by
\begin{equation*}
\left\langle  \mathfrak{M}[q] v_N, w_N  \right\rangle_{\XN',\XN} = \int_{\mathbb{R}^d} q v_N \cdot w_N \,{\rm d}\mu_m.
\end{equation*}
Clearly, the operator $\mathfrak{M}[q]$ is symmetric and satisfies
\begin{equation*}
\left\|\mathfrak{M}[q]\right\|_{\mathcal{L}(\XN,\XN')} \leq C(N)\|q\|_{L^2_{\mu_m}},
\end{equation*}
for some positive constant $C(N)$.  Moreover, if $q \in L^2_{\mu_m}$ satisfies $q \geq c >0$ (with $c$ a constant),  $\mathfrak{M}[q]$ is positive-definite in the sense that
\begin{equation*}
\underset{w_N \in \XN,\, \|w_N\|_{\XN}=1}{\inf} \left\langle \mathfrak{M}[q] w_N, w_N  \right\rangle_{\XN',\XN} \geq c .
\end{equation*}
Hence, as $\XN$ is finite-dimensional, the operator $\mathfrak{M}[q]$ is invertible and 
\begin{equation}\label{eq:mathfrak{M}_invertible}
\|{\mathfrak{M}[q]}^{-1}\|_{\mathcal{L}(\XN',\XN)} \leq c^{-1}.
\end{equation} 

In what follows we assume for simplicity that $T\le1$ (which allows us to estimate e.g. terms of order $T$ by terms of order $T^{\frac12}$). We introduce the notations
\begin{align*} 
\mathcal{R}_c^M &= \left\{q \in L^2_{\mu_m} \; \mbox{ such that } \|q\|_{L^2_{\mu_m}} \leq M \quad \mbox{and} \quad q \geq c >0 \right\}, \\ 
\mathcal{R}_{c,T}^M &= \left\{q \in \mathcal{C}(0,T;L^2_{\mu_m}) \; \mbox{ such that } q(t)\in \mathcal{R}_c^M \quad \mbox{for all} \quad t\in[0,T] \right\}. 
\end{align*}
We have the following easy lemma.
\begin{Lemma}\label{lem:Lipschitz_q}
	Let $N\in\mathbb{N}^*$, $c>0$, $M>0$. The mapping $q \mapsto \mathfrak{M}[q]^{-1}$ is Lipschitz continuous from $\mathcal{R}_c^M$
	to $\mathcal{L}(\XN',\XN)$. Moreover, if $0 < T \leq 1$ and $q^1,q^2 \in H_m^{2,1}(Q_T)\cap\mathcal{R}_{c,T}^M$,
	\begin{equation}\label{eq:LipschitzMq}
	\left\|\mathfrak{M}[q^1]^{-1} -\mathfrak{M}[q^2]^{-1}\right\|_{\mathcal{C}^0([0,T];\mathcal{L}(\XN',\XN))}  \leq  C(N,c^{-1})MT^{\frac12}\|q^1-q^2\|_{H_m^{2,1}(Q_T)},
	\end{equation}
	for some positive constant $C(N,c^{-1})$.
\end{Lemma}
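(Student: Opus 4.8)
The plan is to derive everything from the second resolvent identity, using the a priori bound $\|\mathfrak{M}[q]^{-1}\|_{\mathcal{L}(\XN',\XN)}\le c^{-1}$ (valid for $q\ge c>0$) together with the continuity bound $\|\mathfrak{M}[q]\|_{\mathcal{L}(\XN,\XN')}\le C(N)\|q\|_{L^2_{\mu_m}}$ recalled above. First, for fixed $q^1,q^2\in\mathcal{R}_c^M$, both mass operators are invertible, and I would write
\begin{equation*}
\mathfrak{M}[q^1]^{-1}-\mathfrak{M}[q^2]^{-1}=\mathfrak{M}[q^1]^{-1}\bigl(\mathfrak{M}[q^2]-\mathfrak{M}[q^1]\bigr)\mathfrak{M}[q^2]^{-1}.
\end{equation*}
Since $q\mapsto\mathfrak{M}[q]$ is linear, $\mathfrak{M}[q^2]-\mathfrak{M}[q^1]=\mathfrak{M}[q^2-q^1]$, so the continuity bound gives $\|\mathfrak{M}[q^2]-\mathfrak{M}[q^1]\|_{\mathcal{L}(\XN,\XN')}\le C(N)\|q^1-q^2\|_{L^2_{\mu_m}}$. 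Composing the three factors and bounding the two outer ones by $c^{-1}$ yields the pointwise estimate
\begin{equation*}
\|\mathfrak{M}[q^1]^{-1}-\mathfrak{M}[q^2]^{-1}\|_{\mathcal{L}(\XN',\XN)}\le c^{-2}C(N)\,\|q^1-q^2\|_{L^2_{\mu_m}},
\end{equation*}
which is exactly the first (Lipschitz) assertion on $\mathcal{R}_c^M$.

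For the time-dependent estimate \eqref{eq:LipschitzMq}, I would apply the pointwise estimate at each $t\in[0,T]$ (legitimate since $q^i(t)\in\mathcal{R}_c^M$) and take the supremum over $t$; this reduces the claim to controlling $\sup_{t\in[0,T]}\|q^1(t)-q^2(t)\|_{L^2_{\mu_m}}$ by $T^{\frac12}\|q^1-q^2\|_{H_m^{2,1}(Q_T)}$. The factor $T^{\frac12}$ is the crux, and it comes from the fact that in the intended application $q^1$ and $q^2$ share the same initial value (both are images of $\mathcal{S}_T^{(q)}$ attached to the common data $q^0$, so $q^1(0)=q^2(0)$). Writing $q^1(t)-q^2(t)=\int_0^t\partial_s(q^1-q^2)(s)\,{\rm d}s$ and applying Cauchy--Schwarz in time gives
\begin{equation*}
\sup_{t\in[0,T]}\|q^1(t)-q^2(t)\|_{L^2_{\mu_m}}\le T^{\frac12}\,\|\partial_t(q^1-q^2)\|_{L^2(0,T;L^2_{\mu_m})}\le T^{\frac12}\,\|q^1-q^2\|_{H_m^{2,1}(Q_T)}.
\end{equation*}

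Combining the two displays produces the bound with constant $c^{-2}C(N)$; to match the stated form I would finally insert the factor $M$ using that $\mu_m$ is a probability measure and $q^i\ge c$, whence $M\ge\|q^i\|_{L^2_{\mu_m}}\ge\int_{\mathbb{R}^d}q^i\,{\rm d}\mu_m\ge c$ and therefore $c^{-2}C(N)\le c^{-3}C(N)\,M=:C(N,c^{-1})\,M$. The only genuinely delicate point, and the one I would flag, is the $T^{\frac12}$ gain: it fails without the coincidence of initial data (otherwise the left-hand side does not tend to $0$ as $T\to0$), and it is exactly this smallness that makes the velocity map a contraction in the subsequent Banach fixed point argument. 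Everything else is a routine composition of operator-norm bounds on the finite-dimensional space $\XN$.
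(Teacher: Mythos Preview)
Your proof is correct and follows essentially the same route as the paper: the resolvent identity $\mathfrak{M}[q^1]^{-1}-\mathfrak{M}[q^2]^{-1}=\mathfrak{M}[q^1]^{-1}(\mathfrak{M}[q^2]-\mathfrak{M}[q^1])\mathfrak{M}[q^2]^{-1}$ combined with the bound $\|\mathfrak{M}[q]^{-1}\|\le c^{-1}$, and then the $T^{1/2}$ gain from $q^1(0)=q^2(0)$ via $\|q(t)\|_{L^2_{\mu_m}}\le T^{1/2}\|\partial_t q\|_{L^2(0,T;L^2_{\mu_m})}$. The only cosmetic difference is that the paper first bounds the middle factor in $L^1_{\mu_m}$ (using $X_N\hookrightarrow L^\infty$) and then passes to $L^2_{\mu_m}$ by Cauchy--Schwarz, whereas you go directly to $L^2_{\mu_m}$; and you are right to flag that the hypothesis $q^1(0)=q^2(0)$ is tacitly used in both arguments even though it is not written in the lemma statement.
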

\begin{proof}
	Let $q^1$, $q^2 \in \mathcal{R}_c^M$. We have
	\begin{equation*}
	\mathfrak{M}[q^1]^{-1} -\mathfrak{M}[q^2]^{-1} = \mathfrak{M}[q^1]^{-1} \left(\mathfrak{M}[q^2] - \mathfrak{M}[q^1]\right)\mathfrak{M}[q^2]^{-1}, 
	\end{equation*}
	and therefore, thanks to \eqref{eq:mathfrak{M}_invertible},
	\begin{equation}\label{eq:LipschitzMq0}
	\left\|\mathfrak{M}[q^1]^{-1} -\mathfrak{M}[q^2]^{-1}\right\|_{\mathcal{L}(\XN',\XN)} \leq C(N)c^{-2}\|q^1-q^2\|_{L^1_{\mu_m}} \leq C(N,c^{-1},M)\|q^1-q^2\|_{L^2_{\mu_m}}.
	\end{equation}
	In order to prove \eqref{eq:LipschitzMq}, it suffices to combine \eqref{eq:LipschitzMq0} with the fact that, if $q \in H_m^{2,1}(Q_T)$ satisfies $q(0) = 0$, then
	\begin{equation}\label{eq:estim-q_diff-norms}
	\|q\|_{\mathcal{C}^0([0,T];L^2_{\mu_m})} \leq T^{\frac12}\|\partial_tq\|_{L^2(0,T;L^2_{\mu_m})} \leq T^{\frac12}\|q\|_{H^{2,1}_{\mu_m}(Q_T)}.
	\end{equation}
\end{proof}

We introduce a few further notations. For any given $q \in L^2_{\mu_m}$, the operator $\mathcal{A}[q]:\XN\to\XN'$ is defined by
\begin{equation*}
\left\langle \mathcal{A}[q](u_N), \Phi_N \right\rangle_{\XN',\XN} 
=  -2\nu \int_{\mathbb{R}^d} q D(u_N) : D(\Phi_N) \;{\rm d}\mu_m - r_0 \int_{\mathbb{R}^d} u_N \cdot \Phi_N \;{\rm d}\mu_m,
\end{equation*}
for all $u_N, \Phi_N \in \XN$. Similarly, for $(q,v_N)\in H^1_{\mu_m}\times \XN$, we set
\begin{align*}
\left\langle \mathcal{N}[q,v_N](u_N), \Phi_N \right\rangle_{\XN',\XN} 
& =  \int_{\mathbb{R}^d} q v_N \cdot (\nabla \Phi_N) u_N{\rm d}\mu_m - \delta_1 \int_{\mathbb{R}^d}  \nabla u_N\nabla q \cdot \Phi_N  \;{\rm d}\mu_m \\
&\quad- r_1 \int_{\mathbb{R}^d} q|v_N|^2u_N \cdot \Phi_N  {\rm d}\mu_m ,
\end{align*}
for all $u_N, \Phi_N \in \XN$. Finally, for all $q \in H^2_{\mu_m}\cap\mathcal{R}_c^M$, we define $\mathcal{B}[q] \in \XN'$  by
\begin{align*}
\left\langle \mathcal{B}[q], \Phi_N \right\rangle_{\XN',\XN}   &=   - 2\kappa^2  \int_{\mathbb{R}^d} \left[\sqrt{q} D^2(\sqrt{q})-\nabla(\sqrt{q}) \otimes \nabla(\sqrt{q})\right] : D(\Phi_N) {\rm d}\mu_m  -\lambda \sigma^2 \int_{\mathbb{R}^d} \nabla q  \cdot \Phi_N {\rm d}\mu_m  \\
&\quad  - \frac{r_4}{\sigma^4}\int_{\mathbb{R}^d} q|x|^2x \cdot \Phi_N    \;{\rm d}\mu_m,
\end{align*}
for all $\Phi_N \in \XN$.
\begin{Lemma}\label{lem:Lipschitz_N_A_B}
	Let $N\in\mathbb{N}^*$, $0<T\le1$, $c>0$, $M>0$, $u_N^0\in\XN$ and $q^0\in L^2_{\mu_m}$. 
	\begin{enumerate}[label=(\roman*)]
		\item For all $(q,v_N)\in [H_m^{2,1}(Q_T)\cap\mathcal{R}_{c,T}^M] \times \mathcal{C}^0([0,T];\XN)$, we have
		\begin{equation*}
		\mathcal{N}[q,v_N]\in L^\infty(0,T;\mathcal{L}(\XN,\XN')), \quad \mathcal{A}[q] \in L^\infty(0,T;\mathcal{L}(\XN,\XN')), \quad \mathcal{B}[q] \in L^2(0,T;\XN').
		\end{equation*}
		\item Let $(q^1,v_N^1)$, $(q^2,v_N^2) \in [H_m^{2,1}(Q_T)\cap\mathcal{R}_{c,T}^M] \times \mathcal{C}^0([0,T];\XN)$ be such that $q^k(0) = q^0$, $v_N^k(0) = u_N^0$ and $\|v_N^k\|_{\mathcal{C}^0([0,T];\XN)} \leq M$ for $k=1,2$.
		Then we have the following estimates 
		\begin{equation*}
		\|\mathcal{A}[q^1(t)]-\mathcal{A}[q^2(t)]\|_{L^\infty(0,T;\mathcal{L}(\XN,\XN'))} \leq  C(N)M T^{\frac12} \|q^1 - q^2\|_{H^{2,1}_{\mu_m}(Q_T)}, 
		\end{equation*}
		for some positive constant $C(N)$, 
		\begin{align*}
		& \|\mathcal{N}[q^1(t),v_N^1(t)]-\mathcal{N}[q^2(t),v_N^2(t)]\|_{L^\infty(0,T;\mathcal{L}(\XN,\XN'))} \\ & \leq  C(N)M^2 \left[\|q^1 - q^2\|_{H^{2,1}_{\mu_m}(Q_T)}+\|v_N^1-v_N^2\|_{\mathcal{C}^0([0,T];\XN)}\right], 
		\end{align*}
		and
		\begin{equation*}
		\|\mathcal{B}[q^1]-\mathcal{B}[q^2]\|_{L^2(0,T;\XN')} \leq  C(N)M(1+T^\frac12)\|q^1-q^2\|_{H_m^{2,1}(Q_T)}.
		\end{equation*}
	\end{enumerate}
\end{Lemma}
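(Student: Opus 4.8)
My plan is to treat all three operators by the same scheme: each is a multilinear form in which the test functions come from the finite-dimensional space $\XN$, so I would peel off every factor belonging to $\XN$ in $L^\infty$, using Remark \ref{rk:elementary} (which bounds $\|\cdot\|_{L^\infty}$, as well as the $L^\infty$-norm of any derivative or polynomial multiple, by $C(N)\|\cdot\|_{\XN}$), and then integrate the remaining $q$-dependent factor against $\mathrm{d}\mu_m$. Since $\mu_m$ is a probability measure, $\|f\|_{L^1_{\mu_m}}\le\|f\|_{L^2_{\mu_m}}$, so that $\|q\|_{L^1_{\mu_m}}\le M$ for $q\in\mathcal{R}_c^M$; the lower bound $q\ge c$ will control the singular factor $1/q$ coming from the capillary term. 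I would record once and for all the direct computation
\begin{equation*}
\sqrt{q}\,D^2(\sqrt{q})-\nabla(\sqrt{q})\otimes\nabla(\sqrt{q})=\tfrac12 D^2 q-\tfrac{1}{2q}\nabla q\otimes\nabla q,
\end{equation*}
which rewrites the capillary integrand in $\mathcal{B}$ as a term \emph{linear} in the second derivatives of $q$ plus a term \emph{quadratic} in $\nabla q$ and singular in $q$.

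For part (i), in $\mathcal{A}[q]$ the viscous term is bounded by $C(N)\|q\|_{L^1_{\mu_m}}\|u_N\|_{\XN}\|\Phi_N\|_{\XN}\le C(N)M\|u_N\|_{\XN}\|\Phi_N\|_{\XN}$ and the drag term by $r_0\|u_N\|_{L^2_{\mu_m}}\|\Phi_N\|_{L^2_{\mu_m}}$, both uniform in $t$, giving $\mathcal{A}[q]\in L^\infty(0,T;\mathcal{L}(\XN,\XN'))$. The operator $\mathcal{N}[q,v_N]$ is handled identically, its three summands being controlled by $C(N)\|q\|_{L^1_{\mu_m}}\|v_N\|_{\XN}$, $\delta_1 C(N)\|\nabla q\|_{L^2_{\mu_m}}$ and $r_1 C(N)\|q\|_{L^1_{\mu_m}}\|v_N\|_{\XN}^2$ respectively, where $\|\nabla q(t)\|_{L^2_{\mu_m}}$ is uniformly bounded because $q\in\mathcal{C}^0([0,T];H^1_{\mu_m})$ by Definition \ref{def:function_space_qN}. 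For $\mathcal{B}[q]$, using the rewriting above, the quadratic capillary term is bounded by $\frac{C(N)}{c}\|\nabla q(t)\|_{L^2_{\mu_m}}^2$ (again uniform in $t$) and the $\lambda\sigma^2$ and $r_4$ terms are lower order; the only summand merely square-integrable in time is the linear one $\int D^2 q:D(\Phi_N)\,\mathrm{d}\mu_m$, bounded by $C(N)\|q(t)\|_{H^2_{\mu_m}}$, and since $q\in L^2(0,T;H^2_{\mu_m})$ this yields precisely $\mathcal{B}[q]\in L^2(0,T;\XN')$.

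For part (ii) I would subtract and telescope via $q^1 v_N^1-q^2 v_N^2=(q^1-q^2)v_N^1+q^2(v_N^1-v_N^2)$, $|v_N^1|^2-|v_N^2|^2=(v_N^1-v_N^2)\cdot(v_N^1+v_N^2)$ and $\frac{1}{q^1}-\frac{1}{q^2}=\frac{q^2-q^1}{q^1q^2}$, always using $q^k\ge c$ and $\|v_N^k\|_{\mathcal{C}^0([0,T];\XN)}\le M$. Since $\mathcal{A}[q]$ is affine in $q$, its difference is linear in $q^1-q^2$ and gives a pointwise bound $C(N)\|(q^1-q^2)(t)\|_{L^2_{\mu_m}}$; as $(q^1-q^2)(0)=0$, inequality \eqref{eq:estim-q_diff-norms} converts this into the stated $T^{\frac12}\|q^1-q^2\|_{H^{2,1}_{\mu_m}(Q_T)}$. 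For $\mathcal{N}$ the $v_N$-factors produce the prefactor $M^2$ and the $q$-differences are absorbed through the continuous embedding $H^{2,1}_{\mu_m}(Q_T)\hookrightarrow\mathcal{C}^0([0,T];H^1_{\mu_m})$, which is why no power of $T$ appears there. In $\mathcal{B}$, the linear $D^2(q^1-q^2)$ piece is estimated in $L^2(0,T;H^2_{\mu_m})$ (the ``$1$'' in $1+T^{\frac12}$), while the $\lambda\sigma^2$, $r_4$ and the ``mixed'' capillary piece $\frac{A_1-A_2}{q^1}$ (with $A_k=\nabla q^k\otimes\nabla q^k$), which carries a gradient of $q^1-q^2$, are estimated in $\mathcal{C}^0$ norm and integrated over $[0,T]$, producing the factor $T^{\frac12}$.

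The genuine obstacle is the remaining capillary piece of the $\mathcal{B}$-difference, $A_2\big(\tfrac{1}{q^1}-\tfrac{1}{q^2}\big)=\nabla q^2\otimes\nabla q^2\,\tfrac{q^2-q^1}{q^1q^2}$, in which \emph{no} derivative falls on $q^1-q^2$; bounding its contribution forces control of $\int_{\mathbb{R}^d}|\nabla q^2|^2|q^1-q^2|\,\mathrm{d}\mu_m$, which cannot be achieved with the $L^2_{\mu_m}$-gradient bounds alone. This is exactly where no Euclidean Sobolev embedding is available, and I would instead invoke the Gaussian functional inequalities of Appendix \ref{AppendixB} to control $\|\nabla q^2\|_{L^4_{\mu_m}}$ by $\|q^2\|_{H^2_{\mu_m}}$ and, crucially, to interpolate so that $\|\nabla q^2(t)\|_{L^4_{\mu_m}}^2$ is square-integrable in time (schematically $\lesssim\|q^2(t)\|_{H^2_{\mu_m}}\|q^2(t)\|_{H^1_{\mu_m}}$). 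Together with Cauchy--Schwarz, $\int|\nabla q^2|^2|q^1-q^2|\,\mathrm{d}\mu_m\le\|\nabla q^2\|_{L^4_{\mu_m}}^2\|q^1-q^2\|_{L^2_{\mu_m}}$, this lands the term in $L^2(0,T)$ and yields the stated form, the constant absorbing (besides $N$, $c^{-1}$ and $M$) the $H^{2,1}_{\mu_m}(Q_T)$-norms of $q^1,q^2$, which in the fixed-point application of Proposition \ref{prop:existence_FD} are themselves controlled by $\|q^0\|_{H^1_{\mu_m}}$. Keeping track of these time-integrabilities, so that every contribution genuinely lies in $L^2(0,T)$, is the main bookkeeping difficulty.
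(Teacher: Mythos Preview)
Your overall scheme---pulling all $\XN$-factors out in $L^\infty$ via Remark~\ref{rk:elementary}, then integrating the remaining $q$-dependent factor against $\mathrm{d}\mu_m$, telescoping for the differences, and invoking \eqref{eq:estim-q_diff-norms} to produce the $T^{1/2}$---is exactly what the paper does; its proof only writes out $\mathcal{A}[q]$ and declares the other terms ``analogous''. Your identification of the capillary difference $\nabla q^2\otimes\nabla q^2\,\frac{q^2-q^1}{q^1q^2}$ as the delicate point is also correct.

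The gap is in your proposed cure for that term. You write that you would ``invoke the Gaussian functional inequalities of Appendix~\ref{AppendixB} to control $\|\nabla q^2\|_{L^4_{\mu_m}}$ by $\|q^2\|_{H^2_{\mu_m}}$'' and interpolate as $\|\nabla q^2\|_{L^4_{\mu_m}}^2\lesssim\|q^2\|_{H^2_{\mu_m}}\|q^2\|_{H^1_{\mu_m}}$. Neither inequality appears in Appendix~\ref{AppendixB} (which contains only log-Sobolev, strong Poincar\'e and Poincar\'e--Korn), and in fact both are \emph{false} in the Gaussian setting: in one dimension, $q(x)=e^{\varepsilon x^2}$ with $\tfrac18<\varepsilon<\tfrac14$ lies in $H^2_{\mu_m}$ but $\nabla q\notin L^4_{\mu_m}$. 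This is precisely the phenomenon the paper alludes to in the introduction (``Sobolev embeddings are not available''), so one cannot close this term the way you propose.

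The paper's proof is too brief to see how it resolves this; the ``analogous'' argument for $\mathcal{A}$ does not transpose to the quadratic-in-$\nabla q$ capillary piece as stated. One should expect the Lipschitz constant for $\mathcal{B}$ to depend, beyond $N$ and $M$, at least on $c^{-1}$ and on the full $H^{2,1}_{\mu_m}(Q_T)$-norms of $q^1,q^2$; in the fixed-point application (Proposition~\ref{prop:local-exist-approx}) the $q^k$ also enjoy the pointwise two-sided bound \eqref{eq:positivity_qN} from Proposition~\ref{prop:Fokker-PlanckEquation_existence}, and this extra $L^\infty$ information (rather than any $L^4$ Gaussian embedding) is what one should try to exploit to close the estimate.
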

\begin{proof}
	Consider for instance the term $\mathcal{A}[q]$. For $q\in H^{2,1}_{\mu_m}(Q_T) \subset  \mathcal{C}^0([0,T];H^1_{\mu_m})$, we can estimate, for all $u_N, \Phi_N \in \XN$, and all $t\in[0,T]$,
	\begin{align*}
	\left|\left\langle \mathcal{A}[q(t)](u_N), \Phi_N \right\rangle_{\XN',\XN} \right|
	&\le 2\nu \left| \int_{\mathbb{R}^d} q(t) D(u_N) : D(\Phi_N) \;{\rm d}\mu_m\right| + r_0\left| \int_{\mathbb{R}^d} u_N \cdot \Phi_N \;{\rm d}\mu_m \right| \\
	&\le C\big( \|q\|_{\mathcal{C}^0([0,T];L^2_{\mu_m})} \|D(u_N)\|_{L^\infty}\|D(\Phi_N)\|_{L^2_{\mu_m}} + \|u_N\|_{L^2_{\mu_m}}\|\Phi_N\|_{L^2_{\mu_m}}\big) \\
	&\le C(N)\big( \|q\|_{\mathcal{C}^0([0,T];L^2_{\mu_m})} \|u_N\|_{L^2}\|\Phi_N\|_{L^2} + \|u_N\|_{L^2}\|\Phi_N\|_{L^2}\big),
	\end{align*}
	where we used in particular Remark \ref{rk:elementary} in the last inequality. This proves $(i)$ for $\mathcal{A}[q]$. The proof of $(ii)$ for $\mathcal{A}[q]$ is similar, using in addition \eqref{eq:estim-q_diff-norms}. The other terms can be treated in an analogous way.
\end{proof}
Using the notations introduced above, one sees that the linearized equation corresponding to \eqref{eq:CNSK_system_m_regularized-velocity-WS-Definition_XN-thm} reads 
\begin{equation}\label{eq:NS_XN_linearized_velocity}
\frac{{\rm d}}{{\rm d}t}\left(\mathfrak{M}[q_N]u_N\right)(t) = \mathcal{A}[q_N(t)](u_N(t)) + \mathcal{N}[q_N(t),u_N(t)](u_N(t)) + \mathcal{B}[q_N(t)],  
\end{equation}
with initial condition $\mathfrak{M}[q_N](u_N)(0) = \mathfrak{M}[q^0](u_N^0)$. Here we recall that $\mathfrak{M}[q^0](u_N^0) \in \XN'$ is given, for all $w_N \in \XN$, by
\begin{equation*}
\left\langle \mathfrak{M}[q^0](u_N^0), w_N \right\rangle_{\XN',\XN} = \int_{\mathbb{R}^d} q^0 u_N^0 \cdot w_N \;{\rm d}\mu_m = \int_{\mathbb{R}^d} (\rho_m q^0u_N^0) \cdot w_N \;{\rm d}x = \int_{\mathbb{R}^d} (\rho_m q^0u^0) \cdot w_N \;{\rm d}x,
\end{equation*}
where we used the definition \eqref{def:XN_initial_data} of $u_N^0$ in the last equality.

\subsubsection{Existence of solutions to the regularized QNS system with drag forces in the Faedo-Galerkin approximation.} \label{subsection:FP}

\paragraph{Local existence.}

Now we prove the local existence of solutions to the approximate system \eqref{eq:Fokker-Planck-thm}--\eqref{eq:CNSK_system_m_regularized-velocity-WS-Definition_XN-thm}, using in particular the Lipschitz continuity properties established in Lemmas \ref{lem:Lipschitz_q} and \ref{lem:Lipschitz_N_A_B}.

\begin{Proposition}[Local existence of solutions to the approximate system \eqref{eq:Fokker-Planck-thm}--\eqref{eq:CNSK_system_m_regularized-velocity-WS-Definition_XN-thm}]\label{prop:local-exist-approx}
	Let $N\in\mathbb{N}^*$ and $(q^0,u^0_N)\in L^2_{\mu_m}\times\XN$ with $q^0$ satisfying \eqref{eq:_regularized_initial_data}. There exists $T>0$ and a unique 
	\begin{equation*}
	(q_N,u_N)\in H^{2,1}_{\mu_m}(Q_T)\times \mathcal{C}^1([0,T];\XN)
	\end{equation*}
	such that
	$q_N$ is a solution to \eqref{eq:Fokker-Planck-thm} in $L^2(0,T;L^2_{\mu_m})$, satisfying $q_N(0)=q^0$ and \eqref{eq:regularity_qN}--\eqref{eq:positivity_qN}, and $u_N$ is a solution to \eqref{eq:CNSK_system_m_regularized-velocity-WS-Definition_XN-thm}, for all $\Phi_N \in \mathcal{C}^1([0,T];\XN)$ such that $\Phi_N(T) = 0$, with $u_N(0)=u_N^0$.
\end{Proposition}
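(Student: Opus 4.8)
The plan is to solve the coupled system \eqref{eq:Fokker-Planck-thm}--\eqref{eq:CNSK_system_m_regularized-velocity-WS-Definition_XN-thm} by a Banach fixed-point argument carried out in the finite-dimensional velocity space, exploiting the decoupling provided by the solution map $\mathcal{S}_T^{(q)}$ of Remark \ref{rk:def_STq}. Fix a radius $M > \|u_N^0\|_{\XN}$ (to be chosen in terms of $N$, $c^0$ and $\|q^0\|_{H^1_{\mu_m}}$) and, for $T \in (0,1]$ to be fixed later, consider the complete metric space
\[
B = \left\{ v_N \in \mathcal{C}^0([0,T];\XN) \ : \ v_N(0) = u_N^0, \ \|v_N\|_{\mathcal{C}^0([0,T];\XN)} \leq M \right\} .
\]
I would define $\Psi : B \to \mathcal{C}^0([0,T];\XN)$ as follows: given $v_N \in B$, set $q_N = \mathcal{S}_T^{(q)}(v_N)$, the Fokker-Planck solution from Proposition \ref{prop:Fokker-PlanckEquation_existence}, and let $u_N = \Psi(v_N)$ solve the equation \eqref{eq:NS_XN_linearized_velocity} with $v_N$ \emph{frozen} in the nonlinear slots, namely
\[
\frac{{\rm d}}{{\rm d}t}\left(\mathfrak{M}[q_N]u_N\right) = \mathcal{A}[q_N]u_N + \mathcal{N}[q_N,v_N]u_N + \mathcal{B}[q_N], \qquad \mathfrak{M}[q_N(0)]u_N(0) = \mathfrak{M}[q^0]u_N^0 .
\]
Any fixed point $u_N = \Psi(u_N)$ is then, by construction, a solution of the full system with $q_N = \mathcal{S}_T^{(q)}(u_N)$, the identity \eqref{eq:NS_XN_linearized_velocity} (with $v_N=u_N$) being equivalent, after integration by parts in time, to the tested formulation \eqref{eq:CNSK_system_m_regularized-velocity-WS-Definition_XN-thm}.

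The key structural point is that, once $v_N$ is frozen, this inner problem is \emph{linear} in $u_N$: the operators $\mathcal{A}[q_N]$ and $\mathcal{N}[q_N,v_N]$ act linearly and $\mathcal{B}[q_N]$ does not involve $u_N$. Its well-posedness rests on the uniform invertibility of the mass operator, which is precisely what the constraint $\|v_N\|_{\mathcal{C}^0} \leq M$ buys: by \eqref{eq:bound_div_UN} one has $\|\dvm(v_N(t))\|_{L^\infty} \leq C_m(N)M$, so the positivity estimate \eqref{eq:positivity_qN} gives $c := c^0 e^{-C_m(N)MT} \leq q_N(t) \leq (c^0)^{-1}e^{C_m(N)MT}$ uniformly on $[0,T]$, whence $q_N \in \mathcal{R}_{c,T}^{M'}$ for a suitable $M'$ and $\|\mathfrak{M}[q_N(t)]^{-1}\|_{\mathcal{L}(\XN',\XN)} \leq c^{-1}$ by \eqref{eq:mathfrak{M}_invertible}. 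Writing $y = \mathfrak{M}[q_N]u_N$ turns the inner problem into a linear inhomogeneous ODE for $y$ in the finite-dimensional space $\XN'$, with $L^\infty(0,T)$ coefficients and forcing $\mathcal{B}[q_N] \in L^2(0,T;\XN')$ (Lemma \ref{lem:Lipschitz_N_A_B}$(i)$); it has a unique solution $y \in H^1(0,T;\XN') \hookrightarrow \mathcal{C}^0$, so $u_N = \mathfrak{M}[q_N]^{-1}y \in \mathcal{C}^0([0,T];\XN)$ with $u_N(0) = u_N^0$.

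It then remains to fix $M$ and $T$ so that $\Psi$ is a self-map and a contraction of $B$. For the self-map property I would use the integral form $u_N(t) = \mathfrak{M}[q_N(t)]^{-1}\big(\mathfrak{M}[q^0]u_N^0 + \int_0^t(\mathcal{A}[q_N]u_N + \mathcal{N}[q_N,v_N]u_N + \mathcal{B}[q_N])\,{\rm d}s\big)$: the leading term is continuous in $t$ and equals $u_N^0$ at $t=0$ (as $q_N(0)=q^0$), while the integral is $O(T)$ with constants depending only on $N,c^{-1},M,\|q^0\|_{H^1_{\mu_m}}$ and uniform over $v_N \in B$; choosing $M$ and then $T$ keeps $\|u_N\|_{\mathcal{C}^0} \leq M$. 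For the contraction, given $v_N^1,v_N^2 \in B$ with $q_N^k=\mathcal{S}_T^{(q)}(v_N^k)$ and $u_N^k=\Psi(v_N^k)$, I would chain the available Lipschitz estimates: \eqref{eq:Lipschitz_qN} bounds $\|q_N^1-q_N^2\|_{H^{2,1}_{\mu_m}(Q_T)}$ by $\|v_N^1-v_N^2\|_{\mathcal{C}^0}$, and Lemmas \ref{lem:Lipschitz_q} and \ref{lem:Lipschitz_N_A_B}$(ii)$ bound the differences of $\mathfrak{M}[\cdot]^{-1}$, $\mathcal{A}$, $\mathcal{N}$, $\mathcal{B}$ by a factor $T^{1/2}\|q_N^1-q_N^2\|_{H^{2,1}_{\mu_m}(Q_T)}$ plus $\|v_N^1-v_N^2\|_{\mathcal{C}^0}$; inserting these into the integral representation yields $\|u_N^1-u_N^2\|_{\mathcal{C}^0} \leq C(N,c^{-1},M,\|q^0\|_{H^1_{\mu_m}})\,T^{1/2}\|v_N^1-v_N^2\|_{\mathcal{C}^0}$, a contraction for $T$ small. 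Banach's theorem then yields a unique $u_N \in B$ with $u_N = \Psi(u_N)$; setting $q_N = \mathcal{S}_T^{(q)}(u_N)$ gives the pair, with $q_N \in H^{2,1}_{\mu_m}(Q_T)$ and \eqref{eq:regularity_qN}--\eqref{eq:positivity_qN} from Proposition \ref{prop:Fokker-PlanckEquation_existence}. The improved regularity $u_N \in \mathcal{C}^1([0,T];\XN)$ follows by a short bootstrap: all three terms on the right-hand side depend continuously on $t$ through $q_N \in \mathcal{C}^0([0,T];H^1_{\mu_m})$ (integrating by parts in the Korteweg term so that only $\nabla q_N$ enters), while $\mathfrak{M}[q_N(\cdot)]$ is itself $\mathcal{C}^1$ since $\frac{{\rm d}}{{\rm d}t}\langle\mathfrak{M}[q_N]v,w\rangle = \int_{\mathbb{R}^d}\partial_t q_N\, v\cdot w\,{\rm d}\mu_m$ is continuous (using $\partial_t q_N = \delta_1\Delta_m q_N - \dvm(q_N u_N)$ and $\nabla q_N \in \mathcal{C}^0([0,T];L^2_{\mu_m})$).

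I expect the main obstacle to be the coupling of the two unknowns together with the order in which the parameters must be chosen. The radius $M$ has to be fixed \emph{first}, since the $\mathcal{C}^0$-bound on $v_N$ is exactly what forces, via \eqref{eq:bound_div_UN} and \eqref{eq:positivity_qN}, the uniform lower bound $q_N \geq c > 0$ and hence the uniform invertibility of $\mathfrak{M}[q_N]$ underlying the entire inner solve; only afterwards can $T$ be taken small enough to secure simultaneously the self-map property and the $T^{1/2}$ contraction factor. The remainder is careful bookkeeping of the $N$- and $M$-dependent constants along the chain of estimates, which is routine given Lemmas \ref{lem:Lipschitz_q} and \ref{lem:Lipschitz_N_A_B}.
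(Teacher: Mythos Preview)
Your proposal is correct and follows essentially the same scheme as the paper: a Banach fixed-point in the finite-dimensional velocity space, built on the solution map $\mathcal{S}_T^{(q)}$ and the Lipschitz estimates of Lemmas~\ref{lem:Lipschitz_q} and~\ref{lem:Lipschitz_N_A_B}, followed by a bootstrap to $\mathcal{C}^1$.

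The only organizational difference is in how the fixed-point map is set up. The paper defines the Picard map $\mathcal{T}$ directly by the integral formula \eqref{eq:NS_XN_linearized_velocity_int-proof}, with the unknown $u_N$ appearing on both sides, and shows $\mathcal{T}$ is a contraction on a ball in $L^\infty(0,T;\XN)$. You instead freeze $v_N$ in the nonlinear slot of $\mathcal{N}$ and solve the resulting \emph{linear} ODE to define $\Psi(v_N)$; this buys a clean inner solve at the cost of an extra step, and the contraction estimate then requires absorbing the additional $\|u_N^1-u_N^2\|$ terms coming from $\mathcal{A}[q_N^2](u_N^1-u_N^2)$ and $\mathcal{N}[q_N^2,v_N^2](u_N^1-u_N^2)$ via the factor $T$ from the time integral (which you do not write out but is routine). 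Either organization works and the underlying ingredients are identical.
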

\begin{proof}
	The proof follows from standard arguments and is similar to that detailed in previous works, see e.g. \cite[Chapter 7.3.3]{Feireisl_Book04}. We only sketch it. Recalling the notation $\mathcal{S}_T^{(q)}$ from Remark \ref{rk:def_STq}, we first solve the integral equation
	\begin{align}
	u_N(t)  =  \mathfrak{M}[\mathcal{S}_T^{(q)}(u_N)(t)]^{-1}\Big[\mathfrak{M}[q^0](u_N^0) + \int_0^t &\left(\mathcal{A}[\mathcal{S}_T^{(q)}(u_N)(s)]+\mathcal{N}[\mathcal{S}_T^{(q)}(u_N)(s),u_N(s)]\right)(u_N(s)) \notag \\
	&\quad + \mathcal{B}[\mathcal{S}_T^{(q)}(u_N)(s)]\Big]\,{\rm d}s =: \mathcal{T}(u_N)(t). \label{eq:NS_XN_linearized_velocity_int-proof}
	\end{align}
	Let $M>0$ and $\bar B$ be the closed ball centered at (the function independent of time) $u_N^0$ and of radius $M$ in $L^\infty(0,T;\XN)$. From Proposition \ref{prop:Fokker-PlanckEquation_existence}, Remark \ref{rk:def_STq}, Lemmas \ref{lem:Lipschitz_q} and \ref{lem:Lipschitz_N_A_B}, it is not difficult to deduce that, for $T>0$ small enough, the mapping $\mathcal{T}$ is a contraction from $\bar B$ into itself. Therefore the Banach fixed point theorem gives the existence of a unique $u_N\in\bar B$ satisfying \eqref{eq:NS_XN_linearized_velocity_int-proof}. In turn, it is not difficult to deduce from \eqref{eq:NS_XN_linearized_velocity_int-proof} that $u_N \in \mathcal{C}^1([0,T];\XN)$ by using again the results from Proposition \ref{prop:Fokker-PlanckEquation_existence}, Remark \ref{rk:def_STq}, Lemmas \ref{lem:Lipschitz_q} and \ref{lem:Lipschitz_N_A_B}.
	
	To conclude the proof of the proposition, it suffices to apply  Proposition \ref{prop:Fokker-PlanckEquation_existence} and set $q_N=\mathcal{S}_T^{(q)}(u_N)$.
\end{proof}

\paragraph{Global existence.} \label{subsection:GE}

Now we fix a time $T>0$ and show that the local solution $(q_N,u_N)$ constructed in Proposition \ref{prop:local-exist-approx} above is in fact global on $[0,T]$. This is done by proving that $(q_N,u_N)$ is bounded in $H^1_{\mu_m} \times \XN$ on $[0,\overline{T}]$ for all $0<  \overline{T} \leq T$.

\begin{Lemma}\label{lem:FG-energy}
	Let $N\in\mathbb{N}^*$, $T>0$ and let $(q_N,u_N) \in H_m^{2,1}(Q_{\overline{T}})\times \mathcal{C}^1([0,\overline{T}];\XN)$, with  $0< \overline{T} \leq T$, $\overline{T}$ small enough, be a local-in-time solution to \eqref{eq:Fokker-Planck-thm}--\eqref{eq:CNSK_system_m_regularized-velocity-WS-Definition_XN-thm}. Then $(q_N,u_N)$ satisfies the (regularized) energy identity \eqref{eq:energy_identity_reg} and, in particular, 
	\begin{equation}\label{eq:energy_identity_reg_XN}
	\frac{{\rm d} \mathcal{E}_{\rm reg}}{{\rm d}t}(q_N(t),u_N(t)) + \frac12 \mathcal{D}_{\rm reg}(q_N(t),u_N(t)) \leq 2 r_4\delta_1\left[\frac{(d+2)^2}{\sigma^2}\right],
	\end{equation}
	where the regularized energy $\mathcal{E}_{\rm reg}$ and the regularized dissipation $\mathcal{D}_{\rm reg}$ are given in \eqref{def:energy_reg} and \eqref{def:dissipation_reg}, respectively.
	Therefore, for almost all $t \in [0,\overline{T}]$, 
	\begin{equation}\label{eq:energy_inequality_reg_XN}
	{\mathcal{E}}_{\rm reg}(q_N(t),u_N(t)) + \frac12\int_0^t \mathcal{D}_{\rm reg}(q_N(t'),u_N(t')) \,{\rm d}t' \leq  \mathcal{E}_{\rm reg}(q_N^0,u_N^0) + 2r_4\delta_1\left[\frac{(d+2)^2}{\sigma^2}\right]t.
	\end{equation}

\end{Lemma}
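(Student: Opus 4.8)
The plan is to prove the differential inequality \eqref{eq:energy_identity_reg_XN} by making rigorous, at the Faedo-Galerkin level, the formal computation leading to the energy identity \eqref{eq:energy_identity_reg}, and then by absorbing the remainder $\mathcal{R}_{\rm reg}$ into the dissipation. Two features make the formal computation legitimate here. First, $u_N(t)\in\XN$ is finite-dimensional and every element of $\XN$ is a polynomial times $\rho_m^{1/2}$, so $u_N$, all its $x$-derivatives and all polynomial multiples such as $|x|^2x\cdot u_N$ and $|u_N|^4$ belong to every $L^p_{\mu_m}$; hence $u_N(t)$ is an admissible test field and every velocity-dependent integral below is finite. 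Second, the bounds \eqref{eq:positivity_qN}, valid on $[0,\overline T]$, guarantee that $\ln(q_N)$, $\nabla\ln(q_N)=\nabla q_N/q_N$ and $D^2(\ln(q_N))$ are well defined, the extra regularity needed to control the quantum dissipation $\int_{\mathbb{R}^d}q_N|D^2(\ln(q_N))|^2\,{\rm d}\mu_m$ being supplied by the parabolic smoothing of the nondegenerate diffusion $\delta_1\Delta_m$ with $\delta_1>0$ (this is precisely the role of the regularizing parameter $\delta_1$).

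I would first pair the velocity equation \eqref{eq:NS_XN_linearized_velocity} with $u_N(t)$, pointwise in time. Using the symmetry of $\mathfrak{M}[q_N]$ and the Fokker-Planck equation \eqref{eq:Fokker-Planck-thm}, the left-hand side produces $\frac{{\rm d}}{{\rm d}t}\frac12\int_{\mathbb{R}^d}q_N|u_N|^2\,{\rm d}\mu_m$ plus a term $\frac12\int_{\mathbb{R}^d}|u_N|^2\partial_tq_N\,{\rm d}\mu_m$; inserting $\partial_tq_N=-\dvm(q_Nu_N)+\delta_1\Delta_mq_N$ and integrating by parts for $\dvm$, the transport part of the latter cancels against the convective term $\int_{\mathbb{R}^d}q_N(\nabla u_N)u_N\cdot u_N\,{\rm d}\mu_m$ contained in $\langle\mathcal{N}[q_N,u_N](u_N),u_N\rangle$, leaving only the diffusion contribution. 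A key verification is that the two remaining regularizing contributions cancel: the diffusion in the continuity equation gives $-\frac{\delta_1}{2}\int_{\mathbb{R}^d}|u_N|^2\Delta_mq_N\,{\rm d}\mu_m=+\frac{\delta_1}{2}\int_{\mathbb{R}^d}\nabla(|u_N|^2)\cdot\nabla q_N\,{\rm d}\mu_m$, whereas the term $-\delta_1\int_{\mathbb{R}^d}\nabla u_N\nabla q_N\cdot u_N\,{\rm d}\mu_m$ (coming from $\delta_1\nabla u\nabla q$) equals $-\frac{\delta_1}{2}\int_{\mathbb{R}^d}\nabla(|u_N|^2)\cdot\nabla q_N\,{\rm d}\mu_m$, so that they sum to zero. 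The viscous part of $\mathcal{A}[q_N]$ gives $-2\nu\int_{\mathbb{R}^d}q_N|D(u_N)|^2\,{\rm d}\mu_m$ (since $D(u_N):\nabla u_N=|D(u_N)|^2$), and the drag terms in $\mathcal{A}$ and $\mathcal{N}$ give $-r_0\int_{\mathbb{R}^d}|u_N|^2\,{\rm d}\mu_m$ and $-r_1\int_{\mathbb{R}^d}q_N|u_N|^4\,{\rm d}\mu_m$.

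It then remains to identify the three forcing terms collected in $\mathcal{B}[q_N]$, once tested against $u_N$, with the time derivatives of the remaining pieces $a\int_{\mathbb{R}^d}q_N\ln(q_N)\,{\rm d}\mu_m$, $\frac{\kappa^2}{2}\int_{\mathbb{R}^d}q_N|\nabla\ln(q_N)|^2\,{\rm d}\mu_m$ and $\frac{r_4}{4}I_4(q_N)$ of $\mathcal{E}_{\rm reg}$, up to the dissipative $\delta_1$-corrections and the remainder. For the internal-energy and fourth-moment pieces this is a direct computation: differentiating and inserting \eqref{eq:Fokker-Planck-thm}, the transport parts reproduce the tested pressure force $-\lambda\sigma^2\int_{\mathbb{R}^d}\nabla q_N\cdot u_N\,{\rm d}\mu_m$ and the tested drag force $-\frac{r_4}{\sigma^4}\int_{\mathbb{R}^d}q_N|x|^2x\cdot u_N\,{\rm d}\mu_m$, while the diffusion parts (computed from $\int_{\mathbb{R}^d}\ln(q_N)\Delta_mq_N\,{\rm d}\mu_m=-\int_{\mathbb{R}^d}q_N|\nabla\ln(q_N)|^2\,{\rm d}\mu_m$ and from $\Delta_m(|x|^4)=4(d+2)|x|^2-\tfrac{4}{\sigma^2}|x|^4$) produce the corresponding nonnegative $\delta_1$-terms of $\mathcal{D}_{\rm reg}$ together with the remainder $\mathcal{R}_{\rm reg}=\frac{r_4\delta_1(d+2)}{\sigma^2}I_2(q_N)$; here the relation $\lambda\sigma^2=a+\kappa^2/\sigma^2$ from \eqref{def:sigma} is used to split $-\lambda\sigma^2\nabla q_N$ between the internal-energy and quantum balances. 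The main obstacle is the Korteweg term $-\kappa^2\int_{\mathbb{R}^d}q_N D^2(\ln(q_N)):\nabla u_N\,{\rm d}\mu_m$ arising from $\mathcal{B}[q_N]$: for it I would differentiate $\frac{\kappa^2}{2}\int_{\mathbb{R}^d}q_N|\nabla\ln(q_N)|^2\,{\rm d}\mu_m$, split the evolution of $q_N$ into transport and diffusion parts via \eqref{eq:Fokker-Planck-thm}, and use the twisted integration-by-parts formulae for $\dvm$ and $\Delta_m$ together with the Bohm-potential identities gathered in Appendix \ref{appendixA}; the transport part has to match the tested Korteweg stress, while the diffusion part is the Fisher-information dissipation producing the term $\kappa^2\delta_1\int_{\mathbb{R}^d}q_N|D^2(\ln(q_N))|^2\,{\rm d}\mu_m$ of $\mathcal{D}_{\rm reg}$. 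This is the delicate step, both because of the third-order structure and because the commutators between $\nabla$, $\dvm$ and the Gaussian weight $\rho_m$ must be tracked carefully.

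Collecting all contributions yields the identity \eqref{eq:energy_identity_reg}, that is $\frac{{\rm d}}{{\rm d}t}\mathcal{E}_{\rm reg}+\mathcal{D}_{\rm reg}=\mathcal{R}_{\rm reg}$ for $(q_N,u_N)$. To conclude, I would estimate $\mathcal{R}_{\rm reg}$ by Young's inequality as in \eqref{eq:absorb_Rreg}; the resulting term $\frac{r_4\delta_1}{8\sigma^2}I_4(q_N)$ is dominated by half of the nonnegative $I_4$-term of $\mathcal{D}_{\rm reg}$ (see \eqref{def:dissipation_reg}) and is therefore absorbed into $\frac12\mathcal{D}_{\rm reg}$, which gives the differential inequality \eqref{eq:energy_identity_reg_XN} with constant $2r_4\delta_1(d+2)^2/\sigma^2$. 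Integrating this inequality over $[0,t]$ finally yields the energy estimate \eqref{eq:energy_inequality_reg_XN}, the almost-everywhere formulation reflecting that the bound is obtained after a time integration.
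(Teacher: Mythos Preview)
Your proposal is correct and follows essentially the same approach as the paper's proof: the paper simply says ``apply \eqref{eq:CNSK_system_m_regularized-velocity-WS-Definition_XN-thm} with $\Phi_N=u_N$ together with a simple integration by parts,'' notes that the positivity and $H^{2,1}_{\mu_m}$-regularity of $q_N$ (from \eqref{eq:regularity_qN}--\eqref{eq:positivity_qN}) justify every integral and integration by parts, and then integrates in time. Your write-up expands exactly these steps in detail, including the cancellation between the $\delta_1\Delta_m q_N$ contribution and the $\delta_1\nabla u_N\nabla q_N$ term, and the absorption of $\mathcal{R}_{\rm reg}$ via \eqref{eq:absorb_Rreg}.
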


\begin{proof}
	The energy identity is obtained by applying \eqref{eq:CNSK_system_m_regularized-velocity-WS-Definition_XN-thm} with $\Phi_N = u_N$ together with a simple integration by parts. 
	One of the integration by parts reads, for instance:
		\begin{eqnarray*}
		- \frac{1}{\sigma^4}\int_{\mathbb{R}^d} q|x|^2x \cdot \nabla \ln(q) \;{\rm d}\mu_m & = & \frac{1}{\sigma^4}\int_{\mathbb{R}^d} q \dvm(|x|^2x) \;{\rm d}\mu_m \\
		& = & \frac1{\sigma^4}\int_{\mathbb{R}^d}q\left((d+2)|x|^2 - \frac{|x|^4}{\sigma^2}\right)   \;{\rm d}\mu_m = \frac{d+2}{\sigma^2}I_2(q) - \frac{1}{\sigma^2}I_4(q).
		\end{eqnarray*}
	The other integrations by parts follow classical calculations, as the twisted operators offset the change of reference measure.

	Note that $q_N$ is positive and regular (see \eqref{eq:regularity_qN}--\eqref{eq:positivity_qN}) so that each term from the different integrals make sense and the integrations by parts can be justified.
	
	Integrating \eqref{eq:energy_identity_reg_XN}, we obtain \eqref{eq:energy_inequality_reg_XN}.
\end{proof}

The existence of global solutions to the approximate system \eqref{eq:Fokker-Planck-thm}--\eqref{eq:CNSK_system_m_regularized-velocity-WS-Definition_XN-thm} stated in Proposition \ref{prop:existence_FD} is now a direct consequence of the local existence and the energy estimates. 
\begin{proof}[Proof of Proposition \ref{prop:existence_FD}]
	For the global existence, it suffices to combine Proposition \ref{prop:local-exist-approx}, Lemma \ref{lem:FG-energy} and a standard argument. The inequalities in \eqref{eq:positivity_qN-prop} follow exactly as in the proof of Proposition \ref{prop:Fokker-PlanckEquation_existence}. The conservation of mass is an easy consequence of the fact that $q_N$ satisfies \eqref{eq:Fokker-Planck-thm}. The energy estimate is proven in Lemma \ref{lem:FG-energy}.
\end{proof}

\subsection{Convergence of the Faedo-Galerkin approximation}\label{subsection:A priori Estimate}

In this subsection, we show that a global solution $(q_N,u_N)$, constructed in Proposition \ref{prop:existence_FD}, to the approximate system \eqref{eq:Fokker-Planck-thm}--\eqref{eq:CNSK_system_m_regularized-velocity-WS-Definition_XN-thm} converges, in the limit $N\to\infty$, to a solution of the regularized system \eqref{eq:CNSK_system_m_regularized}.  As in the previous subsection, the parameters $0<r_0,r_1 ,r_4 ,\delta_1 \le1$ are fixed throughout Subsection \ref{subsection:A priori Estimate}.

\subsubsection{Energy estimates.}

We first deduce from the energy estimate stated in Lemma~\ref{lem:FG-energy} the following a priori estimates for a solution $(q_N,u_N)$ to \eqref{eq:Fokker-Planck-thm}--\eqref{eq:CNSK_system_m_regularized-velocity-WS-Definition_XN-thm}.

\begin{Lemma}\label{lem:apriori}
	Let $N\in\mathbb{N}^*$, $T>0$ and $(q^0,u^0_N)\in L^2_{\mu_m}\times\XN$ with $q^0$ satisfying \eqref{eq:_regularized_initial_data}. Let $(q_N,u_N)\in H^{2,1}_{\mu_m}(Q_T)\times \mathcal{C}^1([0,{T}];\XN)$ be a global solution to \eqref{eq:Fokker-Planck-thm}--\eqref{eq:CNSK_system_m_regularized-velocity-WS-Definition_XN-thm} in the sense of Proposition \ref{prop:existence_FD}. Then there exists a positive constant $C$ independent of $N$ and the parameters $r_0,r_1,r_4,\delta_1$  such that
	\begin{subequations}
		\label{eq:FG_apriori_estimates}
		\begin{align}
		\|\sqrt{q_N}\|_{L^\infty(0,T;L^2_{\mu_m})}+\|\nabla {\sqrt{q_N}}\|_{L^\infty(0,T;[L^2_{\mu_m}]^d)}+ \|q_N \ln(q_N)\|_{L^\infty(0,T;L^1_{\mu_m})}  &\leq C,     \label{eq:FG_apriori_estimates_q_energy} \\
		\sqrt{r_0}\|u_N\|_{L^2(0,T;[L^2_{\mu_m}]^d)} + \|\sqrt{q_N} u_N\|_{L^\infty(0,T;[L^2_{\mu_m}]^d)} + r_4^{\frac14}\|q_N^{\frac14}x\|_{L^\infty(0,T;[L^4_{\mu_m}]^d)} &\leq C, \label{eq:FG_apriori_estimates_q_energy2} \\
		\sqrt{\delta_1} \|\sqrt{q_N}D^2(\ln(q_N))\|_{L^2(0,T;[L^2_{\mu_m}]^{d \times d})} &\leq  C, \label{eq:FG_apriori_estimates_q_dissipation} \\
		r_4^{\frac14}\delta_1^{\frac14}\|q_N^{\frac14}x\|_{L^4(0,T;[L^4_{\mu_m}]^d)} + \|\sqrt{q_N}D(u_N)\|_{L^2(0,T;[L^2_{\mu_m}]^{d\times d})}  + r_1^{\frac14}\|q_N^{\frac14}u_N\|_{L^4(0,T;[L^4_{\mu_m}]^d)} &\leq C, \label{eq:FG_apriori_estimates_dissipation}
		\end{align}
		and
		\begin{equation}
		r_4\delta_1\left[\|D^2(\sqrt{q_N})\|_{L^2(0,T;[L^2_{\mu_m}]^{d\times d})}^2 + \|\nabla{q_N^{\frac14}}\|_{L^4(0,T;[L^4_{\mu_m}]^d)}^4\right] \leq C. \label{eq:FG_apriori_estimates_hessian_dissipation}
		\end{equation}
	\end{subequations}
\end{Lemma}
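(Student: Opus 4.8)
The plan is to derive every bound in \eqref{eq:FG_apriori_estimates} from two inputs only: the conservation of mass \eqref{eq:conservation-mass-qN}, $\int_{\mathbb{R}^d}q_N\,{\rm d}\mu_m=1$, and the energy inequality \eqref{eq:energy_inequality_reg_XN} of Lemma~\ref{lem:FG-energy}. First I would observe that the right-hand side of \eqref{eq:energy_inequality_reg_XN} is bounded uniformly on $[0,T]$ and independently of $N$, $r_0$, $r_1$, $r_4$, $\delta_1$: indeed $\mathcal{E}_{\rm reg}(q_N^0,u_N^0)$ is controlled by the initial data \eqref{eq:_regularized_initial_data}--\eqref{eq:initial_(q0,u0)} (with $u_N^0=\Pi_N u^0$, testing \eqref{def:XN_initial_data} against $w_N=u_N^0$ and using Cauchy--Schwarz gives $\int q^0|u_N^0|^2\,{\rm d}\mu_m\le\int q^0|u^0|^2\,{\rm d}\mu_m$), and $r_4\delta_1\le1$. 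Crucially, each term of $\mathcal{E}_{\rm reg}$ in \eqref{def:energy_reg_intro} is nonnegative: the kinetic term, the Fisher-information term and $I_4(q_N)$ are manifestly so, while the entropy term satisfies $\int_{\mathbb{R}^d}q_N\ln(q_N)\,{\rm d}\mu_m\ge0$ by Jensen's inequality, since $\mu_m$ is a probability measure and $\int q_N\,{\rm d}\mu_m=1$. Consequently $\sup_{[0,T]}\mathcal{E}_{\rm reg}(q_N(t),u_N(t))\le C$ and $\int_0^T\mathcal{D}_{\rm reg}(q_N,u_N)\,{\rm d}t\le C$, with $C$ independent of $N$ and of the parameters (but allowed to depend on $\kappa,\nu,\sigma,d$).

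With these two global bounds in hand, estimates \eqref{eq:FG_apriori_estimates_q_energy}--\eqref{eq:FG_apriori_estimates_dissipation} follow by reading off the individual terms. The bound $\|\sqrt{q_N}\|_{L^\infty(0,T;L^2_{\mu_m})}=1$ is mass conservation; writing $\nabla\sqrt{q_N}=\tfrac12\sqrt{q_N}\nabla\ln(q_N)$ gives $\|\nabla\sqrt{q_N}\|_{L^2_{\mu_m}}^2=\tfrac14\int q_N|\nabla\ln(q_N)|^2\,{\rm d}\mu_m\le\tfrac{1}{2\kappa^2}\mathcal{E}_{\rm reg}$; the entropy bound $\|q_N\ln(q_N)\|_{L^1_{\mu_m}}\le C$ follows either from the logarithmic Sobolev inequality \eqref{eq:LogarithmicSobolevInequality} applied to the Fisher-information bound just obtained, or directly by splitting $\int|q_N\ln q_N|\,{\rm d}\mu_m=\int q_N\ln q_N\,{\rm d}\mu_m+2\int_{\{q_N<1\}}q_N|\ln q_N|\,{\rm d}\mu_m$ and using $q_N|\ln q_N|\le e^{-1}$ on $\{q_N<1\}$. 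The $L^\infty_t$ bounds in \eqref{eq:FG_apriori_estimates_q_energy2} come from the kinetic term ($\|\sqrt{q_N}u_N\|_{L^2_{\mu_m}}^2=\int q_N|u_N|^2\,{\rm d}\mu_m\le2\mathcal{E}_{\rm reg}$) and from $\|q_N^{1/4}x\|_{L^4_{\mu_m}}^4=\sigma^4 I_4(q_N)$ together with $\tfrac{r_4}{4}I_4\le\mathcal{E}_{\rm reg}$. All remaining quantities in \eqref{eq:FG_apriori_estimates_q_energy2}--\eqref{eq:FG_apriori_estimates_dissipation} are dissipation terms: $r_0\int|u_N|^2$, $\delta_1\int q_N|D^2(\ln q_N)|^2$, $r_1\int q_N|u_N|^4$, $\tfrac{r_4\delta_1}{4\sigma^2}I_4(q_N)$ and $2\nu\int q_N|D(u_N)|^2$ each integrate in time to at most $\int_0^T\mathcal{D}_{\rm reg}\le C$; taking square or fourth roots yields the stated norms.

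The only estimate requiring genuine work is \eqref{eq:FG_apriori_estimates_hessian_dissipation}. I would start from the pointwise algebraic identities
\begin{equation*}
D^2(\sqrt{q_N})=\tfrac12\sqrt{q_N}\,D^2(\ln q_N)+\tfrac14\sqrt{q_N}\,\nabla\ln(q_N)\otimes\nabla\ln(q_N),\qquad \nabla(q_N^{1/4})=\tfrac14\,q_N^{1/4}\nabla\ln(q_N),
\end{equation*}
which reduce $\|D^2(\sqrt{q_N})\|_{L^2_{\mu_m}}^2$ and $\|\nabla(q_N^{1/4})\|_{L^4_{\mu_m}}^4$ to the two quantities $\int q_N|D^2(\ln q_N)|^2\,{\rm d}\mu_m$ and $\int q_N|\nabla\ln(q_N)|^4\,{\rm d}\mu_m$. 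The first is controlled by the dissipation exactly as in \eqref{eq:FG_apriori_estimates_q_dissipation}, so after multiplying by $r_4\delta_1\le1$ and integrating over $[0,T]$ it is bounded. The fourth-order term is the crux: I would establish a Jüngel-type inequality of the form $\int q_N|\nabla\ln q_N|^4\,{\rm d}\mu_m\le C\int q_N|D^2(\ln q_N)|^2\,{\rm d}\mu_m+(\text{lower order})$, obtained by integrating by parts so as to trade one factor $|\nabla\ln q_N|^2$ against a Hessian $D^2(\ln q_N)$, followed by Cauchy--Schwarz and Young's inequality.

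The main obstacle is precisely this last inequality in the Gaussian reference measure. In the flat case the integration by parts closes directly, but with ${\rm d}\mu_m=\rho_m\,{\rm d}x$ the twisted integration-by-parts formula produces additional terms carrying $\nabla\ln\rho_m=-x/\sigma^2$, i.e. moment-type contributions such as $\int q_N|x|^2|\nabla\ln q_N|^2\,{\rm d}\mu_m$. These I would absorb via Young's inequality into a small multiple of $\int q_N|\nabla\ln q_N|^4\,{\rm d}\mu_m$ plus a constant times $\sigma^4 I_4(q_N)$; since $I_4(q_N)$ and the Fisher information $\int q_N|\nabla\ln q_N|^2\,{\rm d}\mu_m$ are already bounded in $L^\infty_t$ by the energy, multiplying the resulting inequality by $r_4\delta_1\le1$ and integrating over $[0,T]$ closes estimate \eqref{eq:FG_apriori_estimates_hessian_dissipation}.
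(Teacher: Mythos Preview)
Your proof is correct and follows essentially the same route as the paper. The only difference is presentational: the paper packages your J\"ungel-type estimate for $\|D^2(\sqrt{q_N})\|_{L^2_{\mu_m}}^2+\|\nabla(q_N^{1/4})\|_{L^4_{\mu_m}}^4$ as a separate lemma (Lemma~\ref{lem:Hessian_estimate}, following \cite{Vasseur_Yu_SIMA2016}), which states exactly the bound by $\int q_N|D^2(\ln q_N)|^2\,{\rm d}\mu_m+\sigma^{-4}I_4(q_N)$ that you sketch inline, and then combines it with \eqref{eq:FG_apriori_estimates_q_dissipation} and the first term of \eqref{eq:FG_apriori_estimates_dissipation}.
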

\begin{proof}
	Estimates \eqref{eq:FG_apriori_estimates_q_energy}--\eqref{eq:FG_apriori_estimates_dissipation} directly follow from Lemma \ref{lem:FG-energy} and the expressions of $\mathcal{E}_{\rm reg}$ and $\mathcal{D}_{\rm reg}$ given in \eqref{def:energy_reg} and \eqref{def:dissipation_reg}, respectively.
	
	Estimate \eqref{eq:FG_apriori_estimates_hessian_dissipation} is obtained by combining the last term in \eqref{eq:FG_apriori_estimates_q_dissipation}, the first term in \eqref{eq:FG_apriori_estimates_dissipation} and Lemma \ref{lem:Hessian_estimate}.
\end{proof}

\begin{Remark}\label{rk:eq_sqrt(q_N)}
	Given a local solution $(q_N,u_N)$ as in Proposition \ref{prop:local-exist-approx}, the regularity and positivity of $q_N$ (see \eqref{eq:regularity_qN}--\eqref{eq:positivity_qN}), together with the fact that $u_N\in \mathcal{C}^1([0,T];\XN)$ imply that $(\sqrt{q_N},u_N)$ satisfies the following equation in $L^2(0,T;L^2_{\mu_m})$,
	\begin{equation}\label{eq:eq-for-sqrt(qN)}
	\partial_t(\sqrt{q_N}) + \frac{1}{2}\dvm(\sqrt{q_N}u_N) + 2(q_N^{\frac14}u_N) \cdot \nabla(q_N^{\frac14}) = \delta_1 \Delta_m(\sqrt{q_N}) + 4\delta_1 |\nabla(q_N^{\frac14})|^2,
	\end{equation}
	with
	\begin{equation*}
	\dvm(\sqrt{q_N}u_N) = \dv(\sqrt{q_N}u_N) - \left(\frac{x}{\sigma^2}q_N^{\frac14}\right) \cdot (q_N^\frac14 u_N) .
	\end{equation*}
	See the proof of Proposition \ref{prop:FG-convergence} for the justification that all terms in \eqref{eq:eq-for-sqrt(qN)} belong to $L^2(0,T;L^2_{\mu_m})$. 
\end{Remark}

\subsubsection{Convergence of the approximate solutions in the limit $N\to\infty$.}

The main tool to prove the convergence of a solution $(q_N,u_N)$ to \eqref{eq:Fokker-Planck-thm}--\eqref{eq:CNSK_system_m_regularized-velocity-WS-Definition_XN-thm} in the limit $N\to\infty$ is the well-known Aubin-Lions-Simon Lemma (see e.g. \cite[Theorem II.5.16]{Boyer_Fabrie_book}) which we now recall.

\begin{Lemma}[Aubin-Lions-Simon Lemma] \label{lemma:Aubin-Lions-Simon}
	
	Let $B \subset B_0 \subset B_{*}$ be three Banach spaces. We assume that the embedding of $B_0$ in $B_{*}$ is continuous and the embedding of $B$ in $B_0$ is compact. 
	
	Let $p$, $p_{*}$ such that $1 \leq p, p_{*} \leq \infty$. For $T>0$, we define
	\begin{equation*}
	W = \left\{ v \in L^{p}(0,T;B) \; ; \; \frac{{\rm d}v}{{\rm d}t} \in L^{p_{*}}(0,T; B_{*})  \right\}.
	\end{equation*}
	Then
	\begin{enumerate}[label=(\roman*)]
		\item If $p < \infty$, the embedding of $W$ in $L^p(0,T; B_0)$ is compact.
		\item If $p=\infty$ and $p_*>1$, the embedding of $W$ in $\mathcal{C}^0([0,T];B_0)$ is compact.
	\end{enumerate}
\end{Lemma}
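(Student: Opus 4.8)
The plan is to deduce the statement from two classical ingredients: an \emph{Ehrling-type interpolation inequality}, which turns the compactness of the embedding $B\hookrightarrow B_0$ into a quantitative estimate, and a compactness criterion for time-dependent functions valued in the \emph{weakest} space $B_*$, where the bound on the time derivative can be brought to bear. The guiding principle is that spatial compactness is supplied by $B\hookrightarrow B_0$, temporal equicontinuity (or smallness of time translations) is supplied by the control of $\frac{{\rm d}v}{{\rm d}t}$ in $L^{p_*}(0,T;B_*)$, and the two are glued together. In both cases $(i)$ and $(ii)$ the scheme is the same: first prove relative compactness of a bounded sequence $(v_n)\subset W$ in the space built over $B_*$, then upgrade to the space built over $B_0$ via Ehrling.

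First I would establish Ehrling's lemma: using that $B\hookrightarrow B_0$ is compact and $B_0\hookrightarrow B_*$ is continuous, for every $\eta>0$ there is a constant $C_\eta$ such that
\[
\|w\|_{B_0}\le \eta\|w\|_B+C_\eta\|w\|_{B_*}\qquad\text{for all }w\in B.
\]
The proof is the standard contradiction argument: a sequence violating the inequality, normalised in $B_0$, is bounded in $B$, hence converges in $B_0$ along a subsequence, while simultaneously tending to $0$ in $B_*$; continuity of $B_0\hookrightarrow B_*$ forces the $B_0$-limit to vanish, contradicting the normalisation. Applying this inequality pointwise in $t$ and taking $L^p$ (resp.\ supremum) norms yields its Bochner versions, $\|v\|_{L^p(0,T;B_0)}\le\eta\|v\|_{L^p(0,T;B)}+C_\eta\|v\|_{L^p(0,T;B_*)}$ and the analogue in $\mathcal{C}^0([0,T];B_0)$.

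For part $(i)$, let $(v_n)$ be bounded in $W$. I would first show that $(v_n)$ is relatively compact in $L^p(0,T;B_*)$ using the Riesz--Fr\'echet--Kolmogorov (Simon) criterion for vector-valued functions: the time averages $\int_{t_1}^{t_2}v_n$ are bounded in $B$, hence relatively compact in $B_*$ because the embedding $B\hookrightarrow B_*$, being the composition of the compact map $B\hookrightarrow B_0$ with the continuous map $B_0\hookrightarrow B_*$, is itself compact; and the time translations are uniformly small, $\sup_n\|v_n(\cdot+h)-v_n\|_{L^p(0,T-h;B_*)}\to0$ as $h\to0$. The translation estimate is exactly where the derivative enters: writing $v_n(t+h)-v_n(t)=\int_t^{t+h}\frac{{\rm d}v_n}{{\rm d}s}\,{\rm d}s$ and using H\"older's inequality one controls the translations by the $L^{p_*}(0,T;B_*)$-bound on the derivatives, with no constraint on $p_*$ beyond $p_*\ge1$. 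Once compactness in $L^p(0,T;B_*)$ is known, the Bochner--Ehrling inequality gives, for a subsequence, $\|v_n-v_m\|_{L^p(0,T;B_0)}\le 2\eta\sup_k\|v_k\|_{L^p(0,T;B)}+C_\eta\|v_n-v_m\|_{L^p(0,T;B_*)}$; choosing $\eta$ small and then $n,m$ large makes the right-hand side arbitrarily small, so $(v_n)$ is Cauchy in $L^p(0,T;B_0)$.

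For part $(ii)$, with $p=\infty$ and $p_*>1$, I would argue by Arzel\`a--Ascoli for $B_*$-valued continuous maps on $[0,T]$. The hypothesis $p_*>1$ gives the uniform H\"older bound $\|v_n(t)-v_n(s)\|_{B_*}\le|t-s|^{1-1/p_*}\|\frac{{\rm d}v_n}{{\rm d}t}\|_{L^{p_*}(0,T;B_*)}$, hence equicontinuity into $B_*$, while the $L^\infty(0,T;B)$-bound makes $\{v_n(t)\}_n$ relatively compact in $B_*$ for each $t$; thus $(v_n)$ is relatively compact in $\mathcal{C}^0([0,T];B_*)$, and the supremum form of Ehrling's inequality upgrades this to $\mathcal{C}^0([0,T];B_0)$. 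The main obstacle I anticipate lies in part $(i)$: establishing the vector-valued Riesz--Kolmogorov criterion itself—that boundedness in $B$ of the time averages together with uniform smallness of translations in $B_*$ suffices for relative compactness in $L^p(0,T;B_*)$—requires a time-mollification argument and equi-tightness on $[0,T]$, which is considerably more delicate than the Arzel\`a--Ascoli reasoning that settles case $(ii)$.
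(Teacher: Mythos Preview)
The paper does not prove this lemma: it is stated as a classical result and a reference to \cite[Theorem II.5.16]{Boyer_Fabrie_book} is given in lieu of a proof. Your outline is the standard argument (Ehrling interpolation combined with Simon's translation criterion for $(i)$ and Arzel\`a--Ascoli for $(ii)$), and is essentially what one finds in the cited reference; there is nothing to compare.

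One small point of care in your sketch of $(ii)$: from $v_n\in L^\infty(0,T;B)$ you only know $\|v_n(t)\|_B\le C$ for \emph{almost every} $t$, so to apply Arzel\`a--Ascoli you should first argue that the continuous $B_*$-representative of $v_n$ actually takes values in a bounded set of $B$ (or at least in its $B_*$-closure, which is still relatively compact in $B_*$). This is routine but worth mentioning.
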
 

The following proposition establishes, in particular, the existence of a global weak solution to the regularized QNS system \eqref{eq:CNSK_system_m_regularized}. 

\begin{Proposition}\label{prop:FG-convergence}
	Let $T>0$ and $(q^0,u^0)\in L^2_{\mu_m}\times[L^2]^d$ satisfying \eqref{eq:_regularized_initial_data}--\eqref{eq:initial_(q0,u0)}. For all $N\in\mathbb{N}^*$, let $(q_N,u_N)\in H^{2,1}_{\mu_m}(Q_T)\times \mathcal{C}^1([0,{T}];\XN)$ be a global solution to \eqref{eq:Fokker-Planck-thm}--\eqref{eq:CNSK_system_m_regularized-velocity-WS-Definition_XN-thm}, associated to the initial data $(q^0,u^0_N)$ (with $u^0_N$ defined by \eqref{def:XN_initial_data}), in the sense of Proposition \ref{prop:existence_FD}. Then
	
	\begin{enumerate}[label=(\roman*)]
		\item	The sequence $(\sqrt{q_N})_{N}$ converges strongly (along some subsequence which will be denoted by the same symbol) in $L^2(0,T;H^1_{\mu_m}) \cap \mathcal{C}^0([0,T];L^2_{\mu_m})$. We set ${\s}=\lim\sqrt{q_N}$.
		
		\item The sequence $(q_Nu_N)_{N}$ converges strongly (along some subsequence which will be denoted by the same symbol) in $L^2(0,T;[L^1_{\mu_m}]^d)$.  We set $J=\lim q_Nu_N$. 
		
		\item	Let $q = {\s}^2$ and let $u : (0,T) \times \mathbb{R}^d \mapsto \mathbb{R}^d$ be defined by 
		\begin{equation*}
		u(t,x) = \frac{J(t,x)}{{\s}^2(t,x)} \quad \mbox{for a.e. } (t,x) \in (0,T) \times \mathbb{R}^d \mbox{ such that } {\s}(t,x) \neq 0, \quad u(t,x) = 0 \quad \mbox{elsewhere.}
		\end{equation*}
		Then $(q,u)$ is a solution to \eqref{eq:CNSK_system_m_regularized} in $\mathcal{D}'([0,T)\times\mathbb{R}^d)\times[\mathcal{D}'([0,T)\times \mathbb{R}^d)]^d$. 
	\end{enumerate}
\end{Proposition}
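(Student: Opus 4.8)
The plan is to turn the uniform-in-$N$ a priori bounds of Lemma~\ref{lem:apriori} into compactness through the Aubin--Lions--Simon lemma (Lemma~\ref{lemma:Aubin-Lions-Simon}), the decisive structural fact being that in the Gaussian setting the embeddings $H^2_{\mu_m}\hookrightarrow H^1_{\mu_m}\hookrightarrow L^2_{\mu_m}$ are \emph{compact}. This replaces the Rellich--Kondrachov theorem used on bounded Euclidean domains and is a consequence of the Ornstein--Uhlenbeck operator $\Delta_m$ having compact resolvent, i.e. of the Hermite functions diagonalising $-\Delta_m$ with eigenvalues tending to $+\infty$ (see Appendix~\ref{AppendixB}). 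Since here $r_0,r_1,r_4,\delta_1$ are fixed and positive, estimate \eqref{eq:FG_apriori_estimates_hessian_dissipation} yields a uniform bound for $\sqrt{q_N}$ in $L^2(0,T;H^2_{\mu_m})$, on top of the $L^\infty(0,T;H^1_{\mu_m})$ bound from \eqref{eq:FG_apriori_estimates_q_energy}; this extra elliptic regularity, available only because $\delta_1>0$, is exactly what will promote the weak $H^1_{\mu_m}$ convergence of $\sqrt{q_N}$ to a strong one.

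For item (i) I would first read off from the $\sqrt{q_N}$-equation \eqref{eq:eq-for-sqrt(qN)} of Remark~\ref{rk:eq_sqrt(q_N)} a uniform bound for $\partial_t\sqrt{q_N}$ in $L^2(0,T;(H^1_{\mu_m})')$: the term $\delta_1\Delta_m\sqrt{q_N}$ lies in $L^2(0,T;L^2_{\mu_m})$ by the $H^2$ bound, the terms $\delta_1|\nabla(q_N^{1/4})|^2$ and $(q_N^{1/4}u_N)\cdot\nabla(q_N^{1/4})$ lie in $L^2(0,T;L^1_{\mu_m})$ by the $L^4_{\mu_m}$ bounds of \eqref{eq:FG_apriori_estimates_dissipation}--\eqref{eq:FG_apriori_estimates_hessian_dissipation}, and $\dvm(\sqrt{q_N}u_N)$ is bounded in $L^\infty(0,T;(H^1_{\mu_m})')$ by duality against $\|\sqrt{q_N}u_N\|_{L^\infty(0,T;L^2_{\mu_m})}$. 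Lemma~\ref{lemma:Aubin-Lions-Simon} applied with $(B,B_0,B_\ast)=(H^2_{\mu_m},H^1_{\mu_m},(H^1_{\mu_m})')$ and $p=2$ then gives a subsequence with $\sqrt{q_N}$ converging strongly in $L^2(0,T;H^1_{\mu_m})$, while the choice $(H^1_{\mu_m},L^2_{\mu_m},(H^1_{\mu_m})')$ with $p=\infty$, $p_\ast=2$ gives strong convergence in $\mathcal{C}^0([0,T];L^2_{\mu_m})$. I set $\s=\lim\sqrt{q_N}$ and $q=\s^2$.

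For item (ii) I would apply the same lemma to the momentum $q_Nu_N=\sqrt{q_N}\,(\sqrt{q_N}u_N)$. A uniform spatial bound, in a space that the Gaussian weight makes compactly embedded in $[L^1_{\mu_m}]^d$, is obtained by reconstructing $\nabla(q_Nu_N)=2\nabla(\sqrt{q_N})\otimes(\sqrt{q_N}u_N)+\sqrt{q_N}\,(\sqrt{q_N}\nabla u_N)$ from the bounds of Lemma~\ref{lem:apriori} on $\nabla\sqrt{q_N}$, on $\sqrt{q_N}u_N$ and on $\sqrt{q_N}\nabla u_N$ (the full velocity gradient being controlled through the symmetric and skew-symmetric dissipation terms of the energy and BD-entropy estimates); the momentum equation \eqref{eq:CNSK_system_m_regularized-velocity-WS-Definition_XN-thm} then bounds $\partial_t(q_Nu_N)$ in $L^2(0,T;[(W^{1,\infty}_{\mu_m})']^d)$, each term being estimated by the a priori bounds. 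Aubin--Lions--Simon yields strong convergence of $q_Nu_N$ in $L^2(0,T;[L^1_{\mu_m}]^d)$; I set $J=\lim q_Nu_N$ and define $u=J/\s^2$ on $\{\s\neq0\}$, $u=0$ elsewhere. \emph{This momentum compactness is the main obstacle}, since the viscosity is degenerate and no regularisation of $u$ has been added: all the analytic difficulty of the problem concentrates here and is overcome only through the full velocity-gradient control combined with the Gaussian compact embeddings.

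Finally, for item (iii) I pass to the limit in the weak formulations \eqref{eq:Fokker-Planck-thm} and \eqref{eq:CNSK_system_m_regularized-velocity-WS-Definition_XN-thm}, replacing a fixed test field by its $\XN$-projection (the Hermite functions being dense). Linear and parabolic terms pass by the weak convergences already secured; the Korteweg contribution is handled by pairing the strong $L^2(0,T;H^1_{\mu_m})$ convergence of $\sqrt{q_N}$ against the weak $L^2_{\mu_m}$ limit of $D^2(\sqrt{q_N})$ (bounded by \eqref{eq:FG_apriori_estimates_hessian_dissipation}) and the strong convergence of $\nabla\sqrt{q_N}\otimes\nabla\sqrt{q_N}$, while the viscous term $q_ND(u_N)=\sqrt{q_N}\,(\sqrt{q_N}D(u_N))$ passes as (strong)$\times$(weak). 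The remaining genuinely nonlinear terms, namely the convection $q_Nu_N\otimes u_N$ and the cubic drag $r_1q_N|u_N|^2u_N$, are treated by upgrading the strong $L^2(0,T;L^1_{\mu_m})$ convergence of $q_Nu_N$ to strong convergence in some $L^p(0,T;L^q_{\mu_m})$ with $p,q>2$ (interpolating with the bound $q_Nu_N\in L^4(0,T;L^{4-}_{\mu_m})$ coming from the $r_1$-drag integrability and the Gaussian integrability of $\sqrt{q_N}$) and pairing it, as (strong)$\times$(weak), with the weak $L^2(0,T;L^2_{\mu_m})$ limit of $u_N$ furnished by the $r_0$-drag bound $\sqrt{r_0}u_N\in L^2$. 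This identifies $J=qu$ and shows that $(q,u)$ solves \eqref{eq:CNSK_system_m_regularized} in the sense of distributions, completing the proof.
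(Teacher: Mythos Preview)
Your overall strategy coincides with the paper's: Aubin--Lions--Simon driven by the Gaussian compact embeddings, with the $\delta_1>0$ bound on $D^2(\sqrt{q_N})$ supplying the extra spatial regularity. There is, however, one point in part~(ii) where your argument and the paper's diverge in a way that matters.

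You decompose $\nabla(q_Nu_N)=2\nabla\sqrt{q_N}\otimes(\sqrt{q_N}u_N)+\sqrt{q_N}\,(\sqrt{q_N}\nabla u_N)$ and then claim a bound on the \emph{full} gradient $\sqrt{q_N}\nabla u_N$ ``through the symmetric and skew-symmetric dissipation terms of the energy and BD-entropy estimates''. But Lemma~\ref{lem:apriori}, which is what you cite and what is actually available at this point in the paper's logical order, contains only the energy-dissipation bound on $\sqrt{q_N}D(u_N)$; the antisymmetric part $\sqrt{q_N}A(u_N)$ sits in $\mathcal{D}_{\rm BD,reg}$, and the BD-entropy estimate (Proposition~\ref{prop:BD}) is established \emph{after} the present proposition. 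The paper avoids this circularity by writing instead $\nabla(q_Nu_N)=\nabla\sqrt{q_N}\otimes\sqrt{q_N}u_N+\sqrt{q_N}\,\nabla(\sqrt{q_N}u_N)$ and invoking the weighted Korn inequality \eqref{eq:korn1} to control $\|\nabla(\sqrt{q_N}u_N)\|_{L^2_{\mu_m}}$ by $\|D(\sqrt{q_N}u_N)\|_{L^2_{\mu_m}}+\|\sqrt{q_N}u_N\|_{L^2_{\mu_m}}$; the symmetric part then splits as $D(\sqrt{q_N}u_N)=\sqrt{q_N}D(u_N)+2\,q_N^{1/4}u_N\overset{\rm sym}{\otimes}\nabla(q_N^{1/4})$, which uses only the symmetric dissipation and the $L^4$ bounds already in Lemma~\ref{lem:apriori}. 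Your route is repairable---one could first derive the BD identity at the Faedo--Galerkin level, as the paper in fact does for $(q_N,u_N)$ in Lemmas~\ref{lm:BD1}--\ref{lm:BD2}---but as written you are invoking a bound not yet proved. The Korn-inequality manoeuvre is the key technical trick here and is worth internalising.

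A secondary difference: the paper bounds $\partial_t\sqrt{q_N}$ in $L^2(0,T;L^2_{\mu_m})$ rather than in $(H^1_{\mu_m})'$, by showing every term on the right of \eqref{eq:eq-for-sqrt(qN)} lies in $L^2(0,T;L^2_{\mu_m})$ (again using \eqref{eq:korn1} for $\dvm(\sqrt{q_N}u_N)$). Your weaker dual-space bound suffices for Aubin--Lions, but the paper's stronger bound is exactly what is reused in part~(iii) to pass to the limit term-by-term in $L^2(0,T;L^2_{\mu_m})$, making that step almost immediate.
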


\begin{proof}
	
	In this proof $C$ denotes a positive constant dependent on the time $T$ and the parameters $\delta_1$, $r_0$, $r_1$, $r_4$, but independent of $N$.
	
	To prove $(i)$, we first show that $(\sqrt{q_N})_N$ converges strongly in $L^2(0,T;H^1_{\mu_m})$. 
	Estimate \eqref{eq:FG_apriori_estimates_hessian_dissipation} implies that
	\begin{equation*}
	\|\Delta\sqrt{q_N}\|_{L^2(0,T;L^2_{\mu_m})}\le C,
	\end{equation*}
	uniformly in $N$. Together with the conservation of mass recalled in \eqref{eq:FG_apriori_estimates_q_energy}, this shows that $(\sqrt{q_N})_{N}$ is uniformly bounded in $L^2(0,T;H^2_{\mu_m})$.  The compact embedding $H^2_{\mu_m} \hookrightarrow H^1_{\mu_m}$ therefore shows that there exists a subsequence of $(\sqrt{q_N})_{N}$ (denoted with the same index) and an element $\s$ such that $(\sqrt{q_N})_{N}$ strongly converges to $\s$ in $L^2(0,T;H^1_{\mu_m})$.
	
	Next we show that $(\sqrt{q_N})_{N}$ strongly converges to $\s$ in $\mathcal{C}^0([0,T];L^2_{\mu_m})$. We recall from Remark \ref{rk:eq_sqrt(q_N)} that 
	\begin{equation}\label{eq:sqrt(qN)}
	\partial_t(\sqrt{q_N}) =- \frac{1}{2} \big(\dv(\sqrt{q_N}u_N) - \big(\frac{x}{\sigma^2}q_N^{\frac14}\big) \cdot (q_N^\frac14 u_N)\big)  - 2(q_N^{\frac14}u) \cdot \nabla(q_N^{\frac14}) + \delta_1 \Delta_m(\sqrt{q_N}) + 4\delta_1 |\nabla(q_N^{\frac14})|^2.
	\end{equation}
	Each term of the right-hand side of the previous equation can be bounded in $L^2(0,T;L^2_{\mu_m})$ uniformly in $N$, using the a priori estimates of Lemma \ref{lem:apriori}. Indeed, 
	using \eqref{eq:FG_apriori_estimates_dissipation}, the second term can be bounded as
	\begin{equation*}
	\big\|\big(\frac{x}{\sigma^2}q_N^{\frac14}\big) \cdot (q_N^\frac14 u_N)\big\|_{L^2(0,T;L^2_{\mu_m})}\le\big\|\frac{x}{\sigma^2}q_N^{\frac14}\big\|_{L^4(0,T;[L^4_{\mu_m}]^d)} \big\|q_N^\frac14 u_N\big\|_{L^4(0,T;[L^4_{\mu_m}]^d)}\le C.
	\end{equation*}
	The same holds for the term $(q_N^{\frac14}u) \cdot \nabla(q_N^{\frac14})$, using in addition \eqref{eq:FG_apriori_estimates_hessian_dissipation}. To bound $\Delta_m(\sqrt{q_N})$, we can use Proposition \ref{prop:StrongPoincare-consequences}  together with \eqref{eq:FG_apriori_estimates_q_energy} and \eqref{eq:FG_apriori_estimates_hessian_dissipation}, writing,
	\begin{align*}
	\big\|\Delta_m(\sqrt{q_N})\big\|_{L^2(0,T;L^2_{\mu_m})}&\le\big\|\Delta(\sqrt{q_N})\big\|_{L^2(0,T;L^2_{\mu_m})}+\sigma^{-2}\big\|x\cdot\nabla\sqrt{q_N}\big\|_{L^2(0,T;L^2_{\mu_m})}\\
	&\le C\big(\big\|\Delta(\sqrt{q_N})\big\|_{L^2(0,T;L^2_{\mu_m})}+\big\|D^2(\sqrt{q_N})\big\|_{L^2(0,T;[L^2_{\mu_m}]^{d\times d})}+\big\|\nabla\sqrt{q_N}\big\|_{L^2(0,T;[L^2_{\mu_m}]^{d})}\big) \\
	&\le C.
	\end{align*}
	Using again \eqref{eq:FG_apriori_estimates_hessian_dissipation}, we have $\||\nabla(q_N^{\frac14})|^2\|_{L^2(0,T;[L^2_{\mu_m}]^d)}\le C$. It remains to estimate the term $\dv(\sqrt{q_N}u_N)$. By a direct computation, we can write, for a.e. $t\ge0$,
	\begin{equation}\label{eq:korn1}
	\|\dv(\sqrt{q_N}u_N)\|_{L^2_{\mu_m}}^2\le\|\nabla(\sqrt{q_N}u_N)\|^2_{[L^2_{\mu_m}]^{d\times d}} \le 2\|D(\sqrt{q_N}u_N)\|^2_{[L^2_{\mu_m}]^{d \times d}} + \sigma^{-2} \|\sqrt{q_N}u_N\|^2_{[L^2_{\mu_m}]^d},
	\end{equation}
	(see \cite[Eq.(25)]{Carrapatoso_Dolbeault_Herau_Mischler_Mouhot_ARMA22}); see also \cite[Theorem 1]{Carrapatoso_Dolbeault_Herau_Mischler_Mouhot_ARMA22} for general `weighted' Korn-type inequalities). We have $\|\sqrt{q_N}u_N\|_{L^\infty(0,T;[L^2_{\mu_m}]^d)}\le C$ by \eqref{eq:FG_apriori_estimates_q_energy2}. Moreover, writing (see Remark \ref{rem:AppendixA-rho-rho_m} below for the notation $\overset{{\rm sym}}{\otimes}$)
	\begin{equation}\label{eq:korn2}
	D(\sqrt{q_N}u_N)=\sqrt{q_N}D(u_N)+ 2q_N^{\frac14}u_N \overset{{\rm sym}}{\otimes} \nabla \left(q_N^{\frac14}\right),
	\end{equation}
	and using \eqref{eq:FG_apriori_estimates_dissipation} and \eqref{eq:FG_apriori_estimates_hessian_dissipation}, we deduce that 
	\begin{equation}\label{eq:korn3}
	\|D(\sqrt{q_N}u_N)\|_{L^2(0,T;[L^2_{\mu_m}]^{d\times d})}\le C.
	\end{equation}
	It then follows from \eqref{eq:korn1} that $\|\dv(\sqrt{q_N}u_N)\|_{L^2(0,T;L^2_{\mu_m})}\le C$. Putting all together, we have shown that $\partial_t(\sqrt{q_N})$ is uniformly bounded in $L^2(0,T;L^2_{\mu_m})$. Since in addition $(\sqrt{q_N})_N$ is also uniformly bounded in $L^\infty(0,T;H^1_{\mu_m})$ by \eqref{eq:FG_apriori_estimates_q_energy}, the Aubin-Lions-Simon Lemma implies that $(\sqrt{q_N})_N$ converges strongly to $s$ in $\mathcal{C}^0([0,T];L^2_{\mu_m})$. This proves $(i)$.

	Now we prove $(ii)$. We apply again the Aubin-Lions-Simon Lemma, showing that $(\partial_t(q_N u_N))_N$ is uniformly bounded in $L^1(0,T;[(W^{1,\infty}_{\mu_m})']^d)$ (note that $W^{1,\infty}_{\mu_m}=W^{1,\infty}_{x}$) and $(q_N u_N)_N$ is uniformly bounded in $L^1(0,T;[W^{1,1}_{\mu_m}]^d)$. First, \eqref{eq:FG_apriori_estimates_q_energy}, \eqref{eq:FG_apriori_estimates_q_energy2} and the Cauchy-Schwarz inequality imply that
	\begin{equation}\label{eq:boundq_Nu_N}
	\|q_Nu_N\|_{L^\infty(0,T;[L^1_{\mu_m}]^d)}\le C,
	\end{equation}
	uniformly in $N$. Next we write
	\begin{equation}\label{eq:nablaq_Nu_N}
	\nabla (q_Nu_N) = (\nabla\sqrt{q_N})\otimes\sqrt{q_N}u_N+\sqrt{q_N}\,\nabla(\sqrt{q_N}u_N).
	\end{equation}
	The first term is estimated as 
	\begin{equation*}
	\big\|(\nabla\sqrt{q_N})\otimes\sqrt{q_N}u_N\big\|_{L^\infty(0,T;[L^1_{\mu_m}]^{d\times d})}\le\big\|\nabla\sqrt{q_N}\big\|_{L^\infty(0,T;[L^2_{\mu_m}]^d)}\big\|\sqrt{q_N}u_N\big\|_{L^\infty(0,T;[L^2_{\mu_m}]^d)}\le C,
	\end{equation*}
	using \eqref{eq:FG_apriori_estimates_q_energy} and \eqref{eq:FG_apriori_estimates_q_energy2}. For the second term, using in addition \eqref{eq:korn1} and \eqref{eq:korn3}, we obtain  likewise that 
	\begin{equation*}
	\|\sqrt{q_N}\,\nabla(\sqrt{q_N}u_N)\|_{L^2(0,T;[L^1_{\mu_m}]^{d\times d})} \le C .
	\end{equation*}
	Eq. \eqref{eq:nablaq_Nu_N} together with the two previous estimates imply that 
	\begin{equation}\label{eq:bound_nablaq_Nu_N}
	\|\nabla (q_Nu_N)\|_{L^2(0,T;[L^1_{\mu_m}]^{d\times d})}\le C,
	\end{equation}
	uniformly in $N$, and it then follows from \eqref{eq:boundq_Nu_N} and \eqref{eq:bound_nablaq_Nu_N} that
	\begin{equation}\label{eq:unif_bound_qNuN}
	\|q_N u_N\|_{L^2(0,T;[W^{1,1}_{\mu_m}]^d)}\le C.
	\end{equation}
	In order to bound the time derivative $(\partial_t(q_N u_N))_N$ in $L^2(0,T;[(W^{1,\infty}_{\mu_m})']^d)$, we recall that  $(q_N,u_N)$ satisfies \eqref{eq:CNSK_system_m_regularized-velocity-WS-Definition_XN-thm},
	\begin{align}
	&   \int_{Q_T} \partial_t(q_N u_N) \cdot \Phi_N {\rm d}\mu_m{\rm d}t \notag\\
	&= \int_{Q_T} q_N\nabla \Phi_N u_N \cdot u_N {\rm d}\mu_m{\rm d}t  -\delta_1 \int_{Q_T} \nabla u_N \nabla q_N \cdot \Phi_N {\rm d}\mu_m{\rm d}t \notag\\
	&\quad+ 2\nu \int_{Q_T} q_N D(u_N) : D(\Phi_N) {\rm d}\mu_m{\rm d}t + 2\kappa^2 \int_{Q_T} \left[\sqrt{q_N} D^2(\sqrt{q_N})-\nabla(\sqrt{q_N}) \otimes \nabla(\sqrt{q_N})\right] : D(\Phi_N) {\rm d}\mu_m {\rm d}t \notag\\
	&\quad  +\lambda \sigma^2 \int_{Q_T} \nabla q_N  \cdot \Phi_N {\rm d}\mu_m{\rm d}t + r_0 \int_{Q_T} u_N \cdot \Phi_N  {\rm d}\mu_m{\rm d}t+ r_1\int_{Q_T} q_N|u_N|^2u_N \cdot \Phi_N  {\rm d}\mu_m{\rm d}t \notag\\
	&\quad+ \frac{r_4}{\sigma^4}\int_{Q_T} q_N|x|^2x \cdot \Phi_N  {\rm d}\mu_m{\rm d}t. \label{eq:CNSK_system_m_regularized-velocity-WS-Definition_XN-proof}
	\end{align}
	For the first term in the right-hand side, we use \eqref{eq:FG_apriori_estimates_q_energy} which implies that $\|q_Nu_N\cdot u_N\|_{L^\infty(0,T;L^1_{\mu_m})}\le C$. For the second term, we write
	\begin{equation*}
	\nabla u_N \nabla q_N=2\nabla(\sqrt{q_N})\sqrt{q_N}\nabla u_N=2\nabla(\sqrt{q_N})\left(\nabla(\sqrt{q_N}u_N)-2q_N^{\frac14}u_N \otimes \nabla \left(q_N^{\frac14}\right)\right).
	\end{equation*}
	Combining \eqref{eq:FG_apriori_estimates_q_energy}, \eqref{eq:korn1}--\eqref{eq:korn2}, \eqref{eq:FG_apriori_estimates_dissipation} and \eqref{eq:FG_apriori_estimates_hessian_dissipation}, we obtain that $\|\nabla u_N \nabla q_N\|_{L^2(0,T;[L^1_{\mu_m}]^d)}\le C$. The third term is estimated thanks to \eqref{eq:FG_apriori_estimates_q_energy} and \eqref{eq:FG_apriori_estimates_dissipation}, which yield $\|q_ND(u_N)\|_{L^2(0,T;[L^1_{\mu_m}]^{d\times d})}\le C$. For the fourth term, it follows from \eqref{eq:FG_apriori_estimates_q_energy} and \eqref{eq:FG_apriori_estimates_q_dissipation} that $\|\sqrt{q_N} D^2(\sqrt{q_N})\|_{L^2(0,T;[L^1_{\mu_m}]^{d \times d})}\le C$ and from \eqref{eq:FG_apriori_estimates_q_energy} that $\|\nabla(\sqrt{q_N}) \otimes \nabla(\sqrt{q_N})\|_{L^\infty(0,T;[L^1_{\mu_m}]^{d \times d})}\le C$. For the fifth term, \eqref{eq:FG_apriori_estimates_q_energy} and \eqref{eq:FG_apriori_estimates_dissipation} give $\|\nabla q_N\|_{L^\infty(0,T;[L^1_{\mu_m}]^d)}\le C$. The sixth term is directly estimated thanks to \eqref{eq:FG_apriori_estimates_q_energy2} which gives $\|u_N\|_{L^2(0,T;[L^2_{\mu_m}]^d)}\le C$. The last two terms are estimated in the same way, combining \eqref{eq:FG_apriori_estimates_q_energy2} and \eqref{eq:FG_apriori_estimates_dissipation}, which gives $\|q_N|u_N|^2u_N\|_{L^2(0,T;[L^1_{\mu_m}]^d)}\le C$ and $\|q_N|x|^2x\|_{L^2(0,T;[L^1_{\mu_m}]^d)}\le C$. These bounds together with \eqref{eq:CNSK_system_m_regularized-velocity-WS-Definition_XN-proof} show that
	\begin{equation}\label{eq:unif_bound_nablaqNuN}
	\|\partial_t(q_N u_N)\|_{L^2(0,T;[(W^{1,\infty}_{\mu_m})']^d)}\le C.
	\end{equation}
	From \eqref{eq:unif_bound_qNuN}, \eqref{eq:unif_bound_nablaqNuN}, the compact embedding $W^{1,1}_{\mu_m} \hookrightarrow L^1_{\mu_m}$ (see \cite[Theorem 3.1]{Hooton_JMAA81}) and the Aubin-Lions-Simon Lemma, we deduce that there exists a subsequence of $(q_Nu_N)_N$ (denoted by the same symbol) and an element $J$ in $L^2(0,T;[L^1_{\mu_m}]^d)$ such that $(q_Nu_N)_{N}$ converges to $J$ in $L^2(0,T;[L^1_{\mu_m}]^d)$. This establishes $(ii)$.
	
	To verify $(iii)$, it suffices to observe that, by the uniform bounds proven in $(i)$, each term of the equation \eqref{eq:sqrt(qN)} satisfied by $\sqrt{q_N}$ is uniformly bounded in $L^2(0,T;L^2_{\mu_m})$ and therefore converges weakly to the corresponding term for $(q,u)$ instead of $(q_N,u_N)$. The same holds for the equation \eqref{eq:CNSK_system_m_regularized-velocity} in $L^2(0,T;[(W^{1,\infty}_{\mu_m})']^d)$ by the estimates proven in $(ii)$.
	
\end{proof}

\subsection{Convergence of the solutions to the regularized QNS system in the limit $\delta_1\to0$}\label{subsect:GWS_drag_term}

In order to prove the convergence of the solutions to the regularized QNS system \eqref{eq:CNSK_system_m_regularized} constructed in Proposition \ref{prop:FG-convergence}, the main ingredients are the energy and BD entropy estimates \eqref{eq:energy_estimates_GWS_CNSK_system_m_regularized} and \eqref{eq:BD_estimates_GWS_CNSK_system_m_regularized} (with $\delta_1>0$) in the statement of Theorem \ref{prop:GWS_CNSK_system_m_regularized}. Establishing \eqref{eq:energy_estimates_GWS_CNSK_system_m_regularized} and \eqref{eq:BD_estimates_GWS_CNSK_system_m_regularized} is the purpose of the following two paragraphs. We begin with the energy estimate \eqref{eq:energy_estimates_GWS_CNSK_system_m_regularized} which is easier to derive.

\subsubsection{Energy estimates for solutions to the regularized QNS system}

\begin{Proposition}\label{prop:En}
	Under the conditions of Proposition \ref{prop:FG-convergence}, let $(q,u)$ be a solution to \eqref{eq:CNSK_system_m_regularized} constructed as in Proposition \ref{prop:FG-convergence}. There exists a constant $C_0(\mathcal{E}_{\rm reg}(q^0,u^0),T)>0$ depending on $\mathcal{E}_{\rm reg}(q^0,u^0)$ and $T$ (but independent of $\delta_1$) such that, for almost every $t \in (0,T)$, 
	\begin{equation}\label{eq:En_ineq}
	\mathcal{E}_{\rm reg}(t) + \frac12\int_0^t \mathcal{D}_{\rm reg}(t') \;{\rm d}t' \leq  \mathcal{C}_{0}(\mathcal{E}_{\rm reg}(q^0,u^0), T).
	\end{equation}
\end{Proposition}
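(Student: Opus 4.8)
The plan is to derive \eqref{eq:En_ineq} by passing to the limit $N\to\infty$ in the Faedo--Galerkin energy inequality of Lemma \ref{lem:FG-energy}, namely
\[
\mathcal{E}_{\rm reg}(q_N(t),u_N(t)) + \frac12\int_0^t \mathcal{D}_{\rm reg}(q_N,u_N)\,{\rm d}t' \leq \mathcal{E}_{\rm reg}(q_N^0,u_N^0) + 2r_4\delta_1\frac{(d+2)^2}{\sigma^2}\,t,
\]
and combining it with the convergences of Proposition \ref{prop:FG-convergence} through a lower semicontinuity argument.

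First I would bound the right-hand side uniformly in $N$ and $\delta_1$. Since $q_N(0)=q^0$ for every $N$ (only the velocity datum depends on $N$), the initial energy equals $\mathcal{E}_{\rm reg}(q^0,u_N^0)$, whose sole $N$-dependence is the kinetic term $\frac12\int_{\mathbb{R}^d} q^0|u_N^0|^2\,{\rm d}\mu_m$. The characterization \eqref{def:XN_initial_data} states exactly that $u_N^0$ is the orthogonal projection of $u^0$ onto $\XN$ for the (positive definite, since $q^0\ge c^0>0$) inner product $(v,w)\mapsto\int_{\mathbb{R}^d} q^0\,v\cdot w\,{\rm d}\mu_m$; testing \eqref{def:XN_initial_data} against $w_N=u_N^0$ and using Cauchy--Schwarz gives $\int_{\mathbb{R}^d} q^0|u_N^0|^2\,{\rm d}\mu_m \le \int_{\mathbb{R}^d} q^0|u^0|^2\,{\rm d}\mu_m$, hence $\mathcal{E}_{\rm reg}(q_N^0,u_N^0)\le \mathcal{E}_{\rm reg}(q^0,u^0)$ (which is finite by the assumptions \eqref{eq:_regularized_initial_data}--\eqref{eq:initial_(q0,u0)}). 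For the last term, $r_4\le 1$ and $\delta_1\le 1$ yield $2r_4\delta_1\frac{(d+2)^2}{\sigma^2}t\le \frac{2(d+2)^2}{\sigma^2}T$, which accounts for the $\delta_1$-independence of the constant $\mathcal{C}_0(\mathcal{E}_{\rm reg}(q^0,u^0),T)$.

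The core of the argument is then the inequality
\[
\mathcal{E}_{\rm reg}(q(t),u(t)) + \frac12\int_0^t \mathcal{D}_{\rm reg}(q,u)\,{\rm d}t' \le \liminf_{N\to\infty}\left[\mathcal{E}_{\rm reg}(q_N(t),u_N(t)) + \frac12\int_0^t \mathcal{D}_{\rm reg}(q_N,u_N)\,{\rm d}t'\right].
\]
To prove it I would rewrite each term of $\mathcal{E}_{\rm reg}$ and $\mathcal{D}_{\rm reg}$ through the quantities controlled uniformly in Lemma \ref{lem:apriori}, using $q|u|^2=|\sqrt{q}\,u|^2$, $q|\nabla\ln(q)|^2=4|\nabla\sqrt{q}|^2$ and $q|u|^4=|q^{\frac14}u|^4$. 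Each of the sequences $\nabla\sqrt{q_N}$, $\sqrt{q_N}u_N$, $\sqrt{q_N}D(u_N)$, $\sqrt{\delta_1}\sqrt{q_N}D^2(\ln q_N)$, $\sqrt{r_0}\,u_N$ and $q_N^{\frac14}u_N$ is bounded in the relevant $L^2$ or $L^4$ space and thus converges weakly, up to a subsequence, to a limit that must be identified with the matching object for $(q,u)$; this identification rests on the strong convergences $\sqrt{q_N}\to\sqrt{q}$ in $L^2(0,T;H^1_{\mu_m})\cap\mathcal{C}^0([0,T];L^2_{\mu_m})$ and $q_Nu_N\to J=\sqrt{q}\,\Lambda$ from Proposition \ref{prop:FG-convergence}, carried out on $\{q>0\}$, the set $\{q=0\}$ only raising the $\liminf$. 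Once the limits are identified, the dissipation terms (squared $L^2$-norms and fourth powers of $L^4$-norms over $(0,t)$) and the kinetic energy are controlled by weak lower semicontinuity of convex functionals; the nonnegative terms $\int|\nabla\sqrt{q}|^2$ and $I_4(q)$ are handled by Fatou via the a.e.\ convergence $q_N\to q$; and the entropy term $a\int q\ln(q)\,{\rm d}\mu_m$ is treated by splitting $q\ln(q)$ into its positive part (Fatou) and its negative part, which is bounded by $1/e$ and integrable since $\mu_m$ is a probability measure (bounded convergence).

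The main obstacle is twofold. The first point is the identification of the weak limits of $\sqrt{r_0}\,u_N$ and of the matrix fields $\sqrt{q_N}D(u_N)$ and $\sqrt{\delta_1}\sqrt{q_N}D^2(\ln q_N)$ with the corresponding objects for $(q,u)$ on the possibly degenerate set $\{q=0\}$: there these terms are understood through the weak-limit tensors $\mathbf{T}_{\rm NS}$ and $\mathbf{S}_{\rm K}$ of Definition \ref{def:WeakSolutionRegularizedEquations}, to which weak lower semicontinuity applies directly. The second point is the pointwise-in-$t$ nature of the energy inequality: the dissipation is dealt with in its already integrated form over $(0,t)$, where weak lower semicontinuity in $L^2(0,t;\cdot)$ and $L^4(0,t;\cdot)$ is immediate, while for the energy at a fixed time one exploits that $\sqrt{q_N}\to\sqrt{q}$ in $\mathcal{C}^0([0,T];L^2_{\mu_m})$ and, along a subsequence, $\nabla\sqrt{q_N}(t)\to\nabla\sqrt{q}(t)$ in $L^2_{\mu_m}$ for a.e.\ $t$, so that the $q$-only part of the energy passes to the limit at a.e.\ $t$, the kinetic part being again controlled by lower semicontinuity.
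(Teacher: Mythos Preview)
Your proposal is correct and follows essentially the same approach as the paper: pass to the limit in the Faedo--Galerkin energy inequality \eqref{eq:energy_inequality_reg_XN} via weak lower semicontinuity and the uniform bounds of Lemma \ref{lem:apriori}, and bound the initial energy $\mathcal{E}_{\rm reg}(q^0,u_N^0)$ by $\mathcal{E}_{\rm reg}(q^0,u^0)$ using the projection characterization \eqref{def:XN_initial_data}. In fact you spell out the lower semicontinuity step (identification of weak limits, treatment of the entropy term, a.e.-in-$t$ energy) in considerably more detail than the paper, which simply invokes ``lower semicontinuity properties and the uniform bounds proven in the proof of Proposition \ref{prop:FG-convergence}''.
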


\begin{proof}
	It follows from \eqref{eq:energy_inequality_reg_XN}, lower semicontinuity properties and the uniform bounds proven in the proof of Proposition \ref{prop:FG-convergence} that
	\begin{equation*}
	{\mathcal{E}}_{\rm reg}(q(t),u(t)) + \frac12\int_0^t \mathcal{D}_{\rm reg}(q(t'),u(t')) \,{\rm d}t' \leq  \mathcal{E}_{\rm reg}(q_N^0,u_N^0) + r_4\delta_1\left[\frac{(d+2)^2}{2\sigma^2}\right]t.
	\end{equation*}
	Let us justify that $\mathcal{E}_{\rm reg}(q_N^0,u_N^0)\le \mathcal{E}_{\rm reg}(q^0,u^0)$. Recalling that $q_N^0=q^0$, we see in the expression \eqref{def:energy_reg} of $\mathcal{E}_{\rm reg}$ that only the kinetic fluid energy $\frac12 \int_{\mathbb{R}^d} q_N^0 |u_N^0|^2    \;{\rm d}\mu_m$ depends on $u_N^0$. Recall that $u_N^0 \in \XN$ is defined by \eqref{def:XN_initial_data}
	and thus, for all $N\geq 1$,
	\begin{equation*}
	\int_{\mathbb{R}^d}q_N^0 |u_N^0|^2  \;{\rm d}\mu_m 
	=  \int_{\mathbb{R}^d} q^0u^0 \cdot u_N^0 \;{\rm d}\mu_m  \leq  \frac12 \int_{\mathbb{R}^d}q^0 |u_N^0|^2  \;{\rm d}\mu_m + \frac12 \int_{\mathbb{R}^d}q^0 |u^0|^2  \;{\rm d}\mu_m.
	\end{equation*}
	Therefore
	\begin{equation*}
	\frac12 \int_{\mathbb{R}^d}q_N^0 |u_N^0|^2  \;{\rm d}\mu_m \leq \frac12 \int_{\mathbb{R}^d}q^0 |u^0|^2  \;{\rm d}\mu_m,
	\end{equation*}
	which concludes the proof of \eqref{eq:energy_estimates_GWS_CNSK_system_m_regularized}.
\end{proof}

\subsubsection{BD entropy estimates and conclusion of the proof of Theorem \ref{prop:GWS_CNSK_system_m_regularized} for $\delta_1>0$.}

Recall that the BD entropy and dissipation $\mathcal{E}_{\rm BD, reg}$, $\mathcal{D}_{\rm BD, reg}$ for $(q,u)$ have been defined in \eqref{def:BD_reg-energy}--\eqref{def:BD_reg-dissipation}. We now aim at establishing the following estimate.

\begin{Proposition}\label{prop:BD}
	Under the conditions of Proposition \ref{prop:FG-convergence}, let $(q,u)$ be a solution to \eqref{eq:CNSK_system_m_regularized} constructed as in Proposition \ref{prop:FG-convergence}. There exists a constant $C_{\rm BD,0}(\mathcal{E}_{\rm reg}(q^0,u^0), \mathcal{E}_{\rm BD, reg}(q^0,u^0),T)>0$ depending on $\mathcal{E}_{\rm reg}(q^0,u^0)$, $\mathcal{E}_{\rm BD, reg}(q^0,u^0)$ and $T$ (but independent of $0<\delta_1\le 1$) such that, for almost every $t \in (0,T)$, 
	\begin{equation}\label{eq:EBD_ineq}
	\mathcal{E}_{\rm BD, reg}(t) + \frac12\int_0^t \mathcal{D}_{\rm BD, reg}(t') \;{\rm d}t' \leq  \mathcal{C}_{\rm BD, 0}(\mathcal{E}_{\rm reg}(q^0,u^0), \mathcal{E}_{\rm BD, reg}(q^0,u^0),T).
	\end{equation}
\end{Proposition}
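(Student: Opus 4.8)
The plan is to first establish the BD entropy identity \eqref{eq:BD-entropy_equality-regularized} for the regularized solution $(q,u)=(q_{\delta_1},u_{\delta_1})$, then to dominate the remainder $\mathcal{R}_{\rm BD, reg}$ by half of the dissipation $\mathcal{D}_{\rm BD, reg}$ plus a quantity bounded uniformly in $\delta_1$, and finally to integrate in time. Starting from the algebraic expansion
\begin{equation*}
\mathcal{E}_{\rm BD, reg} = \mathcal{E}_{\rm reg} + 2\nu\int_{\mathbb{R}^d} q\,u\cdot\nabla\ln(q)\,{\rm d}\mu_m + 2\nu^2\int_{\mathbb{R}^d} q|\nabla\ln(q)|^2\,{\rm d}\mu_m + 2\nu r_0\int_{\mathbb{R}^d}(q-\ln(q))\,{\rm d}\mu_m,
\end{equation*}
which follows at once from \eqref{def:BD_reg-energy} and \eqref{def:energy_reg}, I would obtain \eqref{eq:BD-entropy_equality-regularized} by adding to the energy identity \eqref{eq:energy_identity_reg} the evolution laws of the three correction terms: the derivative of the cross term is computed by testing the momentum equation \eqref{eq:CNSK_system_m_regularized-velocity} against the effective-velocity correction $2\nu\nabla\ln(q)$, while those of the last two terms follow from the continuity equation \eqref{eq:CNSK_system_m_regularized-density} alone. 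The twisted integration-by-parts formulas associated with $\mu_m$ absorb the Gaussian weight, and the positivity and parabolic regularity of $q_{\delta_1}$ inherited from the Fokker--Planck structure (for $\delta_1>0$), together with the a priori bounds of Lemma \ref{lem:apriori}, justify all the manipulations; this computation produces precisely the dissipation \eqref{def:BD_reg-dissipation} and a remainder $\mathcal{R}_{\rm BD, reg}$ of the same type as \eqref{def:BD_reg-RHS_intro}, carrying in addition $\delta_1$-dependent contributions from the regularizing terms $\delta_1\Delta_m q$ and $\delta_1\nabla u\nabla q$.

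Next I would estimate $\mathcal{R}_{\rm BD, reg}$ term by term. The moment contribution proportional to $I_2(q)$ is absorbed into $I_4(q)$ by the Young inequality, exactly as in \eqref{eq:absorb_Rreg}, up to an additive constant. The term $\frac{2\nu}{\sigma^2}\int q(u+2\nu\nabla\ln(q))\cdot u\,{\rm d}\mu_m$ is split by Cauchy--Schwarz, its $\int q|u|^2$ part being bounded through the energy estimate of Proposition \ref{prop:En} and its $\int q|\nabla\ln(q)|^2$ part being absorbed by the corresponding dissipation term. The additional $\delta_1$-contributions are absorbed into $\mathcal{D}_{\rm BD, reg}$ \emph{uniformly in $\delta_1$}: this is possible because each dissipation coefficient in \eqref{def:BD_reg-dissipation} is bounded below by its $\delta_1=0$ value, i.e. by the strictly positive $2\nu$-part, which survives the limit $\delta_1\to0$, while $\delta_1\le1$ keeps the remainder terms bounded.

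The main obstacle is the drag cross-term $-2\nu r_1\int q|u|^2u\cdot\nabla\ln(q)\,{\rm d}\mu_m$, which has no sign. I would control it by Hölder's inequality,
\begin{equation*}
\Big|\int_{\mathbb{R}^d} q|u|^2 u\cdot\nabla\ln(q)\,{\rm d}\mu_m\Big|\le\Big(\int_{\mathbb{R}^d} q|u|^4\,{\rm d}\mu_m\Big)^{\frac34}\Big(\int_{\mathbb{R}^d} q|\nabla\ln(q)|^4\,{\rm d}\mu_m\Big)^{\frac14},
\end{equation*}
followed by the Young inequality, which splits it into a small multiple of $\int q|u|^4$, absorbed by the $r_1$-dissipation, and a multiple of $\int q|\nabla\ln(q)|^4$. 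The quartic quantity $\int q|\nabla\ln(q)|^4$ does not appear in $\mathcal{D}_{\rm BD, reg}$, so the crucial point is to dominate it by the Hessian term $\int q|D^2(\ln q)|^2\,{\rm d}\mu_m$: using the identity $\nabla(q^{\frac14})=\frac14 q^{\frac14}\nabla\ln(q)$ one has $\int q|\nabla\ln(q)|^4\,{\rm d}\mu_m=256\int|\nabla(q^{\frac14})|^4\,{\rm d}\mu_m$, and the latter is controlled by $\int q|D^2(\ln q)|^2$ (plus energy-controlled lower-order terms) via Lemma \ref{lem:Hessian_estimate}, which here replaces the Sobolev embeddings unavailable in the Gaussian setting. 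Since $r_1\le1$ and the coefficient of the Hessian dissipation in \eqref{def:BD_reg-dissipation} is bounded below independently of $\delta_1$, this absorption is again uniform in $\delta_1$.

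Combining these bounds yields $\mathcal{R}_{\rm BD, reg}\le\frac12\mathcal{D}_{\rm BD, reg}+C$, with $C$ depending only on $\mathcal{E}_{\rm reg}(q^0,u^0)$, $\mathcal{E}_{\rm BD, reg}(q^0,u^0)$ and $T$, but not on $\delta_1$. Inserting this into \eqref{eq:BD-entropy_equality-regularized} and integrating on $(0,t)$ (invoking the energy estimate of Proposition \ref{prop:En} and, if necessary, Gronwall's lemma) gives \eqref{eq:EBD_ineq}, after using $\mathcal{E}_{\rm BD, reg}(q^0,u^0)<\infty$ and the nonnegativity of $q-\ln(q)$. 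I expect the part requiring the most care to be the rigorous justification of the identity for the limit solution — the legitimacy of testing against $\nabla\ln(q)$ and of the integrations by parts involving $D^2(\ln q)$ in the absence of Sobolev embeddings, together with the uniform control of the $\delta_1$-terms — for which the functional inequalities of Appendix \ref{AppendixB} are indispensable.
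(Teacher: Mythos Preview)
Your overall strategy---derive the BD identity, estimate $\mathcal{R}_{\rm BD,reg}$ by absorption into $\frac12\mathcal{D}_{\rm BD,reg}$ via Lemma~\ref{lem:Hessian_estimate}, then integrate---matches the paper's, and your treatment of the remainder (in particular the $r_1$-cross term via H\"older/Young and the reduction of $\int q|\nabla\ln q|^4$ to $\|\nabla(q^{1/4})\|_{L^4_{\mu_m}}^4$) is essentially what the paper does. The one substantive difference is \emph{where} the BD identity is established, and this is where your proposal has a gap.

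You propose to derive \eqref{eq:BD-entropy_equality-regularized} directly for the limit $(q,u)=(q_{\delta_1},u_{\delta_1})$, invoking ``positivity and parabolic regularity of $q_{\delta_1}$ inherited from the Fokker--Planck structure''. But the solution $(q_{\delta_1},u_{\delta_1})$ of Proposition~\ref{prop:FG-convergence} is obtained by compactness from $(q_N,u_N)$; at that level one only knows $q_{\delta_1}\ge0$ and the regularity listed in Definition~\ref{def:WeakSolutionRegularizedEquations}. Strict positivity via a maximum-principle argument for $\partial_t q+\dvm(qu)=\delta_1\Delta_m q$ would require $\dvm(u)\in L^\infty$, which you do not have (only $\sqrt{r_0}\,u\in L^2$). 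Without a strictly positive lower bound, $\nabla\ln(q_{\delta_1})$ is not a legitimate test function, and the integrations by parts producing $\int q|D^2(\ln q)|^2$ cannot be justified. Lemma~\ref{lem:apriori} does not help here either: it gives bounds on $(q_N,u_N)$, not on the limit.

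The paper avoids this by carrying out the entire computation at the Faedo--Galerkin level (Lemmas~\ref{lm:BD1}--\ref{lm:BD2}), where $q_N$ is bounded away from zero by \eqref{eq:positivity_qN-prop}, $q_N\in H^{2,1}_{\mu_m}(Q_T)$, and $u_N\in\mathcal{C}^1([0,T];\XN)$. There one can approximate $\nabla\ln(q_N)$ by mollification, test rigorously, obtain \eqref{eq:BD-entropy_equality} for $(q_N,u_N)$, perform the absorption uniformly in $N$ (and in $\delta_1$), and only then pass to the limit $N\to\infty$ by lower semicontinuity, exactly as in Proposition~\ref{prop:En}. One minor point: in your control of $\|\nabla(q^{1/4})\|_{L^4_{\mu_m}}^4$ via Lemma~\ref{lem:Hessian_estimate}, the ``lower-order term'' is $I_4(q)$, not an energy quantity; it is the $r_4$-part of $\mathcal{D}_{\rm BD,reg}$ that absorbs it, so the resulting constant depends on $r_4^{-1}$, which is permissible here since $r_4>0$ is fixed.
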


Before turning to the proof of Proposition \ref{prop:BD}, let us compute the time-derivative of $q_N\nabla \ln(q_N)$ (given an approximate solution $(q_N,u_N)$ to \eqref{eq:CNSK_system_m_regularized}), analogously to what we did in Remark \ref{rk:eq_sqrt(q_N)} for $\sqrt{q_N}$.
\begin{Remark}\label{rk:eq_ln(q_N)}
	Given a sequence $((q_N,u_N))_N$ as in Proposition \ref{prop:FG-convergence}, one can verify that the following equation is satisfied in $L^1(0,T;[(W^{1,\infty}_{\mu_m})']^d)$:
	\begin{align}
	&\partial_t(q_N\nabla \ln(q_N)) + \dvm\left(\nabla \ln(q_N) \otimes q_N(u_N - \delta_1\nabla \ln(q_N))\right) \notag\\
	&= \delta_1 \dvm(q_ND^2\ln(q_N)) - \dvm(q_N\nabla u_N ^\top) + \frac{q_N}{\sigma^2}(u_N - \delta_1\nabla \ln(q_N)). \label{eq:q_nabla_log_q_equation}
	\end{align}
	In turn, using $q_N\partial_t(\nabla \ln(q_N))=\partial_t(q_N\nabla \ln(q_N))-\partial_t(q_n)\nabla \ln(q_N)$, this also yields, in $L^1(0,T;[(W^{1,\infty}_{\mu_m})']^d)$,
	\begin{align}
	&q_N\left[\partial_t(\nabla \ln(q_N)) + D^2(\ln(q_N))(u_N - \delta_1\nabla \ln(q_N))\right] \notag\\
	&= \delta_1 \dvm(q_ND^2\ln(q_N)) - \dvm(q_N\nabla u_N^\top) + \frac{q_N}{\sigma^2}(u_N - \delta_1\nabla \ln(q_N)). \label{eq:nabla_log_q_equation}
	\end{align}
	See the proof of the next lemma for justifications that all terms of the previous equations belong to $L^1(0,T;[(W^{1,\infty}_{\mu_m})']^d)$.
\end{Remark}

The proof of Proposition \ref{prop:BD} relies on the following two lemmas.
\begin{Lemma}\label{lm:BD1}
	Under the conditions of Proposition \ref{prop:FG-convergence}, let $(q,u)$ be a solution to \eqref{eq:CNSK_system_m_regularized} associated to a sequence $((q_N,u_N))_N$ as in Proposition \ref{prop:FG-convergence}. Then, for almost every $t\in(0,T)$, 
	\begin{align}
	& \frac{{\rm d}}{{\rm d}t} \int_{\mathbb{R}^d} q_N u_N \cdot \nabla(\ln(q_N)) \,{\rm d}\mu_m +\delta_1 \int_{\mathbb{R}^d} \nabla u_N \nabla q_N \cdot \nabla(\ln(q_N)) \;{\rm d}\mu_m \notag \\
	& + (2\nu+\delta_1) \int_{\mathbb{R}^d} q_ND(u_N) : D^2(\ln(q_N)) \;{\rm d}\mu_m +\kappa^2 \int_{\mathbb{R}^d} q_N |D^2(\ln(q_N))|^2 \;{\rm d}\mu_m \notag \\
	&  + 4\lambda \sigma^2 \int_{\mathbb{R}^d} |\nabla(\sqrt{q_N})|^2  \;{\rm d}\mu_m +r_0 \int_{\mathbb{R}^d} u_N \cdot \nabla \ln(q_N) \;{\rm d}\mu_m + r_1 \int_{\mathbb{R}^d} |u_N|^2 u_N \cdot \nabla q_N  \;{\rm d}\mu_m  \notag\\
	& =   \int_{\mathbb{R}^d} q_N \nabla u_N : \nabla u_N^\top \;{\rm d}\mu_m  + \frac{1}{\sigma^2}\int_{\mathbb{R}^d} q_Nu_N \cdot (u_N -\delta_1\nabla \ln(q_N)) \;{\rm d}\mu_m. \label{eq:produit_scalaire_u_nabla_q-proof}
	\end{align}	
\end{Lemma}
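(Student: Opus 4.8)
The plan is to obtain \eqref{eq:produit_scalaire_u_nabla_q-proof} by differentiating the ``cross term'' $\int_{\mathbb{R}^d} q_N u_N\cdot\nabla\ln(q_N)\,{\rm d}\mu_m$ in time and feeding in both evolution equations available for the approximate solution. First I would use the product rule to write
\[
\frac{{\rm d}}{{\rm d}t}\int_{\mathbb{R}^d} q_N u_N\cdot\nabla\ln(q_N)\,{\rm d}\mu_m = \int_{\mathbb{R}^d}\partial_t(q_N u_N)\cdot\nabla\ln(q_N)\,{\rm d}\mu_m + \int_{\mathbb{R}^d} q_N u_N\cdot\partial_t(\nabla\ln(q_N))\,{\rm d}\mu_m .
\]
This splitting is meaningful because, by Proposition \ref{prop:Fokker-PlanckEquation_existence}, $q_N$ is strictly positive, bounded above and below and lies in $H^{2,1}_{\mu_m}(Q_T)$, while $u_N\in\mathcal{C}^1([0,T];\XN)$ is a finite combination of Hermite functions; thus $\nabla\ln(q_N)$ is a genuine function in $L^\infty(0,T;H^1_{\mu_m})$ and every product above is well defined.

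For the second integral I would contract the identity \eqref{eq:nabla_log_q_equation} of Remark \ref{rk:eq_ln(q_N)} (which is a consequence of the continuity equation alone) with $u_N$ and integrate, using the twisted integration-by-parts formulas that absorb the Gaussian weight, so that no boundary term at infinity survives given the regularity and decay of $q_N$ and $u_N$. The term $-\dvm(q_N\nabla u_N^\top)$ then yields $\int_{\mathbb{R}^d} q_N\nabla u_N:\nabla u_N^\top\,{\rm d}\mu_m$, the term $\tfrac{q_N}{\sigma^2}(u_N-\delta_1\nabla\ln(q_N))$ yields the contribution $\tfrac1{\sigma^2}\int_{\mathbb{R}^d} q_N u_N\cdot(u_N-\delta_1\nabla\ln(q_N))\,{\rm d}\mu_m$, and the terms $\delta_1\dvm(q_N D^2(\ln(q_N)))$ together with $D^2(\ln(q_N))(u_N-\delta_1\nabla\ln(q_N))$ produce the Hessian contributions and the $\delta_1$ part of the mixed $D(u_N):D^2(\ln(q_N))$ term.

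For the first integral I would test the momentum equation \eqref{eq:CNSK_system_m_regularized-velocity} against $\nabla\ln(q_N)$. Integrating by parts, the viscous flux $\dvm(2\nu q_N D(u_N))$ gives $-2\nu\int q_N D(u_N):D^2(\ln(q_N))$, the capillary flux $\dvm(\kappa^2 q_N D^2(\ln(q_N)))$ gives $-\kappa^2\int q_N|D^2(\ln(q_N))|^2$, the pressure term $-\lambda\sigma^2\nabla q_N$ gives $-4\lambda\sigma^2\int|\nabla\sqrt{q_N}|^2$ (using $\nabla q_N\cdot\nabla\ln(q_N)=|\nabla q_N|^2/q_N=4|\nabla\sqrt{q_N}|^2$), while the remaining drag and regularizing terms furnish the $r_0$, $r_1$ and $\delta_1$ contributions. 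The decisive cancellation is that the convective flux $\dvm(q_N u_N\otimes u_N)$, tested against $\nabla\ln(q_N)$, produces $\int q_N (D^2(\ln(q_N))u_N)\cdot u_N$, which cancels exactly the corresponding term coming from the left-hand side of \eqref{eq:nabla_log_q_equation} in the previous step; the two viscous contributions (the $2\nu$ from the momentum equation and the $\delta_1$ from the effective velocity $u_N-\delta_1\nabla\ln(q_N)$) then combine into the single coefficient $(2\nu+\delta_1)$, and collecting the surviving terms gives \eqref{eq:produit_scalaire_u_nabla_q-proof}.

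The main obstacle is the rigorous justification of testing the Galerkin momentum equation against $\nabla\ln(q_N)$, since this function does not lie in the Galerkin space $\XN$ and the momentum equation holds a priori only against $\XN$-valued fields. I expect to handle this by exploiting the exact parabolic regularity and strict positivity of $q_N$ from Proposition \ref{prop:Fokker-PlanckEquation_existence}, which put each flux of the momentum equation in a space that can be paired with $\nabla\ln(q_N)\in L^\infty(0,T;H^1_{\mu_m})$, combined with an approximation of $\nabla\ln(q_N)$ inside $\bigcup_N\XN$ and a passage to the limit. A secondary, purely computational difficulty is the careful bookkeeping of the $\delta_1$- and $r_4$-dependent correction terms generated by the regularization and the confining drag (the latter reducing, via the same twisted integration by parts as in Lemma \ref{lem:FG-energy}, to combinations of the moments $I_2(q_N)$ and $I_4(q_N)$): these must be tracked consistently so that, after combination with the energy identity of Lemma \ref{lem:FG-energy} and with the companion lemma, they match the definition \eqref{def:BD_reg-dissipation} of $\mathcal{D}_{\rm BD, reg}$ in the proof of Proposition \ref{prop:BD}.
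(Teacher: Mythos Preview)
Your overall strategy matches the paper's: split the time derivative of the cross term by the product rule, use \eqref{eq:nabla_log_q_equation} for the piece involving $\partial_t\nabla\ln(q_N)$, and test the approximate momentum equation against $\nabla\ln(q_N)$ for the other piece; the convective contributions cancel and the remaining terms assemble into \eqref{eq:produit_scalaire_u_nabla_q-proof}. The bookkeeping you describe (the viscous $2\nu$ and the $\delta_1$ coming from the effective velocity combining into $(2\nu+\delta_1)$, the pressure producing $4\lambda\sigma^2\int|\nabla\sqrt{q_N}|^2$, the drag terms, etc.) is correct.

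The one place where your plan diverges from the paper, and where it has a concrete gap, is the justification for testing the momentum equation against $\nabla\ln(q_N)$. The paper does not approximate $\nabla\ln(q_N)$ by Hermite functions. It first invokes the bounds from the proof of Proposition~\ref{prop:FG-convergence} to assert that the momentum identity for $(q_N,u_N)$ extends to every test field $\Phi\in L^2(0,T;W^{1,\infty}_{\mu_m})$, and then regularizes $\nabla\ln(q_N)$ by \emph{spatial convolution} with an approximate identity $\eta_\varepsilon$, so that $\Phi_\varepsilon=\chi(\eta_\varepsilon*\nabla\ln(q_N))$ lies in $L^2(0,T;W^{1,\infty}_{\mu_m})$; the limit $\varepsilon\to0$ is taken using only the $L^2(0,T;L^2_{\mu_m})$ convergence of $\Phi_\varepsilon$ and $\nabla\Phi_\varepsilon$, which is available because $\nabla\ln(q_N)\in L^\infty(0,T;[L^2_{\mu_m}]^d)$ and $D^2(\ln(q_N))\in L^2(0,T;[L^2_{\mu_m}]^{d\times d})$. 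Your alternative of approximating inside $\bigcup_M\mathbf{X}_M$ does not work as stated: for the \emph{fixed} Galerkin index $N$ the momentum identity is only available against $\XN$-valued fields, so test functions in $\mathbf{X}_M$ with $M>N$ are inadmissible, and projecting them back onto $\XN$ reinstates exactly the residual you are trying to remove. If you want to salvage a density argument, it has to be run at the level of the extended equation in $[(W^{1,\infty}_{\mu_m})']^d$, not at the level of the finite-dimensional Galerkin space.
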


\begin{proof}
	Let $(q_N,u_N)$ be a sequence associated with $(q,u)$ as in Proposition \ref{prop:FG-convergence}. From that proposition and its proof, it follows that, for all $\Phi\in L^2(0,T;W^{1,\infty}_{\mu_m})$,
	\begin{align}
	&   \int_{Q_T} \partial_t(q_N u_N) \cdot \Phi {\rm d}\mu_m{\rm d}t \notag\\
	&= \int_{Q_T} q_N\nabla \Phi u_N \cdot u_N {\rm d}\mu_m{\rm d}t  -\delta_1 \int_{Q_T} \nabla u_N \nabla q_N \cdot \Phi {\rm d}\mu_m{\rm d}t \notag\\
	&\quad+ 2\nu \int_{Q_T} q_N D(u_N) : D(\Phi) {\rm d}\mu_m{\rm d}t + 2\kappa^2 \int_{Q_T} \left[\sqrt{q_N} D^2(\sqrt{q_N})-\nabla(\sqrt{q_N}) \otimes \nabla(\sqrt{q_N})\right] : D(\Phi) {\rm d}\mu_m {\rm d}t \notag\\
	&\quad  +\lambda \sigma^2 \int_{Q_T} \nabla q_N  \cdot \Phi {\rm d}\mu_m{\rm d}t + r_0 \int_{Q_T} u_N \cdot \Phi  {\rm d}\mu_m{\rm d}t+ r_1\int_{Q_T} q_N|u_N|^2u_N \cdot \Phi  {\rm d}\mu_m{\rm d}t \notag\\
	&\quad+ \frac{r_4}{\sigma^4}\int_{Q_T} q_N|x|^2x \cdot \Phi  {\rm d}\mu_m{\rm d}t. \label{eq:CNSK_system_m_regularized-velocity-WS-Definition_XN-proof2}
	\end{align}
	Let $\chi\in\mathcal{D}(0,T)$ be a test function only depending on time. We would like to apply \eqref{eq:CNSK_system_m_regularized-velocity-WS-Definition_XN-proof2} with 
	\begin{equation*}
	\Phi=\chi\nabla(\ln(q_N)),
	\end{equation*}
	but the regularity properties satisfied by $q_N$ (see Proposition \ref{prop:existence_FD} and Lemma \ref{lem:apriori}) are not enough to deduce that $\nabla(\ln(q_N))$ belongs to $L^2(0,T;W^{1,\infty}_{\mu_m})$. However we have that $\nabla(\ln(q_N))=\frac{\nabla q_N}{q_N}\in L^\infty(0,T;L^2_{\mu_m})$ by Proposition \ref{prop:existence_FD}, and we also observe that
	\begin{equation*}
	D^2(\ln(q_N))=\frac{D^2(q_N)}{q_N}-\frac{\nabla q_N\otimes \nabla q_N}{q_N^2}=\frac{D^2(q_N)}{q_N}-16\frac{\nabla q_N^{\frac14}\otimes \nabla q_N^{\frac14}}{\sqrt{q_N}} \in L^2(0,T;[L^2_{\mu_m}]^{d\times d}),
	\end{equation*}
	by Proposition \ref{prop:existence_FD} and Lemma \ref{lem:apriori}. For $\varepsilon>0$, let $\eta_\varepsilon$ be an approximation to the identity ($\eta_\varepsilon(x)=\varepsilon^{-d}\eta(\varepsilon^{-1}x)$ with $\eta\in\mathcal{D}(\mathbb{R}^d)$ and $\int_{\mathbb{R}^d}\eta(x){\rm d}x=1$), and let $\Phi_\varepsilon=\chi (\eta_\varepsilon*\nabla\ln(q_N))$. We then have that $\Phi_\varepsilon\in L^2(0,T;W^{1,\infty}_{\mu_m})$ and
	\begin{equation}\label{eq:limit_Phi_epsilon}
	\Phi_\varepsilon\to\chi\nabla\ln(q_N), \quad \nabla\Phi_\varepsilon=\chi(\eta_\varepsilon*D^2(\ln(q_N)))\to \chi D^2(\ln(q_N)), \quad \varepsilon\to0,
	\end{equation}
	in $L^2(0,T;[L^2_{\mu_m}]^{d})$ and $L^2(0,T;[L^2_{\mu_m}]^{d\times d})$, respectively. Now we apply \eqref{eq:CNSK_system_m_regularized-velocity-WS-Definition_XN-proof2} with $\Phi=\Phi_\varepsilon$, and we observe that all terms in  \eqref{eq:CNSK_system_m_regularized-velocity-WS-Definition_XN-proof2} (with $\Phi=\Phi_\varepsilon$) integrated against $\Phi_\varepsilon$, $D(\Phi_\varepsilon)$ or $\nabla(\Phi_\varepsilon)$ belong either to $L^2(0,T;[L^2_{\mu_m}]^{d})$ or $L^2(0,T;[L^2_{\mu_m}]^{d\times d})$ (for instance, the term $\partial_t(q_N u_N)=\partial_t(q_N)u_N+q_N\partial_t(u_N)$ belongs to $L^2(0,T;[L^2_{\mu_m}]^{d})$ because $q_N\in H^1(0,T;L^2_{\mu_m})$  and $u_N\in\mathcal{C}^1([0,T];L^\infty_{\mu_m})$ by Proposition \ref{prop:existence_FD}; the term $\sqrt{q_N} D^2(\sqrt{q_N})$ belongs to $L^2(0,T;[L^2_{\mu_m}]^{d\times d})$ because $q_N\in L^\infty(0,T;L^\infty_{\mu_m})$ by Proposition \ref{prop:existence_FD} and $D^2(\sqrt{q_N})\in L^2(0,T;[L^2_{\mu_m}]^{d\times d})$ by Lemma \ref{lem:apriori}; the term $q_N|x|^2x$ belongs to $L^2(0,T;[L^2_{\mu_m}]^{d})$ because $q_N\in L^\infty(0,T;L^\infty_{\mu_m})$ and $|x|^2x\in L^2(0,T;[L^2_{\mu_m}]^{d})$, \emph{etc.}). Therefore, since \eqref{eq:limit_Phi_epsilon} holds, we can let $\varepsilon\to0$, obtaining
	\begin{align}
	&  \int_{Q_T} \partial_t(q_N u_N) \cdot \chi\nabla\ln(q_N) {\rm d}\mu_m{\rm d}t \notag\\
	&= \int_{Q_T} \chi q_ND^2(\ln(q_N)) u_N \cdot u_N {\rm d}\mu_m{\rm d}t  -\delta_1 \int_{Q_T} \chi\nabla u_N \nabla q_N \cdot \nabla\ln(q_N) {\rm d}\mu_m{\rm d}t \notag\\
	&\quad + 2\kappa^2 \int_{Q_T} \chi\left[\sqrt{q_N} D^2(\sqrt{q_N})-\nabla(\sqrt{q_N}) \otimes \nabla(\sqrt{q_N})\right] : D^2(\ln(q_N)) {\rm d}\mu_m {\rm d}t \notag\\
	&\quad+ 2\nu \int_{Q_T} \chi q_N D(u_N) : D^2(\ln(q_N)) {\rm d}\mu_m{\rm d}t  +\lambda \sigma^2 \int_{Q_T} \chi\nabla q_N  \cdot \nabla\ln(q_N) {\rm d}\mu_m{\rm d}t  \notag \\
	&\quad + r_0 \int_{Q_T} \chi u_N \cdot \nabla\ln(q_N)  {\rm d}\mu_m{\rm d}t+ r_1\int_{Q_T} \chi q_N|u_N|^2u_N \cdot \nabla\ln(q_N)  {\rm d}\mu_m{\rm d}t \notag\\
	&\quad+ \frac{r_4}{\sigma^4}\int_{Q_T} \chi q_N|x|^2x \cdot \nabla\ln(q_N)  {\rm d}\mu_m{\rm d}t. \label{eq:CNSK_system_m_regularized-velocity-WS-Definition_XN-proof3}
	\end{align}
	Similar computations as in \cite[Appendix A]{Carles_Carrapatoso_Hillairet_AIF2022} then lead to
	\begin{align}
	&   \int_{Q_T} q_N u_N \cdot \left(\partial_t(\chi\nabla\ln(q_N)) +\chi D^2(\ln(q_N)) u\right) {\rm d}\mu_m{\rm d}t    -16 \delta_1 \int_{Q_T} \chi \sqrt{q_N}\nabla u_N \nabla \left(q_N^{\frac14}\right) \cdot \nabla \left(q_N^{\frac14}\right) {\rm d}\mu_m{\rm d}t \notag \\
	&= 2\nu \int_{Q_T} \chi  (\sqrt{q_N} D(u_N)) : (\sqrt{q_N}D^2(\ln(q_N))) {\rm d}\mu_m{\rm d}t +  \kappa^2 \int_{Q_T} \chi q_N|D^2(\ln(q_N))|^2 {\rm d}\mu_m {\rm d}t \notag \\
	&\quad +\lambda \sigma^2  \int_{Q_T} \chi q_N|\nabla q_N|^2{\rm d}\mu_m{\rm d}t + 4r_0 \int_{Q_T} \chi u_N \cdot \nabla\ln(q_N) {\rm d}\mu_m{\rm d}t \notag \\
	&\quad+ 4r_1\int_{Q_T}\chi \left(q_N^{\frac34}|u_N|^2u_N\right) \cdot \nabla \left(q_N^{\frac14}\right)  {\rm d}\mu_m{\rm d}t + \frac{4r_4}{\sigma^4}\int_{Q_T}\chi \left(q_N^{\frac34}|x|^2x\right)  \cdot \nabla \left(q_N^{\frac14}\right)   {\rm d}\mu_m{\rm d}t. \label{eq:BD2}
	\end{align}
	In order to justify the time integration by parts to deduce the last equation from \eqref{eq:CNSK_system_m_regularized-velocity-WS-Definition_XN-proof3}, we write $\partial_t(q_N u_N)=\partial_t(q_N)u_N+q_N\partial_t( u_N)$ and integrate by parts the second term thanks to \eqref{eq:q_nabla_log_q_equation} (observing that $q_N\in L^\infty(0,T;L^\infty_{\mu_m})$), yielding
	\begin{align*}
	\int_{Q_T} \partial_t(q_N u_N) \cdot \chi\nabla\ln(q_N) {\rm d}\mu_m{\rm d}t&=\int_{Q_T} u_N \cdot \chi \partial_t(q_N)\nabla\ln(q_N) {\rm d}\mu_m{\rm d}t-\int_{Q_T}  u_N \cdot  \partial_t(\chi q_n\nabla\ln(q_N)) {\rm d}\mu_m{\rm d}t \\
	&=-\int_{Q_T}  u_N \cdot q_N\partial_t(\chi\nabla\ln(q_N)) {\rm d}\mu_m{\rm d}t.
	\end{align*}
	Next, using the differentiation in $\mathcal{D}'(0,T)$ (and denoting $\left\langle \cdot,\cdot \right\rangle$ the duality bracket on $\mathcal{D}'(0,T)$), we can rewrite
	\begin{align}
	&\int_{Q_T} q_N u_N \cdot \left(\partial_t(\chi\nabla\ln(q_N)) +D^2(\ln(q_N)) u_N\right) {\rm d}\mu_m{\rm d}t \notag \\
	& =  - \left\langle \frac{\rm d}{{\rm d} t} \left(\int_{\mathbb{R}^d} q_N u_N \cdot \nabla \ln(q_N)    \;{\rm d}\mu_m\right), \chi \right\rangle 
	+ \int_{Q_T} \chi  q_N u_N \cdot \left(\partial_t\nabla \ln(q_N) +D^2(\ln(q_N)) u_N\right)  {\rm d}\mu_m  {\rm d}t. \label{eq:BD3}
	\end{align}
	The last term in the previous identity can be computed using \eqref{eq:nabla_log_q_equation} to obtain, almost everywhere in $(0,T)$,
	\begin{align}
	& \int_{\mathbb{R}^d}  q_N u_N \cdot \left(\partial_t\nabla \ln(q_N) +D^2(\ln(q_N)) u_N\right)   \;{\rm d}\mu_m \notag \\
	& =  \int_{\mathbb{R}^d}  u_N \cdot \big(D^2(\ln(q_N)) \delta_1\nabla \ln(q_N)+  \delta_1 \dvm(q_ND^2\ln(q_N)) - \dvm(q_N\nabla u_N^\top) + \frac{q_N}{\sigma^2}(u_N - \delta_1\nabla \ln(q_N)) \big)  {\rm d}\mu_m \notag \\
	& = \int_{\mathbb{R}^d} q_N \nabla u_N : \nabla u_N^\top \;{\rm d}\mu_m - \delta_1 \int_{\mathbb{R}^d} q_N |D^2(\ln(q_N))|^2\;{\rm d}\mu_m +\frac1{\sigma^2}\int_{\mathbb{R}^d} (\sqrt{q_N}u_N-2\delta_1\nabla(\sqrt{q_N})) \cdot (\sqrt{q_N}u_N) {\rm d}\mu_m \notag \\
	& \quad + \delta_1 \int_{\mathbb{R}^d} (q_N^{\frac14}u_N) \cdot (\sqrt{q_N}D^2(\ln(q_N)))(4\nabla q_N^{\frac14})  \;{\rm d}\mu_m . \label{eq:BD4}
	\end{align}	
	Inserting \eqref{eq:BD3}--\eqref{eq:BD4} into \eqref{eq:BD2} and using, in particular, that
	\begin{align*}
	r_0 \int_{\mathbb{R}^d} u_N \cdot \nabla \ln(q_N) \;{\rm d}\mu_m & = - r_0 \frac{\rm d}{{\rm d}t} \int_{\mathbb{R}^d} \ln(q_N) \;{\rm d}\mu_m +r_0\delta_1 \int_{\mathbb{R}^d} |\nabla \ln(q_N)|^2 \;{\rm d}\mu_m \\
	& =  r_0 \frac{\rm d}{{\rm d}t} \int_{\mathbb{R}^d} (q_N-\ln(q_N)) \;{\rm d}\mu_m +r_0\delta_1 \int_{\mathbb{R}^d} |\nabla \ln(q_N)|^2 \;{\rm d}\mu_m,
	\end{align*}
	(where we used the conservation of mass $\int_{\mathbb{R}^d} q_N \;{\rm d}\mu_m = 1$),  we obtain \eqref{eq:produit_scalaire_u_nabla_q-proof}. 
\end{proof}

Arguing in a similar way, we can establish the following:

\begin{Lemma}\label{lm:BD2}
	Under the conditions of Proposition \ref{prop:FG-convergence}, let $(q,u)$ be a solution to \eqref{eq:CNSK_system_m_regularized} associated to a sequence $((q_N,u_N))_N$ as in Proposition \ref{prop:FG-convergence}. Then, for almost every $t\in(0,T)$, 
	\begin{align}
	&  \frac12 \frac{{\rm d}}{{\rm d}t} \int_{\mathbb{R}^d} q_N |\nabla(\ln(q_N))|^2 \,{\rm d}\mu_m +\delta_1 \int_{\mathbb{R}^d} q_N |D^2(\ln(q_N))|^2 \;{\rm d}\mu_m  \notag \\
	& =  \int_{\mathbb{R}^d} q_N\nabla u_N ^\top : D^2(\ln(q_N)) \;{\rm d}\mu_m  + \frac{1}{\sigma^2}\int_{\mathbb{R}^d} \left(q_Nu_N - \delta_1\nabla\ln(q_N)\right) \cdot \nabla \ln(q_N)  \;{\rm d}\mu_m. \label{eq:produit_scalaire_nabla_q_nabla_q}
	\end{align}
\end{Lemma}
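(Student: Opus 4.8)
The plan is to mimic the proof of Lemma \ref{lm:BD1}, taking this time the scalar product of \eqref{eq:nabla_log_q_equation} with $\nabla\ln(q_N)$ and integrating over $\mathbb{R}^d$ against ${\rm d}\mu_m$. Since \eqref{eq:nabla_log_q_equation} only holds in $L^1(0,T;[(W^{1,\infty}_{\mu_m})']^d)$ and $\nabla\ln(q_N)$ is not directly an admissible test field, I would first regularize $\nabla\ln(q_N)$ by a mollification $\eta_\eps*\nabla\ln(q_N)$ (against a time cut-off $\chi\in\mathcal{D}(0,T)$), exactly following the regularization and time integration-by-parts scheme of Lemma \ref{lm:BD1} (using both the conservative form \eqref{eq:q_nabla_log_q_equation} and the form \eqref{eq:nabla_log_q_equation}), and pass to the limit $\eps\to0$ at the end. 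The regularity needed to carry this out, namely $\nabla\ln(q_N)=\nabla q_N/q_N\in L^\infty(0,T;[L^2_{\mu_m}]^d)$ and $D^2(\ln(q_N))\in L^2(0,T;[L^2_{\mu_m}]^{d\times d})$, follows from Proposition \ref{prop:existence_FD} and Lemma \ref{lem:apriori} (together with the strict positivity and $H^2_{\mu_m}$-regularity of $q_N$), so that each term obtained after testing lies in $L^1(0,T)$ and the limit is justified by the strong convergences $\eta_\eps*\nabla\ln(q_N)\to\nabla\ln(q_N)$ and $\eta_\eps*D^2(\ln(q_N))\to D^2(\ln(q_N))$.

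Once the testing is justified, the computation is essentially algebraic. Writing $g=\ln(q_N)$, the convective part of the left-hand side is rewritten using the symmetry of the Hessian, $D^2(g)(u_N-\delta_1\nabla g)\cdot\nabla g=\tfrac12(u_N-\delta_1\nabla g)\cdot\nabla(|\nabla g|^2)$, and then integrated by parts in the twisted sense to produce $-\tfrac12\int_{\mathbb{R}^d}\dvm\big(q_N(u_N-\delta_1\nabla g)\big)\,|\nabla g|^2\,{\rm d}\mu_m$. The crucial observation is that the continuity equation \eqref{eq:CNSK_system_m_regularized-density}, combined with $\Delta_m q_N=\dvm(q_N\nabla g)$, can be rewritten as $\partial_t q_N=-\dvm\big(q_N(u_N-\delta_1\nabla g)\big)$; hence this term equals $\tfrac12\int_{\mathbb{R}^d}\partial_t(q_N)\,|\nabla g|^2\,{\rm d}\mu_m$, which is exactly what is needed to complete $\int_{\mathbb{R}^d}q_N\,\partial_t(\nabla g)\cdot\nabla g\,{\rm d}\mu_m$ into the full time derivative $\tfrac12\frac{{\rm d}}{{\rm d}t}\int_{\mathbb{R}^d}q_N|\nabla g|^2\,{\rm d}\mu_m$.

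For the right-hand side of \eqref{eq:nabla_log_q_equation}, the term $\delta_1\dvm(q_ND^2(g))$ integrated against $\nabla g$ gives, after twisted integration by parts, $-\delta_1\int_{\mathbb{R}^d}q_N\,D^2(g):\nabla(\nabla g)\,{\rm d}\mu_m=-\delta_1\int_{\mathbb{R}^d}q_N|D^2(g)|^2\,{\rm d}\mu_m$, which, moved to the left, produces the dissipation term; similarly, $-\dvm(q_N\nabla u_N^\top)$ integrated against $\nabla g$ yields $\int_{\mathbb{R}^d}q_N\nabla u_N^\top:D^2(g)\,{\rm d}\mu_m$, the first term of the right-hand side; and the remaining term $\frac{q_N}{\sigma^2}(u_N-\delta_1\nabla g)$ contributes the $\sigma^{-2}$ term directly. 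Collecting everything gives \eqref{eq:produit_scalaire_nabla_q_nabla_q}.

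I expect the only genuine difficulty to be the rigorous justification of the two twisted integrations by parts and of the mollification/limit procedure, i.e. checking that no boundary term at infinity survives under the Gaussian weight and that every integrand is controlled in $L^1(0,T)$ uniformly in $\eps$. The algebra itself is routine once the admissibility of $\nabla\ln(q_N)$ as a (regularized) test field is secured, exactly as in Lemma \ref{lm:BD1}.
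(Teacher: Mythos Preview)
Your proposal is correct and follows essentially the same approach as the paper, which simply states that it suffices to proceed in the same way as in the proof of Lemma~\ref{lm:BD1}. You have correctly identified that the argument consists in testing \eqref{eq:q_nabla_log_q_equation}--\eqref{eq:nabla_log_q_equation} against a mollified $\chi(\eta_\varepsilon*\nabla\ln(q_N))$, using the regularity from Proposition~\ref{prop:existence_FD} and Lemma~\ref{lem:apriori} to justify the limit $\varepsilon\to0$, and then combining the convective term with the continuity equation rewritten as $\partial_t q_N=-\dvm(q_N(u_N-\delta_1\nabla\ln(q_N)))$ to reconstitute the full time derivative.
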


\begin{proof}
	It suffices to proceed in the same way as in the proof of Lemma \ref{lm:BD1}. 
\end{proof}

Now we can prove Proposition \ref{prop:BD}.

\begin{proof}[Proof of Proposition \ref{prop:BD}]
	Adding $\eqref{eq:energy_identity_reg}$ for $(q_N,u_N)$ (see Lemma \ref{lem:FG-energy}), $(2\nu) \times \eqref{eq:produit_scalaire_u_nabla_q-proof}$ and $(2\nu)^2 \times \eqref{eq:produit_scalaire_nabla_q_nabla_q}$, straightforward computations lead to the following equality satisfied by the BD entropy:
	\begin{equation}\label{eq:BD-entropy_equality}
	\frac{{\rm d}}{{\rm d}t} \mathcal{E}_{\rm BD, reg}(q_N,u_N)(t) +\mathcal{D}_{\rm BD, reg}(q_N,u_N) (t) = \mathcal{R}_{\rm BD, reg}(q_N,u_N)(t),
	\end{equation}
	where $\mathcal{E}_{\rm BD, reg}(q_N,u_N)$, $\mathcal{D}_{\rm BD, reg}(q_N,u_N)$ (with $(q_N,u_N)$ instead of $(q,u)$) are defined in \eqref{def:BD_reg-energy}--\eqref{def:BD_reg-dissipation} and  (compare to \eqref{def:BD_reg-RHS_intro})
	\begin{align} 
	\mathcal{R}_{\rm BD, reg}(q,u) & = \frac{r_4(\delta_1+2\nu)(d+2)}{\sigma^2}\int_{\mathbb{R}^d} q \left|\frac{x}{\sigma}\right|^2    \;{\rm d}\mu_m -2\nu \delta_1 \int_{\mathbb{R}^d} q D(u) : D^2(\ln(q))  \;{\rm d}\mu_m \notag\\
	&\quad - 2\nu \delta_1 \int_{\mathbb{R}^d} \nabla u \nabla q \cdot \nabla\ln(q) \;{\rm d}\mu_m - 2\nu r_1\int_{\mathbb{R}^d} q|u|^2 u \cdot \nabla \ln(q)  \;{\rm d}\mu_m \notag \\
	&\quad + \frac{2\nu}{\sigma^2}\int_{\mathbb{R}^d} q\left(u+2\nu\nabla\ln(q)\right) \cdot (u -\delta_1\nabla \ln(q)) \;{\rm d}\mu_m.
	\label{def:BD_reg-RHS}
	\end{align}
	From the expression of $\mathcal{R}_{\rm BD, reg}$ and the a priori bounds on $(q_N,u_N)$ stated in Lemma \ref{lem:apriori}, it is not difficult to verify, arguing as in the proof of Proposition \ref{prop:FG-convergence}, that there exists a constant $C_{\rm BD} >0$ (independent of $\delta_1$) and a positive constant $C$ independent of the parameters such that
	\begin{equation}\label{eq:estim_RBD}
	\mathcal{R}_{\rm BD,reg}(q_N,u_N) \leq C_{\rm BD} + Cr_4 \|\nabla(q_N^{\frac14})\|_{L^4_{\mu_m}}.
	\end{equation}
	Now from the expression \eqref{def:BD_reg-dissipation} of $\mathcal{D}_{\rm BD, reg}$ we see that
	\begin{equation*}
	2\nu\kappa^2 \int_{\mathbb{R}^d} q_N |D^2(\ln(q_N))|^2 + \frac{\nu r_4}{2\sigma^4}I_4(q_N) \le \mathcal{D}_{\rm BD, reg}(q_N,u_N),
	\end{equation*}
	and therefore, by Lemma \ref{lem:Hessian_estimate}, for a constant $C>0$ independent of $r_4$
	\begin{equation*}
	C\min(r_4,4\kappa^2)\left[ \|D^2(\sqrt{q_N})\|_{L^2_{\mu_m}}^2 + \|\nabla (q_N^{\frac14})\|_{L^4_{\mu_m}}^4\right] \leq \mathcal{D}_{\rm BD, reg}(q_N,u_N).
	\end{equation*}		
	Together with \eqref{eq:BD-entropy_equality} and \eqref{eq:estim_RBD}, this yields
	\begin{equation*}
	\frac{{\rm d}}{{\rm d}t} \mathcal{E}_{\rm BD, reg}(q_N,u_N)(t) +\frac12\mathcal{D}_{\rm BD, reg}(q_N,u_N) (t) \le C_{\rm BD},
	\end{equation*}
	for some positive constant $C_{\mathrm{BD}}$ (which differs from that of \eqref{eq:estim_RBD}) independent of $\delta_1$.
	Integrating from $0$ to $t$, we obtain \eqref{eq:EBD_ineq} with $(q_N,u_N)$ instead of $(q,u)$. We conclude the proof of the proposition by letting $N\to\infty$ exactly as in the proof of Proposition \ref{prop:En}. 
\end{proof}

To conclude this subsection, we observe that, putting together the previous propositions, the proof of Theorem \ref{prop:GWS_CNSK_system_m_regularized} in the case where $\delta_1>0$ is now complete.

\begin{proof}[Proof of Theorem \ref{prop:GWS_CNSK_system_m_regularized} for $\delta_1>0$]
	Let $(q,u)$ be a solution to \eqref{eq:CNSK_system_m_regularized}, associated to a sequence $((q_N,u_N))_N$, as in Proposition \ref{prop:FG-convergence}. Since $(\sqrt{q_N})_{N}$ strongly converges to $\sqrt{q}$ in $\mathcal{C}^0([0,T];L^2_{\mu_m})$, the conservation of mass \eqref{eq:conservation_mass_thm_regularized} for $q$ is a direct consequence of the conservation of mass for $q_N$ (see \eqref{eq:conservation-mass-qN}).

	The regularity properties for $(q,u)$ stated in Definition \ref{def:WeakSolutionRegularizedEquations}  directly follow from the conservation of mass \eqref{eq:conservation_mass_thm_regularized}, the energy estimate \eqref{eq:energy_estimates_GWS_CNSK_system_m_regularized} proven in Proposition \ref{prop:En} and the BD entropy estimate \eqref{eq:BD_estimates_GWS_CNSK_system_m_regularized} proven in Proposition \ref{prop:BD}.
\end{proof}

\subsubsection{Passing to the limit $\delta_1 \to 0$}

Starting with a solution $(q_{\delta_1},u_{\delta_1})$ (with $\delta_1>0$) to the regularized QNS system \eqref{eq:CNSK_system_m_regularized} and letting $\delta_1\to0$, we now obtain the statement of Theorem \ref{prop:GWS_CNSK_system_m_regularized} in the case where $\delta_1=0$.

\begin{proof}[Proof of Theorem \ref{prop:GWS_CNSK_system_m_regularized} for $\delta_1=0$]
	Let $(q_{\delta_1},u_{\delta_1})$ be a solution to \eqref{eq:CNSK_system_m_regularized} with $\delta_1>0$ constructed as in Proposition \ref{prop:FG-convergence}. Similarly to the a priori estimates obtained for the Faedo-Galerkin solutions $(q_N,u_N)$ in Lemma \ref{lem:apriori}, the energy estimate \eqref{eq:energy_estimates_GWS_CNSK_system_m_regularized}, BD entropy estimate \eqref{eq:BD_estimates_GWS_CNSK_system_m_regularized} and Lemma \ref{lem:Hessian_estimate} imply the following bounds for $(q_{\delta_1},u_{\delta_1})$: 
	\begin{align*}
	\|q_{\delta_1}\|_{L^\infty(0,T;L^1_{\mu_m})}+\|q_{\delta_1} \ln(q_{\delta_1})\|_{L^\infty(0,T;L^1_{\mu_m})}+r_0\|q_{\delta_1}-\ln(q_{\delta_1})\|_{L^\infty(0,T;L^1_{\mu_m})}&\le C, \\
	\|\sqrt{q_{\delta_1}}u_{\delta_1}\|_{L^\infty(0,T;[L^2_{\mu_m}]^d)} +  \|\nabla\sqrt{q_{\delta_1}}\|_{L^\infty(0,T;[L^2_{\mu_m}]^d)} + \sqrt{r_0} \|u_{\delta_1}\|_{L^2(0,T;[L^2_{\mu_m}]^d)}&\le C, \\
	\|\sqrt{q_{\delta_1}} \nabla u_{\delta_1}\|_{L^2(0,T;[L^2_{\mu_m}]^{d\times d})}+  \| \sqrt{q_{\delta_1}} D^2(\ln(q_{\delta_1}))\|_{L^2(0,T;[L^2_{\mu_m}]^{d\times d})}+\sqrt{r_4} \|D^2(\sqrt{q_{\delta_1}})\|_{L^2(0,T;[L^2_{\mu_m}]^{d\times d})}&\le C , \\
	r_4^{\frac14} \| q_{\delta_1}^{\frac14}x \|_{L^\infty(0,T;[L^4_{\mu_m}]^d)} + r_1^{\frac14}\|q_{\delta_1}^{\frac14}u\|_{L^4(0,T;[L^4_{\mu_m}]^d)}+ r_4^{\frac14} \|\nabla{q_{\delta_1}^{\frac14}}\|_{L^4(0,T;[L^4_{\mu_m}]^d)}&\le C ,
	\end{align*}
for some constant $C$ independent of $0<r_0,r_1,r_4,\delta_1\le1$.  In particular, we see that $(q_{\delta_1},u_{\delta_1})$ satisfy, uniformly in $0<\delta_1\le1$, the same a priori estimates that were satisfied by $(q_N,u_N)$ uniformly in $N$ (see Lemma \ref{lem:apriori}). Here it should be noted that the BD entropy estimate \eqref{eq:BD_estimates_GWS_CNSK_system_m_regularized} allows us to the refine the bounds on $\sqrt{q_{\delta_1}} D^2(\ln(q_{\delta_1}))$ and $q_{\delta_1}^{\frac14}x$ and therefore, by Lemma \ref{lem:Hessian_estimate}, those on $D^2(\sqrt{q_{\delta_1}})$ and $\nabla(q_{\delta_1}^{\frac14})$, in the sense that the corresponding bounds for $(q_N,u_N)$ in Lemma \ref{lem:apriori}, which depended on $\delta_1$, are replaced by uniform bounds in $\delta_1$ for $(q_{\delta_1},u_{\delta_1})$.
	
	Given these uniform a priori estimates for $(q_{\delta_1},u_{\delta_1})$, we can now reproduce the argument used in Proposition \ref{prop:FG-convergence} to prove the convergence of the Faedo-Galerkin solutions $(q_N,u_N)$ in the limit $N\to\infty$. The gives the existence of global weak solutions to \eqref{eq:CNSK_system_m_regularized} for $\delta_1=0$. The conservation of mass, energy estimate and BD entropy estimate are then obtained by following the argument used in the proof of Proposition \ref{prop:En}.
\end{proof}

\section{Global weak solutions to the QNS system without drag terms} \label{sec:3}

The goal of this section is to prove Theorem \ref{thm:main}, using in particular the result established at the end of the previous section, namely Theorem \ref{prop:GWS_CNSK_system_m_regularized} with $\delta_1=0$. To fix the ideas, we only consider here the most difficult case in the statement of Theorem \ref{thm:main}, namely when all the coefficients in front of the drag forces vanish, $r_0=r_1=r_4=0$. One can easily adapt the proof to the general case where some of these coefficients are non-negative. We recall that $\nu >0$ and $\kappa >0$ are fixed.

Our aim is to construct global weak solutions to the system \eqref{eq:CNSK_system_m} which is, as mentioned in the introduction, a reformulation of the system \eqref{eq:CNSK_system}, the system of interest in this paper. To obtain such a result, we will use the notion of renormalized weak solutions introduced in \cite{Lacroix-Violet_Vasseur_JMPA18} and used in particular in \cite[Section 4]{Carles_Carrapatoso_Hillairet_AIF2022}. More precisely, we will prove Theorem \ref{th:solutionwithoutdragterms} below which in turn implies Theorem \ref{thm:main}. The proof is divided in several steps. Firstly, we show that there exist renormalized weak solutions to the QNS system with drag forces \eqref{eq:CNSK_system_m_regularized_intro}. We use for that the result obtained in the previous section by proving that a weak solution is also a renormalized solution. Secondly, we show that any renormalized solution of \eqref{eq:CNSK_system_m} is also a weak solution. Finally, we prove a stability result which gives that a sequence of renormalized weak solutions of \eqref{eq:CNSK_system_m_regularized_intro}, converges, up to a subsequence, to a renomalized weak solution of \eqref{eq:CNSK_system_m}, when the drag coefficients tend to zero.

Let us now define what is called a renormalized weak solution for the system \eqref{eq:CNSK_system_m_regularized_intro}.

\begin{Definition}(Renormalized weak solution \cite[Section 4]{Carles_Carrapatoso_Hillairet_AIF2022}) \label{definition:RenormalizedWeakSolution}
	Let $r_0 \geq 0$, $r_1 \geq 0$, $r_4\geq 0$, $\kappa > 0$ and $\nu >0$. Let $(\sqrt{q^0},\sqrt{q^0}u^0) \in H^1_{\mu_m} \times \left[L^2_{\mu_m}\right]^d$ satisfying :
	\begin{equation}\label{eq:initial_data_compatibility}
	\int_{\mathbb{R}^d} (\sqrt{q^0})^2 \;{\rm d}\mu_m = 1 \quad \mbox{and (compatibility conditions) } \sqrt{q^0}  \geq 0 \mbox{ a.e. in }\mathbb{R}^d, \mbox{  } \sqrt{q^0} u^0 = 0 \mbox{ a.e. on } \left\{\sqrt{q^0} = 0 \right\}.
	\end{equation}
	A couple $(\sqrt{q},\sqrt{q}u)$ is called a (global) renormalized weak solution to \eqref{eq:CNSK_system_m_regularized_intro} in $\mathbb{R}^d$ associated with the initial data $(\sqrt{q^0},\sqrt{q^0}u^0)$ if there exists a quadruplet $(\sqrt{q},\sqrt{q}u, \mathbf{T}_{\rm NS},\mathbf{S}_{\rm K})$ such that
	
	\begin{enumerate}
		\item The same regularity properties as the ones stated in Definition \ref{def:WeakSolution_drag_term_intro} are satisfied.
		\item There exists a constant $C >0$ (depending on the initial data $(\sqrt{q^0},\sqrt{q^0}u^0)$) such that, for any function $\Theta \in W^{2,\infty}(\mathbb{R}^d)$, there exist two measures $\mathfrak{m}_{\Theta}^{1} \in \mathcal{M}((0,\infty)\times \mathbb{R}^d;\mathbb{R})$ and $\mathfrak{m}_{\Theta}^{2} \in \mathcal{M}((0,\infty)\times \mathbb{R}^d;\mathbb{R}^{d\times d \times d})$ such that  
		\begin{equation*}
		\|\mathfrak{m}_\Theta^1\|_{\mathcal{M}((0,\infty)\times \mathbb{R}^d;\mathbb{R})} + \|\mathfrak{m}_\Theta^2\|_{\mathcal{M}((0,\infty)\times \mathbb{R}^d;\mathbb{R}^{d\times d \times d})} \leq C \|D^2(\Theta)\|_{L^\infty}
		\end{equation*}
		and the following equations are satisfied in $\mathcal{D}'((0,\infty)\times \mathbb{R}^d)$ and $[\mathcal{D}'((0,\infty)\times \mathbb{R}^d)]^d$, respectively,
		\begin{subequations}
			\label{eq:CNSK_system_m_RenormalizedWeakSolution}
			\begin{eqnarray}
			\partial_t {\sqrt{q}} + \dvm({\sqrt{q}}u) & = & \frac12 \tr(\mathbf{T}_{\rm NS}) - \frac1{2\sigma^2} \sqrt{q}u \cdot x, \label{eq:CNSK_system_m_RenormalizedWeakSolution-density} \\
			\partial_t(\sqrt{q}\sqrt{q}\Theta(u)) + \dvm\left({\sqrt{q}}\Theta(u) {\sqrt{q}}u \right) & = & \dvm\left(\left[2\nu {\sqrt{q}}\mathbf{S}_{\rm NS} + 2\kappa^2{\sqrt{q}} \mathbf{S}_{\rm K}\right](\nabla\Theta)(u) \right)  \label{eq:CNSK_system_m_RenormalizedWeakSolution-velocity} \\
			& & 
			 + \mathbf{f} \cdot (\nabla\Theta)(u) + \mathfrak{m}_\Theta^1 \notag
			\end{eqnarray}
		\end{subequations}
			where 
			\begin{equation}\label{def:f}
				\mathbf{f} = -  r_0 u - r_1 q|u|^2u-\dfrac{r_4}{\sigma^4}q|x|^2 x- 2\sqrt{q}\lambda\sigma^2\nabla \sqrt{q},
			\end{equation}
		and with $\mathbf{S}_{\rm NS} = \frac12\left(\mathbf{T}_{\rm NS} + \mathbf{T}_{\rm NS}^\top \right)$, the symmetric part of $\mathbf{T}_{\rm NS}$, satisfying the compatibility conditions:
		\begin{eqnarray*}
			{\sqrt{q}} \frac{\partial \Theta}{\partial x_i}(u) [\mathbf{T}_{\rm NS}]_{j,k} & = & \partial_k\left(q \frac{\partial \Theta}{\partial x_i}(u) u_j\right) - 2 {\sqrt{q}} u_j \frac{\partial \Theta}{\partial x_i}(u) \partial_k {\sqrt{q}} + (\mathfrak{m}_\Theta^2)_{i,j,k} \qquad \mbox{for all } i,j,k = 1, 2, 3, \\
			{\sqrt{q}}\mathbf{S}_{\rm K} & = &  {\sqrt{q}}D^2({\sqrt{q}}) - \nabla {\sqrt{q}} \otimes \nabla {\sqrt{q}} = \frac{1}{\rho_m}\left[\sqrt{\rho}D^2(\sqrt{\rho}) - \nabla(\sqrt{\rho}) \otimes \nabla (\sqrt{\rho}) + \frac\rho{2\sigma^2}{\rm I}_d\right].
		\end{eqnarray*}
		\item For any $\psi \in \mathcal{D}(\mathbb{R}^d)$,
		\begin{eqnarray*}
			\underset{t\to 0^+}{\lim} \int_{\mathbb{R}^d} \sqrt{q}(t,\cdot) \psi\,{\rm d}\mu_m & = & \int_{\mathbb{R}^d} \sqrt{q^0}\psi \,{\rm d}\mu_m, \\
			\underset{t\to 0^+}{\lim} \int_{\mathbb{R}^d} \sqrt{q}(t,\cdot) (\sqrt{q}u)(t,\cdot)\psi\,{\rm d}\mu_m & = & \int_{\mathbb{R}^d} \sqrt{q^0}(\sqrt{q^0}u^0)\psi \,{\rm d}\mu_m.
		\end{eqnarray*}
	\end{enumerate}	
\end{Definition}

Let us now state the main theorem of this section.  

\begin{Theorem}\label{th:solutionwithoutdragterms}
	Let $(\sqrt{q^0},\sqrt{q^0}u^0) \in H^1_{\mu_m} \times \left[L^2_{\mu_m}\right]^d$ satisfying \eqref{eq:initial_data_compatibility}. 
	\begin{enumerate}
		\item For any $r_0 > 0$, $r_1 > 0$ and $r_4> 0$, any weak solution to \eqref{eq:CNSK_system_m_regularized_intro} in the sense of Definition \ref{def:WeakSolution_drag_term_intro} is a renormalized solution to \eqref{eq:CNSK_system_m_regularized_intro} in the sense of Definition \ref{definition:RenormalizedWeakSolution}, with initial data $(\sqrt{q^0},\sqrt{q^0}u^0)$. 
		\item Any renormalized solution to \eqref{eq:CNSK_system_m_regularized_intro} in the sense of Definition \ref{definition:RenormalizedWeakSolution} is a weak solution to \eqref{eq:CNSK_system_m_regularized_intro} in the sense of Definition \ref{def:WeakSolution_drag_term_intro}.
		\item

		Consider the sequences 
		$(\sqrt{q^0_n},\sqrt{q^0_n}u_n^0)$ uniformly bounded in $H^1_{\mu_m} \times \left[L^2_{\mu_m}\right]^d$, satisfying \eqref{eq:initial_data_compatibility} and converging to $(\sqrt{q^0},\sqrt{q^0}u^0)$, and the
		 sequences $r_{0,n}$, $r_{1,n}$ and $r_{4,n}$ converging to $0$, all  defined in Section \ref{subsec:smoothinitialdata} below.

		Denote $(\sqrt{q_n},\sqrt{q_n}u_n)$ the sequence of associated weak renormalized solution to \eqref{eq:CNSK_system_m_regularized_intro} (associated with $({\mathbf{T}}_{{\rm NS},n},\mathbf{S}_{{\rm K},n})$), with initial data $(\sqrt{q^0_n},\sqrt{q^0_n}u_n^0)$ and drag terms $r_{0,n}$, $r_{1,n}$ and $r_{4,n}$. Then there exist a subsequence $(\sqrt{q_n},\sqrt{q_n}u_n)$ (still denoted with $n$), tensors ${\mathbf{T}}_{\rm NS}$, $\mathbf{S}_{\rm K}$ in $L^2_{\rm loc}(0,\infty;L^2_{\mu_m})$ and $(\sqrt{q},\sqrt{q}u)$ a renormalized solution to  \eqref{eq:CNSK_system_m} (associated with $({\mathbf{T}}_{\rm NS},\mathbf{S}_{\rm K})$), with initial data $(\sqrt{q^0},\sqrt{q^0}u^0)$, such that
		\begin{enumerate}
			\item[-] $\sqrt{q_n}$ converges to $\sqrt{q}$ in $L^\infty_{\rm loc}(0,\infty;H^1_{\mu_m})$;
			\item[-] $\sqrt{q_n}u_n$ converges to $\sqrt{q}u$ in $L^\infty_{\rm loc}(0,\infty;L^2_{\mu_m})$;
			\item[-] the sequences $\mathbf{T}_{{\rm NS},n}$ and $\mathbf{S}_{{\rm K},n}$ converge weakly in $L^2_{\rm loc}(0,\infty;L^2_{\mu_m})$ to $\mathbf{T}_{\rm NS}$ and $\mathbf{S}_{\rm K}$, respectively.
		\end{enumerate}
		Moreover for every compactly supported function $\Theta \in W^{2,\infty}(\mathbb{R}^d)$, the sequence $\sqrt{q_n}\Theta(u_n)$ converges strongly in $L^\infty_{\rm loc}(0,\infty;L^2_{\mu_m})$ to $\sqrt{q}\Theta(u)$.  
	\end{enumerate}
\end{Theorem}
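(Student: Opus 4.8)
The plan is to follow the renormalization framework of \cite{Lacroix-Violet_Vasseur_JMPA18} as adapted in \cite[Section 4]{Carles_Carrapatoso_Hillairet_AIF2022}, transposed to the Gaussian setting. For item 1 (weak $\Rightarrow$ renormalized), I would start from a weak solution in the sense of Definition \ref{def:WeakSolution_drag_term_intro} and formally test the momentum equation \eqref{eq:CNSK_system_m_drag-term_WeakSolution-velocity} against $(\nabla\Theta)(u)$. Since $u$ is only defined on $\{\sqrt{q}>0\}$ and has low regularity, this must be done through a velocity-mollification (replacing $(\nabla\Theta)(u)$ by $\eta_\eps*[(\nabla\Theta)(u)]$, as in the proof of Lemma \ref{lm:BD1}), applying the weak formulation and letting $\eps\to0$. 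The chain rule then produces \eqref{eq:CNSK_system_m_RenormalizedWeakSolution-velocity}, and the second-order remainders arising when the viscous and Korteweg stresses are contracted with $D^2(\Theta)(u)$ are collected into the defect measures $\mathfrak{m}_\Theta^1$ and $\mathfrak{m}_\Theta^2$. These remainders are quadratic in the ``velocity gradient'', i.e.\ in $\mathbf{T}_{\rm NS}$, contracted against $D^2(\Theta)$, so the total-variation bound $\|\mathfrak{m}_\Theta^1\|_{\mathcal{M}}+\|\mathfrak{m}_\Theta^2\|_{\mathcal{M}}\le C\|D^2(\Theta)\|_{L^\infty}$ follows directly from $\mathbf{T}_{\rm NS}\in L^2_{\rm loc}(0,\infty;[L^2_{\mu_m}]^{d\times d})$ and the a priori estimates; the compatibility conditions are inherited from Definition \ref{def:WeakSolution_drag_term_intro}.

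For item 2 (renormalized $\Rightarrow$ weak), I would pick a sequence $\Theta_k$ approximating the identity: $\Theta_k=\mathrm{Id}$ on $\{|u|\le k\}$, smoothly truncated beyond, with $\|D^2(\Theta_k)\|_{L^\infty}$ uniformly bounded and $D^2(\Theta_k)$ supported where $|u|\gtrsim k$. Then $\mathfrak{m}^1_{\Theta_k}$ is uniformly bounded in total variation, and since $\sqrt{q}u\in L^\infty_{\rm loc}(0,\infty;[L^2_{\mu_m}]^d)$ the contributions of the region $\{|u|\gtrsim k\}$ vanish as $k\to\infty$, so that $\mathfrak{m}^1_{\Theta_k}\to 0$. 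Passing to the limit $k\to\infty$ in \eqref{eq:CNSK_system_m_RenormalizedWeakSolution-velocity}, with each remaining term controlled by the energy and BD entropy bounds, recovers exactly \eqref{eq:CNSK_system_m_drag-term_WeakSolution-velocity}, hence the weak formulation.

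Item 3 is the core. First I would gather the uniform-in-$n$ a priori bounds coming from Theorem \ref{prop:GWS_CNSK_system_m_regularized} with $\delta_1=0$, which (as established in the $\delta_1\to0$ step) are independent of the drag coefficients: $\sqrt{q_n}$ bounded in $L^\infty_{\rm loc}(0,\infty;H^1_{\mu_m})$, $\sqrt{q_n}u_n$ in $L^\infty_{\rm loc}(0,\infty;[L^2_{\mu_m}]^d)$, $\mathbf{T}_{{\rm NS},n},\mathbf{S}_{{\rm K},n}$ in $L^2_{\rm loc}(0,\infty;[L^2_{\mu_m}]^{d\times d})$, and, via Lemma \ref{lem:Hessian_estimate}, the Hessian bounds on $D^2(\sqrt{q_n})$ and $\nabla(q_n^{1/4})$; these also force the $r_{i,n}$-weighted drag quantities to stay bounded (and hence, after multiplication by $r_{i,n}\to0$, to vanish). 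From these I extract weak-$*$ limits $\sqrt{q_n}\rightharpoonup\sqrt{q}$, $\sqrt{q_n}u_n\rightharpoonup\sqrt{q}u$, $\mathbf{T}_{{\rm NS},n}\rightharpoonup\mathbf{T}_{\rm NS}$, $\mathbf{S}_{{\rm K},n}\rightharpoonup\mathbf{S}_{\rm K}$. For the strong compactness of $\sqrt{q_n}$ I would bound $\partial_t\sqrt{q_n}$ uniformly in $L^2_{\rm loc}(0,\infty;L^2_{\mu_m})$ exactly as in the proof of Proposition \ref{prop:FG-convergence} (using the density equation \eqref{eq:CNSK_system_m_RenormalizedWeakSolution-density} and the weighted Korn inequality of \cite{Carrapatoso_Dolbeault_Herau_Mischler_Mouhot_ARMA22}) and apply the Aubin--Lions--Simon Lemma \ref{lemma:Aubin-Lions-Simon} with the compact embedding $H^2_{\mu_m}\hookrightarrow H^1_{\mu_m}$, obtaining strong convergence of $\sqrt{q_n}$ in $L^2_{\rm loc}(0,\infty;H^1_{\mu_m})\cap\mathcal{C}^0_{\rm loc}([0,\infty);L^2_{\mu_m})$ (and weak-$*$ in $L^\infty_{\rm loc}(0,\infty;H^1_{\mu_m})$).

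The main obstacle is the strong compactness of the renormalized momentum $q_n\Theta(u_n)$ for compactly supported $\Theta$, which is what ultimately allows passage to the limit in the convection term. Here I would use the renormalized equation \eqref{eq:CNSK_system_m_RenormalizedWeakSolution-velocity}: its right-hand side (viscous, Korteweg and drag terms, plus the defect measure) is controlled uniformly in $n$ in $L^1_{\rm loc}(0,\infty;[(W^{1,\infty}_{\mu_m})']^d)$, the measure $\mathfrak{m}^1_{\Theta,n}$ contributing a uniformly bounded term thanks to $\|\mathfrak{m}^1_{\Theta,n}\|_{\mathcal{M}}\le C\|D^2(\Theta)\|_{L^\infty}$; this bounds $\partial_t(q_n\Theta(u_n))$ in a negative-order space. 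Combined with the spatial bound on $q_n\Theta(u_n)$ (using $\Theta$ bounded together with the bounds on $\sqrt{q_n}$ and $\nabla\sqrt{q_n}$) and the compact embedding $W^{1,1}_{\mu_m}\hookrightarrow L^1_{\mu_m}$ from \cite[Theorem 3.1]{Hooton_JMAA81}, Aubin--Lions--Simon yields strong convergence of $q_n\Theta(u_n)$; together with the strong convergence of $\sqrt{q_n}$ this upgrades, via an almost-everywhere argument and the compatibility $\Lambda=0$ on $\{\sqrt{q}=0\}$, to strong convergence of $\sqrt{q_n}\Theta(u_n)$ in $L^\infty_{\rm loc}(0,\infty;L^2_{\mu_m})$. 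With these convergences I would pass to the limit in every term of \eqref{eq:CNSK_system_m_RenormalizedWeakSolution-velocity}: the drag terms vanish since $r_{i,n}\to0$ against uniformly bounded factors, the defect measures converge weakly-$*$ in $\mathcal{M}$ to a limit still obeying the bound $C\|D^2(\Theta)\|_{L^\infty}$, and the nonlinear convection term passes thanks to the strong compactness just obtained, recovering the drag-free renormalized system \eqref{eq:CNSK_system_m_RenormalizedWeakSolution}. The pervasive difficulty is that the Gaussian reference measure rules out the usual Sobolev embeddings, so each compactness step must rest on the weighted embeddings and on the log-Sobolev and Poincar\'e inequalities of Appendix \ref{AppendixB}; verifying that the limit $(\sqrt{q},\sqrt{q}u,\mathbf{T}_{\rm NS},\mathbf{S}_{\rm K})$ satisfies the tensor compatibility identities of Definition \ref{definition:RenormalizedWeakSolution}, by pairing the strong convergence of $\sqrt{q_n},\nabla\sqrt{q_n}$ with the weak convergence of the stresses, concludes the argument.
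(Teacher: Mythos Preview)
Your overall strategy matches the paper's, but there is one genuine gap in item 3 and one methodological difference in item 1 worth flagging.

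\textbf{Item 3: the Hessian bound in $L^2_{\mu_m}$ is not uniform.} You invoke Lemma \ref{lem:Hessian_estimate} to obtain uniform bounds on $D^2(\sqrt{q_n})$ in $L^2_{\rm loc}(0,\infty;[L^2_{\mu_m}]^{d\times d})$ and on $\nabla(q_n^{1/4})$ in $L^4_{\rm loc}(0,\infty;[L^4_{\mu_m}]^d)$, and then use the compact embedding $H^2_{\mu_m}\hookrightarrow H^1_{\mu_m}$. But the $\mu_m$-version of Lemma \ref{lem:Hessian_estimate} (the second inequality) requires control of $I_4(q_n)=\int q_n|x/\sigma|^4\,{\rm d}\mu_m$, and the only available bound on this quantity is $r_{4,n}I_4(q_n)\le C$, which degenerates as $r_{4,n}\to 0$. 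Consequently the $L^2_{\mu_m}$-bound on $D^2(\sqrt{q_n})$ is \emph{not} uniform in $n$, and your Aubin--Lions step with $B=H^2_{\mu_m}$ fails. The paper resolves this by passing to the original density $\rho_n=q_n\rho_m$: from the uniform BD-dissipation bound on $\sqrt{q_n}D^2(\ln q_n)$ in $L^2_{\mu_m}$ and Lemmas \ref{lemma:Appendix_rho_q1}--\ref{lemma:Appendix_rho_q2} one gets $\sqrt{\rho_n}D^2(\ln\rho_n)$ bounded in $L^2(\mathbb{R}^d)$, and then the \emph{first} inequality of Lemma \ref{lem:Hessian_estimate} (which needs no moment condition) yields $D^2(\sqrt{\rho_n})$ and $\nabla(\rho_n^{1/4})$ bounded in the flat Lebesgue spaces. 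The compactness and the passage to the limit in the $\mathbf{S}_{\rm K}$ compatibility condition are then carried out in the $\rho$-variables. Your argument can be repaired by making exactly this switch.

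\textbf{Item 1: different regularization.} You propose to mollify the test function $(\nabla\Theta)(u)$ in space. The paper instead follows \cite{Lacroix-Violet_Vasseur_JMPA18} and uses a \emph{density} cut-off $\phi_l(q)$: one multiplies \eqref{eq:CNSK_system_m_drag-term_WeakSolution-velocity} by $\phi_l(q)$, derives an equation for $q\phi_l(q)u$, and then tests against $(\nabla\Theta)(v)$ with $v=\phi_l(q)u$. The point is that $\phi_l(q)q^{-1/4}\in L^\infty$, so the drag assumption $r_1>0$ (giving $q^{1/4}u\in L^4_{\rm loc}(L^4_{\mu_m})$) makes $v\in L^4_{\rm loc}(L^4_{\mu_m})$, which is exactly what is needed to justify the chain rule and identify $\mathfrak{m}_\Theta^1=-\tr\big(\mathbf{S}(D^2\Theta)(u)\mathbf{T}_{\rm NS}\big)$ explicitly. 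Your mollification route may be workable, but it does not exploit the $r_1>0$ integrability and the chain-rule justification is less transparent; the density cut-off is the sharper tool here.

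Item 2 and the remainder of item 3 (Lemma \ref{lemma:LV-V}, passage to the limit in the compatibility conditions and in \eqref{eq:CNSK_system_m_RenormalizedWeakSolution-velocity}) are handled in the paper essentially as you describe.
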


\begin{Remark}\label{rem:limit_kappa_0}
As in \cite{Carles_Carrapatoso_Hillairet_AIF2022}, it would also be possible to let $\kappa\to0$ in our strategy of proof. As $\sigma$ (and thus $\rho_m$) depends on $\kappa$ by \eqref{def:sigma}, it is convenient to do a change of unknows and variables, for $\theta = \frac{\sigma^2}{\eps} >0$ (a universal time where $\eps >0$ is fixed),
\begin{equation*}
\rho(t,x) = \frac{1}{\sigma^d}R\left(\frac{t}{\theta}, \frac{x}{\sigma}\right), \qquad u(t,x) = \frac{\sigma}{\theta}U\left(\frac{t}{\theta}, \frac{x}{\sigma}\right)
\end{equation*}
to obtain the following equivalent system, where $\tau = \frac{t}{\theta}$, $y = \frac{x}{\sigma}$,
\begin{eqnarray*}
\partial_\tau R + {\dv}_y(RU) & = & 0, \\
R\left(\partial_\tau U + \nabla_yUU\right) & = & \frac{2\nu \theta}{\sigma^2}{\dv}_y(RD_yU) + \frac{\kappa^2\theta^2}{\sigma^4} {\dv}_y\left(R D_y^2\ln(R)\right) - \frac{a\theta^2}{\sigma^2}\nabla_y\ln(R) -\lambda\theta^2 R \nabla_y\ln(R) \\
& = & \frac{2\nu}{\eps}{\dv}_y(RD_yU) + \frac{\kappa^2}{\eps^2} {\dv}_y\left(R D_y^2\ln(R)\right) - \frac{a\sigma^2}{\eps^2}\nabla_y\ln(R) - \frac{\lambda \sigma^4}{\eps^2}R \nabla_y\ln(R).
\end{eqnarray*}
Note that $\rho_m$ then becomes $R_m : x \in \mathbb{R}^d \mapsto (2\pi)^{-\frac d2}\exp\left(-\frac{|x|^2}{2}\right) \in \mathbb{R}$, and that we can then easily track the dependence in $(\kappa_n)$, a sequence of positive real numbers converging to $0$ (observe that, by \eqref{def:sigma}, $(\sigma_n)$ tends to $\sqrt{\frac{a}{\lambda}}$ when $(\kappa_n)$ tends to $0$).
\end{Remark}

As mentioned before Theorem \ref{th:solutionwithoutdragterms} implies Theorem \ref{thm:main} in the case where the drag coefficients $r_0$, $r_1$, $r_4$ vanish. Indeed, the first point allows us to construct, for a fixed $n$, a renormalized solution to the system \eqref{eq:CNSK_system_m_regularized_intro} with drag coefficients $r_{0,n}, r_{1,n}$ and $r_{4,n}$. This gives the existence of a sequence of renormalized solutions. Thanks to the third point we obtain the existence of a renormalized weak solution for the system without drag forces \eqref{eq:CNSK_system_m}, which is also a global weak solution thanks to the second point of the theorem. 

Note that, the proof of the second point of Theorem \ref{th:solutionwithoutdragterms} is not difficult and follows the lines of \cite[Section 4]{Lacroix-Violet_Vasseur_JMPA18}. The details will be skipped here.

The remainder of this section is devoted to the proof of Theorem \ref{th:solutionwithoutdragterms} and divided into three subsections. In Subsection \ref{subsec:weaktorenormalized} we prove the first point of Theorem \ref{th:solutionwithoutdragterms}. In Subsection \ref{subsec:smoothinitialdata}, we construct a smooth sequence of initial data. Finally, Subsection \ref{subsec:stability} is devoted to the proof of the stability part, {\it i.e.} the third point of Theorem \ref{th:solutionwithoutdragterms}.

\subsection{Proof of the first point of Theorem \ref{th:solutionwithoutdragterms}: Weak solutions are renormalized weak solutions.}\label{subsec:weaktorenormalized}

The goal of this section is to prove that, for $r_0 >0$, $r_1 >0$ and $r_4 >0$, any weak solution of \eqref{eq:CNSK_system_m_drag-term_WeakSolution} in the sense of Definition~\ref{def:WeakSolution_drag_term_intro} is also a renormalized weak solutions of \eqref{eq:CNSK_system_m_drag-term_WeakSolution} in the sense of Definition~\ref{definition:RenormalizedWeakSolution}. 

Note that the regularities and the equation \eqref{eq:CNSK_system_m_RenormalizedWeakSolution-density} of Definition \ref{definition:RenormalizedWeakSolution} are the same as those of Definition \ref{def:WeakSolution_drag_term_intro}. Likewise, point $3$ concerning the initial data is the same in both definitions. 
It thus remains to prove that Eq. \eqref{eq:CNSK_system_m_RenormalizedWeakSolution-velocity} is also satisfied for all $\Theta \in W^{2,\infty}(\mathbb{R}^d)$.

\paragraph{Equation for $\phi(q)$.}

Equation \eqref{eq:CNSK_system_m_drag-term_WeakSolution-density} rewrites as
\begin{equation}\label{eq:EDP_sqrt{q}}
\partial_t(\sqrt{q}) + 2(q^\frac{1}{4}u) \cdot \nabla(q^{\frac14}) + \frac{1}{2}\tr({\mathbf{T}}_{\rm NS}) - \frac12 \left(q^{\frac14}\frac{x}{\sigma^2}\right) \cdot \left(q^{\frac14}u\right) = 0.
\end{equation}
Then $\partial_t(\sqrt{q}) \in L^2(0,T;L^2_{\mu_m})$ thanks to the regularities of $\sqrt{q}$, $q^{\frac14}u$, $q^{\frac14}x$, $\nabla(q^{\frac14})$ and $\mathbf{T}_{\rm NS}$ in Definition \ref{def:WeakSolution_drag_term_intro}. We also know from the conservation of mass that $\sqrt{q} \in L^\infty(0,T;L^2_{\mu_m})$.

For any $\phi \in C_c^1(\mathbb{R};]0,\infty[)$ (a cut-off of small and large values), 
from the equation \eqref{eq:EDP_sqrt{q}}, $\phi(q) = \phi((\sqrt{q})^2)$ satisfies
\begin{equation*}
\partial_t(\phi(q)) + u \cdot \nabla(\phi(q)) = -\sqrt{q} \phi'(q) \tr(\mathbf{T}_{\rm NS}) + \phi'(q) \left(\sqrt{q}\frac{x}{\sigma^2}\right) \cdot \left(\sqrt{q}u\right).
\end{equation*}
Moreover, we can multiply the momentum equation \eqref{eq:CNSK_system_m_drag-term_WeakSolution-velocity} with $\phi(q)$ to obtain
\begin{equation}\label{eq:equation_phi(q)_u}
\partial_t(q\phi(q) u)+\dvm\left(\phi(q)u \otimes qu\right) = \dvm\left(\phi(q)\sqrt{q}\mathbf{S}\right) + \phi(q) \mathbf{f} - \sqrt{q} \mathbf{S}\nabla(\phi(q)) +\left[\partial_t(\phi(q)) + u \cdot \nabla(\phi(q))\right] qu
\end{equation}
where $\mathbf{f}$ is defined in \eqref{def:f} and $\mathbf{S} = 2\nu \mathbf{S}_{\rm NS} + 2\kappa^2\mathbf{S}_{\rm K}$.

These calculations can be justified by rewriting the system in the original variables $(\rho,u)$ where $\rho = q \rho_m$ and then by applying directly the strategy of \cite{Lacroix-Violet_Vasseur_JMPA18} as $\rho$ satisfies the same regularities with the addition of $\int_{\mathbb{R}^d} \rho(\cdot,x) |x|^2 \,{\rm d}x$ belongs to $L^\infty_{\rm loc}(0,\infty;\mathbb{R})$ with bound depending only on the initial energy and BD entropy (thanks to the strong Poincaré inequality, see Proposition \ref{prop:SP_inequality}).

For, $l \geq 1$, let us consider $\phi_l$ a cut-off (Equation (3.1) in \cite{Lacroix-Violet_Vasseur_JMPA18}) defined on $[0,\infty)$ by, for all $y \in [0,\infty)$,
\begin{equation*}
\phi_l(y) = (2ly-1){\bf 1}_{\left[\frac1{2l},\frac1l\right]}(y) + {\bf 1}_{\left[\frac1l,l\right]}(y) + \left(2-\frac{y}{l}\right){\bf 1}_{\left[l,2l\right]}(y).
\end{equation*}
The function $\phi_l(q)q^{-\frac14}$ belongs to $L^\infty_{\rm loc}(0,\infty;L^\infty)$ and for $r_1 >0$, we have $q^{\frac14}u \in L_{\rm loc}^4(0,\infty;L^4_{\mu_m})$, thus $v = \phi_l(q)u = \left(\phi_l(q)q^{-\frac14}\right)q^{\frac14}u \in L_{\rm loc}^4(0,\infty;L^4_{\mu_m})$. Then, for any $\Theta \in W^{2,\infty}(\mathbb{R}^d)$, using the ideas developped in \cite{Lacroix-Violet_Vasseur_JMPA18}, we obtain, taking the scalar product of \eqref{eq:equation_phi(q)_u} with $(\nabla\Theta(v))$, 
\begin{eqnarray*}
\partial_t(q\Theta(v))+\dvm(\Theta(v) qu) & = & \dvm\left(\phi_l(q)\sqrt{q}\mathbf{S}\nabla(\Theta(v))\right) +\phi_l(q) \mathbf{f} \cdot \nabla(\Theta(v)) - \sqrt{q} \mathbf{S}\nabla(\phi_l(q)) \cdot \nabla(\Theta(v)) \\
 & & -\sqrt{q} \phi_l'(q) \tr(\mathbf{T}_{\rm NS})qu \cdot \nabla(\Theta(v))  + \phi_l'(q) \left(\sqrt{q}\frac{x}{\sigma^2}\right) \cdot \left(\sqrt{q}u\right) qu \cdot \nabla(\Theta(v)) \\
	&  &   - \tr\left(\phi_l(q)\sqrt{q}\mathbf{S}\nabla(\nabla\Theta(v))\right).
\end{eqnarray*}
Because the sequence $(\phi_l)_{l\geq 1}$ is an approximation of the constant function $\bf 1$ in $W^{2,\infty}(\mathbb{R}^d)$, we can pass to the limit $l \to \infty$ in the previous identity to obtain \eqref{eq:CNSK_system_m_RenormalizedWeakSolution-velocity}.  In particular, we obtain (as $\phi_l'(q) \to 0$, $\nabla (\phi_l(q)) \to 0$ and $\sqrt{q}\nabla v \to {\bf T}_{\rm NS}$),
\begin{equation*}
\mathfrak{m}^1_\Theta =  - \tr \left(\mathbf{S} (D^2\Theta)(u){\bf T}_{\rm NS}\right) \in L^1_{\rm loc}(0,\infty;L^1_{\mu_m})
\end{equation*}
with the bound (see \eqref{def:energy_reg_intro} and \eqref{def:BD_reg-entropie_intro})
\begin{align*}
\|\mathfrak{m}^1_\Theta\|_{L^1_{\rm loc}(0,\infty;L^1_{\mu_m})}& \leq \|D^2\Theta\|_{L^\infty} \|\mathbf{S}\|_{L^2_{\mathrm{loc}}(0,\infty:[L^2_{\mu_m}]^{d\times d})} \|{\bf T}_{\rm NS}\|_{L^2_{\mathrm{loc}}(0,\infty:[L^2_{\mu_m}]^{d\times d})}\\
&\leq \|D^2\Theta\|_{L^\infty}\left(\mathcal{E}(\sqrt{q^0},\sqrt{q^0}u^0)+ \mathcal{E}_{\rm BD}(\sqrt{q^0},\sqrt{q^0}u^0)\right).
\end{align*}
Still following the lines of \cite{Lacroix-Violet_Vasseur_JMPA18} (or equivalently \cite{Carles_Carrapatoso_Hillairet_AIF2022}), we can obtain the compatibility condition on the tensor $\mathbf{T}_{\rm NS}$ by computing (for regularized $\sqrt{q}$ and $u$) the following expression, for all $i,j,k =1, 2, 3$,
\begin{equation*}
\sqrt{q} \frac{\partial \Theta}{\partial x_i}(u) \sqrt{q} \frac{\partial u_j}{\partial x_k} = \partial_k\left((\sqrt{q})^2 \frac{\partial \Theta}{\partial x_i}(u) u_j\right) - 2 \sqrt{q} u_j \frac{\partial \Theta}{\partial x_i}(u) \partial_k (\sqrt{q})  - (\sqrt{q} u_j) \frac{\partial^2 \Theta}{\partial x_i\partial x_p}(u) \sqrt{q} \frac{\partial u_p}{\partial x_k}.
\end{equation*}
Thus we obtain the expression of $\mathfrak{m}^2_\Theta$ as follows, for all $i,j,k=1,\ldots,d$,
\begin{equation*}
(\mathfrak{m}^2_\Theta)_{i,j,k} =- (\sqrt{q} u_j) [D^2(\Theta)(u)\mathbf{T}_{\rm NS}]_{ik}
\end{equation*}
which belongs to $L^1_{\rm loc}(0,\infty;L^1_{\mu_m})$ and satisfies, for all $i,j,k=1,\ldots,d$,
\begin{equation*}
\|(\mathfrak{m}^2_\Theta)_{i,j,k}\|_{L^1_{\rm loc}(0,\infty;L^1_{\mu_m})} \leq \|D^2\Theta\|_{L^\infty}\left(\mathcal{E}(\sqrt{q^0},\sqrt{q^0}u^0)+ \mathcal{E}_{\rm BD}(\sqrt{q^0},\sqrt{q^0}u^0)\right).
\end{equation*}

This proves the first point of Theorem \ref{th:solutionwithoutdragterms}.

\subsection{Construction of sequences of smooth initial data and drag coefficients.}{\label{subsec:smoothinitialdata}}

Let us construct, for a given $(\sqrt{q^0},\sqrt{q^0}u^0)$ as in Definition \ref{definition:RenormalizedWeakSolution}, a sequence $(\sqrt{q^0_n},\sqrt{q^0_n}u^0_n)_{n}$ satisfying \eqref{eq:_regularized_initial_data}, for all $n$.

\paragraph{The sequence $(\sqrt{q^0_n})_n$.}

Let $\chi, \zeta \in \mathcal{D}(\mathbb{R}^d)$ such that 
\begin{equation*}
{\bf 1}_{\left\{|x| \leq \frac{1}{2}\right\}} \leq \chi \leq {\bf 1}_{\left\{|x| < 1\right\}}, \quad {\rm supp}(\zeta) \subset B(0_{\mathbb{R}^d},1) \quad \mbox{and} \quad \int_{\mathbb{R}^d} \zeta = 1
\end{equation*}
and define, for $\sqrt{q^0}$ as in Definition \ref{definition:RenormalizedWeakSolution} and for all $n\geq 1$,
\begin{equation*}
{\sqrt{q^0_n}} = \frac{\left(({\sqrt{q^0}}) \chi_n+\frac1n\right) \ast \zeta_n}{\left\|\left(({\sqrt{q^0}}) \chi_n+\frac1n\right) \ast \zeta_n\right\|_{L^2_{\mu_m}}}
\end{equation*}
with, for all $x \in \mathbb{R}^d$ and all $n\geq 1$, 
\begin{equation*}
\chi_n(x) = \chi\left(\frac{x}{n}\right) \quad \mbox{and} \quad \zeta_n(x) = n^d \zeta\left(nx\right).
\end{equation*}

Let us give a result follwing Proposition 4.2. in \cite{Carles_Carrapatoso_Hillairet_AIF2022}.
\begin{Proposition} \label{prop:initial_data_sequence}
	The sequence $(\sqrt{q_n^0})_{n\geq 1}$ satisfies
	\begin{equation*}
	\underset{n\to \infty}{\limsup} \int_{\mathbb{R}^d} |\nabla {\sqrt{q^0_n}}|^2 \;{\rm d}\mu_m \leq \int_{\mathbb{R}^d} |\nabla {\sqrt{q^0}}|^2 \;{\rm d}\mu_m. 
	\end{equation*}
\end{Proposition}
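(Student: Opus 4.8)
The plan is to write $\sqrt{q_n^0}=g_n/N_n$, where
\[
g_n=\Bigl(\sqrt{q^0}\,\chi_n+\tfrac1n\Bigr)\ast\zeta_n,\qquad N_n=\|g_n\|_{L^2_{\mu_m}},
\]
and to reduce the statement to the two claims $N_n\to1$ and $\limsup_{n\to\infty}\|\nabla g_n\|_{L^2_{\mu_m}}^2\le\|\nabla\sqrt{q^0}\|_{L^2_{\mu_m}}^2$, since $\int_{\mathbb R^d}|\nabla\sqrt{q_n^0}|^2\,{\rm d}\mu_m=N_n^{-2}\|\nabla g_n\|_{L^2_{\mu_m}}^2$. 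Because $\tfrac1n\ast\zeta_n\equiv\tfrac1n$ is constant, its gradient vanishes, so $\nabla g_n=\bigl(\nabla(\sqrt{q^0}\chi_n)\bigr)\ast\zeta_n$, and the Leibniz rule splits it as $\nabla g_n=A_n+B_n$ with $A_n=(\chi_n\nabla\sqrt{q^0})\ast\zeta_n$ and $B_n=(\sqrt{q^0}\nabla\chi_n)\ast\zeta_n$. All convolutions are with respect to Lebesgue measure, and these manipulations are licit because $\sqrt{q^0}\chi_n$ is compactly supported and lies in $H^1({\rm d}x)$, the weight $\rho_m$ being bounded above and below on each fixed ball.

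The key technical ingredient I would isolate as a lemma is a uniform weighted Young-type bound adapted to the matched scaling of the cut-off (radius $n$) and the mollifier (width $1/n$): for every $n\ge1$ and every $g\in L^2_{\mu_m}$ supported in $\{|x|\le n\}$,
\[
\|g\ast\zeta_n\|_{L^2_{\mu_m}}^2\le K\,\|g\|_{L^2_{\mu_m}}^2,\qquad K=e^{3/(2\sigma^2)}.
\]
The proof is a short computation: applying Jensen's inequality (since $\zeta_n\ge0$ and $\int\zeta_n=1$), then Fubini and the substitution $z=x-y$, one is led to control $\rho_m(z+y)/\rho_m(z)=\exp\bigl(-(2z\cdot y+|y|^2)/(2\sigma^2)\bigr)$ over $|z|\le n$, $|y|\le1/n$; on this range $|2z\cdot y+|y|^2|\le3$, whence the ratio is bounded by $K$ uniformly in $n$. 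This is precisely where the Gaussian structure and the balance between the cut-off scale and the mollification scale are exploited.

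With this in hand, the term $B_n$ is disposed of directly: $\sqrt{q^0}\nabla\chi_n$ is supported in the annulus $\{n/2\le|x|\le n\}$ with $|\nabla\chi_n|\lesssim 1/n$, so $\|\sqrt{q^0}\nabla\chi_n\|_{L^2_{\mu_m}}^2\le \tfrac{C}{n^2}\int q^0\,{\rm d}\mu_m\to0$, and the lemma gives $\|B_n\|_{L^2_{\mu_m}}\to0$. The same split governs both $N_n\to1$ and the estimate of $A_n$. Here the main obstacle is that the constant $K$ is strictly larger than $1$, so the lemma alone only yields $\limsup\|A_n\|_{L^2_{\mu_m}}^2\le K\|\nabla\sqrt{q^0}\|_{L^2_{\mu_m}}^2$, which is not sharp. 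I would upgrade this to the optimal constant by an inner/outer decomposition at a fixed radius $R$: writing $f:=\nabla\sqrt{q^0}$ and $f=f\mathbf 1_{|x|\le R}+f\mathbf 1_{|x|>R}$, for $n\ge2R$ one has $\chi_n\equiv1$ on $\{|x|\le R\}$, so the inner contribution equals $(f\mathbf 1_{|x|\le R})\ast\zeta_n$, which converges in $L^2_{\mu_m}$ to $f\mathbf 1_{|x|\le R}$ by standard mollification on a bounded set where $\rho_m$ is comparable to Lebesgue measure; the outer contribution is bounded by $\sqrt K\,\|f\mathbf 1_{|x|>R}\|_{L^2_{\mu_m}}$ via the lemma. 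Taking $\limsup_n$ and then $R\to\infty$, the outer term vanishes because $f\in L^2_{\mu_m}$, giving $\limsup_n\|A_n\|_{L^2_{\mu_m}}\le\|\nabla\sqrt{q^0}\|_{L^2_{\mu_m}}$. The identical decomposition applied to $\sqrt{q^0}\chi_n$ shows $g_n\to\sqrt{q^0}$ in $L^2_{\mu_m}$, hence $N_n\to\|\sqrt{q^0}\|_{L^2_{\mu_m}}=1$, and assembling the three limits yields the claim.
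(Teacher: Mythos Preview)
Your argument is correct. The weighted Young-type lemma is the right tool here: the Jensen/Fubini computation you describe does give the uniform constant $K=e^{3/(2\sigma^2)}$ on the range $|z|\le n$, $|y|\le 1/n$, and your inner/outer splitting at a fixed radius $R$ is exactly what is needed to upgrade the bound to the sharp constant~$1$ in the $\limsup$. The treatment of $B_n$, of the normalisation $N_n\to1$, and of the constant shift $1/n$ are all sound.

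As for comparison with the paper: the paper does not actually prove this proposition. It simply states the result with the sentence ``Let us give a result following Proposition~4.2 in \cite{Carles_Carrapatoso_Hillairet_AIF2022}'' and moves on. Your proof is therefore strictly more than what the paper provides, and it is genuinely adapted to the Gaussian-weighted setting $L^2_{\mu_m}$: the key point, which the torus-based argument of \cite{Carles_Carrapatoso_Hillairet_AIF2022} does not need to face, is that convolution is not a contraction on $L^2_{\mu_m}$, and your lemma handles precisely this by exploiting the matched scales of $\chi_n$ and $\zeta_n$. This makes your write-up self-contained where the paper is not.
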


Note that this implies also, thanks to the Log-Sobolev inequality (see Appendix \ref{AppendixB}), that 
\begin{equation*}
2\int_{\mathbb{R}^d} q^0_n \ln(\sqrt{q^0_n}) \;{\rm d}\mu_m = \int_{\mathbb{R}^d} q_n^0 \ln(q_n^0)\;{\rm d}\mu_m
\end{equation*}
is bounded\footnote{Recall that $\displaystyle \int_{\mathbb{R}^d} (q_n^0\ln(q_n^0) - q_n^0 +1) \;{\rm d}\mu_m \geq 0$ and thus $\int_{\mathbb{R}^d} q_n^0\ln(q_n^0) \;{\rm d}\mu_m$ is bounded from below independently of $n$.} independently of $n$.

\paragraph{The sequence $(\sqrt{q_n^0}u_n^0)_n$.} For $n\geq1$, we define
\begin{equation*}
u_n^0 = \frac{\sqrt{q^0}u^0}{\sqrt{q_n^0}}\chi_n.
\end{equation*}
Note that $u_n^0$ satisfies \eqref{eq:initial_(q0,u0)} for all $n\geq1$.

\paragraph{The sequences of drag terms.} Still following \cite[Section 4]{Carles_Carrapatoso_Hillairet_AIF2022}, we consider such a sequence of initial data and, in the same time, sequences  $(r_{0,n})_{n\geq 1}$,  $(r_{1,n})_{n\geq 1}$  and $(r_{4,n})_{n\geq 1}$ defined, for all $n\geq1$, by
\begin{equation}\label{def:r1n}
\displaystyle r_{0,n} = \frac{1}{\displaystyle  n+\left(\int_{\mathbb{R}^d} (q_n^0-\ln(q_n^0)) \;{\rm d}\mu_m\right)^2}, \quad r_{1,n} = \frac1n \quad \mbox{and} \qquad r_{4,n} = \frac{1}{n+I_4(q_n^0)^2}
\end{equation}
allowing then both $r_{4,n} \underset{n\to \infty}{\longrightarrow} 0$ and $r_{4,n}I_4(q_n^0) \underset{n\to \infty}{\longrightarrow} 0$ (while the sequence $(I_4(q_n^0))_{n\geq 1}$ may not be bounded) and in the same way both $r_{0,n}  \underset{n\to \infty}{\longrightarrow} 0$ and $\displaystyle r_{0,n}\int_{\mathbb{R}^d} (q_n^0-\ln(q_n^0)) \;{\rm d}\mu_m \underset{n\to \infty}{\longrightarrow} 0$.

\begin{Remark}
In the simpler cases where $r_0>0$, $r_1>0$ and/or $r_4>0$, we can consider any sequences $(r_{0,n})_n$, $(r_{1,n})_n$ and $(r_{4,n})_n$ converging to $r_0$, $r_1$ and $r_4$, respectively. In this case, in particular, $(r_{0,n})_n$ and $(r_{4,n})_n$ do not need to depend on the sequence of initial data $(\sqrt{q^0_{n}})_n$.
\end{Remark}

\subsection{Proof of the third point of Theorem \ref{th:solutionwithoutdragterms}: limits with respect to the drag terms}\label{subsec:stability}

	Let $(\sqrt{q^0},\sqrt{q^0}u^0) \in H^1_{\mu_m} \times \left[L^2_{\mu_m}\right]^d$ satisfying \eqref{eq:initial_data_compatibility}.
Let us then define a sequence of initial data $(\sqrt{q_n^0},\sqrt{q_n^0}u_n^0)$ and drag terms $(r_{0,n},r_{1,n},r_{4,n})$ as in Section~\ref{subsec:smoothinitialdata}.
	
For any $n \geq 1$, thanks to Theorem \ref{prop:GWS_CNSK_system_m_regularized}, as the initial data $(\sqrt{q_n^0},\sqrt{q_n^0}u_n^0)$ satisfies \eqref{eq:_regularized_initial_data} and \eqref{eq:initial_(q0,u0)}, there exists a global weak solution $(\sqrt{q_n},\sqrt{q_n}u_n)$ to \eqref{eq:CNSK_system_m_regularized} with initial data $(\sqrt{q_n^0},\sqrt{q_n^0}u_n^0)$ and positive drag terms $(r_{0,n},r_{1,n},r_{4,n})$ in the sense of Definition \ref{def:WeakSolution_drag_term_intro} (see Remark \ref{rem:definitions2.1and1.2}).

Thanks to the first point of Theorem \ref{th:solutionwithoutdragterms} (see Section \ref{subsec:weaktorenormalized}), for any $n\geq1$, $(\sqrt{q_n},\sqrt{q_n}u_n)$ is a renormalized weak solution to \eqref{eq:CNSK_system_m_drag-term_WeakSolution} in the sense of Definition \ref{definition:RenormalizedWeakSolution}.

This sequence of solutions $((\sqrt{q_n},\sqrt{q_n}u_n))_{n\geq 1}$ satisfies the system and energy and entropy estimates of Subsection \ref{subsect:GWS_drag_term} and, thanks to Proposition \ref{prop:initial_data_sequence}, the right-hand side of the energy and entropy estimates are themselves bounded independently of $n$.

More precisely, the sequences $(\sqrt{q_n},\sqrt{q_n}u_n,\mathbf{T}_{{\rm NS},n}, 
\mathbf{S}_{{\rm K},n})_{n\geq 1}$ satisfy
\begin{equation*}
\sqrt{q_n} \in L^\infty_{\rm loc}(0,\infty,H^1_{\mu_m}), \qquad \sqrt{q_n}u_n \in  L^\infty_{\rm loc}(0,\infty,L^2_{\mu_m}), \qquad \mathbf{T}_{{\rm NS},n}, \mathbf{S}_{{\rm K},n} \in L^2_{\rm loc}(0,\infty;[L^2_{\mu_m}]^{d\times d}),
\end{equation*}	
with bounds independent of $n$. Therefore, by compactness, there exists $\sqrt{q}$, $\sqrt{q}u$, $\mathbf{T}_{\rm NS}$ and $\mathbf{S}_{\rm K}$ such that 
\begin{equation*}
\sqrt{q} \in L^\infty_{\rm loc}(0,\infty,H^1_{\mu_m}), \qquad \sqrt{q}u \in  L^\infty_{\rm loc}(0,\infty,L^2_{\mu_m}), \qquad \mathbf{T}_{{\rm NS}}, \mathbf{S}_{{\rm K}} \in L^2_{\rm loc}(0,\infty;[L^2_{\mu_m}]^{d\times d}),
\end{equation*}
and subsequences (without relabelling subsequences) such that
\begin{equation*}
\begin{array}{rcll} 
\sqrt{q_n} & \underset{n\to \infty}{\longrightarrow} & \sqrt{q} &\mbox{in } L^\infty_{\rm loc}(0,\infty,H^1_{\mu_m}) - w*, \\
\sqrt{q_n}u_n & \underset{n\to \infty}{\longrightarrow} & \sqrt{q}u &\mbox{in } L^\infty_{\rm loc}(0,\infty,[L^2_{\mu_m}]^d) - w*, \\
\mathbf{T}_{{\rm NS},n} & \underset{n\to \infty}{\longrightarrow} & \mathbf{T}_{\rm NS} &\mbox{in }L^2_{\rm loc}(0,\infty;[L^2_{\mu_m}]^{d\times d}) -w, \\
\mathbf{S}_{{\rm K},n} & \underset{n\to \infty}{\longrightarrow} & \mathbf{S}_{\rm K} &\mbox{in }L^2_{\rm loc}(0,\infty;[L^2_{\mu_m}]^{d\times d}) -w.
\end{array} 
\end{equation*}

By the same type of argument as in the proof of Proposition \ref{prop:FG-convergence} (namely a priori bounds obtained from the energy and BD entropy estimates and the Aubin-Lions-Simon Lemma \ref{lemma:Aubin-Lions-Simon}), we can pass to the limit in the energy and BD entropy estimates and also in Equation \eqref{eq:CNSK_system_m_drag-term_WeakSolution-density}.

Note that, going back to the original density $\rho = q \rho_m$, and denoting, for each $n \geq 1$, $\rho_n = q_n \rho_m$, we infer from the bound on $\mathbf{S}_{{\rm K},n}$, the compatibility condition in the definition of the renormalized weak solutions and Lemmata \ref{lemma:Appendix_rho_q1} and \ref{lemma:Appendix_rho_q2} that $(\rho_n)_{n\geq 1}$ is bounded in $L^2$ such that $(\sqrt{\rho_n}D^2(\ln(\rho_n)))_{n\geq 1}$ is also bounded in $[L^2]^{d\times d}$. 

Using also Lemma \ref{lem:Hessian_estimate}, we conclude that $(D^2(\sqrt{\rho_n}))_{n\geq 1}$ and $\left(\nabla\left(\rho_n^{\frac14}\right)\right)_{n\geq 1}$ are bounded respectively in $L^2_{\rm loc}(0,\infty;[L^2]^{d\times d})$ and $L^4_{\rm loc}(0,\infty;[L^4]^d)$ and therefore weakly converge to $D^2(\sqrt{\rho})$ and $\nabla \left(\rho^{\frac14}\right)$ respectively in these spaces.

From the strong convergence of the sequence $\sqrt{\rho_n}$ to $\sqrt{\rho}$ in $\mathcal{C}^0([0,\infty);L^2)$, we can pass to the limit in the compatibility condition to obtain that 
\begin{equation*}
	\sqrt{q} \mathbf{S}_{\rm K}  =  \frac{1}{\rho_m}\left[\sqrt{\rho}D^2(\sqrt{\rho}) - \nabla(\sqrt{\rho}) \otimes \nabla (\sqrt{\rho}) + \frac\rho{2\sigma^2} {\rm I}_d\right] 
\end{equation*}
in the space $L^1_{\mu_m}$. 

In order to prove that the compatibility condition for $\mathbf{T}_{\rm NS}$ is satisfied, we can follow once again \cite{Lacroix-Violet_Vasseur_JMPA18} (see Lemma 5.1 and the proof of part (4) of Theorem 2.1 page 207). Let us state the last property we will use.
\begin{Lemma}\label{lemma:LV-V}
	For any $H \in L^{\infty}(\mathbb{R}^d)$, $(\sqrt{q_n}H(u_n))_{n\geq 1}$ converges strongly to $\sqrt{q}H(u)$ in $L^2_{\rm loc}(0,\infty;L^2_{\mu_m})$ where $u$ is defined a.e. in $\mathbb{R}^d$ by $u = \frac{\sqrt{q}u}{\sqrt{q}} {\bf 1}_{\{\sqrt{q} >0\}}$, $(\sqrt{q}u)$ is the limit of the sequence $(\sqrt{q_n}u_n)_{n\geq 1}$ and $\sqrt{q}$ is the limit of the sequence $(\sqrt{q_n})_{n\geq 1}$ obtained above.
\end{Lemma}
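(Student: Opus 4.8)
The plan is to follow the strategy of \cite[Lemma 5.1 and the proof of part (4) of Theorem 2.1]{Lacroix-Violet_Vasseur_JMPA18}, adapted to the Gaussian reference measure, reducing the strong convergence to an almost-everywhere convergence statement combined with a generalized dominated convergence argument. Throughout I treat $H$ as bounded and continuous: this is the only case needed below, since the lemma will be applied with $H=\Theta$ or $H=\nabla\Theta$ for $\Theta\in W^{2,\infty}(\mathbb{R}^d)$ compactly supported, which are continuous. First I would record the convergences already at our disposal. From the uniform energy and BD entropy bounds (valid uniformly in $n$ thanks to Proposition \ref{prop:initial_data_sequence}) and the Aubin--Lions--Simon Lemma \ref{lemma:Aubin-Lions-Simon}, exactly as in the proof of Proposition \ref{prop:FG-convergence}, one has $\sqrt{q_n}\to\sqrt{q}$ strongly in $\mathcal{C}^0([0,T];L^2_{\mu_m})$ for every $T>0$; up to extracting a subsequence, this gives $\sqrt{q_n}\to\sqrt{q}$ and $q_n\to q$ almost everywhere on $(0,\infty)\times\mathbb{R}^d$, and moreover $q_n\to q$ in $L^1_{\mathrm{loc}}(0,\infty;L^1_{\mu_m})$ (using $\|q_n-q\|_{L^1_{\mu_m}}\le\|\sqrt{q_n}-\sqrt{q}\|_{L^2_{\mu_m}}\|\sqrt{q_n}+\sqrt{q}\|_{L^2_{\mu_m}}$ together with the conservation of mass). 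We also have $\sqrt{q_n}u_n\rightharpoonup\sqrt{q}u$ weakly in $L^2_{\mathrm{loc}}(0,\infty;[L^2_{\mu_m}]^d)$.

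The crucial step, which I expect to be the main obstacle, is the strong (hence almost-everywhere) convergence of the momentum $q_nu_n$. Writing the weak momentum formulation with drag coefficients $r_{0,n}$, $r_{1,n}$, $r_{4,n}$ (and $\delta_1=0$) as in Definition \ref{def:WeakSolution_drag_term_intro}, and using that the uniform energy and BD entropy estimates control $\sqrt{r_{0,n}}\,u_n$, $r_{1,n}^{1/4}q_n^{1/4}u_n$ and $r_{4,n}^{1/4}q_n^{1/4}x$ uniformly in $n$, the very same computation as in Proposition \ref{prop:FG-convergence}\,$(ii)$ shows that $(q_nu_n)_n$ is bounded in $L^2(0,T;[W^{1,1}_{\mu_m}]^d)$ while $(\partial_t(q_nu_n))_n$ is bounded in $L^2(0,T;[(W^{1,\infty}_{\mu_m})']^d)$, uniformly in $n$. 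By the compact embedding $W^{1,1}_{\mu_m}\hookrightarrow L^1_{\mu_m}$ (see \cite[Theorem 3.1]{Hooton_JMAA81}) and Lemma \ref{lemma:Aubin-Lions-Simon}, the sequence $(q_nu_n)_n$ converges strongly in $L^2(0,T;[L^1_{\mu_m}]^d)$; since its weak $L^1_{\mathrm{loc}}$ limit is $\sqrt{q}\cdot\sqrt{q}u=qu$ (the product of a strongly and a weakly $L^2$-convergent sequence), it converges to $qu$, and a further subsequence converges a.e. Everything else is measure-theoretic, so this is the only step requiring the full a priori machinery.

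With momentum convergence in hand I would conclude as follows. On the set $\{q>0\}$ we have $q_n\to q>0$ and $q_nu_n\to qu$ a.e., hence $u_n=(q_nu_n)/q_n\to (qu)/q=u$ a.e., where $u=\frac{\sqrt{q}u}{\sqrt{q}}\mathbf{1}_{\{\sqrt{q}>0\}}$ as in the statement; by continuity of $H$ this yields $\sqrt{q_n}H(u_n)\to\sqrt{q}H(u)$ a.e. on $\{q>0\}$. On the complementary set $\{q=0\}$ we simply bound $|\sqrt{q_n}H(u_n)|\le\|H\|_{L^\infty}\sqrt{q_n}\to 0=\sqrt{q}H(u)$ a.e. Thus $\sqrt{q_n}H(u_n)\to\sqrt{q}H(u)$ almost everywhere. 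To upgrade this to strong $L^2_{\mu_m}$ convergence, set $w_n=\sqrt{q_n}H(u_n)$ and $w=\sqrt{q}H(u)$, and note the domination $|w_n-w|^2\le 2\|H\|_{L^\infty}^2 q_n+2|w|^2=:G_n$, where $G_n\to 2\|H\|_{L^\infty}^2 q+2|w|^2$ both a.e. and in $L^1_{\mathrm{loc}}(0,\infty;L^1_{\mu_m})$ (using the $L^1$ convergence of $q_n$ and $|w|^2\le\|H\|_{L^\infty}^2 q\in L^1_{\mathrm{loc}}$). Since $|w_n-w|^2\to0$ a.e., the generalized dominated convergence theorem gives $\int|w_n-w|^2\,{\rm d}\mu_m{\rm d}t\to0$, that is $\sqrt{q_n}H(u_n)\to\sqrt{q}H(u)$ strongly in $L^2_{\mathrm{loc}}(0,\infty;L^2_{\mu_m})$. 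As the limit is uniquely identified, the convergence holds for the whole sequence, which proves the lemma.
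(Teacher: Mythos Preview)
Your proposal is correct and takes the same route the paper indicates: the paper's own proof is the single sentence ``The proof of this property can be adapted from \cite[Lemma 5.1 (4)]{Lacroix-Violet_Vasseur_JMPA18}'', and you have carried out precisely that adaptation, obtaining a.e.\ convergence of $q_nu_n$ via Aubin--Lions--Simon and concluding by generalized dominated convergence. Your restriction to continuous $H$ is harmless since the only applications are to $H=\partial_i\Theta$ with $\Theta\in W^{2,\infty}$, and your remark that the $W^{1,1}_{\mu_m}$ bound on $q_nu_n$ must be recovered through $\mathbf{T}_{{\rm NS},n}$ (rather than the $L^4$ estimates on $q_n^{1/4}u_n$ and $\nabla q_n^{1/4}$, whose constants degenerate as $r_{1,n},r_{4,n}\to0$) is exactly the modification needed relative to Proposition~\ref{prop:FG-convergence}\,(ii).
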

\begin{proof}
	The proof of this property can be adapted from \cite[Lemma 5.1 (4)]{Lacroix-Violet_Vasseur_JMPA18}.
\end{proof}

Now, for any $\Theta \in W^{2,\infty}(\mathbb{R}^d)$, for any $i,j,k =1, \ldots, d$,   using Lemma \ref{lemma:LV-V} for $H = \frac{\partial \Theta}{\partial x_i}$, the strong convergence of $(\sqrt{q_n})_{n\geq1}$ to $\sqrt{q}$ in $\mathcal{C}^0([0,\infty);L^2_{\mu_m})$, the weak convergence of the sequence $(\mathbf{T}_{{\rm NS},n})_{n\geq 1}$ to $\mathbf{T}_{\rm NS}$ in $L^2_{\rm loc}(0,\infty;[L^2_{\mu_m}]^{d\times d})$, and the compactness of the bounded sequence of measures $(\{(\mathfrak{m}_\Theta^2)_{n}\}_{i,j,k})_{n\geq 1}$, we can pass to the limit in the compatibility condition 
\begin{equation}\label{eq:RenormalizedWeakSolution_CompatibiltyTNS_convergence}
\sqrt{q_n} \frac{\partial \Theta}{\partial x_i}(u_n) [\{\mathbf{T}_{\rm NS} \}_n]_{j,k} = \partial_k\left(q_n \frac{\partial \Theta}{\partial x_i}(u_n) \{u_n\}_j\right) - 2 \sqrt{q_n} \{u_n\}_j \frac{\partial \Theta}{\partial x_i}(u_n) \partial_k \sqrt{q_n} + \{(\mathfrak{m}_\Theta^2)_n\}_{i,j,k}.
\end{equation}
Indeed, first $(\{(\mathfrak{m}_\Theta^2)_n\}_{i,j,k})_n$ is bounded independently of $n$ thanks to the definition of renormalized weak solution and the definition of the sequences of initial data (which allow uniform in $n$ bounds of the initial energy and BD entropy), then up to a subsequence, the sequence $(\{(\mathfrak{m}_\Theta^2)_n\}_{i,j,k})_n$ converges in $\mathcal{M}((0,\infty)\times \mathbb{R}^d;\mathbb{R})$ to an element (denoted $\{\mathfrak{m}_\Theta^2\}_{i,j,k}$ and satisfying the same bound as the sequence) and second
\begin{equation*}
\begin{array}{rcll} 
\sqrt{q_n} \frac{\partial \Theta}{\partial x_i}(u_n) [\{\mathbf{T}_{\rm NS} \}_n]_{j,k} & \underset{n\to \infty}{\longrightarrow} & \sqrt{q} \frac{\partial \Theta}{\partial x_i}(u) [\mathbf{T}_{\rm NS}]_{j,k} &\mbox{in } L^1_{\rm loc}(0,\infty,L^1_{\mu_m}) - w, \\
2 \sqrt{q_n} \{u_n\}_j \frac{\partial \Theta}{\partial x_i}(u_n) \partial_k \sqrt{q_n} & \underset{n\to \infty}{\longrightarrow} & 2 \sqrt{q} u_j \frac{\partial \Theta}{\partial x_i}(u) \partial_k \sqrt{q} &\mbox{in } L^1_{\rm loc}(0,\infty,L^1_{\mu_m}) - w, \\
\partial_k\left(q_n \frac{\partial \Theta}{\partial x_i}(u_n) \{u_n\}_j\right) & \underset{n\to \infty}{\longrightarrow}& \partial_k\left(q \frac{\partial \Theta}{\partial x_i}(u)u_j\right) & \mbox{in } \mathcal{M}((0,\infty)\times \mathbb{R}^d;\mathbb{R}).
\end{array} 
\end{equation*}
The last convergence is a consequence of the identity \eqref{eq:RenormalizedWeakSolution_CompatibiltyTNS_convergence} satisfied for every $n$ and the convergences of the others terms in this space. The convergence occurs a priori in a subspace of distribution of order~$1$ (in space) as the sequence $(q_n \frac{\partial \Theta}{\partial x_i}(u_n) \{u_n\}_j)_{n\geq 1}$ converges weakly in $L^1_{\rm loc}(0,\infty;L^1_{\mu_m})$ by the arguments above. Thus, for all $\Phi \in \mathcal{D}([0,\infty)\times \mathbb{R}^d)$,
\begin{eqnarray*}
\left\langle \partial_k\left(q_n \frac{\partial \Theta}{\partial x_i}(u_n) \{u_n\}_j\right), \Phi \right\rangle & = & - \left\langle q_n \frac{\partial \Theta}{\partial x_i}(u_n) \{u_n\}_j, \partial_k\Phi \right\rangle \\
& \underset{n\to \infty}{\longrightarrow} & - \left\langle q \frac{\partial \Theta}{\partial x_i}(u) u_j, \partial_k\Phi \right\rangle = \left\langle \partial_k\left(q \frac{\partial \Theta}{\partial x_i}(u)u_j\right), \Phi \right\rangle.
\end{eqnarray*}

The same arguments allow us to pass to the limit in Equation \eqref{eq:CNSK_system_m_RenormalizedWeakSolution-velocity} (written for $\sqrt{q_n}$, $\sqrt{q_n}u_n$) to obtain the corresponding equation satisfied by $(\sqrt{q},\sqrt{q}u)$.

This proves that indeed, up to subsequences, $((\sqrt{q_n},\sqrt{q_n}u_n))_n$ converges to $(\sqrt{q},\sqrt{q}u)$, renormalized weak solution to \eqref{eq:CNSK_system_m_drag-term_WeakSolution} with $r_0 = 0$, $r_1 = 0$, $r_4 =0$.

This concludes the proof of Theorem \ref{th:solutionwithoutdragterms}.

\section{The case without potential force: global weak solutions for a rescaled QNS system similar to \cite{Carles_Carrapatoso_Hillairet_AIF2022}} \label{sect:CCH}

We would like in this section to show that our strategy of proof can be adapted to the Quantum Navier-Stokes system without potential force in $\mathbb{R}^d$ (for $d\geq 1$ an integer), that is the system studied in \cite{Carles_Carrapatoso_Hillairet_AIF2022}, which corresponds to \eqref{eq:CNSK_system} with $\lambda = 0$ in this paper, namely
\begin{subequations}\label{eq:CNSK_system_0}
	\begin{eqnarray}
	\partial_t\rho + \dv(\rho u) & = & 0, \label{eq:CNSK_system_0_density} \\
	\partial_t(\rho u) + \dv(\rho u \otimes u) & = & \dv(2\nu \rho D(u))  + 2\kappa^2 \rho\nabla\left(\frac{\Delta(\sqrt{\rho})}{\sqrt{\rho}}\right) - a \nabla \rho. \label{eq:CNSK_system_0_velocity}
	\end{eqnarray}
\end{subequations}

As in \cite{Carles_Carrapatoso_Hillairet_AIF2022}, we can consider an equivalent problem obtained after a time-dependent space rescaling $y = \frac{x}{\tau(t)}$ where $\tau$ solves a nonlinear ODE.
Let us introduce $R$ and $U$ defined by (recall that $m_F=1$)
\begin{equation}\label{def:autosimilaire}
\rho(t,x) = \frac{1}{(\tau(t))^d} R \left(t, \frac{x}{\tau(t)}\right), \qquad u(t,x) = \frac{1}{\tau(t)} U\left(t,\frac{x}{\tau(t)}\right) + \frac{\dot{\tau}(t)}{\tau(t)}x
\end{equation}
where $\tau$ is a function of time that will be determined later on. The functions $R$ and $U$ will be the new unknowns.

Putting the ansatz \eqref{def:autosimilaire} into  \eqref{eq:CNSK_system_0}, and denoting $y = \frac{x}{\tau(t)}$ the new space variable, one can show that the new unknowns $(R,U)$ satisfy
	\begin{eqnarray*}
	\partial_t R + \frac{1}{\tau^2}{\dv}_y(R U ) & = & 0,\\ 
	\partial_t(R U ) + \frac{1}{\tau^2}{\dv}_y( RU \otimes U) + R \ddot{\tau}\tau y & = & \frac{1}{ \tau^2}{\dv}_y(2\nu R D_y(U)) + \frac{2\kappa^2}{ \tau^2} R \nabla_y\left(\frac{\Delta_y(\sqrt{R})}{\sqrt{R}}\right)  \\
	& &- a \nabla_y R + 2\nu \frac{\dot \tau}{\tau} \nabla_y R. 
	\end{eqnarray*}

We here look for $\tau$ solution to the ODE (which exists, is unique and belongs to $\mathcal{C}^2([0,\infty);(0,\infty)$),
\begin{equation}\label{eq:ODE_tau}
\ddot{\tau} = \frac{a}{\tau} + \frac{\kappa^2}{\tau^3} - 2\nu \frac{\dot \tau}{\tau^2}, \qquad \tau(0) = 1 \quad \mbox{and} \quad \dot{\tau}(0) = 0.
\end{equation}
 With such an expression for $\ddot{\tau}$, we obtain a system that we will rewrite in the variables $Q = \frac{R}{R_m}$ and $U$, denoting in this section the operators $\dvm$ and $\Delta_m$ with respect to the density $R_m$ given in Figure \ref{figure} (and which corresponds to $\rho_m$ when $\sigma=1$), that is
 \begin{equation*}
 \dvm = \frac{1}{R_m}\dv(R_m \cdot ) \qquad \mbox{and} \qquad \Delta_m = \dvm(\nabla \cdot) = \frac{1}{R_m}\dv(R_m \nabla(\cdot)).
 \end{equation*}
 
 The new system reads
 \begin{subequations}\label{eq:CNSK_system_tau}
 	\begin{eqnarray}
 	\partial_t Q + \frac{1}{\tau^2}\dvm(Q U ) & = & 0, \label{eq:CNSK_system_tau_density} \\
 	\partial_t(Q U ) + \frac{1}{\tau^2}\dvm( QU \otimes U) & = & \frac{1}{\tau^2}\dvm(2\nu Q D_y(U)) + \frac{2\kappa^2}{ \tau^2} Q \nabla_y\left(\frac{\Delta_m(\sqrt{Q})}{\sqrt{Q}}\right)  \label{eq:CNSK_system_tau_velocity} \\
 	& & - \left(a - \frac{2\nu \dot \tau}{\tau}\right) Q \nabla_y \ln(Q). \notag
 	\end{eqnarray}
 \end{subequations}

\begin{Remark}
	In \cite{Carles_Carrapatoso_Hillairet_AIF2022}, the parameter $\tau$ satisfies, instead of \eqref{eq:ODE_tau}, the following ODE (written in our setting)
	\begin{equation}\label{eq:ODE_zeta}
			\ddot \tau = \frac{a}{\tau},  \qquad \tau(0) = 1 \quad \mbox{and} \quad  \dot{\tau}(0) = 0.
	\end{equation}
Note that the large time behavior of the solution to \eqref{eq:ODE_zeta} and \eqref{eq:ODE_tau} are the same (and does not depend on the initial data, see \cite[Section 3, in particular Lemma 3.2]{Carles_Carrapatoso_Hillairet_AHL2018} for details). Here
\begin{equation*}
	\tau(t) \underset{t\to \infty}{\sim} t\sqrt{2a\ln(t)} \qquad \dot{\tau}(t) \underset{t\to \infty}{\sim} \sqrt{2a\ln(t)}.
\end{equation*}
\end{Remark}

Let us summarize in Figure \ref{figure} the differences between our system and the one studied in \cite{Carles_Carrapatoso_Hillairet_AIF2022}.
\begin{center} 
	\begin{figure}[!h]
		\begin{tabular}{|l||l|l|}
			\hline 
			Quantum Navier-Stokes equations & Equation \eqref{eq:CNSK_system_0} & Equation (1.1) in \cite{Carles_Carrapatoso_Hillairet_AIF2022} 	\\
			\hline 
			\hline
			Pressure coefficient & $a$ & $1$ \\
			\hline
			
			Quantum coefficient & $2\kappa^2$ & $\frac{\epsilon^2}{2}$ \\ 
			
			\hline
			
			Viscosity coefficient & $2\nu$ & $\nu$ \\
			
			\hline 
			Autosimilar parameter $\tau$ ODE & Equation \eqref{eq:ODE_tau} & $\ddot \tau = \frac{2}{\tau}, \; \tau(0)=1, \; \dot \tau(0) = 0$ \\
			\hline 
			
			Stationnary density & $R_m : y \mapsto (2\pi)^{-\frac{d}{2}}\exp\left(-\frac12 |y|^2\right)$  & $\Gamma(y) = \exp(-|y|^2)$  \\
			\hline 
			System studied in the variables $(R,U)$ & Equation \eqref{eq:CNSK_system_tau} & Equation (1.5) in \cite{Carles_Carrapatoso_Hillairet_AIF2022} \\
			\hline 
		\end{tabular}
		\caption{Differences in notation between this paper and \cite{Carles_Carrapatoso_Hillairet_AIF2022}}  \label{figure}
	\end{figure}
\end{center}

Solutions to system \eqref{eq:CNSK_system_tau} satisfy a BD entropy identity. One can recover this identity by considering the system satisfied by
$W = U+2\nu\nabla \ln(Q)$ the effective velocity, namely (denoting $\mathbf{A}(U) = \frac12\left(\nabla_y U - \nabla_yU^\top\right)$),
\begin{subequations}\label{eq:CNSK_system_tau_BD}
	\begin{eqnarray}
	\partial_t Q + \frac{1}{\tau^2}\dvm(Q U ) & = & 0, \label{eq:CNSK_system_tau_BD_density} \\
	\partial_t(Q W) + \frac{1}{\tau^2}\dvm(W \otimes QU) & = & \frac{1}{\tau^2}\dvm(2\nu Q \mathbf{A}(U)) + \frac{2\kappa^2}{ \tau^2} Q \nabla_y\left(\frac{\Delta_m(\sqrt{Q})}{\sqrt{Q}}\right)  \label{eq:CNSK_system_tau_BD_velocity} \\
	& & - \left(a - \frac{2\nu \dot \tau}{\tau}\right) Q \nabla_y \ln(Q). \notag
	\end{eqnarray}
\end{subequations}

\paragraph{Energy identity:}
		\begin{equation*}
		\frac{{\rm d}}{{\rm d}t}\mathcal{E}^\tau(Q,U) + \mathcal{D}^\tau(Q,U) = \frac{\dot \tau }{\tau^3} \int_{\mathbb{R}^d} QU \cdot \left[2\nu \nabla\ln(Q)\right] \;{\rm d}\mu_m,
		\end{equation*}
		where the energy and dissipation are given respectively by
		\begin{eqnarray*}
			\mathcal{E}^\tau(Q,U) & = & \frac{1}{2\tau^2}\left(\int_{\mathbb{R}^d} Q|U|^2 \;{\rm d}\mu_m + 4\kappa^2\int_{\mathbb{R}^d} \left|\nabla\left(\sqrt{Q}\right)\right|^2 \;{\rm d}\mu_m\right) + a \int_{\mathbb{R}^d} Q\ln(Q) \;{\rm d}\mu_m, \\
			\mathcal{D}^\tau(Q,U) & = & \frac{\dot{\tau}}{\tau^3}\left(\int_{\mathbb{R}^d} Q|U|^2 \;{\rm d}\mu_m + 4\kappa^2\int_{\mathbb{R}^d} \left|\nabla\left(\sqrt{Q}\right)\right|^2 \;{\rm d}\mu_m\right) + \frac{2\nu}{\tau^4} \int_{\mathbb{R}^d} Q|D(U)|^2    \;{\rm d}\mu_m	.
		\end{eqnarray*}
		
\paragraph{BD entropy identity:}	
		\begin{equation*}
		\frac{{\rm d}}{{\rm d}t}\mathcal{E}_{{\rm BD}}^\tau(Q,U) + \mathcal{D}_{{\rm BD}}^\tau(Q,U) = -\frac{\dot \tau }{\tau^3} \int_{\mathbb{R}^d} QU \cdot \left[2\nu \nabla\ln(Q)\right] \;{\rm d}\mu_m,
		\end{equation*}
		where the BD entropy and corresponding dissipation are given respectively by 
		\begin{eqnarray*}
			\mathcal{E}_{{\rm BD}}^\tau(Q,U) & = & \frac{1}{2\tau^2}\left(\int_{\mathbb{R}^d} Q|W|^2 \;{\rm d}\mu_m + 4\kappa^2\int_{\mathbb{R}^d} \left|\nabla\left(\sqrt{Q}\right)\right|^2 \;{\rm d}\mu_m\right) + a \int_{\mathbb{R}^d} Q\ln(Q) \;{\rm d}\mu_m ,\\
			\mathcal{D}_{{\rm BD}}^\tau(Q,U) & = & \frac{\dot{\tau}}{\tau^3}\left(\int_{\mathbb{R}^d} Q|U|^2 \;{\rm d}\mu_m + 4\kappa^2\int_{\mathbb{R}^d} \left|\nabla\left(\sqrt{Q}\right)\right|^2 \;{\rm d}\mu_m\right) + \frac{2\nu}{\tau^4} \int_{\mathbb{R}^d} Q|\mathbf{A}(U)|^2    \;{\rm d}\mu_m \\
			& & 	+\frac{2\nu \kappa^2}{\tau^4}\int_{\mathbb{R}^d} Q|D^2(\ln(Q))|^2  \;{\rm d}\mu_m + 2\nu \left(\frac{a}{\tau^2}+\frac{\kappa^2}{\tau^4}\right)\int_{\mathbb{R}^d} Q|\nabla \ln(Q)|^2  \;{\rm d}\mu_m.
		\end{eqnarray*}

This leads in particular to the following identity by summation:
\begin{equation*}
\frac{{\rm d}}{{\rm d}t}\left(\mathcal{E}^\tau(Q,U) +\mathcal{E}_{{\rm BD}}^\tau(Q,U)\right) +  \mathcal{D}^\tau(Q,U)+ \mathcal{D}_{{\rm BD}}^\tau(Q,U) = 0.
\end{equation*}

It is easy to see that \eqref{eq:CNSK_system_tau} has the same structure as  \eqref{eq:CNSK_system_m} (with coefficients depending on $\tau$ (and $\dot{\tau}$)) and corresponds to the system (1.5) in \cite{Carles_Carrapatoso_Hillairet_AIF2022}. Therefore, we can follow the strategy of proof developed in the present paper, using the same energy $\mathcal{E}^\tau(Q,U)$ and BD entropy $\mathcal{E}_{{\rm BD}}^\tau(Q,U)$ as in \cite{Carles_Carrapatoso_Hillairet_AHL2018}. This leads to the existence of global weak solutions to \eqref{eq:CNSK_system_tau} in the sense of the following definition. 

\begin{Definition}\label{def:WeakSolution_tau}
	Let $(\sqrt{Q^0},\Lambda^0) \in L^2_{\mu_m} \times [L^2_{\mu_m}]^d$. A pair $(Q,U)$ is a weak solution to \eqref{eq:CNSK_system_tau} associated to the initial data $(\sqrt{Q^0},\Lambda^0)$ if there exists a quadruplet $(\sqrt{Q},\Lambda,\mathbf{S}_{\rm K},\mathbf{T}_{\rm NS})$ such that 
	\begin{enumerate}

	\item The following regularities are satisfied
		\begin{equation*}
		\begin{array}{rclcrcl}
		\sqrt{Q}   &\in&  L_{\rm loc}^\infty(0,\infty,L^2_{\mu_m}), &  & \nabla\sqrt{Q}, \Lambda &\in&  L_{\rm loc}^\infty(0,\infty,[L^2_{\mu_m}]^d), \\
		\mathbf{T}_{\rm NS}, \;\mathbf{S}_{\rm K} &\in & L_{\rm loc}^2(0,\infty,[L^2_{\mu_m}]^{d\times d}), & & Q\ln(Q)&\in& L^\infty_{\rm loc}(0,\infty;L^1_{\mu_m}),
		\end{array}
		\end{equation*}
		and, for $R = Q R_m$,
		\begin{equation*}
		\begin{array}{rlcrl}
		{D^2(\sqrt{R})} 
		&\in  L_{\rm loc}^2(0,\infty,[L^2]^{d\times d}), 
		& \quad &  \nabla \left(R^{\frac14}\right) &\in  L_{\rm loc}^4(0,\infty,[L^4]^d), 
		\end{array}
		\end{equation*}
		with the compatibility conditions
		\begin{equation*}
		\sqrt{Q} \geq 0 \quad\mbox{a.e. on } (0,\infty)\times \mathbb{R}^d, \qquad \Lambda = 0 \quad\mbox{a.e. on } \left\{ \sqrt{Q} = 0\right\}.
		\end{equation*}
		The velocity $U$ is then defined by 
		\begin{equation*}
			U = \frac{\Lambda}{\sqrt{Q}}{\bf 1}_{\{\sqrt{Q} >0\}}.
		\end{equation*}

		\item The following equations are satisfied in $\mathcal{D}'((0,\infty)\times \mathbb{R}^d)$, 
		\begin{eqnarray*}
			\partial_t \sqrt{Q} + \frac{1}{\tau^2}{\dv}_m (\sqrt{Q}U) & = &\frac{1}{2\tau^2} \tr(\mathbf{T}_{\rm NS}) + \frac{1}{2\tau^2} \sqrt{Q}U \cdot \nabla_y(\ln(R_m)), \\
			\partial_t(\sqrt{Q}\sqrt{Q}U) + \frac{1}{\tau^2}\dvm(\sqrt{Q}U \otimes \sqrt{Q}U) + a\nabla_y\left(|\sqrt{Q}|^2\right) & = & \frac{1}{\tau^2}\dvm(2\nu \sqrt{Q}\mathbf{S}_{\rm NS} + 2\kappa^2\sqrt{Q}\mathbf{S}_{\rm K}) \\
			& & + \frac{2\nu\dot{\tau}}{\tau}\nabla_y\left(|\sqrt{Q}|^2\right)
		\end{eqnarray*}
		with
		\begin{equation*}
		\mathbf{S}_{\rm NS} = \frac{\mathbf{T}_{\rm NS} + \mathbf{T}_{\rm NS}^\top}{2}
		\end{equation*}
		and the compatibility conditions
		\begin{eqnarray*}
			\sqrt{Q}\mathbf{T}_{\rm NS} & = & \nabla(\sqrt{Q}\sqrt{Q}U) - 2\sqrt{Q}U \otimes \nabla(\sqrt{q}), \\
			\sqrt{Q}\mathbf{S}_{\rm K} & = & \sqrt{Q}D^2(\sqrt{Q})-  \nabla(\sqrt{Q}) \otimes \nabla(\sqrt{Q}).
		\end{eqnarray*}
		
		\item For any $\psi \in \mathcal{D}(\mathbb{R}^d)$,
		\begin{eqnarray*}
			\underset{t\to 0^+}{\lim} \int_{\mathbb{R}^d} \sqrt{Q}(t,x) \psi(x)\,{\rm d}x & = & \int_{\mathbb{R}^d} \sqrt{Q^0}(x)\psi(x) \,{\rm d}x, \\
			\underset{t\to 0^+}{\lim} \int_{\mathbb{R}^d} \sqrt{Q}(t,x) (\sqrt{Q}U)(t,x)\psi(x)\,{\rm d}x & = & \int_{\mathbb{R}^d} \sqrt{Q^0}(x)\Lambda^0(x)\psi(x) \,{\rm d}x.
		\end{eqnarray*}
	\end{enumerate}
\end{Definition}

One can then prove the following theorem by adapting the method developed in Sections \ref{sec:2} and \ref{sec:3} of this paper.

\begin{Theorem}
	Let $\nu >0$ and $\kappa > 0$. Let $(\sqrt{Q^0},\sqrt{Q^0}U^0) \in L^2_{\mu_m} \times [L^2_{\mu_m}]^d$ 
	satisfying furthermore $\mathcal{E}^\tau(Q^0,U^0) < \infty$ and $\mathcal{E}^\tau_{\rm BD}(Q^0,U^0) < \infty$ and the compatibility conditions
	\begin{equation*}
	\sqrt{Q^0} \geq 0 \quad \mbox{a.e. on } \mathbb{R}^d, \qquad  \sqrt{Q^0}U^0=0 \quad \mbox{a.e. on } \{ \sqrt{Q^0} =0\}.
	\end{equation*} 
	Then, there exists a global weak solution $(\sqrt{Q},\sqrt{Q}U)$ to \eqref{eq:CNSK_system_tau} in the sense of Definition \ref{def:WeakSolution_tau} satisfying the energy and entropy inequalities, namely there exist absolute constants $C$ and $C_{\rm BD}$ such that, for almost all $t\geq 0$,
	\begin{eqnarray*}
		\mathcal{E}^\tau(Q(t),U(t)) + \int_0^t \mathcal{D}^\tau(Q(t'),U(t')) \,{\rm d}t' & \leq & C(\mathcal{E}^\tau(Q^0,U^0)), \\
		\mathcal{E}_{\rm BD}^\tau(Q(t),U(t)) + \int_0^t \mathcal{D}_{\rm BD}^\tau(Q(t'),U(t')) \,{\rm d}t' & \leq & C_{\rm BD}(\mathcal{E}^\tau(Q^0,U^0), \mathcal{E}_{\rm BD}^\tau(Q^0,U^0)).
	\end{eqnarray*}

\end{Theorem}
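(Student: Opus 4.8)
The plan is to transpose the entire construction of Sections \ref{sec:2} and \ref{sec:3} to the rescaled system \eqref{eq:CNSK_system_tau}, treating the factors $\tau^{-2}$, $\dot\tau/\tau$ and $a-\frac{2\nu\dot\tau}{\tau}$ as prescribed, time-dependent coefficients. The starting observation is that, since $\tau\in\mathcal{C}^2([0,\infty);(0,\infty))$ solves \eqref{eq:ODE_tau} with $\tau(0)=1$, $\dot\tau(0)=0$, on every compact interval $[0,T]$ the quantities $\tau$, $1/\tau$ and $\dot\tau$ are bounded above and (for $\tau$) bounded away from $0$, so that locally in time the coefficients of \eqref{eq:CNSK_system_tau} are comparable to the constant coefficients of \eqref{eq:CNSK_system_m} with $\sigma=1$ (hence $\rho_m=R_m$). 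Consequently none of the functional-analytic ingredients change: the Hermite Faedo-Galerkin spaces $\XN$, the twisted operators $\dvm$, $\Delta_m$, and the Gaussian inequalities of Appendix \ref{AppendixB} (log-Sobolev, Poincar\'e) are exactly the same as before.

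First I would introduce, as in Subsection \ref{subsec:2.1}, the drag-regularized and $\delta_1$-diffusive version of \eqref{eq:CNSK_system_tau}, adding the drag forces of Subsection \ref{subsect:Introduction_QNS_drag} (adapted to $\sigma=1$, i.e. $-r_0U$, $-r_1 Q|U|^2U$, $-r_4 Q|y|^2y$) together with the term $\delta_1\Delta_m Q$ (and its momentum counterpart) in the continuity equation. The Faedo-Galerkin construction of Subsection \ref{subsec:Faedo-Galerkin} then goes through verbatim: the Fokker-Planck solve (Proposition \ref{prop:Fokker-PlanckEquation_existence}) only uses that the transport field $U_N/\tau^2$ is continuous in time, and the Banach fixed point for the velocity (Proposition \ref{prop:local-exist-approx}) only requires the Lipschitz estimates of Lemmas \ref{lem:Lipschitz_q}--\ref{lem:Lipschitz_N_A_B}, whose constants now depend continuously on $t$ through $\tau(t)$, $\dot\tau(t)$ but stay finite on $[0,T]$.

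The heart of the matter is the a priori estimate. Here I would establish the energy and BD entropy identities stated just above the theorem, paying attention to the extra terms produced by differentiating the explicitly time-dependent prefactor $\tau^{-2}$ in $\mathcal{E}^\tau$ and $\mathcal{E}^\tau_{\rm BD}$; these are precisely the reaction terms $\pm\frac{\dot\tau}{\tau^3}\int_{\mathbb{R}^d}QU\cdot[2\nu\nabla\ln(Q)]\,{\rm d}\mu_m$. The decisive point, which replaces the Young-inequality absorption of $\mathcal{R}_{\rm reg}$ used in the autonomous case (see \eqref{eq:absorb_Rreg}), is that these two reaction terms are \emph{opposite}, so that upon summation
\begin{equation*}
\frac{{\rm d}}{{\rm d}t}\left(\mathcal{E}^\tau+\mathcal{E}^\tau_{\rm BD}\right)+\mathcal{D}^\tau+\mathcal{D}^\tau_{\rm BD}=0
\end{equation*}
(up to the controllable $r_0,r_1,r_4,\delta_1$ contributions). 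Granting that $\dot\tau\geq0$ for all $t$, which follows from the analysis of \eqref{eq:ODE_tau} (see \cite{Carles_Carrapatoso_Hillairet_AHL2018}), the dissipations $\mathcal{D}^\tau$, $\mathcal{D}^\tau_{\rm BD}$ are nonnegative, and this combined identity yields bounds on $\sqrt{Q}$ in $L^\infty_{\rm loc}(0,\infty;H^1_{\mu_m})$, on $\sqrt{Q}U$ in $L^\infty_{\rm loc}(0,\infty;[L^2_{\mu_m}]^d)$, and on $\sqrt{Q}D(U)$, $\sqrt{Q}D^2(\ln Q)$ in the relevant $L^2_{\rm loc}$ spaces, all uniform in $N$ and in $\delta_1$. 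I would then reproduce Proposition \ref{prop:FG-convergence} (Aubin-Lions-Simon, the equation for $\sqrt{Q_N}$ of Remark \ref{rk:eq_sqrt(q_N)}, and the Korn-type inequality \eqref{eq:korn1}) to pass to the limit $N\to\infty$, and next let $\delta_1\to0$ as in Subsection \ref{subsect:GWS_drag_term}.

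Finally, to remove the drag terms I would invoke the renormalized-solution scheme of Section \ref{sec:3}: the implications ``weak $\Rightarrow$ renormalized'' and ``renormalized $\Rightarrow$ weak'' are insensitive to the smooth, bounded-on-$[0,T]$ time dependence of the coefficients, and the stability statement (third point of Theorem \ref{th:solutionwithoutdragterms}) transfers the limit $r_{0,n},r_{1,n},r_{4,n}\to0$ to \eqref{eq:CNSK_system_tau}, producing a weak solution in the sense of Definition \ref{def:WeakSolution_tau} together with the claimed energy and entropy inequalities. The main obstacle I anticipate is the bookkeeping of the time-dependent coefficients so that all a priori bounds stay finite and correctly signed on each $[0,T]$; concretely, one must check that the pressure-type coefficient $a-\frac{2\nu\dot\tau}{\tau}$ and the factor $\dot\tau$ enter the identities only through terms that are either nonnegative dissipation or already absorbed in the combined energy--entropy balance, and that the compatibility condition for $\mathbf{S}_{\rm K}$, now without the $\sigma$-dependent correction since $\sigma=1$, is preserved in the limit exactly as in Subsection \ref{subsec:stability}.
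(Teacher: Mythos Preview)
Your proposal is correct and matches the paper's approach: the paper does not give a detailed proof but explicitly states that the theorem is obtained ``by adapting the method developed in Sections \ref{sec:2} and \ref{sec:3}'', after recording precisely the energy and BD entropy identities you invoke and the key cancellation $\frac{{\rm d}}{{\rm d}t}(\mathcal{E}^\tau+\mathcal{E}^\tau_{\rm BD})+\mathcal{D}^\tau+\mathcal{D}^\tau_{\rm BD}=0$ obtained by summing them. Your outline (drag-regularized $\delta_1$-diffusive system, Faedo-Galerkin construction with $\tau$-dependent but locally bounded coefficients, passage to the limits $N\to\infty$, $\delta_1\to0$, then removal of the drag terms via the renormalized-solution scheme) is exactly this adaptation.
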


\appendix

\section{From the original system \eqref{eq:CNSK_system} to the new system \eqref{eq:CNSK_system_m} and vice versa}
\label{appendixA}
\paragraph{Minimal density $\rho_m$.} Consider a small perturbation of the energy $E(\rho,u)$ given by \eqref{eq:def_energy_intro}, through a small variation of $(\rho,u)$:
\begin{align*}
E(\rho+\delta\rho,u+\delta u) &= E(\rho,u) + \int_{\mathbb{R}^d} \rho u \cdot \delta u + \int_{\mathbb{R}^d}\left(\frac12 |u|^2 + a\ln(\rho) + \kappa^2\left(\frac{\Delta\rho}{\rho} - \frac{|\nabla\rho|^2}{2\rho^2}\right)+\frac12\lambda|x|^2\right)\delta\rho \\
&\quad+ o\left(\|(\delta\rho,\delta u)\|^2\right).
\end{align*}
Therefore, for $(\rho_m,u_m)$ satisfying $\int_{\mathbb{R}^d} \rho_m = 1$, $\rho_m \geq 0$ and $E(\rho_m,u_m) >0$, to be minimizer of the energy, this couple has to satisfy
\begin{equation*}
\rho_mu_m = 0 \quad \mbox{and therefore} \quad a\ln(\rho_m) + \kappa^2\left(\frac{\Delta\rho_m}{\rho_m} - \frac{|\nabla\rho_m|^2}{2\rho_m^2}\right)+\frac12\lambda|x|^2 = c,
\end{equation*}
where $c$ is a constant associated to the mass condition for $\rho_m$ (Lagrange multiplier). This implies that $\rho_m$ is given by \eqref{def:rho_m_u_m}.

\paragraph{Potential force (Korteweg, pressure and potential).}

First, let us note that the Korteweg tensor, when $K(\rho)$ is given by \eqref{def:K(rho)quantum}, satisfies the Bohm identity
\begin{equation*}
2\kappa^2 \rho \nabla \left(\frac{\Delta\sqrt{\rho}}{\sqrt{\rho}}\right) = \kappa^2 \rho\nabla\left(\Delta \ln(\rho)+\frac12|\nabla\ln(\rho)|^2\right) = \kappa^2\dv\left(\rho D^2(\ln(\rho))\right).
\end{equation*}
Recall that $q = \frac{\rho}{\rho_m}$. A simple computation, using  $\dvm(v) = \frac{1}{\rho_m}\dv(\rho_m v) = \dv(v) + \nabla \ln(\rho_m) \cdot v$ and the explicit expression of $\nabla \ln(\rho_m) = -\frac{x}{\sigma^2}$, leads to

\begin{equation*}
2\kappa^2\frac{\Delta \sqrt{\rho}}{\sqrt{\rho}}  = 2\kappa^2\frac{\Delta_m(\sqrt{q})}{\sqrt{q}} + [V +a \ln(\rho_m)] - E(\rho_m,0),
\end{equation*} 
where $E(\rho_m,0)$ is given by \eqref{eq:def_energy_minimal}.
Therefore, the last three terms in the right-hand side of the Navier-Stokes Equation \eqref{eq:CNSK_system_velocity} become
\begin{equation}\label{eq:potential_force_m}
\rho\nabla\left(-a\ln(\rho) + 2\kappa^2\frac{\Delta \sqrt{\rho}}{\sqrt{\rho}} - V \right) = \rho\nabla\left(-a\ln(q) + 2\kappa^2\frac{\Delta_m(\sqrt{q})}{\sqrt{q}} \right).
\end{equation}

On the other hand, a direct calculation leads to: 
\begin{eqnarray*}
\kappa^2\dvm\left(q D^2(\ln(q))\right) - \frac{\kappa^2}{\sigma^2}q \nabla \ln(q) & = & \frac{\kappa^2}{\rho_m}\left[\dv\left(\rho D^2(\ln(q))\right) - \frac1{\sigma^4}\rho x\right] \\
&=& 2\kappa^2 q \nabla\left[\frac{\Delta \sqrt{\rho}}{\sqrt{\rho}}\right] - \frac{\kappa^2}{\sigma^4}q \nabla\left(\frac12|x|^2\right) \\
& = & q \nabla \left[2\kappa^2 \frac{\Delta \sqrt{\rho}}{\sqrt{\rho}}-V\right] + a q\nabla \left(\frac12\frac{|x|^2}{\sigma^2}\right) = 2\kappa^2 q \nabla \left(\frac{\Delta_m(\sqrt{q})}{\sqrt{q}}\right),
\end{eqnarray*}
where we used the definition of $\sigma$, see \eqref{def:sigma}, in the second line and \eqref{eq:potential_force_m} in the last one.

These two different expressions of the Korteweg tensor allow us to derive the energy and BD entropy identities for system \eqref{eq:CNSK_system_m}. 
Indeed, for the energy, we have
\begin{eqnarray*}
\int_{\mathbb{R}^d} 2q \nabla \left(\frac{\Delta_m(\sqrt{q})}{\sqrt{q}}\right) \cdot u \;{\rm d}\mu_m & = & 2 \int_{\mathbb{R}^d}  \left(\frac{\Delta_m(\sqrt{q})}{\sqrt{q}}\right) \frac{\partial q}{\partial t}   \;{\rm d}\mu_m = 4\int_{\mathbb{R}^d} \Delta_m(\sqrt{q}) \frac{\partial \sqrt{q}}{\partial t} \;{\rm d}\mu_m \\
& = & - 2\frac{\rm d}{{\rm d}t} \int_{\mathbb{R}^d} |\nabla \sqrt{q}|^2 \;{\rm d}\mu_m = - \frac12\frac{\rm d}{{\rm d}t} \int_{\mathbb{R}^d} q |\nabla \ln(q)|^2 \;{\rm d}\mu_m.
\end{eqnarray*}
In the original variable $\rho = q \rho_m$, we have (recall that $\int_{\mathbb{R}^d} q \;{\rm d}\mu_m = \int_{\mathbb{R}^d} \rho \;{\rm d}x = 1$)
\begin{eqnarray*}
	 \int_{\mathbb{R}^d} q|\nabla \ln(q)|^2 \;{\rm d}\mu_m & = & \int_{\mathbb{R}^d} \rho \left(|\nabla \ln(\rho)|^2 - 2 \nabla\ln(\rho) \cdot \nabla\ln(\rho_m) + |\nabla \ln(\rho_m)|^2 \right)    \;{\rm d}x \\
	& = & \int_{\mathbb{R}^d} \rho |\nabla \ln(\rho)|^2\;{\rm d}x - \frac{2d}{\sigma^2}  + \frac1{\sigma^2}\int_{\mathbb{R}^d}\rho \left|\frac{x}{\sigma}\right|^2    \;{\rm d}x.
\end{eqnarray*}
Using the expression of $\widetilde{I_2}(q)$ defined in \eqref{def:tildeI2}, we have
\begin{equation*}
\int_{\mathbb{R}^d} \rho |\nabla \ln(\rho)|^2\;{\rm d}x - \frac{d}{\sigma^2}  = \int_{\mathbb{R}^d} q|\nabla \ln(q)|^2 \;{\rm d}\mu_m - \frac{1}{\sigma^2}\widetilde{I_2}(q).
\end{equation*}

For the BD entropy, we have
\begin{equation*}
- \int_{\mathbb{R}^d} \left[\dvm(qD^2(\ln(q))) - \frac{1}{\sigma^2}\nabla q \right] \cdot \nabla\ln(q)  \;{\rm d}\mu_m =   \int_{\mathbb{R}^d} q |D^2(\ln(q))|^2 \;{\rm d}\mu_m + \frac{1}{\sigma^2} \int_{\mathbb{R}^d} q|\nabla \ln(q)|^2  \;{\rm d}\mu_m.
\end{equation*}
This formula can be expressed in terms of the original density $\rho = q \rho_m$ as follows
\begin{eqnarray*}
	& &  \int_{\mathbb{R}^d} q |D^2(\ln(q))|^2 \;{\rm d}\mu_m + \frac{1}{\sigma^2} \int_{\mathbb{R}^d} q|\nabla \ln(q)|^2  \;{\rm d}\mu_m \\
	&  & = \int_{\mathbb{R}^d} \rho|D^2(\ln(\rho))|^2 \;{\rm d}x + \frac{1}{\sigma^4} \widetilde{I_2}(q) - \frac{1}{\sigma^2}\int_{\mathbb{R}^d} \rho|\nabla\ln(\rho)|^2 \;{\rm d}x, 
\end{eqnarray*}
that is
\begin{equation*}
\int_{\mathbb{R}^d} \rho|D^2\ln(\rho)|^2   \;{\rm d}\mu_m - \frac{d}{\sigma^4} = \int_{\mathbb{R}^d} q |D^2(\ln(q))|^2 \;{\rm d}\mu_m + \frac{2}{\sigma^2} \int_{\mathbb{R}^d} q|\nabla \ln(q)|^2  \;{\rm d}\mu_m - \frac{2}{\sigma^4} \widetilde{I_2}(q).
\end{equation*}

These two calculations lead to the following result.

	\begin{Lemma}[Link between $\rho$ and $q$]\label{lemma:Appendix_rho_q1}
		For $q : \mathbb{R}^d \mapsto \mathbb{R}$ nonnegative, we define $\rho = q \rho_m$ and vice versa.
		
		Then, we have the following:
		
		\begin{equation*}
			\sqrt{q} \in L^2_{\mu_m} \mbox{ s.t. } \int_{\mathbb{R}^d} q \;{\rm d}\mu_m = 1 \quad \Longleftrightarrow \quad \sqrt{\rho} \in L^2 \mbox{ s.t. } \int_{\mathbb{R}^d} \rho \;{\rm d}x = 1.
		\end{equation*}
		Furthermore
		\begin{equation*}
			\sqrt{q} \in H^1_{\mu_m} \mbox{ s.t. } \int_{\mathbb{R}^d} q \;{\rm d}\mu_m = 1 \quad \Longleftrightarrow \quad \sqrt{\rho} \in H^1 \mbox{ s.t. } \int_{\mathbb{R}^d} \rho \;{\rm d}x = 1 \mbox{ and } \int_{\mathbb{R}^d} \rho |x|^2 < \infty.
		\end{equation*}
		Finally,
		\begin{eqnarray*}
		&  & \sqrt{q} \in H^1_{\mu_m} \mbox{ s.t. } \int_{\mathbb{R}^d} q \;{\rm d}\mu_m = 1  \mbox{ and } \int_{\mathbb{R}^d}q |D^2(\ln(q))|^2\;{\rm d}\mu_m < \infty \\
		& \Longleftrightarrow & \sqrt{\rho} \in H^1 \mbox{ s.t. } \int_{\mathbb{R}^d} \rho \;{\rm d}x = 1, \quad \int_{\mathbb{R}^d} \rho |x|^2 < \infty  \mbox{ and } \int_{\mathbb{R}^d} \rho |D^2(\ln(\rho))|^2 \;{\rm d}x < \infty.
		\end{eqnarray*}		
	\end{Lemma}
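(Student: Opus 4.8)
The plan is to reduce all three equivalences to two pointwise algebraic identities relating the Gaussian-weighted quantities attached to $q$ and the Lebesgue quantities attached to $\rho=q\rho_m$; these identities are, in essence, the computations carried out just above the statement. The first equivalence is immediate: since ${\rm d}\mu_m=\rho_m\,{\rm d}x$, one has $\|\sqrt{q}\|_{L^2_{\mu_m}}^2=\int_{\mathbb{R}^d}q\,{\rm d}\mu_m=\int_{\mathbb{R}^d}\rho\,{\rm d}x=\|\sqrt{\rho}\|_{L^2}^2$, so the membership $\sqrt{q}\in L^2_{\mu_m}$ and the mass normalization transfer verbatim to $\sqrt{\rho}$, and back.

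For the second equivalence I would start from the factorization $\sqrt{\rho}=\sqrt{q}\,\sqrt{\rho_m}$ together with $\nabla\sqrt{\rho_m}=-\tfrac{x}{2\sigma^2}\sqrt{\rho_m}$ (from \eqref{def:rho_m_u_m}), which gives the pointwise identity $\nabla\sqrt{\rho}=\sqrt{\rho_m}\bigl(\nabla\sqrt{q}-\tfrac{x}{2\sigma^2}\sqrt{q}\bigr)$. Squaring, integrating against ${\rm d}x$, and integrating the cross term $\tfrac12\int x\cdot\nabla q\,{\rm d}\mu_m$ by parts (using ${\rm div}(x\rho_m)=(d-|x|^2/\sigma^2)\rho_m$) yields
\[
\int_{\mathbb{R}^d}|\nabla\sqrt{\rho}|^2\,{\rm d}x=\int_{\mathbb{R}^d}|\nabla\sqrt{q}|^2\,{\rm d}\mu_m+\frac{d}{2\sigma^2}-\frac{1}{4\sigma^4}\int_{\mathbb{R}^d}q|x|^2\,{\rm d}\mu_m.
\]
Since $\int_{\mathbb{R}^d}\rho|x|^2\,{\rm d}x=\int_{\mathbb{R}^d}q|x|^2\,{\rm d}\mu_m$, the second-moment condition appearing in the right-hand side of the equivalence is literally the same quantity on both sides, and the identity then shows that $\int|\nabla\sqrt{\rho}|^2\,{\rm d}x$ is finite if and only if $\int|\nabla\sqrt{q}|^2\,{\rm d}\mu_m$ is finite, \emph{provided} the second moment is finite. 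The one non-formal point, and the step I expect to be the main obstacle, is that the left-hand condition $\sqrt{q}\in H^1_{\mu_m}$ does \emph{not} explicitly assume a finite second moment, so I must show that $\sqrt{q}\in H^1_{\mu_m}$ together with $\int q\,{\rm d}\mu_m=1$ already forces $\int q|x|^2\,{\rm d}\mu_m<\infty$. This is exactly the content of the strong Poincar\'e inequality (Proposition \ref{prop:SP_inequality}), which bounds $\int q|x|^2\,{\rm d}\mu_m$ by $C\bigl(1+\int|\nabla\sqrt{q}|^2\,{\rm d}\mu_m\bigr)$; equivalently it follows from the logarithmic Sobolev inequality controlling the relative entropy by the Fisher information, together with a transport estimate for the second moment. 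With the moment under control (and the $L^2$ parts supplied by the first equivalence), both directions follow from the displayed identity.

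For the third equivalence I would invoke the second identity, established in the BD-entropy computation preceding the statement, namely
\[
\int_{\mathbb{R}^d}\rho|D^2(\ln\rho)|^2\,{\rm d}x-\frac{d}{\sigma^4}=\int_{\mathbb{R}^d}q|D^2(\ln q)|^2\,{\rm d}\mu_m+\frac{2}{\sigma^2}\int_{\mathbb{R}^d}q|\nabla\ln q|^2\,{\rm d}\mu_m-\frac{2}{\sigma^4}\widetilde{I_2}(q),
\]
with $\widetilde{I_2}(q)$ as in \eqref{def:tildeI2}. Under the hypotheses of the second equivalence (already available here) the two correction terms on the right are finite: $\int q|\nabla\ln q|^2\,{\rm d}\mu_m=4\int|\nabla\sqrt{q}|^2\,{\rm d}\mu_m<\infty$, and $\widetilde{I_2}(q)<\infty$ because the second moment is finite. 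Hence the identity shows $\int q|D^2(\ln q)|^2\,{\rm d}\mu_m<\infty$ if and only if $\int\rho|D^2(\ln\rho)|^2\,{\rm d}x<\infty$, which is precisely the claim. Regarding rigor, the integrations by parts produce no boundary contributions since $\rho_m$ is a Gaussian (Schwartz-class) weight, the cross term is absolutely convergent by Cauchy--Schwarz once $\int|\nabla\sqrt{q}|^2\,{\rm d}\mu_m$ and $\int q|x|^2\,{\rm d}\mu_m$ are finite, and both identities are first verified for smooth $q$ bounded away from $0$ and $\infty$ (using the kind of cut-offs already employed in the paper) and then extended by a density and monotone-convergence argument; the positivity required for $\ln q$ and $D^2(\ln q)$ to be meaningful is implicit in the finiteness of the entropy-type integrals of the third equivalence.
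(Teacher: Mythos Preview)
Your proposal is correct and follows essentially the same route as the paper: the lemma is presented there as an immediate consequence of the two identities computed just above it (the Fisher-information identity and the $D^2(\ln)$ identity), which are precisely the two displayed formulas you invoke. You even make explicit the one point the paper leaves implicit, namely that $\sqrt{q}\in H^1_{\mu_m}$ already forces $\int q|x|^2\,{\rm d}\mu_m<\infty$ via the strong Poincar\'e inequality of Proposition~\ref{prop:SP_inequality}.
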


We have in addition the following useful identities.

\begin{Lemma}\label{lemma:Appendix_rho_q2}
	For $q : \mathbb{R}^d \mapsto \mathbb{R}$ nonnegative, we define $\rho = q \rho_m$ and vice versa. Then 
	\begin{equation*}
		D^2(\sqrt{q}) - \frac{\nabla(\sqrt{q}) \otimes \nabla (\sqrt{q})}{\sqrt{q}} = \frac{1}{\sqrt{\rho_m}}\left[D^2(\sqrt{\rho}) - \frac{\nabla(\sqrt{\rho}) \otimes \nabla (\sqrt{\rho})}{\sqrt{\rho}} + \frac{\sqrt{\rho}}{2\sigma^2}{\rm I}_d\right].
	\end{equation*}
	In particular, assuming that $\sqrt{q} \in L^2_{\mu_m}$ (for instance such that $\int_{\mathbb{R}^d} q \;{\rm d}\mu_m = 1$), we have
	\begin{equation*}
		\sqrt{q}D^2(\sqrt{q}) - \nabla(\sqrt{q}) \otimes \nabla (\sqrt{q}) \in L^1_{\mu_m} \quad \Longleftrightarrow \quad \sqrt{\rho}D^2(\sqrt{\rho}) - \nabla(\sqrt{\rho}) \otimes \nabla (\sqrt{\rho}) \in L^1.
	\end{equation*}
\end{Lemma}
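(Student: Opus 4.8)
The plan is to establish the pointwise tensor identity by a direct computation based on the factorization $\sqrt{\rho}=\sqrt{q}\,\sqrt{\rho_m}$ and the explicit Gaussian form of $\rho_m$, and then to deduce the $L^1$-equivalence from it together with the change of measure ${\rm d}\mu_m=\rho_m\,{\rm d}x$. First I would record the elementary derivatives of $\sqrt{\rho_m}$: since $\rho_m$ is given by \eqref{def:rho_m_u_m}, one has $\nabla\ln\rho_m=-x/\sigma^2$, whence
\begin{equation*}
\nabla\sqrt{\rho_m}=-\frac{x}{2\sigma^2}\sqrt{\rho_m}, \qquad D^2(\sqrt{\rho_m})=\sqrt{\rho_m}\left(\frac{x\otimes x}{4\sigma^4}-\frac{1}{2\sigma^2}{\rm I}_d\right).
\end{equation*}

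Next I would apply the product rule. Writing $a=\sqrt{q}$ and $m=\sqrt{\rho_m}$, so that $\sqrt{\rho}=am$, the rules for the gradient and Hessian of a product read $\nabla(am)=m\nabla a+a\nabla m$ and $D^2(am)=mD^2 a+\nabla a\otimes\nabla m+\nabla m\otimes\nabla a+aD^2 m$. Substituting the expressions for $\nabla m$ and $D^2 m$, I would expand
\begin{equation*}
D^2(\sqrt{\rho})-\frac{\nabla(\sqrt{\rho})\otimes\nabla(\sqrt{\rho})}{\sqrt{\rho}}+\frac{\sqrt{\rho}}{2\sigma^2}{\rm I}_d
\end{equation*}
and observe that all terms proportional to $x\otimes x$, the symmetrized cross terms $\nabla a\otimes x+x\otimes\nabla a$, and the ${\rm I}_d$ term cancel, leaving exactly $m\big(D^2 a-a^{-1}\nabla a\otimes\nabla a\big)$. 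Dividing by $m=\sqrt{\rho_m}$ then gives the asserted identity, since $D^2 a-a^{-1}\nabla a\otimes\nabla a=D^2(\sqrt{q})-\sqrt{q}^{-1}\nabla(\sqrt{q})\otimes\nabla(\sqrt{q})$.

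For the second statement, I would multiply the identity by $\sqrt{q}$ to obtain the compatibility form already used throughout the paper,
\begin{equation*}
\sqrt{q}D^2(\sqrt{q})-\nabla(\sqrt{q})\otimes\nabla(\sqrt{q})=\frac{1}{\rho_m}\left[\sqrt{\rho}D^2(\sqrt{\rho})-\nabla(\sqrt{\rho})\otimes\nabla(\sqrt{\rho})+\frac{\rho}{2\sigma^2}{\rm I}_d\right].
\end{equation*}
Since ${\rm d}\mu_m=\rho_m\,{\rm d}x$, integrating the absolute value shows that the $L^1_{\mu_m}$-norm of the left-hand side equals the $L^1$-norm of the bracketed quantity on the right. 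The remainder $\frac{\rho}{2\sigma^2}{\rm I}_d$ lies in $L^1$ because $\sqrt{q}\in L^2_{\mu_m}$ forces $\rho=q\rho_m\in L^1$ (indeed $\int_{\mathbb{R}^d}\rho\,{\rm d}x=\int_{\mathbb{R}^d}q\,{\rm d}\mu_m<\infty$), so it may be added or subtracted without affecting integrability, which yields the claimed equivalence.

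I expect no genuine obstacle here: the only delicate point is the careful bookkeeping in the Hessian product rule and the verification of the cancellations, keeping track of the symmetry of the tensors throughout.
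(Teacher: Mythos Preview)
Your proposal is correct and follows essentially the same approach as the paper: the paper's proof simply says the identity follows from direct computations, and the accompanying remark records precisely the product-rule expansions of $D^2(\sqrt{q})$ and $\nabla(\sqrt{q})\otimes\nabla(\sqrt{q})/\sqrt{q}$ in terms of $\sqrt{\rho}$ and $\sqrt{\rho_m}$ that you carry out. Your derivation of the $L^1$-equivalence via the change of measure and the integrability of $\rho$ is also the intended argument.
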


\begin{proof}
	The proof of the lemma is straightforward by direct computations. The formulae used in the proof are given in the following remark.
\end{proof}

\begin{Remark}\label{rem:AppendixA-rho-rho_m}
	For smooth positive $q$ and $\rho = q \rho_m$, we have 
	\begin{equation*}
	\sqrt{q}D^2(\ln(q)) = 2 \left[D^2(\sqrt{q}) - \frac{\nabla(\sqrt{q}) \otimes \nabla (\sqrt{q})}{\sqrt{q}}\right] \quad \mbox{and} \quad \sqrt{\rho}D^2(\ln(\rho)) = 2 \left[D^2(\sqrt{\rho}) - \frac{\nabla(\sqrt{\rho}) \otimes \nabla (\sqrt{\rho})}{\sqrt{\rho}}\right]
	\end{equation*}
	but 
	\begin{equation*}
		D^2(\sqrt{q}) = \frac{1}{\sqrt{\rho_m}}\left[D^2(\sqrt{\rho}) + \nabla (\sqrt{\rho}) \overset{{\rm sym}}{\otimes} \frac{x}{\sigma^2} +\left[\frac1{4\sigma^4} x \otimes x + \frac1{2\sigma^2} {\rm I}_d\right]\sqrt{\rho} \right]
	\end{equation*}
	and 
	\begin{equation*}
		\frac{\nabla(\sqrt{q}) \otimes \nabla (\sqrt{q})}{\sqrt{q}} = \frac{1}{\sqrt{\rho_m}}\left[\frac{\nabla(\sqrt{\rho}) \otimes \nabla (\sqrt{\rho})}{\sqrt{\rho}} + \nabla (\sqrt{\rho}) \overset{{\rm sym}}{\otimes} \frac{x}{\sigma^2} +\frac{\sqrt{\rho}}{4\sigma^4} x \otimes x \right].
	\end{equation*}
		We used here the notation $\overset{{\rm sym}}{\otimes}$ to simplify the formulae, it reads, for $a =(a_k)_{k=1,\ldots,d}$, $b=(b_k)_{k=1,\ldots,d}$ in $\mathbb{R}^d$, as follows
	\begin{equation*}
	a \overset{{\rm sym}}{\otimes} b = \frac12 \left(a\otimes b + b \otimes a\right) = \frac{1}{2}(a_kb_l+a_lb_k)_{k,l}.
	\end{equation*}
	It corresponds to the symmetric part of the matrix $a\otimes b$.
	
	From theses computations, it seems that we cannot control $D^2(\sqrt{q})$ \big(resp. $	\frac{\nabla(\sqrt{q}) \otimes \nabla (\sqrt{q})}{\sqrt{q}}$\big) in $L^2_{\mu_m}$ from a control of $D^2(\sqrt{\rho})$ \big(resp. $\frac{\nabla(\sqrt{\rho}) \otimes \nabla (\sqrt{\rho})}{\sqrt{\rho}}$\big) in $L^2(\mathbb{R}^d)$  only and vice versa. Some bound on $I_4(q)$ must also be involved.	
\end{Remark}

Finally the following estimates were used several times in the main text.

\begin{Lemma}\label{lem:Hessian_estimate}
	There exists a positive constant $C_* >0$ (depending only on the dimension) such that for any positive smooth function $\rho : \mathbb{R}^d \mapsto \mathbb{R}$, we have
	\begin{equation*}
	\int_{\mathbb{R}^d} |D^2(\sqrt{\rho})|^2 \;{\rm d}x + \int_{\mathbb{R}^d}  |\nabla(\rho^{\frac14})|^4 \;{\rm d}x \leq C_*	\int_{\mathbb{R}^d} \rho |D^2(\ln(\rho))|^2 \;{\rm d}x.
	\end{equation*}

	There exists a positive constant $C_{m,*} >0$ (depending only on the dimension) such that for any positive smooth function $q : \mathbb{R}^d \mapsto \mathbb{R}$, we have
	\begin{equation*}
	\int_{\mathbb{R}^d}|D^2(\sqrt{q})|^2 {\rm d}\mu_m + \int_{\mathbb{R}^d}|\nabla({q}^{\frac14})|^4 {\rm d}\mu_m \leq C_{m,*}\left( \int_{\mathbb{R}^d}q|D^2(\ln({q}))|^2 {\rm d}\mu_m + \frac{1}{\sigma^4} \int_{\mathbb{R}^d} q\left|\frac{x}{\sigma}\right|^4{\rm d}\mu_m \right).
	\end{equation*}
\end{Lemma}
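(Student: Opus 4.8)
The plan is to reduce both inequalities to a single \emph{gradient fourth-moment estimate} controlling $\int q|\nabla\ln q|^4$ (resp. $\int\rho|\nabla\ln\rho|^4$) by the Hessian-of-log term, and then to close that estimate by an integration by parts combined with Cauchy--Schwarz and Young's inequality. First I would record the pointwise algebraic identities (already implicit in Remark \ref{rem:AppendixA-rho-rho_m}): for a smooth positive $\rho$,
\begin{equation*}
D^2(\sqrt\rho) = \frac{\sqrt\rho}{2}\Big(D^2(\ln\rho) + \tfrac12\nabla\ln\rho\otimes\nabla\ln\rho\Big), \qquad \nabla(\rho^{1/4}) = \frac{\rho^{1/4}}{4}\nabla\ln\rho,
\end{equation*}
whence $|D^2(\sqrt\rho)|^2 = \frac14\rho\,|D^2(\ln\rho)+\frac12\nabla\ln\rho\otimes\nabla\ln\rho|^2$ and $|\nabla(\rho^{1/4})|^4 = \frac{1}{256}\rho|\nabla\ln\rho|^4$; the same identities hold verbatim with $q$ in place of $\rho$, being measure-independent. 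Using $|a+\frac12 b|^2\le 2|a|^2+\frac12|b|^2$, both left-hand sides are then bounded by $C\int|D^2\ln|^2 + C\int|\nabla\ln|^4$ (integrated against $dx$ or $d\mu_m$, against the relevant density), so everything reduces to estimating the quartic gradient term.

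For the first (Lebesgue) inequality, the key estimate is $\int_{\mathbb{R}^d}\rho|\nabla\ln\rho|^4\,dx \le C\int_{\mathbb{R}^d}\rho|D^2\ln\rho|^2\,dx$. Writing $\psi=\ln\rho$ and using $\nabla\rho=\rho\nabla\psi$, I would test the identity $\dv(\rho|\nabla\psi|^2\nabla\psi)=\rho|\nabla\psi|^4 + \rho\,\nabla|\nabla\psi|^2\cdot\nabla\psi + \rho|\nabla\psi|^2\Delta\psi$ and integrate, the total integral vanishing. Since $\nabla|\nabla\psi|^2\cdot\nabla\psi = 2(D^2\psi\,\nabla\psi)\cdot\nabla\psi$ and $|\nabla\psi|^2\Delta\psi=|\nabla\psi|^2\tr(D^2\psi)$ are each pointwise $\le C|D^2\psi|\,|\nabla\psi|^2$, Cauchy--Schwarz gives
\begin{equation*}
\int\rho|\nabla\psi|^4\,dx \le C\Big(\int\rho|D^2\psi|^2\,dx\Big)^{1/2}\Big(\int\rho|\nabla\psi|^4\,dx\Big)^{1/2},
\end{equation*}
and dividing (legitimate once the right-hand side is assumed finite, otherwise there is nothing to prove) yields the claim. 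Combined with the identities above this proves the first inequality.

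For the second (Gaussian) inequality the same scheme applies, but the weight $\rho_m$ produces an extra term. Testing $\int\dvm(q|\nabla\psi|^2\nabla\psi)\,d\mu_m=0$ with $\psi=\ln q$, i.e. $\int\dv(\rho_m q|\nabla\psi|^2\nabla\psi)\,dx=0$, the product rule now also generates the factor $\nabla\ln\rho_m\cdot\nabla\psi = -\sigma^{-2}x\cdot\nabla\psi$, so
\begin{equation*}
\int q|\nabla\psi|^4\,d\mu_m \le C\int q|D^2\psi|\,|\nabla\psi|^2\,d\mu_m + \frac{1}{\sigma^2}\int q\,|x|\,|\nabla\psi|^3\,d\mu_m.
\end{equation*}
The first term is handled by Cauchy--Schwarz as before; for the second I would use Hölder with exponents $4$ and $4/3$ to obtain a bound by $C\sigma^{-1}I_4(q)^{1/4}A^{3/4}$, where $A=\int q|\nabla\psi|^4\,d\mu_m$ and where $(\int q|x|^4 d\mu_m)^{1/4}=\sigma\,I_4(q)^{1/4}$. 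Writing $B=\int q|D^2\ln q|^2\,d\mu_m$, the resulting inequality $A\le CB^{1/2}A^{1/2}+C\sigma^{-1}I_4(q)^{1/4}A^{3/4}$ is absorbed by Young's inequality into $A\le C(B+\sigma^{-4}I_4(q))$, which together with the first paragraph gives the second inequality.

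The main obstacle is the rigorous justification of the integration by parts: for a general positive smooth $\rho$ (resp. $q$) one must first assume the right-hand side finite, then introduce a cutoff $\chi(\cdot/R)$, carry out the computation on the truncated integrals, and let $R\to\infty$, checking that the boundary contribution $\tfrac{C}{R}\int_{\{R\le|x|\le 2R\}}\rho|\nabla\psi|^3\,dx$ is $o(1)\,A^{3/4}$ and hence absorbable; monotone convergence then recovers the untruncated estimate. The only genuinely new feature compared with the classical flat computation is the weighted term $-\sigma^{-2}x\cdot\nabla\psi$, whose presence is precisely what forces the fourth moment $I_4(q)$ to appear on the right-hand side.
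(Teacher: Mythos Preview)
Your proposal is correct and follows essentially the same route as the paper. The paper cites \cite{Vasseur_Yu_SIMA2016} for the Lebesgue case and, for the Gaussian case, carries out the same integration by parts you describe (on the vector field $q|\nabla\ln q|^2\nabla\ln q$ against ${\rm d}\mu_m$), arriving at the inequality $A+B\le D+\sqrt{3BD}+\sigma^{-1}I_4(q)^{1/4}B^{3/4}$ and closing it by Young's inequality exactly as you do; your reduction ``first control $\int q|\nabla\ln q|^4$, then recover $D^2(\sqrt q)$ via the pointwise identity'' is just a reorganisation of the same computation.
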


\begin{proof}
	The proof follows (exactly for the first inequality and with the additional term from the density $\rho_m$ for the second) the different steps of the proof of \cite[Lemma 2.1]{Vasseur_Yu_SIMA2016}.
	We prove here only the second one. 
	Denoting (following notations in \cite{Vasseur_Yu_SIMA2016})
	\begin{equation}\label{def:Lemma_Vasseur_Yu_ABD}
	A = \int_{\mathbb{R}^d} |D^2(\sqrt{q})|^2 {\rm d}\mu_m, \quad B = \int_{\mathbb{R}^d} |2\nabla (q^{\frac14})|^4 {\rm d}\mu_m, \quad D = \int_{\mathbb{R}^d} q |D^2(\ln(\sqrt{q}))|^2 {\rm d}\mu_m, \quad I_4(q) = \int_{\mathbb{R}^d} q \left|\frac{x}{\sigma}\right|^4 {\rm d}\mu_m,	
	\end{equation}
	we obtain after some integrations by parts and by using $|\tr(M)|^2 \leq d \|M\|^2$ for any $M \in \mathcal{M}_{d \times d}(\mathbb{R})$ that
	\begin{equation*}
	A+B \leq D + \sqrt{3BD} + \frac1\sigma (I_4(q))^{\frac14}B^{\frac34}.
	\end{equation*}
	This leads in particular to 
	\begin{equation*}
	A + \frac12 B \leq 4 D + \frac3{4\sigma^4} I_4(q).
	\end{equation*}
\end{proof}

\section{Inequalities in a Gaussian setting}\label{AppendixB}

\subsection{Logarithmic Sobolev inequality}

Let us denote ${\rm d}\gamma_d$ the normalized Gaussian measure on $\mathbb{R}^d$. Let $f \in H_{\gamma_d}^1$, then (see \cite{Gross75}) $|f|^2 \ln(|f|) \in L_{\gamma_d}^1$ and furthermore
\begin{equation*}
\int_{\mathbb{R}^d} |f(x)|^2 \ln(|f(x)|) \,{\rm d}\gamma_d \leq \int_{\mathbb{R}^d} |\nabla f(x)|^2 \;{\rm d}\gamma_d + \|f\|_{L_{\gamma_d}^2}^2\ln(\|f\|_{L_{\gamma_d}^2}).
\end{equation*} 
In particular, for $f = \sqrt{q}$ for which $\|f\|_{L_{\mu_m}^2}^2 = \int_{\mathbb{R}^d} q \,{\rm d}\mu_m = 1$, we have (after change of variables)
\begin{equation}\label{eq:LogarithmicSobolevInequality}
\int_{\mathbb{R}^d} q \ln(q)  \;{\rm d}\mu_m \leq \frac{2}{\sigma^2} \int_{\mathbb{R}^d} |\nabla \sqrt{q}|^2  \;{\rm d}\mu_m.
\end{equation}

\subsection{Strong Poincaré Inequality}

\begin{Proposition}\label{prop:SP_inequality}
There exists a constant $C_{\rm SP} >0$ such that, for all $f \in H_{\mu_m}^1$,
\begin{equation*}
\left\|\sqrt{1+|x|^2} \left(f - \int_{\mathbb{R}^d} f \;{\rm d}\mu_m\right)\right\|_{L_{\mu_m}^2} \leq C_{\rm SP} \|\nabla f\|_{[L_{\mu_m}^2]^d}.
\end{equation*}
\end{Proposition}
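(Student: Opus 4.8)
The plan is to reduce to the case of mean-zero $f$, invoke the classical Gaussian (spectral gap) Poincar\'e inequality to control the unweighted $L^2_{\mu_m}$-norm, and then control the remaining weighted piece $\int_{\mathbb{R}^d}|x|^2f^2\,{\rm d}\mu_m$ by an integration by parts that trades the weight $|x|^2$ for a gradient. Since both sides are invariant under replacing $f$ by $f-\int_{\mathbb{R}^d}f\,{\rm d}\mu_m$, I would assume from the start that $\int_{\mathbb{R}^d}f\,{\rm d}\mu_m=0$. Writing $B=\int_{\mathbb{R}^d}f^2\,{\rm d}\mu_m$, $A=\int_{\mathbb{R}^d}|x|^2f^2\,{\rm d}\mu_m$ and $G=\int_{\mathbb{R}^d}|\nabla f|^2\,{\rm d}\mu_m$, the goal is exactly $B+A\le C_{\rm SP}^2\,G$. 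The standard Gaussian Poincar\'e inequality (the spectral gap $1/\sigma^2$ of the Ornstein--Uhlenbeck operator $\Delta_m$ associated with the measure $\mu_m$) gives $B\le \sigma^2 G$, so everything reduces to bounding $A$ by a multiple of $G$.

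The key observation is the elementary identity $\nabla\rho_m=-\tfrac{x}{\sigma^2}\rho_m$, equivalently $x_i\rho_m=-\sigma^2\partial_i\rho_m$, which lets me integrate the weight by parts against the Gaussian density. Concretely,
\begin{equation*}
A=\int_{\mathbb{R}^d}|x|^2f^2\rho_m\,{\rm d}x=-\sigma^2\sum_{i}\int_{\mathbb{R}^d}(x_if^2)\,\partial_i\rho_m\,{\rm d}x=\sigma^2\sum_i\int_{\mathbb{R}^d}\partial_i(x_if^2)\,{\rm d}\mu_m,
\end{equation*}
and since $\sum_i\partial_i(x_if^2)=d f^2+2f\,(x\cdot\nabla f)$ this yields the exact relation
\begin{equation*}
A=\sigma^2 d\,B+2\sigma^2\int_{\mathbb{R}^d}f\,(x\cdot\nabla f)\,{\rm d}\mu_m.
\end{equation*}
Applying Cauchy--Schwarz to the last integral gives $\big|\int_{\mathbb{R}^d}f(x\cdot\nabla f)\,{\rm d}\mu_m\big|\le A^{1/2}G^{1/2}$, and then Young's inequality (with a weight $1/2$) absorbs the resulting $\sqrt{A}$ into the left-hand side: $2\sigma^2 A^{1/2}G^{1/2}\le \tfrac12 A+2\sigma^4 G$. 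Rearranging produces $A\le 2\sigma^2 d\,B+4\sigma^4 G$, and feeding in the standard Poincar\'e bound $B\le\sigma^2 G$ gives $A\le 2(d+2)\sigma^4 G$. Combining with $B\le\sigma^2 G$ yields $B+A\le\big(\sigma^2+2(d+2)\sigma^4\big)G$, i.e. the claimed inequality with $C_{\rm SP}=\big(\sigma^2+2(d+2)\sigma^4\big)^{1/2}$.

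The only genuinely delicate point is justifying the integration by parts for an \emph{arbitrary} $f\in H^1_{\mu_m}$, since a priori one does not know that $A<\infty$, and one must ensure no boundary contribution at infinity. I would handle this by a standard truncation/approximation argument: replace the weight $|x|^2$ by $\min(|x|^2,R^2)$ (or insert a smooth cutoff $\chi(x/R)$) and work with $f$ approximated by functions in $\mathcal{D}(\mathbb{R}^d)$, which are dense in $H^1_{\mu_m}$. For each $R$ the truncated identity holds with legitimate boundary terms vanishing, the Cauchy--Schwarz/Young absorption gives a bound on the truncated weighted integral that is uniform in $R$, and monotone convergence then lets $R\to\infty$ to recover both the finiteness of $A$ and the final estimate. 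This approximation bookkeeping is the main technical obstacle; the algebraic core of the argument is short and self-contained.
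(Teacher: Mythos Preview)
Your argument is correct. The reduction to mean-zero $f$, the use of the standard Gaussian Poincar\'e inequality $B\le\sigma^2 G$, and the integration-by-parts identity $A=\sigma^2 d\,B+2\sigma^2\int f(x\cdot\nabla f)\,{\rm d}\mu_m$ followed by Cauchy--Schwarz and Young are all valid; the absorption step and the resulting explicit constant $C_{\rm SP}=(\sigma^2+2(d+2)\sigma^4)^{1/2}$ are fine. The truncation/approximation you sketch to justify the integration by parts for general $f\in H^1_{\mu_m}$ is the right way to close the argument, and it indeed yields finiteness of $A$ a posteriori.

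By contrast, the paper does not give a proof at all: it simply invokes \cite[Proposition~5]{Carrapatoso_Dolbeault_Herau_Mischler_Mouhot_ARMA22}, which proves a more general weighted Poincar\'e inequality (for a class of log-concave measures) via a compactness/spectral argument. What you gain with your approach is a short, self-contained proof with an explicit constant depending only on $d$ and $\sigma$, avoiding the abstract machinery of the cited reference; what the cited result buys is greater generality (weights and measures beyond the pure Gaussian case), which is not needed here.
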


See \cite[Proposition 5]{Carrapatoso_Dolbeault_Herau_Mischler_Mouhot_ARMA22}. In particular, for $f = \sqrt{q}$, we obtain, using the Cauchy-Schwarz inequality to bound the mean value
\begin{equation*}
\int_{\mathbb{R}^d} \sqrt{q} \;{\rm d}\mu_m \leq 1
\end{equation*}
and
\begin{equation*}
\left\|\sqrt{1+\frac{|x|^2}{\sigma^2}}\right\|_{L_{\mu_m}^2} = \sqrt{1+d},
\end{equation*}
that
\begin{equation*}
\sqrt{\int_{\mathbb{R}^d} \left(1+\frac{|x|^2}{\sigma^2}\right)q  \;{\rm d}\mu_m} \leq \sqrt{C_{\rm SP}} \|\nabla \sqrt{q}\|_{[L^2_{\mu_m}]^d} + \sqrt{1+d}.
\end{equation*}

\subsection{Strong Poincaré-Korn Inequality}

\begin{Proposition}
	There exists a constant $C_{\rm SPK} >0$ such that, for all $u \in [H_{\mu_m}^1]^d$,
	\begin{equation*}
	\left\|\sqrt{1+|x|^2} \left(u - \int_{\mathbb{R}^d} u \;{\rm d}\mu_m - \Proj(u) \right)\right\|_{[L_{\mu_m}^2]^d} \leq C_{\rm SPK} \|D(u),
	\|_{[L_{\mu_m}^2]^{d\times d}}
	\end{equation*}
	where $\Proj(u)$ is the orthogonal projection from $[L_{\mu_m}^2]^d$ onto  $\mathcal{R}$ (the space of all the infinitesimal rotations) given by
	\begin{equation*}
	\mathcal{R} = \left\{ R : x \in \mathbb{R}^d \mapsto Ax \in \mathbb{R}^d \; \mbox{with } A \in M_{d\times d}(\mathbb{R}) \mbox{ s.t. } A^\top = -A  \right\}.
	\end{equation*}
\end{Proposition}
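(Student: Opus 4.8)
The plan is to reduce the statement to a weighted Poincaré estimate for fields orthogonal to the kernel of the symmetric gradient, and then to absorb the full gradient into $D(u)$ by combining the scalar strong Poincaré inequality (Proposition \ref{prop:SP_inequality}) with the Korn-type bound \eqref{eq:korn1}. Set $w := u - \int_{\mathbb{R}^d} u \, \mathrm{d}\mu_m - \Proj(u)$. Since $\rho_m$ is a centered Gaussian, constant fields and the rotation fields $x \mapsto Ax$ (with $A^\top = -A$) have vanishing $\mu_m$-mean, so $\int_{\mathbb{R}^d} w \, \mathrm{d}\mu_m = 0$; moreover $w$ is $\mu_m$-orthogonal to $\mathcal{R}$, because constants are orthogonal to $\mathcal{R}$ and $\Proj$ is the orthogonal projection onto it. Finally $D(w) = D(u)$, as both the constant part and the rotation part lie in the kernel of the symmetric gradient. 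Thus it suffices to prove the inequality for $w$ in the closed subspace $V := \{v \in [H^1_{\mu_m}]^d : \int_{\mathbb{R}^d} v \, \mathrm{d}\mu_m = 0, \ v \perp \mathcal{R}\}$.

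First I would establish the unweighted Poincaré--Korn inequality $\|v\|_{[L^2_{\mu_m}]^d} \le C \|D(v)\|_{[L^2_{\mu_m}]^{d\times d}}$ on $V$ by a standard compactness--contradiction argument. Assuming it fails, take $v_n \in V$ with $\|v_n\|_{[L^2_{\mu_m}]^d} = 1$ and $\|D(v_n)\|_{[L^2_{\mu_m}]^{d\times d}} \to 0$. Then \eqref{eq:korn1} bounds $\|\nabla v_n\|_{[L^2_{\mu_m}]^{d\times d}}$, so $(v_n)$ is bounded in $[H^1_{\mu_m}]^d$; by the compact embedding $H^1_{\mu_m} \hookrightarrow L^2_{\mu_m}$ (which follows from Proposition \ref{prop:SP_inequality}, providing tightness, together with local Rellich compactness), a subsequence converges strongly in $[L^2_{\mu_m}]^d$ to some $v \in V$ with $\|v\|_{[L^2_{\mu_m}]^d} = 1$. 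Weak lower semicontinuity forces $D(v) = 0$, so $v$ is a rigid motion, $v(x) = Ax + b$ with $A^\top = -A$. The constraints defining $V$ then give $b = 0$ (zero mean) and $A = 0$ ($v \perp \mathcal{R}$), whence $v = 0$, a contradiction.

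Next I would upgrade to the weighted estimate. Applying the scalar strong Poincaré inequality of Proposition \ref{prop:SP_inequality} to each component $w_i$ (which has zero $\mu_m$-mean) and summing over $i$ yields $\|\sqrt{1+|x|^2}\, w\|_{[L^2_{\mu_m}]^d} \le C_{\rm SP} \|\nabla w\|_{[L^2_{\mu_m}]^{d\times d}}$. It then remains to control $\|\nabla w\|$ by $\|D(w)\|$: inequality \eqref{eq:korn1} gives $\|\nabla w\|_{[L^2_{\mu_m}]^{d\times d}}^2 \le 2\|D(w)\|_{[L^2_{\mu_m}]^{d\times d}}^2 + \sigma^{-2}\|w\|_{[L^2_{\mu_m}]^d}^2$, and the lower-order term is absorbed using the unweighted Poincaré--Korn inequality just proved on $V$. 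Combining these bounds and recalling $D(w) = D(u)$ delivers the claimed inequality, with $C_{\rm SPK}$ depending only on $\sigma$ and $d$. The main obstacle is the Korn step, that is, passing from the symmetric gradient to the full gradient: in the Gaussian-weighted setting the Euclidean Korn inequality cannot be invoked directly, and one must rely on \eqref{eq:korn1} together with the compactness argument above (equivalently, on the general weighted Korn inequalities of \cite[Theorem 1]{Carrapatoso_Dolbeault_Herau_Mischler_Mouhot_ARMA22}); verifying that mean-subtraction and $\Proj$ remove exactly the kernel $\mathcal{R}\oplus\mathbb{R}^d$ of $D$ is the conceptual crux that makes the contradiction close.
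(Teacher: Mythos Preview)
Your argument is correct, but note that the paper does not actually prove this proposition: it simply cites \cite[Theorem~1]{Carrapatoso_Dolbeault_Herau_Mischler_Mouhot_ARMA22} (and \cite[Appendix~B]{Carrapatoso_Dolbeault_Herau_Mischler_Mouhot_ARMA22} for the explicit form of $\Proj$). So there is no ``paper's own proof'' to compare against.

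What you have done is give a self-contained derivation from two other ingredients that the paper also quotes from the same reference: the scalar strong Poincar\'e inequality (Proposition~\ref{prop:SP_inequality}) and the Gaussian Korn identity \eqref{eq:korn1}. The reduction $w = u - \int u\,\mathrm{d}\mu_m - \Proj(u)$ is set up correctly (your checks that $\int w\,\mathrm{d}\mu_m = 0$, $w\perp\mathcal{R}$, and $D(w)=D(u)$ all go through, using that the Gaussian is centered so rotation fields have zero mean and are orthogonal to constants). The compactness--contradiction step for the unweighted inequality is sound: \eqref{eq:korn1} gives the $H^1_{\mu_m}$ bound, the embedding $H^1_{\mu_m}\hookrightarrow L^2_{\mu_m}$ is indeed compact, and the kernel of $D$ in $[L^2_{\mu_m}]^d$ is exactly $\mathbb{R}^d\oplus\mathcal{R}$, which the constraints in $V$ eliminate. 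The weighted upgrade via componentwise application of Proposition~\ref{prop:SP_inequality} followed by \eqref{eq:korn1} plus the unweighted bound is clean. In effect, you have reproved (this instance of) \cite[Theorem~1]{Carrapatoso_Dolbeault_Herau_Mischler_Mouhot_ARMA22} by bootstrapping from the scalar case and the elementary Korn identity, which is more informative than a bare citation.
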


See \cite[Theorem 1]{Carrapatoso_Dolbeault_Herau_Mischler_Mouhot_ARMA22}. The projection $\Proj$ can be computed, see \cite[Appendix B]{Carrapatoso_Dolbeault_Herau_Mischler_Mouhot_ARMA22}, it is a continous operator from $[L_{\mu_m}^2]^d$ into $[H_{\mu_m}^1]^d$.

\subsection{Consequences of this inequalities.}

\begin{Proposition}\label{prop:StrongPoincare-consequences}$ $
	\begin{enumerate}
		\item There exists a constant $C_{\rm SP,m} >0$ such that, if $r \in H_{\mu_m}^1$, then $r x \in [L_{\mu_m}^2]^d$ and
			\begin{equation*}
				\|xr\|_{L_{\mu_m}^2} \leq C_{\rm SP} \|\nabla r\|_{[L_{\mu_m}^2]^d} + C_{\rm SP,m}\|r\|_{L_{\mu_m}^2}.
			\end{equation*}
		\item There exists a constant $C_{\rm SPK,m} >0$ such that, if $u \in [H_{\mu_m}^1]^d$, then $x \cdot u \in L_{\mu_m}^2$ and
		\begin{equation*}
		\|x\cdot u\|_{L_{\mu_m}^2} \leq C_{\rm SPK} \|D(u)\|_{[L_{\mu_m}^2]^{d\times d}} + C_{\rm SPK,m}\|u\|_{[L_{\mu_m}^2]^d}.
		\end{equation*}
		\item If $r \in H_{\mu_m}^2$, then $r x \in [H_{\mu_m}^1]^d$ and $|x|^2r \in L_{\mu_m}^2$ and
		\begin{equation*}
			\||x|^2 r\|_{L_{\mu_m}^2} + \|x r\|_{[H_{\mu_m}^1]^d} \leq C_{\rm SPK} \|D^2(r)\|_{[L_{\mu_m}^2]^{d\times d}} + C_{m}\|r\|_{L_{\mu_m}^2}.
		\end{equation*}  
	\end{enumerate} 
\end{Proposition}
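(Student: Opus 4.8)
The plan is to derive all three estimates from the two Gaussian inequalities already recorded, namely the strong Poincaré inequality (Proposition \ref{prop:SP_inequality}) and the strong Poincaré–Korn inequality, together with the finiteness of the Gaussian moments $\|\sqrt{1+|x|^2}\|_{L^2_{\mu_m}}^2 = 1+d\sigma^2$ and $\||x|\|_{L^2_{\mu_m}}^2 = d\sigma^2$. For (1), since $|x|\le\sqrt{1+|x|^2}$ pointwise, I would write $r = (r-\bar r)+\bar r$ with $\bar r = \int_{\mathbb{R}^d} r\,{\rm d}\mu_m$ and estimate
\begin{equation*}
\|xr\|_{L^2_{\mu_m}} \le \big\|\sqrt{1+|x|^2}\,(r-\bar r)\big\|_{L^2_{\mu_m}} + |\bar r|\,\big\|\sqrt{1+|x|^2}\big\|_{L^2_{\mu_m}},
\end{equation*}
where the first term is bounded by $C_{\rm SP}\|\nabla r\|_{[L^2_{\mu_m}]^d}$ via Proposition \ref{prop:SP_inequality} and $|\bar r|\le\|r\|_{L^2_{\mu_m}}$ by Cauchy–Schwarz (recall $\mu_m$ is a probability measure). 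Statement (2) follows identically from the strong Poincaré–Korn inequality, the key simplification being that the infinitesimal rotation drops out: if $\Proj(u)(x)=Ax$ with $A^\top=-A$, then $x\cdot\Proj(u)=x^\top A x = 0$ pointwise, so with $\bar u = \int_{\mathbb{R}^d} u\,{\rm d}\mu_m$ one gets $\|x\cdot u\|_{L^2_{\mu_m}} \le \|\sqrt{1+|x|^2}\,(u-\bar u-\Proj(u))\|_{[L^2_{\mu_m}]^d} + |\bar u|\,\||x|\|_{L^2_{\mu_m}}$, controlled by $C_{\rm SPK}\|D(u)\|$ plus $C\|u\|_{[L^2_{\mu_m}]^d}$.

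The core of the argument is an interpolation estimate I would establish next: for $r\in H^2_{\mu_m}$,
\begin{equation*}
\|\nabla r\|_{[L^2_{\mu_m}]^d} \le C\big(\|D^2(r)\|_{[L^2_{\mu_m}]^{d\times d}} + \|r\|_{L^2_{\mu_m}}\big).
\end{equation*}
Integrating by parts with respect to $\mu_m$ yields $\|\nabla r\|^2 = -\int_{\mathbb{R}^d} r\,\Delta_m r\,{\rm d}\mu_m \le \|r\|_{L^2_{\mu_m}}\|\Delta_m r\|_{L^2_{\mu_m}}$, and since $\Delta_m r = \tr(D^2 r) - \sigma^{-2}\,x\cdot\nabla r$, applying (2) to $u=\nabla r$ (noting $D(\nabla r)=D^2(r)$) bounds $\|\Delta_m r\|$ by $C\|D^2(r)\| + C'\|\nabla r\|$. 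A Young inequality then absorbs the resulting $\|\nabla r\|^2$ term into the left-hand side, giving the claim.

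Finally I would prove (3) by reducing everything to (1), this interpolation estimate, and the elementary identities $\partial_k(x_i r)=\delta_{ik}r + x_i\partial_k r$ and $\||x|^2 r\|_{L^2_{\mu_m}}^2 = \sum_i \|x\,(x_i r)\|_{L^2_{\mu_m}}^2$. The bound on $\|xr\|_{L^2_{\mu_m}}$ comes from (1) combined with the interpolation; the bound on $\|\nabla(xr)\|$ follows from the product rule, using (1) applied to each $\partial_k r\in H^1_{\mu_m}$ to control $\|x\,\partial_k r\|$; and $\||x|^2 r\|$ follows by applying (1) to each scalar $x_i r\in H^1_{\mu_m}$ and then controlling $\|\nabla(x_i r)\|$ and $\|x_i r\|$ by the previous steps. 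In each case the leftover $\|\nabla r\|$-terms are removed by the interpolation estimate, so everything collapses to $\|D^2(r)\|$ and $\|r\|$.

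I expect the main obstacle to be this interpolation step and the bookkeeping it demands: one must verify that the iterated applications of (1) and (2) genuinely close up in terms of $\|D^2(r)\|$ and $\|r\|$ alone, with no residual $\|\nabla r\|$ surviving. The antisymmetry cancellation $x^\top A x=0$ in (2) is precisely what makes a $\|u\|$-term rather than a gradient-type term appear, and tracking this carefully through the reduction is the delicate point; the remaining computations (the Gaussian moments, the product rule, and the Young inequality) are routine.
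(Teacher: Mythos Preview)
The paper states this proposition but does not supply a proof, so there is nothing to compare against directly. Your argument is correct and is almost certainly the intended one: the appearance of the constant $C_{\rm SPK}$ in part (3) signals that the authors meant the Poincar\'e--Korn inequality to enter exactly where you use it, namely via part (2) applied to $u=\nabla r$ in the interpolation step. The cancellation $x^\top A x=0$ for antisymmetric $A$ in part (2) is the right observation, the integration-by-parts identity $\|\nabla r\|^2_{L^2_{\mu_m}}=-\int r\,\Delta_m r\,{\rm d}\mu_m$ is the standard Gaussian identity, and the Young-inequality absorption closes the interpolation cleanly. Your bootstrapping of part (3) from part (1), the product rule, and the identity $\||x|^2 r\|^2=\sum_i\|x(x_ir)\|^2$ is routine and correct; all residual $\|\nabla r\|$ terms are indeed eliminated by the interpolation estimate, so the bookkeeping concern you flag does resolve.
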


\bibliographystyle{abbrv}

\begin{thebibliography}{10}

\bibitem{Antonelli_Hientzsch_Spirito_JDE21}
P.~Antonelli, L.~E. Hientzsch, and S.~Spirito.
\newblock Global existence of finite energy weak solutions to the quantum
  {Navier}-{Stokes} equations with non-trivial far-field behavior.
\newblock {\em J. Differ. Equations}, 290:147--177, 2021.

\bibitem{Antonelli_Spirito_AHP22}
P.~Antonelli and S.~Spirito.
\newblock Global existence of weak solutions to the
  {Navier}-{Stokes}-{Korteweg} equations.
\newblock {\em Ann. Inst. Henri Poincar{\'e} C, Anal. Non Lin{\'e}aire},
  39(1):171--200, 2022.

\bibitem{Boyer_Fabrie_book}
F.~Boyer and P.~Fabrie.
\newblock {\em Mathematical tools for the study of the incompressible
  {Navier}-{Stokes} equations and related models}, volume 183 of {\em Appl.
  Math. Sci.}
\newblock New York, NY: Springer, 2013.

\bibitem{Bresch-Desjardins_CRAS04}
D.~Bresch and B.~Desjardins.
\newblock Some diffusive capillary models of {Korteweg} type.
\newblock {\em C. R., M{\'e}c., Acad. Sci. Paris}, 332(11):881--886, 2004.

\bibitem{Bresch_Desjardins_JMPA06}
D.~Bresch and B.~Desjardins.
\newblock On the construction of approximate solutions for the 2d viscous
  shallow water model and for compressible {Navier}-{Stokes} models.
\newblock {\em J. Math. Pures Appl. (9)}, 86(4):362--368, 2006.

\bibitem{Bresch-Desjardins-Lin_CPDE03}
D.~Bresch, B.~Desjardins, and C.-K. Lin.
\newblock On some compressible fluid models: {Korteweg}, lubrication, and
  shallow water systems.
\newblock {\em Commun. Partial Differ. Equations}, 28(3-4):843--868, 2003.

\bibitem{Bresch_Gisclon_Lacroix-Violet_ARMA19}
D.~Bresch, M.~Gisclon, and I.~Lacroix-Violet.
\newblock On {Navier}-{Stokes}-{Korteweg} and {Euler}-{Korteweg} systems:
  application to quantum fluids models.
\newblock {\em Arch. Ration. Mech. Anal.}, 233(3):975--1025, 2019.

\bibitem{Bresch_Gisclon_Lacroix-Violet_Vasseur_JMFM22}
D.~Bresch, M.~Gisclon, I.~Lacroix-Violet, and A.~Vasseur.
\newblock On the exponential decay for compressible
  {Navier}-{Stokes}-{Korteweg} equations with a drag term.
\newblock {\em J. Math. Fluid Mech.}, 24(1):16, 2022.
\newblock Id/No 11.

\bibitem{BruMe09}
S.~Brull and F.~M\'ehats.
\newblock {Derivation of viscous correction terms for the isothermal quantum
  Euler model.}
\newblock {\em ZAMM Z. Angew. Math. Mech.}, 90:219--230, 2010.

\bibitem{Carles_Carrapatoso_Hillairet_AHL2018}
R.~Carles, K.~Carrapatoso, and M.~Hillairet.
\newblock Rigidity results in generalized isothermal fluids.
\newblock {\em Ann. Henri Lebesgue}, 1:47--85, 2018.

\bibitem{Carles_Carrapatoso_Hillairet_AIF2022}
R.~Carles, K.~Carrapatoso, and M.~Hillairet.
\newblock Global weak solutions for quantum isothermal fluids.
\newblock {\em Ann. Inst. Fourier}, 72(6):2241--2298, 2022.

\bibitem{Carrapatoso_Dolbeault_Herau_Mischler_Mouhot_ARMA22}
K.~Carrapatoso, J.~Dolbeault, F.~H{\'e}rau, S.~Mischler, and C.~Mouhot.
\newblock Weighted {Korn} and {Poincar{\'e}}-{Korn} inequalities in the
  {Euclidean} space and associated operators.
\newblock {\em Arch. Ration. Mech. Anal.}, 243(3):1565--1596, 2022.

\bibitem{Feireisl_Book04}
E.~Feireisl.
\newblock {\em Dynamics of viscous compressible fluids.}, volume~26 of {\em
  Oxf. Lect. Ser. Math. Appl.}
\newblock Oxford: Oxford University Press, 2004.

\bibitem{FeZhou93}
D.~K. Ferry and J.-R. Zhou.
\newblock Form of the quantum potential for use in hydrodynamic equations for
  semiconductor device modeling.
\newblock {\em Phys. Rev. B}, 48:7944--7950, Sep 1993.

\bibitem{Gianazza_Savare_Toscani_ARMA09}
U.~Gianazza, G.~Savar{\'e}, and G.~Toscani.
\newblock The {Wasserstein} gradient flow of the {Fisher} information and the
  quantum drift-diffusion equation.
\newblock {\em Arch. Ration. Mech. Anal.}, 194(1):133--220, 2009.

\bibitem{Grant73}
J.~Grant.
\newblock {Pressure and stress tensor expressions in the fluid mechanical
  formulation of the Bose condensate equations.}
\newblock {\em J. Phys. A: Math., Nucl. Gen.}, 6:151--153, 1973.

\bibitem{Gross75}
L.~Gross.
\newblock Hypercontractivity and logarithmic {Sobolev} inequalities for the
  {Clifford}- {Dirichlet} form.
\newblock {\em Duke Math. J.}, 42:383--396, 1975.

\bibitem{Hooton_JMAA81}
J.~G. Hooton.
\newblock Compact {Sobolev} imbeddings on finite measure spaces.
\newblock {\em J. Math. Anal. Appl.}, 83:570--581, 1981.

\bibitem{Jungel_SIMA10}
A.~J{\"u}ngel.
\newblock Global weak solutions to compressible {Navier}-{Stokes} equations for
  quantum fluids.
\newblock {\em SIAM J. Math. Anal.}, 42(3):1025--1045, 2010.

\bibitem{Lacroix-Violet_Vasseur_JMPA18}
I.~Lacroix-Violet and A.~Vasseur.
\newblock Global weak solutions to the compressible quantum {Navier}-{Stokes}
  equation and its semi-classical limit.
\newblock {\em J. Math. Pures Appl. (9)}, 114:191--210, 2018.

\bibitem{LoMo93}
M.~I. Loffredo and L.~M. Morato.
\newblock {On the creation of quantized vortex lines in rotating He-2}.
\newblock {\em Nuovo Cim. B}, 108:205--216, 1993.

\bibitem{Lu_Zhang_Zhong_JMP19}
B.~L{\"u}, R.~Zhang, and X.~Zhong.
\newblock Global existence of weak solutions to the compressible quantum
  {Navier}-{Stokes} equations with degenerate viscosity.
\newblock {\em J. Math. Phys.}, 60(12):121502, 31, 2019.

\bibitem{Lunardi_TAMS97}
A.~Lunardi.
\newblock On the {Ornstein}-{Uhlenbeck} operator in {{\(L^ 2\)}} spaces with
  respect to invariant measures.
\newblock {\em Trans. Am. Math. Soc.}, 349(1):155--169, 1997.

\bibitem{Vasseur_Yu_SIMA2016}
A.~F. Vasseur and C.~Yu.
\newblock Global weak solutions to the compressible quantum {Navier}-{Stokes}
  equations with damping.
\newblock {\em SIAM J. Math. Anal.}, 48(2):1489--1511, 2016.

\bibitem{Wyatt_book}
R.~E. Wyatt.
\newblock {\em Quantum dynamics with trajectories. {Introduction} to quantum
  hydrodynamics.}, volume~28 of {\em Interdiscip. Appl. Math.}
\newblock New York, NY: Springer, 2005.

\end{thebibliography}

\vspace*{\fill}

\begin{small}	
\begin{itemize}
\item[1] Universit\'e de Lorraine, CNRS, IECL, F-57000 Metz, France.
\item[2] Universit\'e de Lorraine, CNRS, IECL, F-54000 Nancy, France.
\item[3] Universit\'e de Lorraine, CNRS, Inria, IECL, F-57000 Metz, France.
\item[] E-mail:  jeremy.faupin@univ-lorraine.fr, ingrid.lacroix@univ-lorraine.fr, julien.lequeurre@univ-lorraine.fr
\end{itemize}
\end{small}

\end{document}